\documentclass{article}

\usepackage[T1]{fontenc}
\usepackage{lmodern}
\usepackage{amsfonts,amsmath,amssymb,amsthm}
\usepackage{graphicx,color}
\usepackage[margin=1in]{geometry}
\usepackage{tikz}
\usepackage{blkarray}
\usepackage{enumitem}
\usepackage{caption}
\usepackage{subcaption}
\usepackage[dvipsnames]{xcolor}
\usepackage{comment}
\usepackage[colorlinks=true,linkcolor=RoyalBlue,citecolor=PineGreen,urlcolor=RoyalBlue]{hyperref}
\usepackage[nameinlink]{cleveref}
\usepackage{thmtools} 
\usepackage{thm-restate}

\usepackage{kbordermatrix}

\newtheorem{theorem}{Theorem}[section]
\newtheorem{proposition}[theorem]{Proposition}
\newtheorem{lemma}[theorem]{Lemma}
\newtheorem{corollary}[theorem]{Corollary}
\newtheorem{conjecture}[theorem]{Conjecture}
\newtheorem{claim}[theorem]{Claim}

\theoremstyle{definition}
\newtheorem{definition}[theorem]{Definition}

\theoremstyle{remark}
\newtheorem{remark}[theorem]{Remark}

\newcommand\CC{{\mathbb C}}

\newcommand\HH{{\mathbb H}}

\newcommand\NN{{\mathbb N}}
\newcommand\PP{{\mathbb P}}

\newcommand\RR{{\mathbb R}}

\DeclareMathOperator{\arsinh}{arsinh}

\DeclareMathOperator{\id}{id}

\DeclareMathOperator{\rank}{rank}

\DeclareMathOperator{\SL}{SL}
\DeclareMathOperator{\PSL}{PSL}
\DeclareMathOperator{\im}{im}
\DeclareMathOperator{\re}{re}

\title{Rigidity on compact surfaces through hyperbolic symmetries}
\author{Sean Dewar, Alison La Porta, Rebecca Monks, Anthony Nixon,\\ Klara Stokes and Joannes Vermant}
\date{}

\begin{document}

\maketitle

\begin{abstract}
Generically the rigidity of bar-joint structures admits combinatorial characterisations in the Euclidean plane and, more generally, for frameworks on the sphere and the torus. The remaining case of compact surfaces of genus at least two has remained open. Using the hyperbolic geometry of their universal covers, we develop a theory of infinitesimal rigidity for frameworks on compact surfaces of genus at least two. By the uniformisation theorem, every such surface is a quotient of the hyperbolic plane by a surface group, allowing frameworks on the surface to be represented as infinite symmetric frameworks in the hyperbolic plane. Encoding the symmetry through gain graphs, we prove that infinitesimal rigidity is determined entirely by finite combinatorial data. Specifically, a framework is generically rigid if and only if its associated gain graph contains a spanning (2,3,1,0)-gain tight subgraph. This yields the first combinatorial characterisation of generic rigidity for frameworks on compact surfaces of genus at least two.
\end{abstract}

\section{Introduction}
A central problem in rigidity theory is to determine when infinitesimal rigidity admits a purely combinatorial characterisation. Beginning with the work of Geiringer and Laman \cite{Laman1970,HPG27}, generic bar-joint frameworks in the Euclidean plane admit a complete combinatorial description. By projective equivalence, the Geiringer--Laman characterisation extends to the sphere and the hyperbolic plane \cite{I09,saliola_whiteley_2007_equivalence_rigidity}. A combinatorial characterisation is also known for the torus \cite{MR3265750,ross2015inductive}. The corresponding problem for compact orientable surfaces of higher genus has, however, remained open.

By the uniformisation theorem, every compact orientable surface of genus at least two carries a hyperbolic metric and can be realised as a quotient $\mathbb H/\Gamma$ of the hyperbolic plane by a co-compact surface group $\Gamma$. Accordingly, a framework on a surface lifts naturally to a $\Gamma$-symmetric framework in the hyperbolic plane. The central question is whether the rigidity of such an infinite symmetric framework can still be determined from finite combinatorial data. 

The infinitesimal rigidity of symmetric frameworks has attracted considerable attention in recent years, motivated in part by applications in engineering, robotics, biophysics, materials science and computer-aided design (see \cite{cnswpair,dgl,jzt2016,sw2011,tanigawamatroids}). 
We build on this work by developing a theory of symmetric infinitesimal rigidity for frameworks in the hyperbolic plane, with symmetry groups given by \textit{Fuchsian groups}, which are discrete subgroups of $\PSL(2,\mathbb{R})$. In particular, we focus on the case where the symmetry group is a surface group. 

Unlike periodic frameworks on the torus, the symmetry group is no longer generated by translations. Moreover, while projective methods relate generic rigidity in Euclidean and hyperbolic geometry, they do not preserve group actions and therefore cannot be used to study symmetric rigidity. 
Consequently, the existing theory cannot be applied directly. Instead, our approach is based on the orbit rigidity matrix together with gain graph realisations in the hyperbolic plane.

Given a group $\Gamma$, a $\Gamma$-gain graph is a directed multigraph equipped with an assignment of a group element of $\Gamma$ to each directed edge, and can be seen as a quotient graph of a symmetric graph together with the information needed to recover the covering graph (see \Cref{sec: GG}). A $\Gamma$-gain graph is $\Gamma$-isostatic in the hyperbolic plane if, roughly speaking, its lift admits a realisation as a $\Gamma$-symmetric framework with no symmetry-preserving infinitesimal flex and no redundant bars (see \Cref{sec: comb. rig.}). 

Our analysis relies on the intrinsic geometry of the hyperbolic plane, in particular the behaviour of geodesics and the hyperbolic distance. 
We are able to reduce infinitesimal rigidity questions for infinite $\Gamma$-symmetric frameworks in $\mathbb{H}$ to finite combinatorial conditions. Our main result gives a combinatorial characterisation of isostatic $\Gamma$-gain graphs. 

\begin{theorem}\label{thm:main}
For a surface group $\Gamma$ and a $\Gamma$-gain graph $(G,\psi)$, the following are equivalent:
\begin{enumerate}
\item $(G,\psi)$ is $\Gamma$-isostatic in the hyperbolic plane $\mathbb{H}$;
\item $(G,\psi)$ is $(2,3,1,0)$-gain tight.
\end{enumerate}
\label{main_theorem}
\end{theorem}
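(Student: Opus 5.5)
We argue both directions via the orbit rigidity matrix. Set up $R(G,\psi,p)$ with one row per edge of the gain graph and $\dim\mathbb H=2$ columns per vertex of $G$. The relevant sparsity count is: a subset $F$ of edges should satisfy $|F|\le 2|V(F)|-3$ when $F$ is balanced (trivial gains, so the lift is a disjoint union of finite copies), $|F|\le 2|V(F)|-1$ in the intermediate cyclic case, and $|F|\le 2|V(F)|$ otherwise. The $(2,3,1,0)$ indices record the dimension of the space of $\Gamma$-symmetric trivial motions available to the lift of $F$. This dimension is computed from the centraliser in $\PSL(2,\RR)$ of the gain subgroup $\langle F\rangle_\psi$:
\begin{itemize}
\item it is $3$ for the trivial subgroup;
\item it is $1$ for a nontrivial cyclic subgroup, since a hyperbolic element commutes only with the translations along its axis;
\item it is $0$ for a non-cyclic subgroup, since in a surface group any two non-commuting elements have trivial common centraliser.
\end{itemize}
A count of $2|V|$ overall, with no trivial motions, is then the tight count for the whole graph.

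For (1)$\Rightarrow$(2), we first identify the kernel of $R$ for a subgraph $F$ with generic $p$. It contains the infinitesimal isometries of $\mathbb H$, i.e.\ elements of $\mathfrak{sl}(2,\RR)$, that are invariant under $\mathrm{Ad}$ of the gain subgroup. Rank considerations then give $\rank R(F)\le 2|V(F)|-\dim\mathfrak{c}(\langle F\rangle_\psi)$. Independence of the rows therefore forces the sparsity counts above. The full rank $2|V|$ gives $|E|=2|V|$, so the graph is tight. Here one must check carefully that the gain subgroup of each connected component is well defined up to conjugacy, and that switching equivalence preserves both the rank and the counts.

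For (2)$\Rightarrow$(1), we proceed inductively. First, establish an inductive construction for $(2,3,1,0)$-gain tight graphs, analogous to the Ross/Tanigawa construction for periodic graphs. The base graphs are a single vertex with two loops whose gains generate a non-cyclic subgroup. The moves are:
\begin{itemize}
\item gained 0-extensions;
\item 1-extensions;
\item possibly loop-1-extensions and vertex-to-$K_4$ type moves that merge balanced components.
\end{itemize}
The combinatorial step shows that every tight graph other than a base reduces by an inverse move. The key inputs are: a vertex of degree $2$ or $3$ exists, since the average degree is $4$ and loops count twice; and among the candidate new edges at a degree-3 vertex, one can be chosen that preserves sparsity. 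The choice uses the balanced/cyclic/general trichotomy and a submodularity argument on tight sets, where one must handle the fact that the union of two cyclic tight sets may be non-cyclic. Second, show that each move preserves $\Gamma$-isostaticity. For the base case, with one vertex $v$ and loops with gains $\gamma_1,\gamma_2$, the rows are $d$-gradients of $\mathrm{dist}(p,\gamma_i p)$. These are independent unless $p$ lies on a common geodesic of the two axes, which is impossible for non-commuting hyperbolic elements. For 0-extensions, take the new point off the geodesic through the two (gained) neighbours. For 1-extensions, place the new vertex on the geodesic of the removed edge and use the standard limit argument, which works verbatim in $\mathbb H$ using the hyperboloid model, where rigidity rows are linear.

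The main obstacle is the combinatorial reduction, specifically the 1-extension in which the deleted edge or loop must have a gain compatible with the existing gains on a path. In the cyclic case, a reduction may produce an edge whose gain creates an unwanted non-cyclic subgroup, or may fail to keep a cyclic set below $2|V|-1$. I would first try to follow Jord\'an--Kaszanitzky--Tanigawa's treatment of $(2,3,k)$-gain sparsity, adapted to the centraliser dimensions $3,1,0$. If that fails, I would instead prove (2)$\Rightarrow$(1) matroidally: show that the $(2,3,1,0)$-gain sparse sets form a count matroid, given by a submodular function on edge sets, and that its rank equals the generic rank of $R$. The latter would follow from a generic-point argument showing that every circuit of the rigidity matroid violates the count.
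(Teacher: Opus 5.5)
Your necessity direction matches the paper's: the rank of the orbit matrix of a connected piece is at most $2|V|$ minus the dimension of the centraliser of its gain subgroup, which is $3$, $1$ or $0$. The sufficiency direction, however, has a genuine gap at its foundation.

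Since $|E(G)|=2|V(G)|$, the average degree is \emph{exactly} $4$, so nothing forces a vertex of degree $2$ or $3$. In fact $(2,3,1,0)$-gain tight graphs can be $4$-regular: a fancy triangle (a balanced triangle with a loop at every vertex, non-cyclic overall) and any tight double cycle $C_n^2$ are examples. Every move you list (0-extension, 1-extension, loop-1-extension) leaves the last added vertex with degree $2$ or $3$. So no sequence of these moves applied to vertices with two loops can produce such graphs, and vertex-to-$K_4$ moves are not the missing ingredient. The inductive construction that actually works here is Jord\'an--Kaszanitzky--Tanigawa's, which the paper transfers to Fuchsian groups. It needs the degree-$4$ moves, 2-extension and loop-2-extension, and a larger family of base graphs: trivial graphs, fancy triangles, fancy hats, double cycles and near-cyclic graphs.

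This is where almost all of the paper's geometric work lies, and your plan contains none of it.
\begin{itemize}
\item One must prove that loop-2-extensions and 2-extensions preserve $\Gamma$-isostaticity. For loop-2-extensions this is short: put the new vertex on the geodesic of the deleted edge.
\item For 2-extensions, the paper places the new vertex at a crossing point of the geodesics realising the two deleted edges. It needs ideal-boundary arguments to guarantee such crossings when endpoints of the deleted edges coincide.
\item When both deleted edges are loops at the same vertex, crossings are not enough. The paper then uses a Killing-form/Lorentzian cross-product argument, which relies on a surface group having no parabolic elements and no elements of order $2$.
\item One must show that fancy triangles, fancy hats and near-cyclic graphs are $\Gamma$-isostatic. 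The paper does this by intersecting the spaces $\mathcal{T}_{\langle g\rangle}+\mathfrak{h}_{p(v)}$ and using the centraliser of the cyclic part.
\item One must show that every tight double cycle admits a reduction. This step is group-theoretic and specific to surface groups: infinite-index subgroups of surface groups are free, combined with an abelianisation argument. It genuinely fails for other symmetry groups: in the Euclidean setting with the dihedral group of order $4$, the double cycle $C_2^2$ is irreducible and lifts to the flexible Bottema mechanism.
\end{itemize}

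Your matroidal fallback does not close the gap either. Showing that every circuit of the generic rigidity matroid violates the count \emph{is} the sufficiency statement, and no generic-point argument supplies it for free.

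A minor point: in your two-loop base case the degenerate positions do exist. They are the points on either axis, or on the common perpendicular of the two axes. They are merely nowhere dense, which is still enough for your purpose.
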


The formal definitions of gain graphs and $(2,3, 1,0)$-gain tight graphs will be given in  \Cref{sec:symgraphs}. We additionally prove a similar result for the case when $\Gamma$ is a cyclic group, which generalises a result in \cite{Schulze2012}. 

Since every compact orientable surface of genus $g\geq 2$ is realised as a quotient $\mathbb{H}/\Gamma$, \Cref{main_theorem} yields a combinatorial characterisation of rigidity for graphs on such surfaces, 
viewed through their symmetric lifts to the universal cover.
Previous work on intrinsic rigidity of frameworks on surfaces has focused primarily on
surfaces with non-trivial continuous isometry groups, such as the sphere and the torus. 
Whiteley \cite{doi:10.1137/0401025} considered graphs on quotients of the Euclidean plane under a group, where the bars of the framework are geodesics on the surface. Ross \cite{MR3265750,ross2015inductive} studied and characterised the infinitesimal rigidity of frameworks on tori in this way. The same result is implicit in Malestein and Theran's analysis of periodic frameworks \cite{MR2995673}.
In contrast, infinitesimal rigidity for surfaces of genus $\geq 2$ has not been studied in this context. We note also prior work on extrinsic surface rigidity \cite{NOP1,NOP2} which also focussed on surfaces with non-trivial continuous isometry groups but differs from the intrinsic view taken in this paper.

The proof of \Cref{main_theorem} is an inductive proof based on the characterisation of forced-symmetric rigidity in the plane with respect to dihedral groups of order $2n$ for odd $n$ by Jordán, Kaszanitzky, and Tanigawa \cite{jzt2016}. In \cite{jzt2016}, an inductive characterisation is given for the $(2, 3, 1, 0)$-gain tight graphs, which still holds for the graphs symmetric with respect to a Fuchsian group. They show that each $(2, 3, 1, 0)$-gain tight graph can be built from a finite family of base graphs using Henneberg-type operations. Given this combinatorial characterisation, we need to show that the graphs from the finite family of base graphs are all $\Gamma$-isostatic, and that applying an operation to an isostatic graph yields an isostatic graph. We formulate most of the arguments for general Fuchsian groups so that the results could potentially be generalised in the future.

The paper is organised as follows. In \Cref{sec:symgraphs}, we introduce gain graphs and their combinatorial properties. In \Cref{sec:hypgeom}, we review the necessary background on hyperbolic geometry and Fuchsian groups. \Cref{sec:hypsymrig} defines the basic concepts of hyperbolic symmetric rigidity and develops the orbit rigidity matrix. In \Cref{sec:combchar} we prove the characterisation for cyclic Fuchsian groups, where we will already prove some of the combinatorial moves preserve isostaticity. In \Cref{sec: irr.}, we show that irreducible tight graphs are $\Gamma$-isostatic and in \Cref{sec:2ext} we prove that the remaining Henneberg moves, the $2$-extension and the loop $2$-extension, preserve isostaticity. The results in \Cref{sec: irr.,sec:2ext} will conclude the proof of our main result, \Cref{thm:main}. Finally, in \Cref{sec:riemann}, we interpret our results in the setting of compact Riemann surfaces.

\section{Symmetric graphs}\label{sec:symgraphs}
The set of all automorphisms of a graph $\tilde{G}$ forms a group under composition, known as the \textit{automorphism group} of $\tilde{G}$, and denoted $\text{Aut}(\tilde{G})$. Let $\Gamma$ be a group. We say $\tilde{G}$ is \textit{$\Gamma$-symmetric} if there is a homomorphism $\theta:\Gamma\rightarrow\text{Aut}(\tilde{G})$ with finitely many vertex orbits and edge orbits. We call $\theta$ the \textit{action} of $\Gamma$ on $\tilde{G}$. We will assume, throughout the paper, that the action $\theta$ is \textit{free} on $V(\tilde{G})$, i.e.~that $\theta(g)v\neq v$ for all non-identity $g\in\Gamma$ and all $v\in V(\tilde{G})$. When the action $\theta$ is clear from context, we drop it from the notation, i.e.~we write $g v$ instead of $\theta(g)v$ for $g\in\Gamma,v\in V(\tilde{G})$. 

\subsection{Gain graphs}
\label{sec: GG}

Gain graphs are labelled multigraphs introduced in \cite{Gross} and further developed in \cite{ZASLAVSKY198932}. They are often used as a technical tool to concisely represent symmetric graphs, and were first introduced in rigidity theory in \cite{jzt2016}. In this subsection, we define gain graphs and show how gain graphs correspond to symmetric graphs, since this construction will be used throughout. First, we define a gain graph.

Throughout the paper we will use id to denote the identity of a group. We will be consistent with this even when we are concretely analysing matrices in later sections.

\begin{definition}
\label{def:GG}
Let $\Gamma$ be a group. A \emph{$\Gamma$-gain graph} $(G, \psi)$ consists of a directed multigraph $G$ paired with a function $\psi: E(G)\rightarrow \Gamma$ such that the following are satisfied:
\begin{itemize}
    \item[(i)] for all loops $e\in E(G)$, $\psi(e)\neq\text{id};$
    \item[(ii)] for all parallel edges $e,f\in E(G)$ with the same orientation, $\psi(e)\neq\psi(f)$; and
    \item[(iii)] for all parallel edges $e,f\in E(G)$ with the opposite orientation, $\psi(e)\neq\psi(f)^{-1}$.
\end{itemize}
The elements $\psi(e)$ are called the \textit{gains} of the edges, and $\psi$ is called the \textit{gain map} of $(G,\psi)$. We will denote a directed edge $e=u \rightarrow v$, with $\psi(e)= g$ using the notation  $u\xrightarrow{g}v$. 
\end{definition}

Let $\tilde{G}$ be a $\Gamma$-symmetric graph with respect to a group action $\theta$ and $G$ be the $\theta(\Gamma)$-quotient of $\tilde{G}$. To obtain a $\Gamma$-gain graph $(G,\psi)$ from $G$, we fix an orientation on the edges of $G$, and for each $v=\theta(\Gamma)v^{\star}\in V(G)$, choose a vertex representative $v^{\star}\in V(\tilde{G})$. For each $e=u\rightarrow v\in E(G)$, there is a unique $g\in\Gamma$ such that $u^{\star}v^{\star}_{\theta(g)}\in E(\tilde{G})$, where $v^{\star}_{\theta(g)}:=\theta(g)v^{\star}$. We let $\psi(e)=g$.

In this process, each edge $e=u\xrightarrow{g}v\in E(G)$ can be substituted with $v\xrightarrow{g^{-1}}u\in E(G)$. The choice of vertex orbit representatives can also lead to different gains on the edges of the gain graph. Let $u\xrightarrow{g}v\in E(G)$, and suppose we had chosen different representatives $u'=\theta(g_1)u^{\star}$ and $v'=\theta(g_2)v^{\star}$ ($u^{\star}$ may be equal to $v^{\star}$). Then we see that 
\begin{align*}
u^{\star}v^{\star}_{\theta(g)}\in E(\tilde{G}) \quad 
\iff 
\quad
u^{\star}_{\theta(g_1)}v^{\star}_{\theta(g_1 gg_{2}^{-1}g_2)} \in E(\tilde{G})
\quad 
\iff 
\quad
 u'v'_{\theta(g_1gg_{2}^{-1})} \in E(\tilde{G}),
\end{align*} 
where $w_{\theta(h)}:=\theta(h)w$ for all $h\in\Gamma,w\in V(\tilde{G})$. The change of choice for vertex orbit representatives is captured by a gain graph operation known as switching.

\begin{definition}\label{def:gainswitch}
    Let $(G,\psi)$ be a $\Gamma$-gain graph, $v\in V(G)$ and $g\in\Gamma$. A \emph{switching} at $v$ with gain $g$ is an operation which generates a new gain $\psi'$ given by
\begin{equation*}
    \psi'(e)=\begin{cases}
        g\psi(e)g^{-1} & \text{if }e\text{ is a loop incident to }v\\
        g\psi(e) & \text{if }e\text{ is a non-loop edge directed from }v\\
        \psi(e)g^{-1} & \text{if }e\text{ is a non-loop edge directed to }v\\
        \psi(e) & \text{otherwise}
    \end{cases}
\end{equation*}
for all $e\in E(G)$. Two $\Gamma$-gain graphs $(G,\psi),(G,\psi')$ are \emph{equivalent} if one can be obtained from the other by a sequence of switchings. 
\end{definition}

Up to equivalence, and up to the orientation on the edges, this well-known process generates a unique $\Gamma$-gain graph for each $\Gamma$-symmetric graph. Conversely, for any gain graph $(G, \psi)$, we can define a graph $\tilde{G}$, called the \textit{$\Gamma$-lift} of $(G,\psi)$. It can be defined by letting
 \begin{align*}
V(\tilde{G}) &= (\Gamma\times V(G)) \mbox{ and}\\
E(\tilde{G}) &= \{(g,u)(gh,v)  ~|~  u\xrightarrow{h}v\in E(G),g\in\Gamma\}.
 \end{align*}
 The action on $\tilde{G}$ is defined by letting $g(h,u)=(gh,u)$ for all $g\in\Gamma,(h,u)\in V(\tilde{G})$. 

 \subsection{Sparsity conditions of gain graphs}
 \label{sec: sparsity}
Next we introduce combinatorial conditions on gain graphs in terms of sparsity counts. These will be used in later sections to describe infinitesimally rigid symmetric frameworks in the hyperbolic plane. We start with the following definition.

 \begin{definition}
Let $\Gamma$ be a group and let $(G,\psi)$ be a $\Gamma$-gain graph. Let $W=e_1,e_2,\dots,e_n$ be a walk in $G$, where $e_i$ has end-vertices $v_i,v_{i+1}\in V(G)$ for all $1\leq i\leq n$. We define the \emph{gain} of $W$ as 
\begin{equation}
 \psi(W)=\psi(e_1)^{\text{sign}(e_1)} \cdots \psi(e_n)^{\text{sign}(e_n)},
 \end{equation}
 where $\text{sign}(e_i)=1$ if $e_i=v_i\xrightarrow{\psi(e_i)} v_{i+1}$ and $\text{sign}(e_i)=-1$ if $e_i=v_{i+1}\xrightarrow{\psi(e_i)} v_i$. Suppose $G$ is connected. We define the \emph{gain of} $G$ with \textit{base vertex} $v\in V(G)$ and \textit{gain map} $\psi$ to be $$\langle G\rangle_{\psi,v}=\langle \psi(W) \in \Gamma~|~W \text{ is a closed walk in $G$ starting at }v \rangle.$$
 \end{definition}

 \begin{proposition}[{\cite[Corollary 1 of Theorem 2.5.1]{gross2001topological},\cite[Proposition 2.1]{jzt2016}}]
    \label{prop:<H>_u=<H>_v}
    For a group $\Gamma$, let $(G,\psi),(G,\psi')$ be two equivalent connected $\Gamma$-gain graphs. For all $v\in V(G)$, $\left<G\right>_{\psi,v}$ and $\left<G\right>_{\psi',v}$ are conjugate. For all $u,v\in V(G)$,  $\left<G\right>_{\psi,u}$ and $\left<G\right>_{\psi,v}$ are conjugate.
\end{proposition}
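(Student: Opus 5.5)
The plan is to prove both conjugacy statements directly from the definition of the gain of a walk, by tracking how the gain of a closed walk changes. Two facts make this work. First, walk gains are multiplicative under concatenation. Second, reversing a walk inverts its gain. Both are immediate from the definition, since traversing an edge against its orientation contributes $\psi(e)^{-1}$.

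For the second statement, fix $u,v\in V(G)$. Since $G$ is connected, we choose a walk $P$ from $u$ to $v$ and set $h=\psi(P)$. Any closed walk $W$ at $v$ gives a closed walk $P W P^{-1}$ at $u$, with gain $h\psi(W)h^{-1}$. Hence $h\langle G\rangle_{\psi,v}h^{-1}\subseteq \langle G\rangle_{\psi,u}$, because conjugation is a homomorphism and so carries generators to generators. Applying the same argument with the reverse walk $P^{-1}$ gives $h^{-1}\langle G\rangle_{\psi,u}h\subseteq\langle G\rangle_{\psi,v}$. Together these give equality after conjugation.

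For the first statement, since equivalence is generated by single switchings and conjugacy is transitive, it suffices to treat one switching at a vertex $w$ with gain $g$. We then check the effect on the gain of an arbitrary walk by going edge by edge through the cases of \Cref{def:gainswitch}.

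\begin{itemize}
\item An edge leaving $w$, traversed forward, picks up a factor $g$ on the left.
\item An edge entering $w$, traversed forward, picks up $g^{-1}$ on the right.
\item A reversed traversal inverts the new gain, so the factor $g$ or $g^{-1}$ appears on the opposite side.
\item A loop at $w$ is conjugated by $g$.
\end{itemize}

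In every case the new contribution of an edge traversed from $x$ to $y$ equals $\epsilon_x\,(\text{old contribution})\,\epsilon_y^{-1}$, where $\epsilon_x=g$ if $x=w$ and $\epsilon_x=\mathrm{id}$ otherwise. For example, $g\psi(e)$ for an edge leaving $w$ forward is $\epsilon_w\psi(e)\epsilon_y^{-1}$ with $y\neq w$. The loop case is consistent, since there $x=y=w$.

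Multiplying along a walk, the intermediate factors telescope. So for a walk from $x$ to $y$ we get $\psi'(W)=\epsilon_x\psi(W)\epsilon_y^{-1}$. For a closed walk at $v$ this reads $\psi'(W)=\epsilon_v\psi(W)\epsilon_v^{-1}$. Hence $\langle G\rangle_{\psi',v}=\epsilon_v\langle G\rangle_{\psi,v}\epsilon_v^{-1}$, which is either equal to $\langle G\rangle_{\psi,v}$ or conjugate to it by $g$.

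The only delicate point is this local case check. Orientation signs and loops must all be consistent with the single rule $\epsilon_x(\cdot)\epsilon_y^{-1}$. Once that rule is verified, the telescoping is routine. I would also note that a loop traversed backwards gives $g\psi(e)^{-1}g^{-1}$, which again fits the rule.
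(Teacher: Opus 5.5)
Your proof is correct, including the edge-by-edge switching check. The paper gives no proof of this proposition and simply cites Gross--Tucker and Jordán--Kaszanitzky--Tanigawa. Your two arguments are the standard ones behind those citations: conjugate by the gain $h=\psi(P)$ of a connecting walk, and for a single switching use the telescoping identity $\psi'(W)=\epsilon_x\psi(W)\epsilon_y^{-1}$.

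One small precision is worth adding. The paper's sign convention is ambiguous for loops, since a loop is directed both ``from $v_i$ to $v_{i+1}$'' and ``from $v_{i+1}$ to $v_i$''. So either state that a walk records the direction in which each loop is traversed, or take $P$ to be a path. Either choice makes $\psi(P^{-1})=\psi(P)^{-1}$ unambiguous. Your switching computation already covers both traversal directions of a loop, so nothing else needs changing.
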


Let $\Gamma$ be a group and $(G,\psi)$ be a connected $\Gamma$-gain graph. \Cref{prop:<H>_u=<H>_v} implies that $\left<G\right>_{\psi,v}$ is cyclic (respectively, the trivial group) for some $v\in V(G)$ if and only if it is cyclic (respectively, the trivial group) for all $v\in V(G)$. Similarly, given $v\in V(G)$, $\left<G\right>_{\psi,v}$ is cyclic (respectively, the trivial group) if and only if $\left<G\right>_{\psi',v}$ is cyclic (respectively, the trivial group) for all $(G,\psi')$ equivalent to $(G,\psi)$. Therefore, we define the following.

\begin{definition}
    Let $\Gamma$ be a group, $(G,\psi)$ be a connected $\Gamma$-gain graph, and $v\in V(G)$. We say $(G,\psi)$ is \textit{cyclic} if $\left<G\right>_{\psi,v}$ is cyclic. We say $(G,\psi)$ is \textit{balanced} if $\left<G\right>_{\psi,v}=\{\text{id}\}$. We say a connected subgraph $H$ of $G$ is \textit{cyclic} (respectively, \textit{balanced}) if $(H,\psi|_{E(H)})$ is cyclic (respectively, balanced).
\end{definition}

The following two results show that one can choose the gains in a way that is often convenient for computations.

\begin{lemma}[{\cite[Lemma 5.3]{ZASLAVSKY198932}}]
\label{lem:treegain}
    Let $\Gamma$ be a group and $(G,\psi)$ be a connected $\Gamma$-gain graph. Let $T\subseteq E(G)$ be a spanning tree of $G$. Then there is always a gain map $\psi'$ equivalent to $\psi$ such that $\psi'(e)=\text{id}$ for all $e\in E(T)$. 
\end{lemma}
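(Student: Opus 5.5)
We construct the switching explicitly, rooting the tree $T$ at a vertex $r\in V(G)$. For each $v\in V(G)$ let $P_v$ be the unique path in $T$ from $r$ to $v$, traversed as a walk. Define $g_v:=\psi(P_v)\in\Gamma$, with $g_r=\text{id}$. We perform, at each vertex $v$, a switching with gain $g_v$ (in the sense of Definition~\ref{def:gainswitch}). Switchings at distinct vertices interact only on edges joining them, so we first compute their combined effect. A non-loop edge $e=u\xrightarrow{h}w$ is affected only by the switchings at $u$ and at $w$. Applying both gives
\[
\psi'(e)=g_u\,h\,g_w^{-1},
\]
whichever order they are applied in, since one multiplies on the left and the other on the right. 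A loop $e$ at $v$ becomes $\psi'(e)=g_v\psi(e)g_v^{-1}$. One sign convention needs checking here. The definition switches $\psi(e)\mapsto g\psi(e)$ for edges leaving $v$, which matches replacing the representative $v^\star$ by $g^{-1}$-translates. So we may need to switch with $g_v^{-1}$ rather than $g_v$, and the precise choice is whichever makes the tree computation below come out to $\text{id}$.

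Now take a tree edge $e\in E(T)$ with endpoints $u,w$, and suppose $u$ is the parent, so that $P_w=P_u,e$. If $e=u\xrightarrow{h}w$, then $\psi(P_w)=\psi(P_u)h$, so $g_w=g_uh$. With switching gains $g_v^{-1}$ the new gain is
\[
g_u^{-1}\,h\,g_w=g_u^{-1}h\,g_uh\cdots,
\]
which is not $\text{id}$. So the right convention is to switch at $v$ by $g_v$ with the composite formula $\psi'(e)=g_v^{\epsilon}\cdots$, chosen so that $\psi'(e)=g_u h g_w^{-1}$ once the signs are aligned with $g_w=g_uh$. Then
\[
\psi'(e)=g_u h (g_u h)^{-1}=\text{id}.
\]
If instead $e=w\xrightarrow{h}u$, then $g_w=g_uh^{-1}$, and the new gain is
\[
g_w h g_u^{-1}=g_u h^{-1} h g_u^{-1}=\text{id}.
\]
Thus, with the convention $\psi'(u\xrightarrow{h}w)=g_u h g_w^{-1}$, every tree edge receives gain $\text{id}$. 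The convention can be realised by switching at $v$ with gain $g_v$, or its inverse, according to the definition's orientation.

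Two further points need checking. First, the result $\psi'$ must still be a $\Gamma$-gain graph. Conditions (i)–(iii) are preserved because switching acts by conjugation on loops, and on parallel edges it multiplies by the same elements on each side. Second, $\psi'$ is equivalent to $\psi$ by definition, being obtained by a sequence of switchings.

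The only real obstacle is bookkeeping of the left/right and inverse conventions in Definition~\ref{def:gainswitch}, so that $g_v$ is defined compatibly with the walk-gain formula. An alternative that avoids composite formulas is induction on $|V(G)|$: remove a leaf $w$ of $T$, with tree edge $e$ joining it to $u$. Normalise the rest of the tree by induction, noting that switchings at vertices other than $w$ leave the gain of $e$ in some form $g\psi(e)$. Then a single switching at $w$, by $\psi(e)$ or its inverse, sets $e$ to $\text{id}$ and touches no other tree edge.
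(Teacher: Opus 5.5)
Your argument is correct and is the standard one behind the cited result; the paper gives no proof of its own, only the reference to Zaslavsky. Switching at every $v$ by the gain $g_v$ of the tree path from the root gives $\psi'(u\xrightarrow{h}w)=g_uhg_w^{-1}$, which equals $\text{id}$ on every tree edge whichever way it is oriented. You should cut the sign-convention detour and the garbled display with $g_v^{-1}$. Your first paragraph already shows that Definition~\ref{def:gainswitch} with gain $g_v$ (not its inverse) produces exactly $g_uhg_w^{-1}$. Likewise, in the leaf-induction variant, the final switch at $w$ should use the current gain of $e$ after normalising $G-w$, not the original $\psi(e)$.
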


\begin{proposition}[{\cite[Proposition 2.3]{jzt2016}}]
\label{prop:balgain}
    Let $\Gamma$ be a group and $(G,\psi)$ be a connected $\Gamma$-gain graph. Let $T\subseteq E(G)$ be a spanning tree of $G$, and assume that $\psi(e)=\text{id}$ for all $e\in E(T)$. Then, for all $v\in V(G)$, $\left<G\right>_{\psi,v}=\left<\psi(e):e\in E(G-T)\right>$.
\end{proposition}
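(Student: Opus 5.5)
We must show: if $\psi(e)=\text{id}$ for all $e\in E(T)$, then $\langle G\rangle_{\psi,v}=\langle \psi(e):e\in E(G-T)\rangle$ for every $v\in V(G)$. The plan is to prove the two inclusions separately, using the fact that all tree edges carry trivial gain.

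\textbf{Inclusion $\subseteq$.} Let $W=e_1,\dots,e_n$ be any closed walk based at $v$. By definition,
\[
\psi(W)=\psi(e_1)^{\text{sign}(e_1)}\cdots\psi(e_n)^{\text{sign}(e_n)}.
\]
Every factor coming from an edge of $T$ equals $\text{id}$. The remaining factors are $\psi(e)^{\pm1}$ with $e\in E(G-T)$. Hence $\psi(W)$ is a product of generators of $H:=\langle \psi(e):e\in E(G-T)\rangle$ and their inverses, so $\psi(W)\in H$. Since $\langle G\rangle_{\psi,v}$ is generated by such $\psi(W)$, we get $\langle G\rangle_{\psi,v}\subseteq H$.

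\textbf{Inclusion $\supseteq$.} It suffices to realise each generator $\psi(e)$, for $e\in E(G-T)$, as the gain of a closed walk at $v$. Write $e=x\xrightarrow{g}y$, where possibly $x=y$ if $e$ is a loop. Let $P_x$ be the unique path in $T$ from $v$ to $x$, and $P_y$ the unique path in $T$ from $y$ to $v$. Consider the closed walk
\[
W=P_x\,e\,P_y .
\]
Both $P_x$ and $P_y$ use only tree edges, so their gains are $\text{id}$, and $e$ is traversed in its forward direction. Therefore $\psi(W)=\text{id}\cdot g\cdot \text{id}=g=\psi(e)$. Thus every generator of $H$ lies in $\langle G\rangle_{\psi,v}$, and since the latter is a group, $H\subseteq\langle G\rangle_{\psi,v}$.

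There is no substantial obstacle. The only points needing care are the sign conventions in the walk gain, which we handle by traversing $e$ forwards, and the loop case $x=y$, which the same construction covers. Note also that the argument never uses a particular choice of $v$, so the identity holds for every base vertex; this is consistent with \Cref{prop:<H>_u=<H>_v}, since here the conjugating element is trivial.
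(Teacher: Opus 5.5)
Your proof is correct. The $\subseteq$ inclusion holds because tree edges contribute only trivial factors to a walk gain. The $\supseteq$ inclusion holds because the closed walk through $T$ and a single non-tree edge $e$ has gain exactly $\psi(e)$, and this also covers loops and the case $v=x$.

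The paper does not prove this statement itself; it simply cites \cite{jzt2016}. Your two-inclusion argument is the standard fundamental-cycle argument for this result.
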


Let $\Gamma$ be a group and $(G,\psi)$ be a $\Gamma$-graph graph with non-empty edge set. For $\ell\in\{0,1,3\}$ we say $G$ (equivalently, $(G,\psi)$) is \textit{$(2,\ell)$-sparse} if, for all subgraphs $H\subseteq G$ with $E(H)\neq\emptyset$, $|E(H)|\leq2|V(H)|-\ell$.
If $G$ is $(2,\ell)$-sparse and $|E(G)|=2|V(G)|-\ell$, then we say that $G$ is \emph{$(2,\ell)$-tight}.
There are two families of graphs we are interested in throughout:

\begin{definition}
    We define a $\Gamma$-gain graph $(G,\psi)$ to be \emph{$(2,3,1)$-gain sparse} if it is $(2,1)$-sparse and all balanced subgraphs of $G$ with non-empty edge set are $(2,3)$-sparse.
    We say $(G,\psi)$ is \emph{$(2,3,1)$-gain tight} if it is \emph{$(2,3,1)$-gain sparse} and, in addition, $|E(G)|=2|V(G)|-1$.
\end{definition}

\begin{definition}
    We define a $\Gamma$-gain graph $(G,\psi)$ to be \emph{$(2,3,1,0)$-gain sparse} if it is $(2,0)$-sparse, all cyclic subgraphs of $G$ with non-empty edge set are $(2,1)$-sparse, and all balanced subgraphs of $G$ with non-empty edge set are $(2,3)$-sparse. 
    We say $(G,\psi)$ is \emph{$(2,3,1,0)$-gain tight} if it is \emph{$(2,3,1,0)$-gain sparse} and, in addition,  $|E(G)|=2|V(G)|$.
\end{definition}

 \subsection{Extension and reduction operations}\label{sec:ext}
 Now we introduce extension and reduction operations which we will use throughout.
\begin{definition}
\label{def:0ext}
Let $\Gamma$ be a group. A $\Gamma$-gain graph $(G',\psi')$ is obtained from another $\Gamma$-gain graph $(G,\psi)$ by a \emph{0-extension} if it is obtained by adding a vertex with two incident edges. The new edges are labelled such that, if they are parallel, they satisfy conditions (ii) and (iii) in \Cref{def:GG}. The inverse operation is called a \emph{$0$-reduction} (see \Cref{fig:0ext}).
\end{definition}

\begin{figure}[ht]
    \centering
    \begin{subfigure}[scale=0.5]{0.47\textwidth}
        \begin{tikzpicture}
  [scale=.7,auto=left]
  \node[draw, circle, scale=0.7](u) at (2.75,0){};
  \draw (2.75,0) circle (1.5cm);

  \draw[->, very thick] (4.5,0.25) -- (6,0.25);
  \node[above] at (5.25, 0.25) {\emph{Ext.}};
  \draw[->, very thick] (6,-0.25) -- (4.5,-0.25);
  \node[below] at (5.25, -0.25) {\emph{Red.}};

  \node[draw, circle, scale=0.7](u) at (7.75,0){};
  \node[draw, circle, scale=0.7](v) at (7.75,2){};  
  \draw (7.75,0) circle (1.5cm);
  \draw[->] (v) .. controls (7.5,1.115) .. (u);
  \draw[->] (v) .. controls (8,1.115) .. (u);
    \node[left] at (7.55,0.85) {$g$};
    \node[right] at (7.9,0.85) {$h$};
\end{tikzpicture}
    \end{subfigure}
    \begin{subfigure}[scale=0.5]{0.45\textwidth}
        \begin{tikzpicture}
  [scale=.7,auto=left]
  \draw (2.75,0) circle (1.5cm);
  \node[draw, circle, scale=0.7](u1) at (2,0){};
  \node[draw, circle, scale=0.7](u2) at (3.5,0){};

  \draw[->, very thick] (4.5,0.25) -- (6,0.25);
  \node[above] at (5.25, 0.25) {\emph{Ext.}};
  \draw[->, very thick] (6,-0.25) -- (4.5,-0.25);
  \node[below] at (5.25, -0.25) {\emph{Red.}};

  \node[draw, circle, scale=0.7](u1) at (7,0){};
  \node[draw, circle, scale=0.7](u2) at (8.5,0){};
  \node[draw, circle, scale=0.7](v) at (7.75,2){};
  \draw (7.75,0) circle (1.5cm);
  \draw[->] (v) -- (u1);
  \draw[->] (v) -- (u2);
\end{tikzpicture}
    \end{subfigure}
    \caption{Examples of a 0-extension. Here, $g$ and $h$ cannot coincide.} 
    \label{fig:0ext}
\end{figure}

\begin{definition}
\label{def:loop1ext}
Let $\Gamma$ be a group. A $\Gamma$-gain graph $(G',\psi')$ is obtained from another $\Gamma$-gain graph $(G,\psi)$ by a \emph{loop-1-extension} if it is obtained by adding a looped vertex of degree 3\footnote{Throughout the paper, we assume that loops add two to the degrees of vertices.}. The new edges are labelled freely, provided the loop has non-identity gain. The inverse operation is called a \emph{loop-1-reduction} (see \Cref{fig:loop1ext}).
\end{definition}

\begin{figure}[ht]
    \centering
    \begin{tikzpicture}
  [scale=.7,auto=left]
  \draw (2.75,0) circle (1.5cm);
  \draw(2.75,0) circle (0.15cm);

  \draw[->, very thick] (4.5,0.25) -- (6,0.25);
  \node[above] at (5.25, 0.25) {\emph{Ext.}};
  \draw[->, very thick] (6,-0.25) -- (4.5,-0.25);
  \node[below] at (5.25, -0.25) {\emph{Red.}};

  \draw(7.75,0) circle (0.15cm);  
  \draw (7.75,0) circle (1.5cm);
  \draw (7.75,2) circle (0.15cm);
  \draw[->] (7.75, 1.85) -- (7.75, 0.15);
  \draw[->] (7.9,2) .. controls (8.3,2.5) and (7.2,2.5) .. (7.6,2); 
  \node[left] at (7.5,2.2) {$g$};
\end{tikzpicture}
    \caption{Example of a loop-1-extension. Here, $g$ cannot be $\text{id}$.}
    \label{fig:loop1ext}
\end{figure}

\begin{definition}
\label{def:1ext}
Let $\Gamma$ be a group. A $\Gamma$-gain graph $(G',\psi')$ is obtained from another $\Gamma$-gain graph $(G,\psi)$ by a \emph{1-extension} if it is obtained by choosing a vertex $u\in V(G)$ and an edge $e=v_1\rightarrow v_2\in E(G)$, removing $e$ and adding a new vertex $v$ to $V(G),$ together with three edges $e_1=v\rightarrow v_1,e_2=v\rightarrow v_2,e_3=v\rightarrow u$. The edges $e_1,e_2$ are labelled such that $\psi'(e_1)^{-1}\psi'(e_2)=\psi(e)$ and every two-cycle $e_ie_j^{-1},$ if it exists, is unbalanced. The inverse operation is called a \emph{$1$-reduction} (see \Cref{fig:1ext}).
\end{definition}

\begin{figure}[ht]
    \centering
    \begin{subfigure}[scale=0.5]{0.47\textwidth}
        \begin{tikzpicture}
  [scale=.7,auto=left]
  \node[draw, circle, scale=0.7] (u1) at (2,0){};
  \node[draw, circle, scale=0.7] (u2) at (3.5,0){};
  \node[draw, circle, scale=0.7] (u3) at (2.75,-0.5){};
  \draw (2.75,0) circle (1.5cm);
  \draw[->] (u1) -- (u2);
  \node[above] at (2.75, 0.01){$g$};

  \draw[->, very thick] (4.5,0.25) -- (6,0.25);
  \node[above] at (5.25, 0.25) {\emph{Ext.}};
  \draw[->, very thick] (6,-0.25) -- (4.5,-0.25);
  \node[below] at (5.25, -0.25) {\emph{Red.}};

  \node[draw, circle, scale=0.7] (u1) at (7,0){};
  \node[draw, circle, scale=0.7] (u2) at (8.5,0){};
  \node[draw, circle, scale=0.7] (u3) at (7.75,-0.5){};
  \node[draw, circle, scale=0.7] (v) at (7.75,2){};
  \draw (7.75,0) circle (1.5cm);
  \draw[->] (v) -- (u1);
  \draw[->] (v) -- (u2);
  \draw[->] (v) -- (u3);
  \node[left] at (7.45,0.9) {$g_1$};
  \node[right] at (8.1,0.9) {$g_2$};
\end{tikzpicture}
    \end{subfigure}
    \begin{subfigure}[scale=0.5]{0.47\textwidth}
        \begin{tikzpicture}
  [scale=.7,auto=left]
  \node[draw, circle, scale=0.7] (u1) at (2,0){};
  \node[draw, circle, scale=0.7] (u2) at (3.5,0){};
  \draw (2.75,0) circle (1.5cm);
  \draw[->] (u1) .. controls (2.55,0.5) and (1.45,0.5) .. (u1);
  \node[above] at (2.05, 0.4){$g$};

  \draw[->, very thick] (4.5,0.25) -- (6,0.25);
  \node[above] at (5.25, 0.25) {\emph{Ext.}};
  \draw[->, very thick] (6,-0.25) -- (4.5,-0.25);
  \node[below] at (5.25, -0.25) {\emph{Red.}};

  \node[draw, circle, scale=0.7] (u1) at (7,0){};
  \node[draw, circle, scale=0.7] (u2) at (8.5,0){};
  \node[draw, circle, scale=0.7] (u3) at (7.75,2){};  
  \draw (7.75,0) circle (1.5cm);
  \draw[->] (v) -- (u2);
  \draw[->] (v) .. controls (7.85,0.8) .. (u1);
  \draw[->] (v) .. controls (7,1.2) .. (u1);
  \node[left] at (7.18,0.65) {$g_1$};
  \node[left] at (7.8,0.9) {$g_2$};
\end{tikzpicture}
    \end{subfigure}
    \begin{subfigure}[scale=0.5]{0.47\textwidth}
        \begin{tikzpicture}
  [scale=.7,auto=left]
  \node[draw, circle, scale=0.7] (u) at (2.75,-0.5){};
  \draw (2.75,0) circle (1.5cm);
  \draw[->] (u) .. controls (3.3,0) and (2.2,0) .. (u);
  \node[above] at (2.75, -0.1){$g$};

  \draw[->, very thick] (4.5,0.25) -- (6,0.25);
  \node[above] at (5.25, 0.25) {\emph{Ext.}};
  \draw[->, very thick] (6,-0.25) -- (4.5,-0.25);
  \node[below] at (5.25, -0.25) {\emph{Red.}};
  
  \node[draw, circle, scale=0.7] (u) at (7.75,-0.5){};
  \node[draw, circle, scale=0.7] (v) at (7.75,2){};
  \draw (7.75,0) circle (1.5cm);
  \draw[->] (v) -- (u);
  \draw[->] (v) .. controls (8.2,0.8) .. (u);
  \draw[->] (v) .. controls (7.3,0.8) .. (u);
  \node[left] at (7.4,0.7) {$g_1$};
  \node[right] at (8.1,0.7) {$g_2$};
\end{tikzpicture}
    \end{subfigure}
    \caption{Examples of 1-extensions. Here, $g=g_1^{-1}g_2$.}
    \label{fig:1ext}
\end{figure}

\begin{definition}
    \label{def:loop2ext}
    Let $\Gamma$ be a group. A $\Gamma$-gain graph $(G',\psi')$ is obtained from another $\Gamma$-gain graph $(G,\psi)$ by a \emph{loop-2-extension} if it is obtained by choosing an edge $e=v_1\rightarrow v_2\in E(G)$, removing $e$ and adding a looped vertex $v$ to $V(G)$, together with the edges $e_1=v\rightarrow v_1,e_2=v\rightarrow v_2$. The loop is labelled with any non-identity gain, and $e_1,e_2$ are labelled in a way such that $\psi'(e_1)^{-1}\psi'(e_2)=\psi(e)$. The inverse operation is called a \textit{loop-2-reduction} (see \Cref{fig:loop2ext}).
\end{definition}

\begin{figure}[ht]
    \centering
    \begin{subfigure}[scale=0.5]{0.47\textwidth}
        \begin{tikzpicture}
  [scale=.7,auto=left]
  \draw (2.75,0) circle (1.5cm);
  \node[draw, circle, scale=0.7] (u) at (2.75,0){};
  \draw[->] (u) .. controls (3.3,0.5) and (2.2,0.5) .. (u);
  \node[above] at (2.75,0.4){$g$};

  \draw[->, very thick] (4.5,0.25) -- (6,0.25);
  \node[above] at (5.25, 0.25) {\emph{Ext.}};
  \draw[->, very thick] (6,-0.25) -- (4.5,-0.25);
  \node[below] at (5.25, -0.25) {\emph{Red.}};

  \node[draw, circle, scale=0.7] (u) at (7.75,0){};
  \node[draw, circle, scale=0.7] (v) at (7.75,2){}; 
  \draw (7.75,0) circle (1.5cm);
  \draw[->] (v) .. controls (7.5,1.115) .. (u);
  \draw[->] (v) .. controls (8,1.115) .. (u);
  \draw[->] (v) .. controls (8.3,2.5) and (7.2,2.5) .. (v);
    \node[left] at (7.55,0.85) {$g_1$};
    \node[right] at (7.9,0.85) {$g_2$};
\end{tikzpicture}
    \end{subfigure}
    \begin{subfigure}[scale=0.5]{0.45\textwidth}
        \begin{tikzpicture}
  [scale=.7,auto=left]
  \node[draw, circle, scale=0.7] (u1) at (2,0){};
  \node[draw, circle, scale=0.7] (u2) at (3.5,0){};
  \draw (2.75,0) circle (1.5cm);
  \draw[->] (u1) -- (u2);
  \node[above] at (2.75,0){$g$};

  \draw[->, very thick] (4.5,0.25) -- (6,0.25);
  \node[above] at (5.25, 0.25) {\emph{Ext.}};
  \draw[->, very thick] (6,-0.25) -- (4.5,-0.25);
  \node[below] at (5.25, -0.25) {\emph{Red.}};

  \node[draw, circle, scale=0.7] (u1) at (7,0){};
  \node[draw, circle, scale=0.7] (u2) at (8.5,0){};
  \node[draw, circle, scale=0.7] (v) at (7.75,2){};
  \draw (7.75,0) circle (1.5cm);
  \draw[->] (v) .. controls (8.3,2.5) and (7.2,2.5) .. (v);
  \draw[->] (v) -- (u1);
  \draw[->] (v) -- (u2);
  \node[left] at (7.45,0.9) {$g_1$};
  \node[right] at (8.1,0.9) {$g_2$};
\end{tikzpicture}
    \end{subfigure}
    \caption{Examples of a loop-2-extension. Here, $g=g_1^{-1}g_2$. } 
    \label{fig:loop2ext}
\end{figure}

\begin{definition}
    \label{def:2ext}
    Let $\Gamma$ be a group. A $\Gamma$-gain graph $(G',\psi')$ is obtained from another $\Gamma$-gain graph $(G,\psi)$ by a \emph{2-extension} if it is obtained by choosing two edges $f_1=v_1\rightarrow v_2,f_2=v_3\rightarrow v_4\in E(G)$, removing $f_1,f_2$ and adding a vertex $v$ to $V(G)$, together with the edges $e_1=v\rightarrow v_1,e_2=v\rightarrow v_2,e_3=v\rightarrow v_3,e_4=v\rightarrow v_4$. The edges $e_1,e_2,e_3,e_4$ are labelled in a way such that $\psi'(e_1)^{-1}\psi'(e_2)=\psi(f_1)$, $\psi'(e_3)^{-1}\psi'(e_4)=\psi(f_2)$ and $G[\{v_1,v_2,v_3,v_4\}]$ is $(2,3,1,0)$-sparse. The inverse operation is called a \textit{2-reduction} (see \Cref{fig:2ext}).
\end{definition}

\begin{figure}[ht]
    \centering
    \begin{subfigure}[scale=0.5]{0.47\textwidth}
        \begin{tikzpicture}
  [scale=.7,auto=left]
  \node[draw, circle, scale=0.7] (u) at (2.75,0){};
  \draw (2.75,0) circle (1.5cm);
  \draw[->] (u) .. controls (3.3,0.5) and (2.2,0.5) .. (u);
  \draw[->] (u) .. controls (2.2,-0.5) and (3.3,-0.5) .. (u);
  \node[above] at (2.75,0.4) {$g$};
  \node[below] at (2.75,-0.4) {$h$};

  \draw[->, very thick] (4.5,0.25) -- (6,0.25);
  \node[above] at (5.25, 0.25) {\emph{Ext.}};
  \draw[->, very thick] (6,-0.25) -- (4.5,-0.25);
  \node[below] at (5.25, -0.25) {\emph{Red.}};

  \node[draw, circle, scale=0.7] (u) at (7.75,0){};
  \node[draw, circle, scale=0.7] (v) at (7.75,2){}; 
  \draw (7.75,0) circle (1.5cm);
  \draw[->] (v) .. controls (6.9,1.115) .. (u);
  \draw[->] (v) .. controls (8.6,1.115) .. (u);
  \draw[->] (v) .. controls (7.6,1.115) .. (u);
  \draw[->] (v) .. controls (7.9,1.115) .. (u);
  \node[left] at (7.3,0.8) {$g_1$};
  \node at (7.4,1) {$g_2$};
  \node at (8.1,1) {$h_1$};
  \node[right] at (8.25,0.8) {$h_2$};
\end{tikzpicture}
    \end{subfigure}
    \begin{subfigure}[scale=0.5]{0.47\textwidth}
        \begin{tikzpicture}
  [scale=.7,auto=left]
  \node[draw, circle, scale=0.7] (u1) at (2,0){};
  \node[draw, circle, scale=0.7] (u2) at (3.5,0){};
  \draw (2.75,0) circle (1.5cm);
  \draw[->] (u1) .. controls (2.55,0.5) and (1.45,0.5) .. (u1);
  \draw[->] (u1) -- (u2);
  \node[above] at (2,0.4) {$g$};
  \node[above] at (2.75,0.01) {$h$};

  \draw[->, very thick] (4.5,0.25) -- (6,0.25);
  \node[above] at (5.25, 0.25) {\emph{Ext.}};
  \draw[->, very thick] (6,-0.25) -- (4.5,-0.25);
  \node[below] at (5.25, -0.25) {\emph{Red.}};

  \node[draw, circle, scale=0.7] (u1) at (7,0){};
  \node[draw, circle, scale=0.7] (u2) at (8.5,0){};
  \node[draw, circle, scale=0.7] (v) at (7.75,2){};
  \draw (7.75,0) circle (1.5cm);
  \draw[->] (v) .. controls (7.25,1.2) .. (u1);
  \draw[->] (v) -- (u2);
  \draw[->] (v) .. controls (7.85,0.8) .. (u1);
  \draw[->] (v) .. controls (7,1.2) .. (u1);
  \node[left] at (7.2,0.75) {$g_1$};
  \node at (7.5,0.9) {$g_2$};
  \node at (7.8,0.4) {$h_1$};
  \node[right] at (8.15,0.9) {$h_2$};
\end{tikzpicture}
    \end{subfigure}
    \begin{subfigure}[scale=0.5]{0.47\textwidth}
        \begin{tikzpicture}
  [scale=.7,auto=left]
  \node[draw, circle, scale=0.7] (u1) at (2,0){};
  \node[draw, circle, scale=0.7] (u2) at (3.5,0){};
  \draw (2.75,0) circle (1.5cm);
  \draw[->] (u1) .. controls (2.55,0.5) and (1.45,0.5) .. (u1);
  \draw[->] (u2) .. controls (4.05,0.5) and (2.95,0.5) .. (u2);
  \node[above] at (2,0.4) {$g$};
  \node[above] at (3.5,0.4) {$h$};

  \draw[->, very thick] (4.5,0.25) -- (6,0.25);
  \node[above] at (5.25, 0.25) {\emph{Ext.}};
  \draw[->, very thick] (6,-0.25) -- (4.5,-0.25);
  \node[below] at (5.25, -0.25) {\emph{Red.}};

  \node[draw, circle, scale=0.7] (u1) at (7,0){};
  \node[draw, circle, scale=0.7] (u2) at (8.5,0){};
  \node[draw, circle, scale=0.7] (v) at (7.75,2){};
  \draw (7.75,0) circle (1.5cm);
  \draw[->] (v) .. controls (7.85,0.8) .. (u1);
  \draw[->] (v) .. controls (7,1.2) .. (u1);
  \draw[->] (v) .. controls (7.65,0.8) .. (u2);
  \draw[->] (v) .. controls (8.5,1.2) .. (u2);
  \node[left] at (7.2,0.65) {$g_1$};
  \node at (7.4,0.9) {$g_2$};
  \node at (8.1,0.9) {$h_1$};
  \node[right] at (8.4,0.65) {$h_2$};
\end{tikzpicture}
    \end{subfigure}
    \begin{subfigure}[scale=0.5]{0.47\textwidth}
        \begin{tikzpicture}
  [scale=.7,auto=left]
  \node[draw, circle, scale=0.7] (u1) at (2,0){};
  \node[draw, circle, scale=0.7] (u2) at (3.5,0){};
  \draw (2.75,0) circle (1.5cm);
  \draw[->] (u1) .. controls (2.5,0.4) and (3,0.4) .. (u2);
  \draw[->] (u1) .. controls (2.5,-0.4) and (3,-0.4) .. (u2);
  \node[above] at (2.75,0.4) {$g$};
  \node[below] at (2.75,-0.4) {$h$};

  \draw[->, very thick] (4.5,0.25) -- (6,0.25);
  \node[above] at (5.25, 0.25) {\emph{Ext.}};
  \draw[->, very thick] (6,-0.25) -- (4.5,-0.25);
  \node[below] at (5.25, -0.25) {\emph{Red.}};

  \node[draw, circle, scale=0.7] (u1) at (7,0){};
  \node[draw, circle, scale=0.7] (u2) at (8.5,0){};
  \node[draw, circle, scale=0.7] (v) at (7.75,2){};
  \draw(7,0) circle (0.15cm);  
  \draw (7.75,0) circle (1.5cm);
  \draw(8.5,0) circle (0.15cm);
  \draw (7.75, 2) circle (0.15cm);
  \draw[->] (v) .. controls (7.85,0.8) .. (u1);
  \draw[->] (v) .. controls (7,1.2) .. (u1);
  \draw[->] (v) .. controls (7.65,0.8) .. (u2);
  \draw[->] (v) .. controls (8.5,1.2) .. (u2);
  \node[left] at (7.2,0.65) {$g_1$};
  \node at (7.4,0.9) {$h_1$};
  \node at (8.1,0.9) {$g_2$};
  \node[right] at (8.4,0.65) {$h_2$};
\end{tikzpicture}
    \end{subfigure}
    \begin{subfigure}[scale=0.5]{0.47\textwidth}
        \begin{tikzpicture}
  [scale=.7,auto=left]
  \node[draw, circle, scale=0.7] (u1) at (2,0){};
  \node[draw, circle, scale=0.7] (u2) at (3.5,0){};
  \node[draw, circle, scale=0.7] (u3) at (2.75,-0.5){};
  \draw (2.75,0) circle (1.5cm);
    \draw[->] (u3) .. controls (3.3,-1) and (2.2,-1) .. (u3);
    \draw[->] (u1) -- (u2);
    \node[above] at (2.75,0.01){$g$};
    \node[below] at (2.75,-0.9){$h$};

  \draw[->, very thick] (4.5,0.25) -- (6,0.25);
  \node[above] at (5.25, 0.25) {\emph{Ext.}};
  \draw[->, very thick] (6,-0.25) -- (4.5,-0.25);
  \node[below] at (5.25, -0.25) {\emph{Red.}};

  \node[draw, circle, scale=0.7] (u1) at (7,0){};
  \node[draw, circle, scale=0.7] (u2) at (8.5,0){};
  \node[draw, circle, scale=0.7] (u3) at (7.75,-0.5){};
  \node[draw, circle, scale=0.7] (v) at (7.75,2){}; 
  \draw (7.75,0) circle (1.5cm);
    \draw[->] (v) .. controls (8.2,0.75) .. (u3);
  \draw[->] (v) .. controls (7.3,0.75) .. (u3);
  \draw[->] (v) -- (u1);
  \draw[->] (v) -- (u2);
  \node[left] at (7.3,0.65) {$g_1$};
  \node[right] at (7.32,1) {$h_1$};
  \node[left] at (8.18,0.5) {$h_2$};
  \node[right] at (8.2,0.65) {$g_2$};
\end{tikzpicture}
    \end{subfigure}
    \begin{subfigure}[scale=0.5]{0.47\textwidth}
        \begin{tikzpicture}
  [scale=.7,auto=left]
  \node[draw, circle, scale=0.7] (u1) at (2,0){};
  \node[draw, circle, scale=0.7] (u2) at (3.5,0){};
  \node[draw, circle, scale=0.7] (u3) at (2.75,-0.5){};
  \draw (2.75,0) circle (1.5cm);
  \draw[->](u1) -- (u3);
  \draw[->](u3) -- (u2);
  \node at (2.2,-0.5) {$g$};
  \node at (3.3,-0.5) {$h$};

  \draw[->, very thick] (4.5,0.25) -- (6,0.25);
  \node[above] at (5.25, 0.25) {\emph{Ext.}};
  \draw[->, very thick] (6,-0.25) -- (4.5,-0.25);
  \node[below] at (5.25, -0.25) {\emph{Red.}};

  \node[draw, circle, scale=0.7] (u1) at (7,0){};
  \node[draw, circle, scale=0.7] (u2) at (8.5,0){};
  \node[draw, circle, scale=0.7] (u3) at (7.75,-0.5){};
  \node[draw, circle, scale=0.7] (v) at (7.75,2){}; 
  \draw (7.75,0) circle (1.5cm);
    \draw[->] (v) .. controls (8.2,0.75) .. (u3);
  \draw[->] (v) .. controls (7.3,0.75) .. (u3);
  \draw[->] (v) -- (u1);
  \draw[->] (v) -- (u2);
  \node[left] at (7.3,0.65) {$g_1$};
  \node[right] at (7.32,1) {$g_2$};
  \node[left] at (8.18,0.5) {$h_1$};
  \node[right] at (8.2,0.65) {$g_2$};
\end{tikzpicture}
    \end{subfigure}
    \begin{subfigure}[scale=0.5]{0.47\textwidth}
        \begin{tikzpicture}
  [scale=.7,auto=left]
  \node[draw, circle, scale=0.7] (u1) at (1.8,0){};
  \node[draw, circle, scale=0.7] (u2) at (3.7,0){};
  \node[draw, circle, scale=0.7] (u3) at (2.3,-0.5){}; 
  \node[draw, circle, scale=0.7] (u4) at (3.2,-0.5){};
  \draw (2.75,0) circle (1.5cm);
  \draw[->] (u1) -- (u2);
  \draw[->] (u3) -- (u4);
  \node[above] at (2.75,0.01) {$g$};
  \node[below] at (2.75,-0.6) {$h$};

  \draw[->, very thick] (4.5,0.25) -- (6,0.25);
  \node[above] at (5.25, 0.25) {\emph{Ext.}};
  \draw[->, very thick] (6,-0.25) -- (4.5,-0.25);
  \node[below] at (5.25, -0.25) {\emph{Red.}};
  
  \node[draw, circle, scale=0.7] (u1) at (6.8,0){};
  \node[draw, circle, scale=0.7] (u2) at (8.7,0){};
  \node[draw, circle, scale=0.7] (u3) at (7.3,-0.5){}; 
  \node[draw, circle, scale=0.7] (u4) at (8.2,-0.5){};
  \node[draw, circle, scale=0.7] (v) at (7.75,2){};
  \draw (7.75,0) circle (1.5cm);
  \draw[->] (v) -- (u1);
  \draw[->] (v) -- (u2);
  \draw[->] (v) -- (u3);
  \draw[->] (v) -- (u4);
  \node[left] at (7.2,0.65) {$g_1$};
  \node[right] at (7.35,0.7) {$h_1$};
  \node[left] at (8.2,0.2) {$h_2$};
  \node[right] at (8.3,0.65) {$g_2$};
\end{tikzpicture}
    \end{subfigure}
    \caption{Examples of 2-extensions. Here, $g=g_1^{-1}g_2$, $h=h_1^{-1}h_2$ and $G[\{v_1,v_2,v_3,v_4\}]$ is $(2,3,1,0)$-sparse.} 
    \label{fig:2ext}
\end{figure}

Note that, in general, 1-reductions, loop-2-reductions and 2-reductions do not preserve sparsity counts: when applying these operations to a $(2,3,1)$-gain tight or $(2,3,1,0)$-gain tight graph, we add at least one edge (two in the case of 2-reductions). The added edge may lead the resulting graph to break the sparsity conditions. Given a $(2,3,1)$-gain tight (respectively, $(2,3,1,0)$-gain tight) graph $(G,\psi)$, we say a reduction is \textit{admissible} if the resulting gain graph is also $(2,3,1)$-gain tight (respectively, $(2,3,1,0)$-gain tight). If there is an admissible reduction at a vertex $v\in V(G)$, then we say that $v$ \textit{admits a reduction}, and that $(G,\psi)$ \textit{admits a reduction}.

\subsection{Constructive characterisation of gain tight gain graphs}\label{subsec:basegraphs}
We conclude the section by presenting a constructive characterisation of $(2,3,1)$-gain tight and $(2,3,1,0)$-gain tight graphs. The results given in this subsection were shown in \cite[Theorem 4.4 and Theorem 7.9]{jzt2016} for the case where $\Gamma$ is a dihedral group. The same proofs extend to the case where $\Gamma$ is a Fuchsian group (see \Cref{appA} for details). We start with presenting the constructive characterisation of $(2,3,1)$-gain tight graphs from a single vertex with an \emph{unbalanced loop}; i.e.~a loop with non-trivial gain.

\begin{theorem}
\label{thm: recur. con. easy}
    Let $\Gamma$ be a cyclic Fuchsian group and $(G,\psi)$ be a $\Gamma$-gain graph. Then, $(G,\psi)$ is $(2,3,1)$-gain tight if and only if it can be built up from a single vertex with an unbalanced loop by applying a series of $0$-extensions, $1$-extensions and loop-$1$-extensions. 
\end{theorem}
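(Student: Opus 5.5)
The plan follows the proof of \cite[Theorem 4.4]{jzt2016}, separating the easy direction (extensions preserve tightness) from the hard one (every tight graph other than the base graph admits an admissible reduction), and then inducting on $|V(G)|$. Throughout we use that $\Gamma$ is cyclic, hence abelian. The consequence that does the work is that every connected subgraph is automatically cyclic, so the only sparsity conditions to check are $(2,1)$-sparsity of everything and $(2,3)$-sparsity of balanced subgraphs.

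\emph{Sufficiency.} A single vertex with an unbalanced loop has $1=2\cdot1-1$ edges, and its only subgraph with an edge is unbalanced, so it is $(2,3,1)$-gain tight. We then check that each extension preserves tightness.

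A $0$-extension adds one vertex and two edges. A subgraph $H'$ containing the new vertex $v$ restricts to $H=H'-v$ with $|E(H')|\le |E(H)|+2$, so the $(2,1)$ count is preserved. If $H'$ is balanced then so is $H$, which gives the $(2,3)$ count. The degenerate case where $H$ has no edges is handled by condition (ii)/(iii): two parallel new edges with distinct gains form an unbalanced $2$-cycle, so that $H'$ is not balanced.

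A loop-$1$-extension behaves the same way. Any subgraph using the new loop is unbalanced, and it gains at most $3$ edges and $1$ vertex.

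For a $1$-extension, take $H'\ni v$ and set $H=H'-v$, adding $e$ back whenever $e_1,e_2\in H'$.
\begin{itemize}
\item If $H'$ contains all three new edges, it has $|E(H')|\le|E(H)|+2$ relative to $H+e$.
\item If $H'$ contains only two new edges, the count goes through directly.
\end{itemize}
Balance is preserved because $\psi'(e_1)^{-1}\psi'(e_2)=\psi(e)$: a cycle through $e_1,e_2$ has the same gain as the corresponding cycle through $e$. The hypothesis that the $2$-cycles $e_ie_j^{-1}$ are unbalanced covers the small cases in which $H$ is a single vertex.

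\emph{Necessity.} Let $(G,\psi)$ be $(2,3,1)$-gain tight with $|V(G)|\ge 2$. Since $|E|=2|V|-1$, the average degree is below $4$, so some vertex has degree $2$ or $3$ (loops count twice).
\begin{itemize}
\item A vertex $v$ of degree $2$ can be $0$-reduced. Deleting a vertex preserves sparsity, and the count $2|V|-1$ is preserved.
\item A vertex of degree $3$ carrying a loop can be loop-$1$-reduced in the same way.
\end{itemize}

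The main obstacle is a degree-$3$ vertex $v$ without a loop. Its neighbours may have multiplicity, so the local configurations are as in \Cref{fig:1ext}. We must show that some pair $e_i,e_j$ at $v$ gives an admissible $1$-reduction, i.e.\ that $G-v+e$ is $(2,3,1)$-gain tight, where $e$ is the edge with gain $\psi(e_i)^{-1}\psi(e_j)$. By \Cref{lem:treegain} we may switch so that the edges at $v$ have gains in a convenient normal form.

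A reduction fails for one of three reasons:
\begin{itemize}
\item the new edge $e$ is a loop of identity gain, or is parallel to an existing edge with the same gain;
\item there is a $(2,1)$-tight subgraph of $G-v$ containing both endpoints;
\item there is a balanced $(2,3)$-tight subgraph containing both endpoints on which the gain of $e$ keeps it balanced.
\end{itemize}
I would take such blockers $X_{12},X_{13},X_{23}$ for the three pairs and derive a contradiction using submodularity of the count function $f(H)=2|V(H)|-|E(H)|$ on unions and intersections. This needs the standard lemmas that the union of two balanced $(2,3)$-tight subgraphs sharing at least two vertices is $(2,3)$-tight, and, if it is balanced, that the union of a tight and a balanced-tight subgraph is $(2,1)$-tight.

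Here cyclicity of $\Gamma$ enters. For abelian $\Gamma$, the union of two balanced subgraphs whose intersection is connected is balanced. When it is not balanced, its gain group is generated by a single cycle, and this controls the remaining cases. Adding $v$ back to the union of blockers then violates $(2,1)$-sparsity, or violates $(2,3)$-sparsity of a balanced subgraph; this mirrors \cite[Lemmas 4.2, 4.3]{jzt2016}, with the dihedral-specific arguments replaced by the abelian ones.

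Once some reduction is shown to be admissible, induction on $|V(G)|$ completes the proof.
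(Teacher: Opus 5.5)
Your route is the paper's. The paper simply imports the proof of \cite[Theorem 4.4]{jzt2016}, which its appendix (Theorem A.1) records for an arbitrary group $\Gamma$.

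The sufficiency direction is fine apart from a slip in the loop-$1$ case. A loop-$1$-extension adds one vertex and two edges (the loop and one further edge), not three; with three edges the $(2,1)$-count would not close.

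\textbf{The gap is in the degree-$3$ step without a loop.} This is the only non-trivial step, and the lemma you plan to use there is false for gain graphs. You claim that the union of two balanced $(2,3)$-tight subgraphs sharing at least two vertices is $(2,3)$-tight. Take $\Gamma=\langle t\rangle$ and two triangles:
\begin{itemize}
\item $u\to x$, $x\to w$, $u\to w$, with all gains $\text{id}$;
\item $u\to y$, $y\to w$, $u\to w$, with gains $\text{id},t,t$.
\end{itemize}
Both are balanced and $(2,3)$-tight, and they meet in $\{u,w\}$ with no common edge. Their union has $2|V|-|E|=2$ and contains an unbalanced $2$-cycle, so it is not $(2,3)$-tight. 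A union of balanced subgraphs is balanced only when the intersection is connected (Lemma 2.4(i) of \cite{jzt2016}, valid for every group). Controlling exactly this is the whole difficulty.

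Your appeal to cyclicity does not supply this control. In a cyclic $\Gamma$ every gain group is cyclic, so ``its gain group is generated by a single cycle'' carries no information. The group-theoretic input the paper isolates (cyclic centralisers, used for Lemma 2.4(iii)) concerns cyclic subgraphs, which play no role in the $(2,3,1)$ count.

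\textbf{How to close the argument.} Attach $v$ to each blocker: set $Y_{ij}=X_{ij}+v+e_i+e_j$. Then:
\begin{itemize}
\item each $Y_{ij}$ is either $(2,1)$-tight or balanced $(2,3)$-tight;
\item $Y_{ij}\cap Y_{ik}=(X_{ij}\cap X_{ik})+v+e_i$, which is connected exactly when $X_{ij}\cap X_{ik}$ is;
\item the count $2|V|-|E|$ of $Y_{ij}\cap Y_{ik}$ is one more than that of $X_{ij}\cap X_{ik}$.
\end{itemize}
Use modularity of $2|V|-|E|$ under union and intersection. First, it rules out two $(2,1)$-tight blockers. Second, any intersection $X_{ij}\cap X_{ik}$ that does not already give a contradiction is forced into one of two shapes:
\begin{itemize}
\item a single vertex, in which case the $Y$'s meet connectedly through $v$ and Lemma 2.4(i) keeps their union balanced;
\item a subgraph of count at least $4$.
\end{itemize}
In every neighbour configuration, including those with parallel edges at $v$, some union of blockers then either has $2|V|-|E|\le 0$ or is balanced with $2|V|-|E|\le 2$. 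For example, with three distinct neighbours and three balanced blockers meeting pairwise only in $v_1,v_2,v_3$, the union of the three $Y$'s is balanced with $2|V|-|E|=2$.
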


To present the constructive characterisation of $(2,3,1,0)$-gain tight graphs, we first need to define the `base graphs' for the recursion. A \emph{base graph} is a graph which is contained in one of the following three classes. 

The first class consists of `trivial graphs', `fancy triangles', and `fancy hats'. 
A \emph{trivial graph} is a gain graph composed of a simple vertex with two loops (see \Cref{fig:base}(a)) labelled so that it is not cyclic. 
A \emph{fancy triangle} is a gain graph whose underlying graph is obtained from a 3-cycle by adding a loop at each vertex (see \Cref{fig:base}(c)). The labels are assigned so that the 3-cycle is balanced and the full graph is not cyclic. A \emph{hat} is a graph obtained from a $K_{2,3}$ by adding an edge to the partite set of cardinality 2. A \emph{fancy hat} is a gain graph obtained from a hat by adding a loop to each degree 2 vertex (see \Cref{fig:base}(d)). The labels are assigned so that the hat is balanced and the full graph is not cyclic. It is easy to check that the trivial graph, a fancy triangle and a fancy hat are all $(2,3,1,0)$-gain tight.

The second class of graphs consists of $(2,3,1,0)$-gain tight graphs whose underlying graphs are `double cycles'. For $n\geq2$, the \emph{double cycle} $C_n^2$ is the graph obtained from the cycle $C_n$ by replacing each edge with two parallel edges. Note, $C_2^2$ is simply composed of two vertices with four parallel edges (see \Cref{fig:base}(b)).

The third class of graphs are the \emph{near-cyclic} graphs: $(2,3,1,0)$-gain tight graphs obtained from a cyclic graph by adding an edge.

\begin{figure}[ht]
    \centering
    \begin{tikzpicture}
  [scale=.7,auto=left]
  \node(a)[draw, circle, scale = 0.75] at (1,0){};
  \draw[->] (a) .. controls (1.55,0.5) and (0.45,0.5) .. (a);
  \draw[->] (a) .. controls (0.45,-0.5) and (1.55,-0.5) .. (a);
  \node at (1,-1.5){$(a)$};

  \node(b1)[draw, circle, scale = 0.75] at (3,0){};
  \node(b2)[draw, circle, scale = 0.75] at (5,0){};
  \draw[->] (b1) .. controls (3.3, 0.5) and (4.7, 0.5) .. (b2);
  \draw[->] (b2) .. controls (4.5, 0.8) and (3.5, 0.8) .. (b1);
  \draw[->] (b2) .. controls (4.7, -0.5) and (3.3, -0.5) .. (b1);
  \draw[->] (b1) .. controls (3.5, -0.8) and (4.5, -0.8) .. (b2);
  \node at (4,-1.5){$(b)$};

  \node(d1)[draw, circle, scale = 0.75] at (7,0){};
  \node(d2)[draw, circle, scale = 0.75] at (9,0){};
  \node(d3)[draw, circle, scale = 0.75] at (8,1){};
  \draw[->] (d1) -- (d2);
  \draw[->] (d3) -- (d1);
  \draw[->] (d3) -- (d2);
  \draw[->] (d1) .. controls (6.5,-0.45) and (6.5,0.55) .. (d1);
  \draw[->] (d2) .. controls (9.5,-0.45) and (9.5,0.55) .. (d2);
  \draw[->] (d3) .. controls (8.55,1.5) and (7.45,1.5) .. (d3);
  \node at (8,-1.5){$(c)$};

  \node(e1)[draw, circle, scale = 0.75] at (11,0){};
  \node(e2)[draw, circle, scale = 0.75] at (13,0){};
  \node(e3)[draw, circle, scale = 0.75] at (12,2){};
  \node(e4)[draw, circle, scale = 0.75] at (11,1.5){};
  \node(e5)[draw, circle, scale = 0.75] at (13,1.5){};
  \draw[->] (e1) -- (e2);
  \draw[->] (e1) -- (e4);
  \draw[->] (e2) -- (e5);
  \draw[->] (e3) -- (e1);
  \draw[->] (e3) -- (e2);
  \draw[->] (e4) -- (e2);
  \draw[->] (e5) -- (e1);
  \draw[->] (e4) .. controls (11.55,2) and (10.45,2) .. (e4);
  \draw[->] (e5) .. controls (13.55,2) and (12.45,2) .. (e5);
  \draw[->] (e3) .. controls (12.55,2.5) and (11.45,2.5) .. (e3);
  \node at (12,-1.5){$(d)$};
\end{tikzpicture}
    \caption{Examples of base graphs. The unlabelled edges may be labelled freely, with the restrictions that the non-looped edges in (a), (c) and (d) are labelled $\text{id}$, and each graph must maintain the sparsity count.}
    \label{fig:base}
\end{figure}

\begin{theorem}
\label{thm: recur. con.}
    Let $\Gamma$ be a Fuchsian group and $(G,\psi)$ be a $\Gamma$-gain graph. Then, $(G,\psi)$ is $(2,3,1,0)$-gain tight if and only if it can be constructed by applying a series of $0$-extensions, $1$-extensions, loop-$1$-extensions, $2$-extensions, and loop-$2$-extensions to a disjoint union of finitely many base graphs. 
\end{theorem}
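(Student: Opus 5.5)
The proof has two directions, and I would follow the strategy of \cite[Theorem 7.9]{jzt2016}, checking at each point that only group-theoretic features available in an arbitrary Fuchsian group are used.

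\emph{Sufficiency.} I would first check that every base graph is $(2,3,1,0)$-gain tight. For trivial graphs, fancy triangles and fancy hats this is the direct count noted above. Double cycles and near-cyclic graphs are tight by definition. A disjoint union of tight graphs is tight, since the counts are additive over components and every balanced or cyclic subgraph is taken to be connected. Next I would show that each extension preserves $(2,3,1,0)$-gain sparsity together with the count $|E|=2|V|$. Each extension adds one vertex and exactly two new edges. For a subgraph $H'$ containing the new vertex $v$, I would compare with a subgraph $H$ of $G$ obtained by deleting $v$ and reinserting the removed edge(s) $f_i$ whenever both endpoints are present. The gain conditions $\psi'(e_1)^{-1}\psi'(e_2)=\psi(f_1)$ guarantee that the walk $v_1 v v_2$ has the same gain as $f_1$. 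Hence $H'$ balanced (respectively cyclic) forces $H$ balanced (respectively cyclic), and the required count transfers. The unbalanced two-cycle conditions handle the small cases where $H'$ consists only of $v$ and a few edges. For the 2-extension, the stated sparsity hypothesis on $G[\{v_1,\dots,v_4\}]$ covers the degenerate situations.

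\emph{Necessity.} I would argue by induction on $|V(G)|$. Take a $(2,3,1,0)$-gain tight $(G,\psi)$. If $G$ is disconnected, each component is tight, since $(2,0)$-sparsity plus the total count forces every component to be tight, so I may apply induction componentwise. Suppose $G$ is connected and not a base graph. The average degree is $4$, so there is a vertex of degree at most $4$, counting loops twice. The cases are as follows.
\begin{enumerate}
\item Degree $2$: this vertex is removed by a $0$-reduction.
\item Degree $3$ with a loop: this is a loop-$1$-reduction.
\item Degree $3$ without a loop: some $1$-reduction is admissible. I would use the standard argument that if every candidate new edge is blocked by a tight balanced, cyclic or general subgraph, then unions and intersections of these blockers give a contradiction.
\item Degree $4$: I would perform a loop-$2$- or $2$-reduction.
\end{enumerate}
The key tools are submodularity-type lemmas. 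If $H_1,H_2$ are tight (in the relevant $(2,\ell)$ sense) and intersect suitably, then $H_1\cup H_2$ and $H_1\cap H_2$ are tight, with balanced/cyclic status controlled through \Cref{prop:balgain} and \Cref{lem:treegain}.

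\emph{Group-theoretic dependence.} This is where \cite{jzt2016} used the dihedral structure, and it must be replaced. The facts needed are that the union of two cyclic subgraphs sharing an edge or vertex is cyclic or has non-cyclic gain group, and that the union of a balanced and a cyclic subgraph with connected intersection behaves predictably. In a Fuchsian group, for the torsion-free surface-group case, two cyclic subgroups with nontrivial intersection lie in a common maximal cyclic subgroup, because centralisers of non-identity elements are cyclic. I would verify that this is exactly the property invoked. For general Fuchsian groups with elliptic elements, I would use that elementary subgroups are classified and that nontrivial intersections of cyclic subgroups still force commensurability.

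\emph{Main obstacle.} I expect the degree-$4$ case to be hardest: showing that when no loop-$2$- or $2$-reduction is admissible, $G$ must be one of the base graphs. This includes recognising near-cyclic graphs and double cycles as exactly the obstructions. My first attempt would be to transcribe the case analysis of \cite[Section 7]{jzt2016} on blockers and maximal tight subgraphs containing the neighbours of $v$. At each use of the dihedral group property, I would replace it by the cyclic-intersection lemma above, which I would state and prove separately in \Cref{appA}.
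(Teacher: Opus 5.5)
Your proposal takes the same route as the paper: both directions are transferred from \cite[Theorem 7.9]{jzt2016}, and the only group-theoretic input that must be replaced is the maximal-cyclic-subgroup step in the union lemma (two cyclic subgraphs whose connected intersection is unbalanced have cyclic union), which the paper handles exactly as you propose, via cyclicity of the centralisers $C_\Gamma(g)$ for $g\neq\text{id}$. One correction: this centraliser fact holds for every Fuchsian group, including those with torsion, because commuting non-identity elements of $\PSL(2,\RR)$ have the same fixed points and a Fuchsian group whose non-identity elements share their fixed points is cyclic \cite{katok1992fuchsian}; so your separate treatment of groups with elliptic elements is unnecessary, and as phrased it would not suffice, since commensurability of two cyclic subgroups does not by itself make the subgroup they generate cyclic.
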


\section{Hyperbolic geometry and Fuchsian groups}\label{sec:hypgeom}

There are several isometric models of the hyperbolic plane, each with their own pros and cons. The model we consider in this paper is the upper half-plane model. 
For a general reference on hyperbolic geometry and Fuchsian groups, see \cite{katok1992fuchsian}, and for a general reference on Riemannian geometry, see \cite{lee2018introduction}. 

\subsection{The upper half-plane model}
As a topological space, the \emph{upper half plane} is given by
\begin{align*}
\HH &=\{z \in \CC ~ \vert ~ \im(z) > 0 \}.
\end{align*}
Often we will identify $\CC$ with the real linear space $\RR^2$ by considering each complex point $x+\mathfrak{i}y$ as the real vector $(x,y)$. The tangent space of $\HH$ at a point $P$ is denoted by $T_P\HH$.
Since $\HH$ is a topological submanifold of $\CC$ (but importantly not a Riemannian submanifold since their metrics differ),
each tangent space $T_P\HH$ is simply a copy of $\mathbb{C}$ (which we often consider as $\RR^2$).
We will make use of this identification throughout,
and we will only refer to tangent spaces by $T_P\HH$ if we wish to stress that the vector being considered is a tangent vector at $P$. 

The upper half plane is equipped with the Riemannian metric described as follows for a point $(x,y) \in \HH$:
\begin{equation*}
    \frac{(dx)^2 + (dy)^2}{y^2}.
\end{equation*}
This notation means that for each point $P=(x,y) \in \HH$,
there is an inner product $\langle \cdot , \cdot \rangle_P$ on $T_P(\HH)^2$ so that for any vectors $w = (x_w,y_w),w' = (x_w',y_w') \in T_P(\HH)$ we have
\begin{equation*}
    \langle w, w' \rangle_P= \frac{x_w x_w' + y_w y_w'}{y^2}.
\end{equation*}
The Riemannian metric above induces a distance metric on $\mathbb{H}$, whereby the distance between two points $P_1=(x_1,y_1)$ and $P_2=(x_2,y_2)$ of $\HH$ is given by the following equation (see, e.g., \cite[Theorem 1.2.6]{katok1992fuchsian}):
\begin{equation*}
d_{\HH}(P_1,P_2) =2 \arsinh \left(\frac{\|P_1 - P_2\|}{2\sqrt{y_1 y_2}} \right) ,
\end{equation*}
where $\| \cdot\|$ represents the standard Euclidean norm on $\mathbb{R}^2$,
i.e.~$\|(x,y)\| = \sqrt{x^2 + y^2}$.

\subsection{Geodesics}

A \emph{geodesic} on a Riemannian manifold is a curve (parametrised by arc length) that locally minimises the arc length, in the sense that any variation of the curve increases the arc-length. In the hyperbolic plane, there is a unique unit-speed geodesic between any pair of distinct points. In the upper half plane, these are either given by a vertical line or by a semi-circle with centre on the line $y=0$ \cite[Theorem 1.2.1]{katok1992fuchsian}, and the distance $d_{\mathbb{H}}(x,y)$ is then given by the length of the geodesic segment joining $x$ and $y$. The hyperbolic plane is also \textit{complete}, meaning that one can extend any geodesic infinitely. See \Cref{fig:hyperbolic-geodesics} for an illustration.

\begin{figure}
    \centering
\begin{tikzpicture}[scale=0.65]
\clip (-5,0) rectangle (10,5);
\draw[dashed] (-5,0) -- (9,0);
\filldraw(-2,4) circle(2pt);
\filldraw(-2,1) circle(2pt);
\draw[black, line width=1pt](-2,4) -- (-2,1);
\draw[gray] (-2,0) -- (-2,5);
\def\c{3}
\def\r{2.974}

\draw[gray] (\c+\r,0) arc[start angle=0,end angle=180,radius=\r];
\draw[black, line width=1pt]
  ({\c+\r*cos(160)},{\r*sin(160)}) arc[start angle=160,end angle=20,radius=\r];

\filldraw (0.2,1) circle(2pt);
\filldraw (5.8,1) circle(2pt);
\end{tikzpicture}
    \caption{Some geodesics in the upper-half plane.}
    \label{fig:hyperbolic-geodesics}
\end{figure}

We introduce the following notation that we use throughout.

\begin{definition}
We denote by $\gamma_{P,Q} : \mathbb{R} \rightarrow \mathbb{H}$ the unique smooth curve that passes through points $P,Q \in \HH$ whose parametrisation satisfies the following properties:
\begin{itemize}
	\item (geodesic): for any $t_1 \in \RR$, there exists $\varepsilon >0$ so that for any $t_1<t_2 < t_1 + \varepsilon$, the length of the domain-restricted curve $\gamma_{P,Q}|_{[t_1,t_2]}$ is exactly $d_{\HH}(\gamma_{P,Q}(t_1),\gamma_{P,Q}(t_2))$.
    \item (unit speed): given
    \begin{equation*}
        \dot{\gamma}_{P,Q}(t_0) := \frac{d}{dt} \gamma_{P,Q}(t) \bigg\vert_{t=t_0} \in \mathbb{R}^2,
    \end{equation*}
    the following holds for all $t_0 \in \mathbb{R}$:
    \begin{equation*}
        \left\langle \dot{\gamma}_{P,Q}(t_0), \dot{\gamma}_{P,Q}(t_0) \right\rangle_{\gamma(t_0)} = 1.
    \end{equation*}
    \item (starts at $P$ and passes through $Q$): $\gamma_{P,Q}(0) =P$ and $\gamma_{P,Q}(T) =Q$ for some $T>0$.
\end{itemize}
\end{definition}

Alternatively, we can define a unit-speed geodesic via its starting point and initial direction.

\begin{definition}\label{def:unitspeedbydirection}
    For $P \in \mathbb{H}$ and $w \in \mathbb{R}^2$ satisfying $\langle w, w \rangle_P=1$,
    we denote by $\gamma_{P:w} : \mathbb{R} \rightarrow \mathbb{H}$ the unique unit-speed geodesic that starts at $P$ with $\dot{\gamma}_{P:w}(0) = w$.
\end{definition}

Occasionally we will not want to concern ourselves with how a given geodesic is parametrised. 
With this in mind, we introduce the following concepts. 

For a geodesic $\gamma: \RR \rightarrow \HH$,
the points $x =\lim_{t \rightarrow -\infty} \gamma(t)$, $y =\lim_{t \rightarrow \infty} \gamma(t)$ (with limits taken with respect to the Euclidean norm $\|\cdot\|_2$), called the \emph{ideal points} of $\gamma$, are contained in the boundary $\RR \,\cup\, \{\infty\}$ of $\HH$. It is clear that we can describe oriented geodesics\footnote{Here we consider an oriented geodesic to be the image of a geodesic $\gamma : \RR \rightarrow \HH$ with ordered ideal points.} by the ideal endpoints $x\neq y \in \mathbb{R}\,\cup\, \{\infty\}$. It is relatively standard to describe geodesics in this way (see for example \cite[Section 8.5]{MR4554426}). We thus define
\begin{equation*}
    \textup{Geod}(\mathbb{H}) := \{(x,y)\in (\mathbb{R} \,\cup\, \{\infty\})^2 ~\vert~ x\neq y \}. 
\end{equation*}
Using the chart $x\mapsto 1/x$ around $\infty$ and the fact that the points of $\textup{Geod}(\mathbb{H})$ are in one-to-one correspondence with oriented geodesics, this gives $\textup{Geod}(\mathbb{H})$ the structure of a smooth manifold. 
Moreover, we can easily pass from geodesics of the form $\gamma_{P,Q}$ for $P,Q\in\HH$ and elements of $\textup{Geod}(\mathbb{H})$ using the following result.

\begin{lemma}\label{continuity-geodesics}
    The map
    \begin{equation*}
        \chi : \left\{(P,Q) \in\mathbb{H}^{2}~\vert~P\neq Q \right\} \longrightarrow \textup{Geod}(\HH), ~ (P, Q) \longmapsto \lim_{t \rightarrow \infty} \Big( \gamma_{P,Q}(t) , \gamma_{P,Q}(-t) \Big)
    \end{equation*}
    is continuous and surjective.
\end{lemma}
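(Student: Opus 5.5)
The plan is to write down the ideal endpoints of $\gamma_{P,Q}$ explicitly as functions of $P$ and $Q$, check that these formulas are continuous, and then obtain surjectivity by placing two points on any prescribed oriented geodesic. The ordering of the pair in $\chi$ (first entry at $t\to+\infty$, second at $t\to-\infty$) only affects which endpoint comes first, so it plays no role in continuity or surjectivity.

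\textbf{Explicit endpoints.} Write $P=(x_1,y_1)$ and $Q=(x_2,y_2)$.

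If $x_1=x_2$, the geodesic through $P$ and $Q$ is the vertical line $\re z=x_1$, and its ideal points are $x_1$ and $\infty$. Which endpoint is reached as $t\to+\infty$ is decided by the sign of $y_2-y_1$, because the curve starts at $P$ and passes through $Q$ at a positive time.

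If $x_1\neq x_2$, the geodesic is the semicircle centred at
\[
c=\frac{|P|^2-|Q|^2}{2(x_1-x_2)},
\]
which is the intersection of the perpendicular bisector of $PQ$ with the real axis. Its radius is $r=\|P-(c,0)\|$, its endpoints are $c\pm r$, and the orientation is decided by the sign of $x_2-x_1$.

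\textbf{Continuity.} On the open set where $x_1\neq x_2$, each coordinate of $\chi$ is a continuous function of $(P,Q)$, since the ordering is locally constant there.

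The delicate part is a pair $(P_0,Q_0)$ with $x_1=x_2$, approached by pairs with $x_1\neq x_2$. Near such a point, $c\to\pm\infty$, so one endpoint $c\pm r$ must tend to $\infty$ in the chart $x\mapsto 1/x$, and the other must tend to the common real part $x_1$.

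To verify this, I would rewrite the finite endpoint in a form that stays bounded. Using $(c-x_1)^2+y_1^2=r^2$, the finite endpoint is
\[
c\mp r = x_1-\frac{y_1^2}{(c-x_1)\pm r},
\]
and the right-hand side tends to $x_1$ because the denominator blows up. For the infinite endpoint, $1/(c\pm r)\to 0$ follows from $|c|\to\infty$. Finally, the orientation must match the limit, and this works out because the sign of $x_2-x_1$ combined with the sign of $c$ encodes the sign of $y_2-y_1$.

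Checking that these orientation conventions agree in the limit is the main obstacle. If the sign bookkeeping becomes messy, I would switch to a coordinate-free argument: apply a Möbius isometry to normalise the configuration, or use continuous dependence of geodesics on their initial data, since $\gamma_{P,Q}=\gamma_{P:w}$ with $w$ the unit tangent at $P$ pointing towards $Q$, and $w$ depends continuously on $(P,Q)$ through the logarithm map.

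\textbf{Surjectivity.} Given $(x,y)\in\textup{Geod}(\HH)$, the points $x$ and $y$ determine a vertical line or a semicircle, and that curve is a geodesic. Choose two distinct points $P,Q$ on it, ordered along the orientation from $y$ towards $x$. Then $\gamma_{P,Q}(t)\to x$ as $t\to\infty$ and $\gamma_{P,Q}(-t)\to y$, so $\chi(P,Q)=(x,y)$.
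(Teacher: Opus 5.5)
Your proposal is correct and follows essentially the paper's route: the explicit centre $c=(|P|^2-|Q|^2)/(2(x_1-x_2))$ and radius of the semicircle, orientation read off from the sign of $x_2-x_1$ (or of $y_2-y_1$ on vertical lines), continuity from the behaviour as $x_1\to x_2$, and surjectivity by placing two points on a prescribed geodesic. Your identity $c\mp r=x_1-y_1^2/((c-x_1)\pm r)$ and the sign check relating $\operatorname{sign}(c)$, $\operatorname{sign}(x_2-x_1)$ and $\operatorname{sign}(y_2-y_1)$ supply the details that the paper compresses into ``observing the behaviour of $C,R$''; you should also state explicitly that $|c|\to\infty$ there, because the numerator $|P|^2-|Q|^2$ tends to $y_1^2-y_2^2\neq 0$.
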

\begin{proof}
    Since any two points on the boundary of $\HH$ describe an oriented geodesic, $\chi$ is surjective. If $P=x_1+ \mathfrak{i} y_1$ and $Q=x_2, + \mathfrak{i} y_2$ with $x_1 \neq x_2$, then $\gamma_{P, Q}$ parametrises the semi-circle with centre $(C,0)$ and radius $R$ given by 
    \begin{equation*}
        C = \frac{x_1^2-x_2^2 + y_1^2-y_2^2}{2(x_1- x_2)}, \qquad R= \sqrt{\left( x_1 - C \right)^2 + y_1^2}.
    \end{equation*}
    With this we have
    \begin{equation*}
        \chi(P, Q) =
        \begin{cases}
            (C-R, C+R) &\text{if } x_1 < x_2, \\
            (C+R, C-R) &\text{if } x_1 > x_2, \\
            (x_1, \infty) &\text{if } x_1 = x_2, ~ y_1 < y_2, \mbox{ and} \\
            (\infty,x_1) &\text{if } x_1 = x_2, ~ y_1 > y_2.
        \end{cases}
    \end{equation*}
    By observing the behaviour of $C,R$ as $x_1$ converges to $x_2$, we see that $\chi$ is continuous.
\end{proof}

Suppose that two geodesics $(x_1,y_1),(x_2,y_2)\in\text{Geod}(\HH)$ intersect at a single point $P\in \HH$ and, for $i=1,2$, let $T_i$ denote the tangent space of the geodesic $(x_i,y_i)$ at the point $P$ when considered as a smooth submanifold of $\HH$. We say $(x_1,y_1),(x_2,y_2)\in\text{Geod}(\HH)$ \textit{intersect transversally} if $T_1 + T_2 = \RR^2$.
The following lemma (whose tedious but trivial proof we omit) allows us to determine if geodesics intersect by checking orderings of their ideal points. Note that if two distinct geodesics intersect, then they intersect transversally, which is also reflected in the following lemma.

\begin{lemma}\label{intersecting geodesics}
    Two distinct geodesics $(x_1,y_1),(x_2,y_2) \in \textup{Geod}(\HH)$ intersect (transversally) if and only if
    \begin{equation*}
    x_1 < x_2 < y_1 < y_2 \qquad \textup{ or } \qquad x_2 < x_1 < y_2 < y_1.
    \end{equation*}
\end{lemma}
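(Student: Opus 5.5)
The plan is to reduce to a normal form with an isometry and then read intersection off from a single inequality. First, the action of $\PSL(2,\mathbb{R})$ on $\HH$ by M\"obius transformations is by orientation-preserving isometries. It extends to the boundary $\RR\cup\{\infty\}$ and preserves the cyclic order of boundary points. An isometry maps geodesics to geodesics and preserves whether two geodesics meet. So both the condition ``the geodesics intersect'' and the condition ``the endpoints interleave'' are invariant under this action.

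Here ``interleave'' is the cyclic reading of the stated inequalities. When all four points lie in $\RR$, interleaving means exactly that one of the two displayed chains of inequalities holds after possibly reversing orientations, i.e.\ swapping $x_i$ with $y_i$. I would note that the lemma as stated should be read with the pairs $\{x_i,y_i\}$ unordered (or with $x_i<y_i$), and with the obvious modification when one point is $\infty$.

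Step 1: endpoints in $\RR$, geodesics as semicircles. Assume all four ideal points are finite; this can be arranged by applying a M\"obius map sending a point not among the endpoints to $\infty$. Also assume $x_i<y_i$. Then geodesic $i$ is the upper semicircle with centre $c_i=(x_i+y_i)/2$ and radius $r_i=(y_i-x_i)/2$. If the two geodesics share an ideal endpoint, they do not meet in $\HH$: two circles centred on the real axis and passing through a common real point are tangent there, so their only common point is on the boundary. Otherwise all four endpoints are distinct.

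Step 2: the algebraic criterion. Two circles centred on the real axis meet at a point with positive imaginary part iff $|r_1-r_2|<|c_1-c_2|<r_1+r_2$. I would check that this is equivalent to exactly one of $x_2,y_2$ lying in the open interval $(x_1,y_1)$. To see this, note that the function $f(t)=(t-c_1)^2-r_1^2$ is negative on $(x_1,y_1)$ and positive outside it. Real intersection of the two circles is decided by subtracting the two circle equations, which gives a linear equation in $\re z$, and then asking whether the resulting point lies strictly inside both discs. Equivalently, the semicircles cross iff $f(x_2)f(y_2)<0$.

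Step 3: transversality. If the circles meet at a single point in $\HH$ with both centres real, then the centres differ. Since the tangent lines are perpendicular to the radii, the tangent lines differ, so $T_1+T_2=\RR^2$. Two distinct geodesics meet in at most one point, because geodesics are unique between two points.

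The main obstacle is purely bookkeeping: handling $\infty$ and orientations so that the ordered inequalities in the statement match the cyclic interleaving. I would handle this by the normalisation in Step 1 rather than by separate case analysis. That normalisation is legitimate because Möbius maps preserve cyclic order, and because interleaving of two pairs of four distinct points on a circle is a cyclic-order notion.
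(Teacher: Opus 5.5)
The paper does not prove this lemma (it is omitted as ``tedious but trivial''), so there is nothing to compare against. Your argument is correct and supplies the missing proof. Normalising by a M\"obius map and using the cyclic-order invariance of interleaving is cleaner than a direct case analysis over vertical lines and semicircles.

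Your caveat about how to read the statement is right, and it is exactly how the paper uses the lemma. In the proof of Lemma~\ref{lem:repeat1}, the configuration $h\cdot(z+\varepsilon)<z<z+\varepsilon<g\cdot z$ is taken to give an intersection of $(z,g\cdot z)$ and $(z+\varepsilon,h\cdot(z+\varepsilon))$. Yet for these ordered pairs it matches neither displayed chain. Under the unordered reading, ``distinct'' should mean $\{x_1,y_1\}\neq\{x_2,y_2\}$. Otherwise $(x,y)$ and $(y,x)$, which are distinct in $\textup{Geod}(\HH)$ but coincide as sets, would be a counterexample.

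Two small points would tighten the write-up.

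\emph{Step 2.} The computation you leave implicit is short. For $c_1\neq c_2$, set $f_i(t)=(t-x_i)(t-y_i)$. Subtracting the circle equations gives $f_1-f_2=2(c_2-c_1)(t-t_0)$. Since $f_2$ vanishes at $x_2$ and $y_2$, this yields $f_1(x_2)f_1(y_2)=4(c_1-c_2)^2f_2(t_0)=4(c_1-c_2)^2f_1(t_0)$. A common point $t_0+\mathfrak{i}s$ satisfies $s^2=-f_1(t_0)$, so the semicircles meet in $\HH$ exactly when $f_1(x_2)f_1(y_2)<0$. In the concentric case there is no common point, and $f_1(x_2)f_1(y_2)=(r_2^2-r_1^2)^2>0$.

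\emph{Step 3.} Distinct centres alone do not make the radii at $P$ non-parallel. You also need $P\notin\RR$, which holds since $P\in\HH$. Alternatively, transversality needs no coordinates: two geodesics through $P$ with the same tangent line coincide, by uniqueness of the geodesic with given initial point and direction (Definition~\ref{def:unitspeedbydirection}). In any case transversality is preserved by the normalising M\"obius map, since it is a diffeomorphism.
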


We can also describe transverse intersection for geodesics of the form $\gamma_{P,P_1}, \gamma_{P,P_2}$ in terms of collinearity.

\begin{restatable}{lemma}{keylemma}\label{lem:linear_independence}
    Let $P_1,P_2$ be distinct points in $\mathbb{H}$.
    Then for a point $P\in \HH$ with $P\neq P_1$ and $P\neq P_2$, the pair of vectors $\dot{\gamma}_{P,P_1}(0), \dot{\gamma}_{P,P_2}(0)$ are linearly dependent if and only if $P$ lies on the geodesic $\gamma_{P_1,P_2}$.
\end{restatable}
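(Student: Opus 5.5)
Both tangent vectors live in the single tangent space $T_P\HH\cong\RR^2$, and each is a nonzero velocity of a unit-speed geodesic through $P$. So the plan is to reduce linear dependence of $\dot{\gamma}_{P,P_1}(0)$ and $\dot{\gamma}_{P,P_2}(0)$ to the statement that $\gamma_{P,P_1}$ and $\gamma_{P,P_2}$ have the same image. Then I translate "same image" into "$P,P_1,P_2$ lie on a common geodesic", which is exactly the condition that $P$ lies on $\gamma_{P_1,P_2}$.

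For the forward direction, suppose $\dot{\gamma}_{P,P_1}(0)=\lambda\,\dot{\gamma}_{P,P_2}(0)$ for some $\lambda\in\RR$. Both vectors have unit length in $\langle\cdot,\cdot\rangle_P$, and the metric at $P$ is a positive multiple of the Euclidean inner product, so $\lambda=\pm1$.
\begin{itemize}
\item If $\lambda=1$, then uniqueness of geodesics with prescribed initial point and velocity (\Cref{def:unitspeedbydirection}) gives $\gamma_{P,P_1}=\gamma_{P:w}=\gamma_{P,P_2}$, where $w=\dot{\gamma}_{P,P_1}(0)$.
\item If $\lambda=-1$, then $\gamma_{P,P_2}(t)=\gamma_{P,P_1}(-t)$, since the reversed curve is again a unit-speed geodesic starting at $P$ with the required velocity.
\end{itemize}
In either case $P_1$ and $P_2$ both lie on the image of the single geodesic $\gamma_{P,P_1}$, and so does $P$. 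Uniqueness of the geodesic through two distinct points means this image is the image of $\gamma_{P_1,P_2}$, and hence $P$ lies on $\gamma_{P_1,P_2}$.

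For the converse, suppose $P$ lies on $\gamma_{P_1,P_2}$. Let $\gamma$ be a unit-speed parametrisation of this geodesic with $\gamma(0)=P$. Since $P_1,P_2\neq P$, we have $P_1=\gamma(t_1)$ and $P_2=\gamma(t_2)$ with $t_1,t_2\neq 0$. Then $\gamma_{P,P_i}(t)=\gamma(\operatorname{sign}(t_i)\,t)$, again by uniqueness of the geodesic through $P$ and $P_i$, together with the requirement that it reaches $P_i$ at a positive time. Therefore $\dot{\gamma}_{P,P_i}(0)=\operatorname{sign}(t_i)\,\dot\gamma(0)$, and the two vectors are dependent.

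The only step needing care is the uniqueness facts used above. These are that the geodesic through two distinct points is unique as a set (already quoted in the paper from \cite{katok1992fuchsian}), and that geodesics are determined by initial data, which is standard ODE uniqueness. If a purely explicit argument is preferred, the same proof can be done concretely in the upper half-plane. Geodesics are vertical lines or semicircles centred on $\RR$, and the tangent direction at $P$ determines the line: vertical tangent gives the vertical line, and otherwise the centre is where the normal line at $P$ meets $\RR$. So dependence of the directions forces the same Euclidean semicircle or vertical line. I expect no real obstacle; the main thing to get right is handling the sign $\lambda=-1$ correctly.
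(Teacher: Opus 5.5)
Your proof is correct and follows essentially the same route as the paper. For dependence, unit length forces $\dot\gamma_{P,P_1}(0)=\pm\dot\gamma_{P,P_2}(0)$, and uniqueness of geodesics with given initial data then puts $P,P_1,P_2$ on one geodesic. Your converse, reparametrising the geodesic so that $\gamma(0)=P$, is a slightly tidier version of the paper's three-case split according to where $P$ sits relative to $P_1,P_2$; you also make explicit the step $\lambda=\pm1$ that the paper leaves implicit.
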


\begin{proof}
    Suppose $P$ lies on the geodesic $\gamma_{P_1,P_2}$ with $\gamma_{P_1,P_2}(T) = P_2$ for some $T>0$.
    Then $P = \gamma_{P_1,P_2}(t)$ for some $t \in \mathbb{R} \setminus \{0,T\}$,
    and one of the following cases hold:
    \begin{itemize}
        \item If $t<0$ then $\dot{\gamma}_{P,P_1}(0) = \dot{\gamma}_{P,P_2}(0) = \dot{\gamma}_{P_1,P_2}(t)$.
        \item If $t>T$ then $\dot{\gamma}_{P,P_1}(0) = \dot{\gamma}_{P,P_2}(0) = -\dot{\gamma}_{P_1,P_2}(t)$. 
        \item If $0<t<T$ then $\dot{\gamma}_{P,P_1}(0) = - \dot{\gamma}_{P_1,P_2}(t)$ and $\dot{\gamma}_{P,P_2}(0) = \dot{\gamma}_{P_1,P_2}(t)$. 
    \end{itemize}
    Hence, $\dot{\gamma}_{P,P_1}(0), \dot{\gamma}_{P,P_2}(0)$ are linearly dependent.

    Now suppose $w_1 =\dot{\gamma}_{P,P_1}(0)$ and $w_2 = \dot{\gamma}_{P,P_2}(0)$ are linearly dependent.
    If $w_1=w_2$ then $\gamma_{P:w_1} = \gamma_{P:w_2}$, while if $w_1=-w_2$ then $\gamma_{P:w_1} = \gamma_{P:-w_2}$.
    In either case,
    this implies $P,P_1,P_2$ all lie on the unit-speed geodesic $\gamma_{P:w_1}$.
    Since every unit-speed geodesic is uniquely determined by two distinct points,
    we have that $P,P_1,P_2$ all lie on the geodesic $\gamma_{P_1,P_2}$.
\end{proof}

The derivative of the distance metric $d_{\HH}$ can also be understood using geodesics.
Here we denote the derivative of a map $f$ at a point $x$ by either $D_x f$ or $D_x (f)$. 

\begin{proposition}[{\cite[IX.80]{Bourguignon2022-za}}]\label{metric-metric}
    Let $P = (x_P,y_P),Q= (x_Q,y_Q) \in \mathbb{H}$ with $P\neq Q$, and let $w_P\in T_{P}\mathbb{H}$ and $w_Q\in T_{Q}\mathbb{H}$. Then
    \begin{equation*}
        D_{(P,Q)}(d_\mathbb{H})(w_P, w_Q) = \langle \dot{\gamma}_{P,Q}(0)  , w_P \rangle_{P} + \langle \dot{\gamma}_{Q,P}(0)  , w_Q \rangle_{Q} =  \langle \dot{\gamma}_{P,Q}(0)  , w_P \rangle_{P} -\langle \dot{\gamma}_{P,Q}(T)  , w_Q \rangle_{Q},
    \end{equation*}
    where $T$ is such that $\gamma_{P,Q}(T) = Q$. Identifying each tangent space with $\mathbb{R}^2$, the derivative $D_{(P,Q)}(d_\mathbb{H})$ at points $P\neq Q$ is described by the $1 \times 4$ Jacobian matrix
    \begin{equation*}
    		\left[ \frac{\dot{\gamma}_{P,Q}(0)}{y_P^2}^T ~  \frac{\dot{\gamma}_{Q,P}(0)}{y_Q^2}^T  \right].
    \end{equation*}
\end{proposition}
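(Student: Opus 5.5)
The plan is to reduce to a normal form using the isometry group, compute there from the explicit distance formula, and then transport the result back.

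\textbf{Reduction.} Since $\PSL(2,\RR)$ acts transitively on ordered pairs of points at a given distance and preserves $d_{\HH}$, it suffices to compute in a normal form. For $A\in\PSL(2,\RR)$ we have $d_{\HH}(AP,AQ)=d_{\HH}(P,Q)$, so the chain rule gives
\[
D_{(P,Q)}d_{\HH}(w_P,w_Q)=D_{(AP,AQ)}d_{\HH}(D_PA\,w_P,\,D_QA\,w_Q).
\]
Each $D_PA$ is a linear isometry $T_P\HH\to T_{AP}\HH$ with respect to $\langle\cdot,\cdot\rangle$. It also maps $\dot\gamma_{P,Q}(0)$ to $\dot\gamma_{AP,AQ}(0)$, because isometries send unit-speed geodesics to unit-speed geodesics. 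Hence both sides of the claimed identity are invariant under $A$. We may therefore assume $P=\mathfrak{i}$ and $Q=e^{s}\mathfrak{i}$ with $s>0$.

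\textbf{Computation in normal form.} Here $\gamma_{P,Q}(t)=e^{t}\mathfrak{i}$, so $\dot\gamma_{P,Q}(0)=(0,1)$ and $\dot\gamma_{Q,P}(0)=(0,-e^{s})$. Differentiate
\[
d_{\HH}=2\arsinh\!\left(\frac{\|P-Q\|}{2\sqrt{y_Py_Q}}\right)
\]
at this configuration. Horizontal perturbations enter only quadratically, because the $x$-coordinates agree, so they contribute nothing to first order. Vertically, $d_{\HH}$ restricted to the axis equals $\log(y_Q/y_P)$.

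This gives $\partial d_{\HH}/\partial y_P=-1$ and $\partial d_{\HH}/\partial y_Q=e^{-s}$. In terms of the metric these read $-\langle\dot\gamma_{P,Q}(0),w_P\rangle_P$ and $-\langle\dot\gamma_{Q,P}(0),w_Q\rangle_Q$.

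\textbf{The sign.} This computation shows that, with the stated convention that $\gamma_{P,Q}$ starts at $P$ and heads towards $Q$, the true derivative is the negative of the displayed expression. Indeed, moving $P$ towards $Q$ must decrease the distance. I would therefore prove the identity in the form
\[
D_{(P,Q)}(d_{\HH})(w_P,w_Q)=-\bigl(\langle\dot\gamma_{P,Q}(0),w_P\rangle_P+\langle\dot\gamma_{Q,P}(0),w_Q\rangle_Q\bigr),
\]
that is, the stated formula holds exactly for $-d_{\HH}$. Equivalently, it holds if $\gamma_{P,Q}$ is read as pointing from $Q$ towards $P$.

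This global sign is immaterial for all later use. The rows of the rigidity matrix and its kernel are unchanged by multiplying a row by $-1$. So the statement as used — the Jacobian row $\bigl[\dot\gamma_{P,Q}(0)^T/y_P^2,\ \dot\gamma_{Q,P}(0)^T/y_Q^2\bigr]$ up to sign — is established.

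\textbf{Remaining equalities.} The second equality follows from reversing the geodesic: $\gamma_{Q,P}(t)=\gamma_{P,Q}(T-t)$, so $\dot\gamma_{Q,P}(0)=-\dot\gamma_{P,Q}(T)$. The Jacobian form follows from $\langle v,w\rangle_P=v\cdot w/y_P^2$.

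\textbf{Main obstacle.} The one genuinely delicate step is checking that $D_PA$ carries $\dot\gamma_{P,Q}(0)$ to $\dot\gamma_{AP,AQ}(0)$ and preserves the metric, which is what makes the reduction legitimate. This follows because Möbius maps are Riemannian isometries of $\HH$.
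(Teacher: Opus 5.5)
Your proposal is correct, and so is your sign correction. With $\gamma_{P,Q}$ running from $P$ to $Q$, the true derivative is $-\langle\dot\gamma_{P,Q}(0),w_P\rangle_P-\langle\dot\gamma_{Q,P}(0),w_Q\rangle_Q$, so the displayed Jacobian row needs an overall minus sign.

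The paper's own entries $A,B,C,D$, computed from the distance formula right after the statement, agree with you rather than with the statement. At $P=\mathfrak{i}$, $Q=e^{s}\mathfrak{i}$ they give $B=-1$ and $D=e^{-s}$. By contrast, $\dot\gamma_{P,Q}(0)/y_P^2=(0,1)$ and $\dot\gamma_{Q,P}(0)/y_Q^2=(0,-e^{-s})$. More generally, $A=2(x_P-x_Q)/\sqrt{\cdots}$ has the opposite sign to the horizontal component of $\dot\gamma_{P,Q}(0)$.

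As you note, nothing downstream is affected. Every row of the orbit rigidity matrix, loop rows included, flips sign together. The later arguments only use vanishing, kernels, ranks and equilibrium stresses.

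On the route: the paper gives no argument of its own. It cites the general first variation formula for arc length (Bourguignon, IX.80), which applies directly because geodesics in $\mathbb{H}$ are unique and minimising, so $d_{\mathbb{H}}$ is smooth off the diagonal.

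You instead use homogeneity. The chain rule transports the left-hand side, and the invariance properties \eqref{eq:isometries_metric_1} and \eqref{eq:isometries_metric_2}, which the paper states anyway, transport the right-hand side. This lets you place $P,Q$ on the imaginary axis. There the computation reduces to differentiating $\log(y_Q/y_P)$, plus noting that the $x$-partials vanish because $x_P=x_Q$ and $y_P\neq y_Q$.

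The citation buys generality: it holds on any Riemannian manifold and needs no coordinates. Your argument buys self-containment, using only facts already in the paper. Being an actual computation rather than a transcription, it also exposes the sign that the stated version gets wrong.
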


We can explicitly compute the Jacobian $D_{(P,Q)}(d_\mathbb{H})$  to be the $1 \times 4$ matrix $[A ~ B ~ C ~ D ]$ where
\begin{align*}
    A &= \frac{2}{\sqrt{(\|P-Q\|)^4 +4y_Py_Q \|P-Q\|^2 }} \cdot\left( x_P -x_Q \right), \\
    B &= \frac{2}{\sqrt{(\|P-Q\|)^4 +4y_Py_Q \|P-Q\|^2}} \cdot\left(y_P-y_Q - \frac{\| P-Q\|^2}{2 y_P} \right), \\
    C &= \frac{2}{\sqrt{(\|P-Q\|)^4 +4y_Py_Q \|P-Q\|^2 }} \cdot\left( x_Q - x_P \right) \mbox{ and} \\
    D &= \frac{2}{\sqrt{(\|P-Q\|)^4 +4y_Py_Q \|P-Q\|^2}} \cdot\left(y_Q-y_P - \frac{\| P-Q\|^2}{2 y_Q} \right).
\end{align*}
Importantly,
we see that the map $(P,Q) \mapsto \dot{\gamma}_{P,Q}(0)$ is continuous on the set of distinct ordered pairs $(P,Q) \in \HH^2$ with $P \neq Q$.

\subsection{Isometries of the hyperbolic plane}

The Lie group $\SL(2,\RR)$ is the group of $2 \times 2$ real matrices with determinant 1, and the quotient
$\SL(2, \mathbb{R}) / \{\pm id\}$, is the Lie
group $\PSL(2,\RR)$ which represents the orientation-preserving isometries of the upper half plane. The matrices in $\PSL(2, \RR)$ act on the points $z \in \mathbb{H}$ (considered here as complex points) by the following action:
\begin{equation*}
\begin{bmatrix}
a & b \\
c & d 
\end{bmatrix}
\cdot 
z := \frac{
az+ b}{cz + d} .
\end{equation*}
We will often treat elements of $\textup{PSL}(2, \mathbb{R})$ simply as matrices, though they are technically equivalence classes. By abuse of notation,
we consider each matrix $g \in \PSL(2,\RR)$, as given above, to also be a map $g: \HH \rightarrow \HH$ with derivative at $z \in \HH$ given by the non-singular Jacobian matrix
\begin{equation}\label{eq:dzg}
	D_z(g) =	
	\begin{bmatrix}
        \re \left(\frac{1}{(cz + d)^2} \right)  & \im \left(\frac{1}{(cz + d)^2} \right) \\
       -\im \left(\frac{1}{(cz + d)^2} \right) & \re \left(\frac{1}{(cz + d)^2} \right) 
    \end{bmatrix}.
\end{equation}
An isometry of $\HH$ that does not preserve orientation is the reflection in the $y$-axis, i.e.~the map $\rho: z \mapsto -\overline{z}$.
In fact, every isometry of $\HH$ that does not preserve orientation can be decomposed into a composition $g \circ \rho$ for some orientation-preserving isometry $g$. Since $\PSL(2,\RR)$ is connected, the isometry group of $\HH$ has exactly two connected components.

It follows from $g$ being an isometry of a Riemannian manifold that the above Jacobian satisfies the following equality for any $w_1, w_2 \in \RR^2$:
\begin{equation}\label{eq:isometries_metric_1}
    \langle D_z (g) w_1, D_z (g) w_2 \rangle_{g \cdot z} = \langle w_1, w_2 \rangle_{z}.
\end{equation}
The derivative $D_z(g)$ behaves especially well with geodesics.
Namely,
for any distinct $z,z' \in \mathbb{H}$, we have
\begin{equation}\label{eq:isometries_metric_2}
    D_z(g) \dot{\gamma}_{z,z'}(0) = \dot{\gamma}_{gz, gz'}(0).
\end{equation}

Since $\HH$ sits in the compact Riemann sphere $\CC \,\cup\, \{\infty\}$,
we can extend every isometry $g : \HH \rightarrow \HH$ to a unique continuous map $\tilde{g} : \HH\,\cup\,\RR \,\cup\,\{\infty\} \rightarrow \HH\,\cup\,\RR\,\cup\,\{\infty\}$. This induces an action of $\text{PSL}(2,\RR)$ on $\text{Geod}(\HH)$, given by $g\cdot(x, y)=(\tilde g\cdot x, \tilde g\cdot y).$ Using $\tilde{g}$, we can classify every non-identity isometry as exactly one of the following \cite[Section 2.1]{katok1992fuchsian}:
\begin{itemize}
	\item (elliptic): $g$ has exactly one fixed point, and for some $\theta\in [0,2\pi)$, $g$ is conjugate to
    \begin{equation*}
        \begin{bmatrix}
            \cos(\theta) & -\sin(\theta)\\
            \sin(\theta) & \cos(\theta)\\
        \end{bmatrix};
    \end{equation*}
	\item (parabolic): $g$ has no fixed points, but $\tilde{g}$ has exactly one fixed point, and for some $t\in \mathbb{R}$, $g$ is conjugate to
    \begin{equation*}
    \begin{bmatrix}
        1 & t\\
        0 & 1\\
    \end{bmatrix};
    \end{equation*}
	\item (hyperbolic): $g$ has no fixed points, but $\tilde{g}$ has exactly two distinct fixed points, and for some $\lambda\in \mathbb{R}\setminus\{0\}$, $g$ is conjugate to
    \begin{equation*}
        \begin{bmatrix}
            \lambda & 0\\
            0 & \frac{1}{\lambda}\\
        \end{bmatrix}.
    \end{equation*}
    In this case, $g$ fixes one geodesic which is called the axis of $g$.
\end{itemize}
By abuse of terminology we describe any fixed point of $\tilde{g}$ to be a fixed point of $g$ and then specify if it is contained in $\HH$ or its boundary if needed.

\subsection{Infinitesimal isometries}

The tangent space of $\PSL(2,\RR)$ at the identity is identical to the tangent space of $\SL(2,\RR)$ at the identity. It is the 3-dimensional Lie algebra
\begin{equation*}
    \mathfrak{sl}(2,\RR) := \left\{A\in \RR^{2\times 2} ~\vert~ Tr(A)= 0\right\}.
\end{equation*}
We can move between the Lie algebra $\mathfrak{sl}(2,\mathbb{R})$ and Lie group $\PSL(2,\mathbb{R})$ using the \emph{exponential map}
\begin{equation*}
    \exp : \mathfrak{sl}(2,\mathbb{R}) \longrightarrow \PSL(2,\mathbb{R}), ~ X \longmapsto \exp(X) : = \sum_{n=0}^\infty \frac{1}{n!} X^n.
\end{equation*}
The exponential map is a well-defined local diffeomorphism. Moreover, $\exp$ is also surjective \cite{moskowitz1994surjectivity}.

Consider the group action $(g, z) \mapsto g \cdot z$ for $g \in \PSL(2,\RR)$ and $z \in \HH$.
The partial derivative of this action with respect to $\PSL(2,\RR)$ at the identity gives the smooth map
\begin{equation}\label{eq:U}
    \eta:\mathfrak{sl}(2,\RR) \times \mathbb{H} \longrightarrow \mathbb{C}, ~ 
    \left( 
    \begin{bmatrix}
        a & b \\
        c & -a 
    \end{bmatrix} ,
    z \right) 
    \longmapsto b + 2a z - cz^2.
\end{equation}
The map $\eta$ can be linked to the derivative of $d_{\HH}$ by the following lemma.

\begin{lemma}
    Let $P,Q$ be distinct points in $\HH$ and choose any $A \in \mathfrak{sl}(2,\RR)$.
    Then
    \begin{equation}\label{eq:trivial_inf_edge}
        D_{(P,Q)}(d_{\HH})\Big(\eta(A, P),\eta(A, Q) \Big) = 0.
    \end{equation}
\end{lemma}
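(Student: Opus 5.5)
The plan is to use the fact that $\eta(A,\cdot)$ is the infinitesimal generator of a one-parameter family of isometries, so that differentiating an invariant distance gives zero. Set $g_t=\exp(tA)\in\PSL(2,\RR)$ for $t\in\RR$. Each $g_t$ is an isometry of $\HH$, so
\[
d_{\HH}(g_t\cdot P,\, g_t\cdot Q)=d_{\HH}(P,Q)
\]
for all $t$. The right-hand side is constant in $t$, so its derivative at $t=0$ vanishes.

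Next I compute the derivative of the left-hand side by the chain rule. The curve $t\mapsto (g_t\cdot P, g_t\cdot Q)$ passes through $(P,Q)$ at $t=0$ and stays in the open set of distinct pairs, since the $g_t$ are bijections. On that set $d_{\HH}$ is smooth by \Cref{metric-metric}, so
\[
0=\frac{d}{dt}\Big|_{t=0} d_{\HH}(g_t P,g_tQ)=D_{(P,Q)}(d_{\HH})\Big(\tfrac{d}{dt}\big|_{0} g_tP,\ \tfrac{d}{dt}\big|_{0} g_tQ\Big).
\]
It remains to identify $\frac{d}{dt}|_{0}\, g_t\cdot z$ with $\eta(A,z)$. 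This is exactly how $\eta$ was defined: it is the partial derivative of the action map at the identity in the direction $A$, and $\frac{d}{dt}|_0\exp(tA)=A$.

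As a sanity check I would also verify this directly. Write $\exp(tA)=I+tA+O(t^2)$ with $A=\begin{bmatrix} a& b\\ c&-a\end{bmatrix}$. Then
\[
\frac{(1+ta)z+tb}{tcz+1-ta}=z+t(2az+b-cz^2)+O(t^2),
\]
which matches \eqref{eq:U}. The sign ambiguity coming from working in $\PSL$ rather than $\SL$ causes no trouble, because near $t=0$ we use the lift in $\SL(2,\RR)$ close to the identity, and the Möbius action is unchanged under $\pm$.

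The only point needing any care is this identification of the velocity vector with $\eta$ together with the smoothness of $d_{\HH}$ away from the diagonal, and both are immediate. A fallback is to substitute $\eta(A,P),\eta(A,Q)$ into the explicit Jacobian $[A\ B\ C\ D]$ and expand. That route is routine algebra but messy, so I would avoid it.
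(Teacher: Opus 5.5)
Your proposal is correct and is essentially the paper's argument: differentiate the constant quantity $d_{\HH}(g_t\cdot P, g_t\cdot Q)$ at $t=0$ via the chain rule and identify the velocities with $\eta(A,P),\eta(A,Q)$. The only cosmetic difference is that you use the specific curve $\exp(tA)$, whereas the paper takes an arbitrary smooth curve in $\PSL(2,\RR)$ through the identity with velocity $A$.
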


\begin{proof}
    Let $g:(-1,1) \rightarrow \PSL(2,\RR)$ be a smooth curve where $g(0) = \textrm{id}$ and $\dot{g}(0) = A$. We will write $g(t) = g_t$.
    By the chain rule we have that 
    \begin{equation*}
        \frac{d}{dt} d_{\HH} ( g_t \cdot P, g_t \cdot Q) \Big|_{t=0} = D_{(P,Q)}(d_{\HH})\left( \frac{d}{dt} g_t \cdot P \Big|_{t=0}, \frac{d}{dt} g_t \cdot Q \Big|_{t=0} \right) = D_{(P,Q)}(d_{\HH})\Big(\eta(A, P),\eta(A, Q) \Big).
    \end{equation*}
    Since $d_{\HH}(g_t \cdot P, g_t \cdot Q) = d_{\HH}(P,Q)$ for each $t \in (-1,1)$, the above equation implies \Cref{eq:trivial_inf_edge} holds.
\end{proof}

\begin{definition}\label{def:hz}
For $z\in\HH$, we define  $\mathfrak{h}_z:=\{X\in\mathfrak{sl}(2,\RR): \eta(X,z)=\textbf{0}\}.$
\end{definition} 

It follows from \Cref{eq:U} that each $\mathfrak{h}_z$ is a linear subspace of $\mathfrak{sl}(2,\RR)$.  
We can be even more specific on the structure of $\mathfrak{h}_z$.
Suppose that $z = x + \mathfrak{i}y \in \HH$.
Then \Cref{eq:U} shows that
\begin{equation*}
    X=\begin{bmatrix}
       a & b \\
       c & -a
   \end{bmatrix}\in \mathfrak{h}_z
   \qquad
   \iff \qquad
   b+ 2a x - c x^{2} + cy^{2} = 2a y - 2c xy= 0.
\end{equation*}
Since $y>0$ (as $z\in\HH$), the right-hand side holds if and only if $a=cx$ and $b=(-x^2-y^2)c$. 
Therefore, we have
\begin{equation}\label{eq:hz}
    \mathfrak{h}_z=\left\{\lambda
    \begin{bmatrix}
        x & -x^2-y^2\\
        1 & -x
    \end{bmatrix}
    :\lambda\in\RR\right\}.
\end{equation}
Note that each $\mathfrak{h}_z$ is a 1-dimensional linear space, and so is uniquely represented by an element of the projective space $\mathbb{P}(\mathfrak{sl}(2, \mathbb{R}))$.

Later, we will require the following technical lemmas regarding $\mathfrak{h}_z$.

\begin{lemma}\label{lem: Lie -algebras-stabilizers0}
    For any $z \in \HH$,
    the set $\{\mathfrak{h}_z \in \mathbb{P}(\mathfrak{sl}(2, \mathbb{R})) ~\vert ~ z\in \mathbb{H}\}$ is a non-empty open subset of the projective space $\mathbb{P}(\mathfrak{sl}(2, \mathbb{R}))$. 
\end{lemma}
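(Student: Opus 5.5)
The plan is to make the map $z\mapsto \mathfrak{h}_z$ completely explicit using \Cref{eq:hz} and identify its image inside $\mathbb{P}(\mathfrak{sl}(2,\RR))\cong \mathbb{RP}^2$. Writing an element of $\mathfrak{sl}(2,\RR)$ as $\begin{bmatrix} a & b\\ c & -a\end{bmatrix}$ with homogeneous coordinates $[a:b:c]$, \Cref{eq:hz} says that for $z=x+\mathfrak{i}y$ the line $\mathfrak{h}_z$ is $[x : -x^2-y^2 : 1]$. Non-emptiness is immediate, since $\mathfrak{h}_{\mathfrak{i}}=[0:-1:1]$ lies in the set.

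For openness, I will characterise the image as a sublevel set of the determinant. The determinant of $\begin{bmatrix} a & b\\ c & -a\end{bmatrix}$ is $-a^2-bc$, a quadratic form of signature $(1,2)$. For the representative of $\mathfrak{h}_z$ above, $-a^2-bc = -x^2 + x^2+y^2 = y^2>0$. Conversely, suppose $[a:b:c]$ satisfies $-a^2-bc>0$; this condition is homogeneous of even degree, so it is well defined on projective classes. Then $bc<-a^2\le 0$, so $c\neq 0$ and we may rescale so that $c=1$. Setting $x=a$ gives $-x^2-b>0$, so $y:=\sqrt{-x^2-b}>0$ satisfies $b=-x^2-y^2$, and therefore $[a:b:c]=\mathfrak{h}_{x+\mathfrak{i}y}$ with $x+\mathfrak{i}y\in\HH$.

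It follows that the image is exactly $\{[X]\in\mathbb{P}(\mathfrak{sl}(2,\RR)) : \det X>0\}$. This is the set of elliptic directions, one of the two components of the complement of the conic $\det X=0$. It is open because it is the preimage under the quotient map of the open cone $\{X\neq 0:\det X>0\}$, which is open and saturated under scaling by $\RR\setminus\{0\}$.

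I expect no real obstacle. The only point needing care is checking that the projective quotient topology makes the "sign of $\det$" condition a well-defined open condition, and this is handled by the scale-invariance of $\det X>0$ noted above. As a byproduct, $z\mapsto\mathfrak{h}_z$ is a homeomorphism onto this open set, with inverse $[a:b:c]\mapsto a/c+\mathfrak{i}\sqrt{\det X}/|c|$. This may be what later sections use.
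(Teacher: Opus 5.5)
Your proof is correct and takes the same approach as the paper. The paper's set $S$ of nonzero multiples of $\begin{bmatrix} x & t\\ 1 & -x\end{bmatrix}$ with $t+x^2<0$ is exactly your cone $\{X : \det X>0\}$, and the paper likewise concludes $\{\mathfrak{h}_z\}=\mathbb{P}(S)$; your determinant description just makes explicit the openness of $S$, which the paper asserts without comment.
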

\begin{proof}
    We first note that the set 
    \begin{equation*}
        S
        :=
        \left\{ \lambda 
        \begin{bmatrix}
             x & t\\
            1 & -x 
        \end{bmatrix} 
        : \lambda \neq 0, ~ t +x^2 <0  \right\}
    \end{equation*}
    is a non-empty open subset of $\mathfrak{sl}(2,\mathbb{R})$ and the image of the map
    \begin{equation*}
        \HH \times (\RR \setminus \{0\}) \longrightarrow \mathfrak{sl}(2,\mathbb{R}), ~
        (x+\mathfrak{i}y, \lambda) \longmapsto 
        \lambda
        \begin{bmatrix}
           x & -x^2-y^2\\ 1 & -x
        \end{bmatrix}.
    \end{equation*}
    Thus $\{\mathfrak{h}_z \in \mathbb{P}(\mathfrak{sl}(2, \mathbb{R})) ~\vert ~ z\in \mathbb{H}\} = \mathbb{P}(S)$,
    proving the result.
\end{proof}

\begin{lemma}\label{lem: Lie -algebras-stabilizers1}
    For any $z \in \HH$ and $g\in \PSL(2,\RR)$ we have $g\mathfrak{h}_z g^{-1} = \mathfrak{h}_{g\cdot z}$.
\end{lemma}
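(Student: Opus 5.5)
The statement says that $\mathfrak{h}_z$ is the Lie algebra of the stabiliser of $z$, and that conjugating by $g$ moves that stabiliser to the stabiliser of $g\cdot z$. I would argue this conceptually rather than by matrix computation. For $X\in\mathfrak{sl}(2,\RR)$, the defining formula of $\eta$ in \Cref{eq:U} gives
\[
\eta(X,z)=\frac{d}{dt}\exp(tX)\cdot z\Big|_{t=0},
\]
so $\eta(X,z)$ is the infinitesimal action of $X$ at $z$. The plan is to show an equivariance identity,
\[
\eta(gXg^{-1},\,g\cdot z)=D_z(g)\,\eta(X,z),
\]
and then read off the result using that $D_z(g)$ is non-singular.

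To prove the equivariance, I use $\exp(t\,gXg^{-1})=g\exp(tX)g^{-1}$ in $\PSL(2,\RR)$. This gives
\[
\exp(t\,gXg^{-1})\cdot(g\cdot z)=g\cdot\bigl(\exp(tX)\cdot z\bigr).
\]
Differentiating at $t=0$ with the chain rule yields $\eta(gXg^{-1},g\cdot z)=D_z(g)\,\eta(X,z)$.

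With the identity in hand, $X\in\mathfrak{h}_z$ means $\eta(X,z)=0$. Since $D_z(g)$ is non-singular by \Cref{eq:dzg}, this holds if and only if $\eta(gXg^{-1},g\cdot z)=0$, i.e. $gXg^{-1}\in\mathfrak{h}_{g\cdot z}$. Hence $g\mathfrak{h}_zg^{-1}\subseteq\mathfrak{h}_{g\cdot z}$, and applying the same argument to $g^{-1}$ and $g\cdot z$ gives the reverse inclusion. Alternatively, both spaces are $1$-dimensional by \Cref{eq:hz}, so one inclusion already gives equality.

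Two points need care. The first is that conjugation by $g\in\PSL(2,\RR)$ is well defined on $\mathfrak{sl}(2,\RR)$, which holds because the sign ambiguity $\pm$ cancels in $gXg^{-1}$. The second is identifying $\eta$ with the derivative of the orbit map $t\mapsto \exp(tX)\cdot z$. That identification is a direct check: differentiate $(az+b)/(cz+d)$ along $\exp(tX)=\id+tX+O(t^2)$ to recover $b+2az-cz^2$. If one prefers to avoid these points, a fallback is direct verification. Write $g\cdot z=x'+\mathfrak{i}y'$, compute $g\begin{bmatrix}x&-x^2-y^2\\1&-x\end{bmatrix}g^{-1}$ for a generating set of $\PSL(2,\RR)$ (translations, dilations, and $z\mapsto -1/z$), and compare with \Cref{eq:hz}. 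That route is routine but tedious, so I would use the conceptual argument.
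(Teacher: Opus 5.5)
Your proposal is correct and is essentially the paper's own proof. The paper also derives $\eta(gXg^{-1},g\cdot z)=D_z(g)\,\eta(X,z)$ by applying the chain rule to $g\exp(tX)g^{-1}\cdot(g\cdot z)=g\exp(tX)\cdot z$, and concludes from the non-singularity of $D_z(g)$. Your explicit treatment of the reverse inclusion (via $g^{-1}$, or via the one-dimensionality in \Cref{eq:hz}) fills in a step the paper leaves implicit.
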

\begin{proof}
    Choose some $g\in \PSL(2,\RR)$ and some $X\in\mathfrak{h}_z$. Then by the chain rule we have
   \begin{equation*}
       \eta(g X g^{-1},g z)=\frac{d}{dt}(g\exp(tX)g^{-1}g z)=\frac{d}{dt}(g\exp(tX)z)=D_z(g) \frac{d}{dt}(\exp(tX)z)= D_z(g) \eta(X,z),
    \end{equation*}
    where $D_z(g)$ is the Jacobian given in \Cref{eq:dzg}.
    Since $D_z(g)$ is non-singular for any $z$ and $g$, this proves the result.
\end{proof}

\begin{lemma}\label{lem: eqfrom_lem_inf_motions}
    For any $z,w\in \mathbb{H}$ and $X \in \mathfrak{h}_{w}$, we have $\langle \dot{\gamma}_{z,w}(0), \eta(X, z)\rangle_{z} = \textup{\textbf{0}}$.
\end{lemma}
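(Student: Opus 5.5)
The idea is to read $\eta(X,\cdot)$ as the velocity field of the one-parameter group $\exp(tX)$, which fixes $w$. Distances to $w$ are therefore preserved, and differentiating gives the claim. The case $z=w$ needs separate handling, since $\dot{\gamma}_{z,z}$ is undefined; there $\eta(X,z)=\eta(X,w)=\mathbf{0}$ by \Cref{def:hz}, so the inner product vanishes under any convention. From now on assume $z\neq w$.

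Take the smooth curve $g_t=\exp(tX)$ in $\PSL(2,\RR)$, so that $g_0=\id$ and $\dot g_0=X$. Since $X\in\mathfrak{h}_w$ we have $\eta(X,w)=\mathbf{0}$. From this I want the stronger fact that $g_t\cdot w=w$ for all $t$. To get it, I would use \Cref{eq:hz}: $X$ is a multiple of $\begin{bmatrix} x & -x^2-y^2\\ 1 & -x\end{bmatrix}$ with $w=x+\mathfrak{i}y$. This matrix squares to $-y^2\,\id$, so $\exp(tX)$ is conjugate to a rotation about $w$ and hence fixes $w$. A second route avoids the computation: $t\mapsto g_t\cdot w$ is the integral curve of the vector field $\eta(X,\cdot)$, because $\frac{d}{dt}g_t\cdot w=\frac{d}{ds}\exp(sX)(g_t w)|_{s=0}=\eta(X,g_tw)$. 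It starts at a zero of that field, so by uniqueness of solutions it is constant.

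Next, $d_{\HH}(g_t z, w)=d_{\HH}(g_tz,g_tw)=d_{\HH}(z,w)$ for all $t$, since each $g_t$ is an isometry. Differentiating at $t=0$ and applying the chain rule as in the proof of \Cref{eq:trivial_inf_edge} gives
$$0=D_{(z,w)}(d_{\HH})\big(\eta(X,z),\eta(X,w)\big).$$
By \Cref{metric-metric}, this equals $\langle\dot\gamma_{z,w}(0),\eta(X,z)\rangle_z+\langle\dot\gamma_{w,z}(0),\eta(X,w)\rangle_w$. The second term vanishes because $\eta(X,w)=\mathbf{0}$, which leaves exactly the claimed identity.

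The only step that needs care is showing that $\exp(tX)$ genuinely fixes $w$, rather than merely having zero velocity there. Either the explicit computation from \Cref{eq:hz} or the ODE-uniqueness argument settles it. Everything else is a direct application of the chain rule and \Cref{metric-metric}.
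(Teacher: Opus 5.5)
Your argument is correct and is essentially the paper's proof: both differentiate $d_{\HH}(\exp(tX)z,\exp(tX)w)=d_{\HH}(z,w)$ at $t=0$, expand the derivative via \Cref{metric-metric}, and drop the $w$-term because $\eta(X,w)=\mathbf{0}$. The step you flag as needing care, that $\exp(tX)$ actually fixes $w$, is not needed: $d_{\HH}(g_tz,g_tw)$ is constant for every $X\in\mathfrak{sl}(2,\RR)$ simply because each $g_t$ is an isometry, and the hypothesis $X\in\mathfrak{h}_w$ is used only to make the second term vanish.
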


\begin{proof}
    Since $\eta(X,w) = \textbf{0}$, we have that
    \begin{align*}
        \Big\langle \dot{\gamma}_{z,w}(0), \eta(X, z)\Big\rangle_{z} &= \Big\langle \dot{\gamma}_{z,w}(0),  \eta(X, z) \Big\rangle_z + \Big\langle \dot{\gamma}_{w,z}(0),  \eta(X,w) \Big\rangle_w \\
        &= \frac{d}{dt}\Big(d_{\mathbb{H}}\big(\exp(t X) z, \exp(t X) w \big) \Big)\Big\vert_{t=0}\nonumber\\
        &= \frac{d}{dt}\Big(d_{\mathbb{H}}\big( z,w \big) \Big)\Big\vert_{t=0}=0. \qedhere
    \end{align*} 
\end{proof}

\subsection{Fuchsian groups and surface groups}
\label{sec: fuc surf groups}

A \textit{Fuchsian group} is a discrete subgroup of $\mathrm{PSL}(2,\mathbb{R})$.
One class of Fuchsian groups we are interested in is the class of surface groups. We define the \textit{abstract surface group (of genus $g\geq 2$)} to be the group $\Gamma_g$ given by the presentation.
\begin{equation*}
    \Gamma_g=\langle a_1,b_1,\dots,a_g,b_g:[a_1,b_1]\cdots[a_g,b_g]=\text{id}\rangle,
\end{equation*}
where $[a,b]$ denotes the commutator $aba^{-1}b^{-1}$ of $a$ and $b$. These groups are the fundamental groups of genus $g$ compact surfaces. 
By the Uniformisation Theorem \cite{Unif1,Unif2}, every compact Riemann surface of genus $g \geq 2$ with constant curvature is isometric to a quotient $\mathbb{H}/\Gamma$, where $\Gamma$ is an abstract surface group of genus $g$ acting freely and properly discontinuously\footnote{A group $G$ acts properly discontinuously on a topological space $X$ is for every $x\in X$ there is an open neighbourhood $U_x\subset X$ such that $g\cdot U_x\,\cap\,U_x=\emptyset$ for all $g\neq\text{id}$ in $G$.} on $\mathbb{H}.$
We say a subgroup $\Gamma \leq \PSL(2,\RR)$ is a \emph{surface group (of genus $g \geq 2$)} if it is isomorphic to $\Gamma_g$, co-compact (i.e.~$\mathbb{H}/\Gamma$ is compact) and the action on $\HH$ by $\Gamma$ is free and properly discontinuous.
Importantly, if $\Gamma$ is a surface group of genus $g \geq 2$,
then $\HH/ \Gamma$ is a compact Riemann surface of genus $g \geq 2$ with constant curvature: $\HH/\Gamma$ is a Riemann surface with constant curvature by \cite[Corollary 2.29]{lee2018introduction}, and, since the fundamental group of $\HH/\Gamma$ is $\Gamma_g$, the surface $\HH/\Gamma$ has genus $g$ by \cite[Theorem 2.6]{MR2895884}. Additionally, every surface group is a Fuchsian group by \cite[Proposition 2.19]{MR2895884}.

We later require the following two results regarding abelian Fuchsian groups and surface groups.

\begin{lemma}\label{lem:cyclic-iff-fix}
If $\Lambda$ is a subgroup of a Fuchsian group, then the following are equivalent:
\begin{enumerate}
    \item\label{abelian} $\Lambda$ is abelian;
    \item\label{cyclic} $\Lambda$ is cyclic;
    \item\label{fixed points} all non-identity elements of $\Lambda$ have the same fixed points when acting on $\HH \,\cup\, \mathbb{R}\,\cup\, \{\infty\}$; and
    \item\label{max_1_d_subgroup} $\Lambda$ is contained in a subgroup $\{\exp(tX) \in \PSL(2, \mathbb{R})~\vert~t\in \mathbb{R}\}$ for some $X\in \mathfrak{sl}(2, \mathbb{R})$.  
\end{enumerate}
\end{lemma}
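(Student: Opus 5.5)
We prove the cycle of implications \ref{cyclic}$\Rightarrow$\ref{abelian}$\Rightarrow$\ref{fixed points}$\Rightarrow$\ref{max_1_d_subgroup}$\Rightarrow$\ref{cyclic}, relying on the classification of non-identity elements of $\PSL(2,\RR)$ into elliptic, parabolic and hyperbolic types, together with the discreteness of $\Lambda$, which it inherits as a subgroup of a Fuchsian group. The implication \ref{cyclic}$\Rightarrow$\ref{abelian} is immediate.

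For \ref{abelian}$\Rightarrow$\ref{fixed points}, take non-identity commuting elements $g,h\in\Lambda$. Since $h$ commutes with $g$, it maps $\mathrm{Fix}(\tilde g)$ to itself. The set $\mathrm{Fix}(\tilde g)$ is either a single point of $\HH$ (elliptic), a single boundary point (parabolic), or a pair of boundary points (hyperbolic).
\begin{itemize}
\item If $\mathrm{Fix}(\tilde g)$ is a single point, then $h$ fixes it. Conjugating so that this point is $\mathfrak{i}$ or $\infty$ and computing the commutation relation directly shows that $h$ has the same type and the same fixed set as $g$.
\item If $g$ is hyperbolic, conjugate so that its fixed points are $0$ and $\infty$. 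Then $h$ either fixes both points, in which case it is diagonal and we are done, or swaps them. In the latter case $h$ is the elliptic order-two map $z\mapsto -c/z$, and commutation forces $\lambda=\lambda^{-1}$, so $g=\mathrm{id}$, a contradiction.
\item If $g$ is elliptic and $h$ is parabolic (or vice versa), the fixed-set comparison already gives a contradiction.
\end{itemize}
There is one further mixed case: $g$ parabolic fixing $\infty$ and $h$ hyperbolic fixing $\infty$ and some other point. Then $h$ cannot commute with the translation $g$, because $hgh^{-1}$ is the translation by $\lambda^2 t\neq t$. This case does not even require discreteness.

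For \ref{fixed points}$\Rightarrow$\ref{max_1_d_subgroup}, conjugate the common fixed set to a normal form.
\begin{itemize}
\item For the point $\mathfrak{i}$, the stabiliser is the rotation group $\mathrm{PSO}(2)=\{\exp(tX)\}$ with $X=\begin{bmatrix}0&-1/2\\1/2&0\end{bmatrix}$.
\item For $\{0,\infty\}$, the elements fixing both points are the diagonal matrices $\exp(t\,\mathrm{diag}(1,-1))$. Here we need $\lambda>0$, which holds after choosing the sign in $\PSL$.
\item For the single boundary point $\infty$, with no other fixed point, all non-identity elements are translations $\exp\left(t\begin{bmatrix}0&1\\0&0\end{bmatrix}\right)$.
\end{itemize}
Conjugating back, we use $g\exp(tX)g^{-1}=\exp(t\,gXg^{-1})$.

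For \ref{max_1_d_subgroup}$\Rightarrow$\ref{cyclic}, the one-parameter subgroup $\{\exp(tX)\}$ is the image of $\RR$ under a continuous homomorphism. It is either isomorphic to $\RR$ (the hyperbolic and parabolic cases) or to the circle $\RR/\ZZ$ (the elliptic case). The group $\Lambda$ is a discrete subgroup of this one-parameter group, since it is a discrete subgroup of $\PSL(2,\RR)$ and the one-parameter group is a closed embedded subgroup. Discrete subgroups of $\RR$ or of the circle are cyclic, which proves \ref{cyclic}. This is the only step that uses the Fuchsian hypothesis. The main obstacle is verifying closedness and embeddedness so that discreteness transfers. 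I would handle this concretely by noting that in each normal form the subgroup is a closed set of matrices, so the subspace topology coincides with the topology of $\RR$ or of $S^1$.
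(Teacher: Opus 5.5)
Your proof is correct, but it is organised differently from the paper's. The paper gets the equivalence of the abelian, cyclic and common-fixed-set conditions almost entirely by citing Katok (Theorems 2.3.6, 2.3.2 and 2.3.5). It proves cyclic $\Rightarrow$ one-parameter by choosing $X$ with $\exp(X)=g$ for a generator $g$. That step relies on surjectivity of $\exp\colon\mathfrak{sl}(2,\RR)\to\PSL(2,\RR)$, a non-trivial fact that fails for $\SL(2,\RR)$. It then closes the loop by noting that one-parameter subgroups are abelian.

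Your cycle instead shows, by explicit normal forms, that abelian $\Rightarrow$ common fixed set $\Rightarrow$ one-parameter holds for \emph{every} subgroup of $\PSL(2,\RR)$. You write the one-parameter group down directly as the stabiliser of the common fixed set, so surjectivity of $\exp$ is never needed. Discreteness enters exactly once, in one-parameter $\Rightarrow$ cyclic, and in effect you reprove the Katok results the paper quotes. The paper's version is shorter; yours is self-contained and makes clear exactly where the Fuchsian hypothesis matters.

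One simplification: the obstacle you flag at the end is not really there, because you do not need the one-parameter group to be closed or embedded. Set $\phi(t)=\exp(tX)$ with $X\neq 0$; then $\ker\phi$ is $\{0\}$ or $c\mathbb{Z}$. If $U$ is a neighbourhood isolating $\lambda\in\Lambda$ in $\PSL(2,\RR)$, the open set $\phi^{-1}(U)$ meets $\phi^{-1}(\Lambda)$ only in the discrete coset $\phi^{-1}(\lambda)$. Hence $\phi^{-1}(\Lambda)=d\mathbb{Z}$ is a discrete subgroup of $\RR$, and $\Lambda=\phi(d\mathbb{Z})$ is cyclic, using only continuity of $\phi$.
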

\begin{proof}
   \ref{abelian} $\implies$ \ref{cyclic} is \cite[Theorem 2.3.6]{katok1992fuchsian}, \ref{cyclic} $\implies$ \ref{fixed points} follows directly from \cite[Theorem 2.3.2 ]{katok1992fuchsian}, and \ref{fixed points} $\implies$ \ref{cyclic} is \cite[Theorem 2.3.5]{katok1992fuchsian}. Therefore, it suffices to show that \ref{cyclic} $\implies$ \ref{max_1_d_subgroup} and \ref{max_1_d_subgroup} $\implies$ \ref{abelian}. To see that \ref{cyclic} $\implies$ \ref{max_1_d_subgroup}, suppose $\Lambda = \langle g \rangle$ for some $g\in\Lambda$. Since $\exp$ is surjective, there is some $X\in \mathfrak{sl}(2, \mathbb{R})$ such that $\exp(X)= g$. Then $\{\exp(tX) ~\vert ~t\in \mathbb{R}\}$ contains $\Lambda$, and  \ref{max_1_d_subgroup} holds.
   Finally, we show that \ref{max_1_d_subgroup} $\implies$ \ref{abelian}. Let $\Lambda$ be contained in $\{\exp(tX) \in \PSL(2, \mathbb{R})~\vert~t\in \mathbb{R}\}$ for some $X\in \mathfrak{sl}(2, \mathbb{R})$. By \cite[Proposition 20.8(b)]{MR2954043}, 
   \begin{equation*}
        \exp(aX)\exp(bX)=\exp((a+b)X)=\exp(bX)\exp(aX) \qquad \text{ for all } a,b \in \RR.
   \end{equation*}
   From this it follows that $\Lambda$ is abelian. 
\end{proof}

\begin{lemma}\label{lem:noparabolic}
    Let $\Gamma \leq \PSL(2,\RR)$ be a co-compact Fuchsian group isomorphic to an abstract surface group of genus $g\geq 2$.
    Then $\Gamma$ is a surface group of genus $g$. Moreover, $\Gamma$ contains no parabolic or elliptic elements, and has no element of finite order.
\end{lemma}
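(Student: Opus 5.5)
The plan is to prove the three claims in the order: torsion-freeness, absence of elliptic elements, absence of parabolic elements, and finally that $\Gamma$ is a surface group. The most economical route is to lean on standard facts about abstract surface groups and co-compact Fuchsian groups, as in \cite{katok1992fuchsian} and \cite{MR2895884}.

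\textbf{Torsion-freeness.} The abstract group $\Gamma_g$ is torsion-free. It is the fundamental group of a closed aspherical surface: the universal cover is contractible, so the surface is a finite-dimensional $K(\Gamma_g,1)$. A group with a finite-dimensional classifying space has no nontrivial finite cyclic subgroup, since $\mathbb{Z}/n$ has nonzero cohomology in infinitely many degrees. Because $\Gamma \cong \Gamma_g$, $\Gamma$ has no non-identity element of finite order.

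\textbf{No elliptic elements.} In a Fuchsian group, every elliptic element has finite order. Its stabiliser of the fixed point is a discrete subgroup of the compact rotation group $\mathrm{SO}(2)/\{\pm \mathrm{id}\}$, hence finite (\cite[Theorem 2.2.?]{katok1992fuchsian}-type fact). So torsion-freeness rules out elliptic elements.

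\textbf{No parabolic elements.} This is where co-compactness is used: a co-compact Fuchsian group contains no parabolic elements \cite[Theorem 4.2.1]{katok1992fuchsian}. The idea behind that result is as follows. If $p$ were parabolic, conjugate it to $z\mapsto z+t$. The displacement $d_\HH(z,pz)$ tends to $0$ as $\im z\to\infty$. By compactness, the images of these points in $\HH/\Gamma$ accumulate, so one can translate them back into a compact fundamental region. This produces nontrivial elements of $\Gamma$ moving points of a compact set by arbitrarily small amounts, which contradicts discreteness.

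\textbf{$\Gamma$ is a surface group.} With no elliptic elements and $\Gamma$ discrete, the action on $\HH$ is free; it is properly discontinuous because $\Gamma$ is Fuchsian (\cite[Theorem 2.2.6]{katok1992fuchsian}). Co-compactness holds by hypothesis and $\Gamma\cong\Gamma_g$, so $\Gamma$ satisfies the definition of a surface group of genus $g$.

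The main obstacle is justifying the no-parabolic step rigorously. In writing it up I would simply cite \cite{katok1992fuchsian} rather than reprove it.
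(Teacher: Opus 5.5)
Your proposal is correct and follows essentially the same route as the paper: torsion-freeness excludes elliptic elements (an elliptic element of a discrete group has finite order), so the action on $\HH$ is free and, with proper discontinuity and co-compactness, $\Gamma$ meets the definition of a surface group, while parabolics are excluded by the result in \cite{katok1992fuchsian} that co-compact Fuchsian groups have none. The one real difference is how you get torsion-freeness of $\Gamma_g$: you use the finite-dimensional aspherical $K(\Gamma_g,1)$ and cohomological dimension, whereas the paper uses the free, properly discontinuous action of $\Gamma_g$ on $\HH$ coming from the Uniformisation Theorem; both are valid. Your heuristic for the parabolic case also needs a further step, since small displacement alone does not contradict discreteness: some conjugate of $p$ must recur and would then fix a limit point. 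This is not load-bearing, because you cite the result rather than reprove it.
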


\begin{proof}
    By \cite[Proposition 2.19]{MR2895884},
    $\Gamma$ acts properly discontinuously on $\HH$.
    By the Uniformisation Theorem,
    the abstract surface group isomorphic to $\Gamma$ can act on $\HH$ both freely and properly discontinuously (although the action of $\Gamma$ as a subgroup of $\PSL(2,\RR)$ may not be).
    Thus $\Gamma$ contains no finite order elements by \cite[Corollary 2.24]{MR2895884}.
    Now suppose for contradiction that $h \in \Gamma$ is elliptic.
    By conjugating $\Gamma$ (which does not change our initial assumptions),
    we may suppose that 
    \begin{equation*}
        h = 
        \begin{bmatrix}
            \cos(\theta) & -\sin(\theta)\\
            \sin(\theta) & \cos(\theta)\\
        \end{bmatrix}
    \end{equation*}
    for some $\theta \in [0, 2 \pi)$.
    Since $h$ cannot have finite order, $2 \pi / \theta$ is not a rational value.
    Since $\Gamma$ is closed,
    we thus have that
    \begin{equation*}
        \overline{\langle h \rangle} = \left\{ \begin{bmatrix}
            \cos(\phi) & -\sin(\phi)\\
            \sin(\phi) & \cos(\phi)\\
        \end{bmatrix} : \phi \in [0,2\pi)  \right\} \subseteq \Gamma.
    \end{equation*}
    However, this contradicts that $\Gamma$ is discrete.
    Hence $\Gamma$ contains no elliptic elements.
    Since elliptic isometries are the only elements of $\PSL(2,\RR)$ that fix a point in $\HH$,
    it follows that $\Gamma$ acts freely on $\HH$ also.
    Hence $\Gamma$ is a surface group of genus $g$.

    Since $\mathbb{H}/\Gamma$ is a compact orientable surface, that $\Gamma$ contains no parabolic elements follows from \cite[Corollary 4.2.7]{katok1992fuchsian}, which states that if $\Gamma'$ is a co-compact Fuchsian group (i.e.~$\mathbb{H}/\Gamma'$ is compact), then $\Gamma'$ has no parabolic elements.    
\end{proof}

\section{Hyperbolic symmetric rigidity}\label{sec:hypsymrig}

Our main objects of study are \emph{$\Gamma$-symmetric frameworks} for some fixed choice of a Fuchsian group $\Gamma$:
pairs $(\tilde{G},\tilde{p})$, where $\tilde{G}=(\tilde{V},\tilde{E})$ is a $\Gamma$-symmetric graph and $\tilde{p}: \tilde{V}\rightarrow \HH$ is a map such that for any $g\in \Gamma$, one has $\tilde{p}(g \cdot v) = g \cdot \tilde{p}(v)$. 
Since $\Gamma$-symmetric frameworks of $\tilde{G}$ are determined by the positions of their vertex orbits, we will directly define rigidity concepts throughout the paper in the language of gain graphs.

\subsection{Rigid joint-configurations}

We begin with the following definition.
\begin{definition}
	Let $\Gamma$ be a Fuchsian group and let $(G,\psi)$ be a $\Gamma$-gain graph. 
    Any map $p: V(G)\rightarrow \HH$ is said to be a \emph{realisation} of $(G,\psi)$.
    A triple $(G, \psi,p)$ consisting of a $\Gamma$-gain graph $(G, \psi)$ and a realisation $p$ of $(G,\psi)$ is called a \emph{$\Gamma$-joint-configuration} (or just \emph{joint-configuration} when the group is clear).
    We identify the space of all realisations of $(G,\psi)$ with $\HH^{V(G)}$. 
\end{definition}

When $\Gamma$ is a Fuchsian group, any joint-configuration $(G, \psi,p)$ can be lifted to a $\Gamma$-symmetric framework $(\tilde{G},\tilde{p})$ by setting $\tilde{G}$ to be the $\Gamma$-lift of $(G,\psi)$ and $\tilde{p}(g\cdot v) = g\cdot p(v)$ for each vertex $v \in V(G)$ and $g \in \Gamma$. 
Note that two different joint-configurations can lift to the same $\Gamma$-symmetric framework;
if they do, then we say that they are \emph{geometrically equivalent}.
If two joint-configurations are geometrically equivalent then their gain graphs are equivalent.

\begin{definition}
Let $\Gamma$ be a Fuchsian group, and let $(G, \psi)$ be a finite $\Gamma$-gain graph. For any edge $e$ given by $u\xrightarrow{g}v$ and any $p \in \HH^{V(G)}$, let 
\begin{equation*}
f_e(p) =d_{\HH}\big(p(u), g\cdot p(v) \big)
\end{equation*}
and define the \emph{rigidity map}
\begin{equation*}
f_{G, \psi} : \mathbb{H}^{V(G)} \longrightarrow \mathbb{R}^{E(G)} \hspace{.6cm} p \longmapsto \big(f_e(p) \big)_{e\in E(H)}.
\end{equation*}
\end{definition}

Note that the rigidity map $f_{G,\psi}$ is not invariant under the action $p \mapsto g\cdot p := (g\cdot p(v))_{v \in V(G)}$ for all isometries $g: \HH \rightarrow \HH$.
The isometries for which this is true can be characterised as follows: 

\begin{proposition}\label{prop:invariant}
    Let $\Gamma$ be a Fuchsian group and let $g: \HH \rightarrow \HH$ be an isometry.
    Then the following are equivalent:
    \begin{enumerate}
        \item For every $\Gamma$-gain graph $(G,\psi)$, we have $f_{G,\psi}(g\cdot p) = f_{G,\psi}(p)$ for each $p \in \HH^{V(G)}$.
        \item $g$ commutes with every element of $\Gamma$.
    \end{enumerate}
\end{proposition}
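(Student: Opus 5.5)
The approach is to prove each implication directly from the definition of $f_e$ together with the fact that $d_{\HH}$ is invariant under all isometries of $\HH$, including orientation-reversing ones.

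\emph{(2) $\Rightarrow$ (1).} Suppose $g$ commutes with every $h\in\Gamma$. Let $e$ be an edge $u\xrightarrow{h}v$. Then
\begin{equation*}
f_e(g\cdot p)=d_{\HH}\big(g p(u),\, h g p(v)\big)=d_{\HH}\big(g p(u),\, g h p(v)\big)=d_{\HH}\big(p(u),\, h p(v)\big)=f_e(p).
\end{equation*}
The middle equality uses commutation, and the next one uses that $g$ is an isometry. Applying this to every edge gives (1).

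\emph{(1) $\Rightarrow$ (2).} We argue by contraposition and test (1) on small gain graphs. Fix $h\in\Gamma$; we may assume $h\neq\text{id}$. Take the gain graph consisting of a single vertex $v$ with one loop of gain $h$. By (1), for every $z\in\HH$,
\begin{equation*}
d_{\HH}(gz, hgz)=d_{\HH}(z,hz).
\end{equation*}
Writing $w=gz$, this says that $d_{\HH}(w,\,g^{-1}hg\,w)=d_{\HH}(w,hw)$ for all $w\in\HH$. So the two isometries $h$ and $h':=g^{-1}hg$ have the same displacement function.

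A single loop does not by itself force $h'=h$. Any isometry $k$ commuting with $h$ gives $h'=h$, but a conjugating isometry could a priori change $h$ while preserving its displacement function. To close this gap, the plan is to show that an isometry is determined by its displacement function $w\mapsto d(w,hw)$. The case analysis is as follows.
\begin{itemize}
\item If $h$ is hyperbolic, the displacement is minimised exactly on the axis and grows symmetrically with distance from it. Hence $h'$ has the same axis and the same translation length.
\item Equal displacement at a point off the axis then forces the same translation direction. Indeed, $h$ and $h^{-1}$ have the same displacement function, so to distinguish them we use a second edge.
\end{itemize}

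For that second edge, take two vertices $u,v$ and an edge $u\xrightarrow{h}v$. Invariance then gives $d(gz,hgz')=d(z,hz')$ for all $z,z'$. Setting $w=gz$ and $w'=gz'$, this reads $d(w,hw')=d(g^{-1}w,\,hg^{-1}w')=d(w,\,g h g^{-1}w')$ for all $w,w'$. Choosing $w=ghg^{-1}w'$ gives $d(w,hw')=0$, so $hw'=ghg^{-1}w'$ for every $w'$. Hence $h=ghg^{-1}$. This simpler argument makes the displacement analysis unnecessary, so the main step is this two-vertex choice, and the remaining work is just the substitution computed carefully.

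We also check that the edge is a legitimate gain graph. A non-loop edge carries no restriction, so every $h\in\Gamma$, including $\text{id}$, is allowed. Since $h$ was arbitrary, $g$ commutes with every element of $\Gamma$, which is (2). Note that $g$ may be orientation-reversing; this does not matter, because only the isometry property and composition were used.
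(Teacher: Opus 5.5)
Your proof is correct and is essentially the paper's argument. The direction (2)$\Rightarrow$(1) is identical, and for (1)$\Rightarrow$(2) the paper likewise tests a single non-loop edge with gain $h$, rewrites invariance as $d_{\HH}(x,(g^{-1}hg)\cdot y)=d_{\HH}(x,h\cdot y)$, and sets $x=h\cdot y$ to force a zero distance, just as your substitution $w=ghg^{-1}w'$ does. The loop and displacement-function discussion is unnecessary, and its case analysis was incomplete anyway, so you can simply delete it.
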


\begin{proof}
    If $g$ commutes with $\Gamma$, then for an arbitrary edge $u\xrightarrow{h}v$ and any $p \in \HH^{V(G)}$ we compute
    \begin{equation*}
         d_{\HH}\big(g\cdot p(u),  (hg)\cdot p(v)\big) = d_{\HH}\big(g\cdot p(u), (gh)\cdot p(v) \big) = d_{\HH}\big(p(u),h\cdot p(v) \big),
    \end{equation*}
    and hence $f_{G,\psi}$ is invariant under $g$. 

    To prove the converse, suppose for a contradiction there exists $h \in \Gamma$ such that $gh \neq hg$ but 
    \begin{equation}\label{eq:invariant}
        d_{\HH}(x, (g^{-1}hg)\cdot y ) = d_{\HH}(g\cdot x,  (hg)\cdot y) = d_{\HH}(x,h\cdot y )
    \end{equation}
    for all $x,y \in \HH$.
    If we set $x = h\cdot y$, then the right hand side of \Cref{eq:invariant} is zero but the left hand side is not zero.
    This completes the proof. 
\end{proof}

It follows from \Cref{prop:invariant}
that the subgroup of orientation-preserving isometries $$ \left\{g \in \PSL(2,\mathbb{R}): f_{G,\psi} (g\cdot p) = f_{G,\psi}(p) \mbox{ for all } p\in \mathbb{H}^{V(G)} \right\}$$ is exactly the centraliser of $\Gamma$,
which we denote here by $C_{\PSL(2,\mathbb{R})}(\Gamma)$.
\Cref{prop:invariant} also justifies the following definition for rigidity in our setting.

\begin{definition}\label{def:rigiditysymm}
Let $\Gamma$ be a Fuchsian group.
A joint-configuration $(G, \psi, p)$ is \emph{locally $\Gamma$-symmetrically rigid} if there exists an open neighbourhood $U\subseteq \mathbb{H}^{V(G)}$ of $p$ such that, if $q\in f_{G, \psi}^{-1}(f_{G,\psi}(p)) \,\cap\, U$,  then there exists $g \in C_{\PSL(2,\mathbb{R})}(\Gamma)$  such that $g\cdot q =p$. 
Otherwise, we say $(G, \psi, p)$ is \emph{locally $\Gamma$-symmetrically flexible}.
\end{definition}

\subsection{The orbit rigidity matrix}
Given an edge $e=u\xrightarrow{g}v$ of a joint-configuration $(G,\psi,p)$, for any $X \in \prod_{v\in V(G)}T_{p(v)}\mathbb{H}^{V(G)} = (\mathbb{R}^2)^{V(G)}$,
\begin{equation}\label{eq:edgederivative}
    D_p f_e (X) = \Big\langle \dot{\gamma}_{p(u), g\cdot p(v)}(0), X(u) \Big\rangle_{p(u)} + \Big\langle \dot{\gamma}_{p(v), g^{-1}\cdot p(u)}(0), X(v) \Big\rangle_{p(v)}.
\end{equation}
Hence the derivative $D_p(f_{G,\psi})$ of the rigidity map $f_{G,\psi}$ at $p$ (with $p(v) = (x_v,y_v)$ for each vertex $v\in V$) can be represented using a matrix of the form:
\begin{equation*}
\kbordermatrix{
&&&&u&&&&v&&& \\
u\xrightarrow{g}v &0&\cdots&0& \frac{1}{y_u^2} \dot{\gamma}_{p(u), g\cdot p(v)}(0)&0&\cdots 
&0& \frac{1}{y_v^2} \dot{\gamma}_{p(v), g^{-1}\cdot p(u)}(0)&0&\cdots&0 \\
v\xrightarrow{g}v &0&\cdots&0& 0&0&\cdots 
&0&\frac{1}{y_v^2}(\dot{\gamma}_{p(v), g\cdot p(v)}(0) + \dot{\gamma}_{p(v), g^{-1}\cdot p(v)}(0)) &0&\cdots&0}.
\end{equation*}
By abuse of notation, we will often consider $D_p(f_{G,\psi})$ to be its matrix representation.
We can simplify $D_p(f_{G,\psi})$ by multiplying each column associated to $v$ by $y_v^2$ to obtain the \emph{orbit rigidity matrix} $O(G, \psi, p)$ of $(G,\psi,p)$:
\begin{equation*}
    \kbordermatrix{
    &&&&u&&&&v&&& \\
    u\xrightarrow{g}v &0&\cdots&0&  \dot{\gamma}_{p(u), g\cdot p(v)}(0)&0&\cdots 
    &0&\dot{\gamma}_{p(v), g^{-1}\cdot p(u)}(0)&0&\cdots&0 \\
    v\xrightarrow{g}v &0&\cdots&0& 0&0&\cdots 
    &0&\dot{\gamma}_{p(v), g\cdot p(v)}(0) + \dot{\gamma}_{p(v), g^{-1}\cdot p(v)}(0)&0&\cdots&0}.
\end{equation*}
Since column multiplication does not affect the image of a matrix, the matrix representation of $D_p(f_{G,\psi})$ and the orbit rigidity matrix $O(G,\psi,p)$ have the same rank and left kernel.

The rank and left kernel of $O(G, \psi, p)$ are invariant under switching operations.
For example, suppose we perform a switching at $v$ with gain $g$ (see \Cref{def:gainswitch}) to obtain a new $\Gamma$-gain graph $(G,\psi')$.
Let $\phi : \mathbb{H}^{V(G)} \rightarrow \mathbb{H}^{V(G)}$ be the map given by $\phi(p)(v) = g\cdot p(v)$ and $\phi(p)(u) = p(u)$ for each $u \neq v$. Then,
we obtain the following commutative diagram:
\begin{center}       
    \begin{tikzpicture}
        \node[] (r)at (0,0) {$\mathbb{H}^{V(G)}$};
        \node[] (p) at (4,0) {$\mathbb{H}^{V(G)}$};
        \node[] (c) at (4,-2) {$\mathbb{R}^{E(G)}$};
        \draw[->] (r) to node[above] {$\phi$} (p);
        \draw[->] (r) to node[below] {$f_{G,\psi}$} (c);
        \draw[->] (p) to node[right] {$f_{G,\psi'}$} (c);
    \end{tikzpicture}
\end{center}
Using the chain rule, we see that $D_pf_{G,\psi} = D_{\phi(p)} f_{G,\psi'} \circ D_{p} \phi$ for each $p \in \mathbb{H}^{V(G)}$,
and so (since $\phi$ is a diffeomorphism) the image of $D_pf_{G,\psi}$ is the same as the image of $D_{\phi(p)} f_{G,\psi'}$.
Hence, the matrices $O(G, \psi, p)$ and $O(G, \psi', \phi(p))$ have the same rank and left kernel. 

In the same way that there exists a one-to-one map between `trivial motions' of frameworks in $d$-dimensional Euclidean space and continuous paths in the group of $d$-dimensional Euclidean isometries, it follows from \Cref{prop:invariant} that continuous paths in the centraliser $C_{\PSL(2,\mathbb{R})}(\Gamma)$ are the correct notion for `trivial motions' in our current setting.
Similarly, the correct notion for `trivial infinitesimal motions' are the elements of the tangent space of the Lie group $C_{\PSL(2,\mathbb{R})}(\Gamma)$ at the identity, which we now denote by $\mathcal{T}_\Gamma$.
By considering the constraint conditions for smooth paths in $C_{\PSL(2,\mathbb{R})}(\Gamma)$ we obtain the following equality:
\begin{equation}\label{eq:tgamma}
    \mathcal{T}_\Gamma = \{ A \in \mathfrak{sl}(2, \mathbb{R}) : Ag = gA \mbox{ for all } g \in \Gamma \}.
\end{equation}
From this, we define the \emph{space of trivial infinitesimal motions of $(G,\psi,p)$} to be the set $$\mathcal{T}_\Gamma(p) = \left\{W_{v} \in \mathbb{R}^{2|V(G)|} : W_v= \eta(X, p(v)) \textup{ for all }X\in \mathcal{T}_\Gamma \right\}.$$ 

\begin{lemma}\label{lem:centraliser_dim}
    Let $\Gamma$ be a non-trivial Fuchsian group with centraliser $C_{\PSL(2,\mathbb{R})}(\Gamma)$.
    Then either $\Gamma$ is cyclic and $\dim C_{\PSL(2,\mathbb{R})}(\Gamma) = 1$,
    or $\dim C_{\PSL(2,\mathbb{R})}(\Gamma) = 0$.
\end{lemma}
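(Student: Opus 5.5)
The dimension of the Lie group $C_{\PSL(2,\RR)}(\Gamma)$ equals the dimension of its Lie algebra $\mathcal{T}_\Gamma$ from \Cref{eq:tgamma}. So I will compute the linear subspace of $\mathfrak{sl}(2,\RR)$ consisting of matrices commuting with every $g\in\Gamma$, using the classification of non-identity elements of $\PSL(2,\RR)$. Note that the conditions $Ag=gA$ and $A(-g)=(-g)A$ coincide, so the condition is independent of the choice of lift in $\SL(2,\RR)$.

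First I fix a single non-identity $h\in\Gamma$ and compute its centraliser $\mathfrak{c}(h)=\{A\in\mathfrak{sl}(2,\RR): Ah=hA\}$. After conjugating, which is an automorphism of $\mathfrak{sl}(2,\RR)$ and preserves dimensions, $h$ takes one of three forms.
\begin{itemize}
\item If $h$ is hyperbolic, so $h=\mathrm{diag}(\lambda,\lambda^{-1})$ with $\lambda\neq\pm1$, then a direct computation forces the off-diagonal entries of $A$ to vanish, giving $\mathfrak{c}(h)=\RR\,\mathrm{diag}(1,-1)$.
\item If $h$ is parabolic, so $h$ is the upper unipotent matrix with $t\neq 0$, then $\mathfrak{c}(h)=\RR\begin{bmatrix}0&1\\0&0\end{bmatrix}$.
\item If $h$ is elliptic, a rotation by $\theta\notin\pi\mathbb{Z}$, then $\mathfrak{c}(h)$ is spanned by the rotation generator $\begin{bmatrix}0&-1\\1&0\end{bmatrix}$.
\end{itemize}
In each case $\mathfrak{c}(h)$ is one-dimensional and equals $\RR X_h$, where $\exp(tX_h)$ is the one-parameter subgroup containing $h$. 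Since $\Gamma$ is non-trivial, $\mathcal{T}_\Gamma\subseteq\mathfrak{c}(h)$, and therefore $\dim\mathcal{T}_\Gamma\le 1$.

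Next I split into two cases.
\begin{itemize}
\item Suppose $\Gamma$ is cyclic, generated by $h$. Then every element of $\Gamma$ is a power of $h$, so it commutes with $X_h$, since $X_h$ commutes with $\exp(X_h)$-type elements. Hence $\mathcal{T}_\Gamma=\RR X_h$ and the dimension is $1$. Alternatively, \Cref{lem:cyclic-iff-fix}(4) directly gives a one-parameter subgroup containing $\Gamma$, and its generator commutes with all of $\Gamma$.
\item Suppose $\Gamma$ is not cyclic. If $\dim\mathcal{T}_\Gamma=1$, then $\mathcal{T}_\Gamma=\RR X$ for some $X$, and $\mathcal{T}_\Gamma=\mathfrak{c}(g)$ for every non-identity $g\in\Gamma$. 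By the first step, every non-identity $g\in\Gamma$ then lies in the one-parameter subgroup $\{\pm\exp(tX)\}$ determined by $X$. By \Cref{lem:cyclic-iff-fix}, (4)$\Rightarrow$(2), this makes $\Gamma$ cyclic, a contradiction. Hence $\dim\mathcal{T}_\Gamma=0$.
\end{itemize}

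The main obstacle is justifying that $h$ lies in the one-parameter subgroup $\exp(\RR X_h)$ whenever $X_h$ spans $\mathfrak{c}(h)$, up to the sign ambiguity in $\PSL$. I would handle it using the same normal forms: in the conjugated coordinates, each $h$ is visibly $\exp(sX_h)$ for a suitable real $s$. In the hyperbolic case with negative $\lambda$, the sign is absorbed by passing to $\PSL$. This also covers $\theta=\pi$ in the elliptic case, where $h$ is $\exp(\tfrac{\pi}{2}X_h)$ up to sign.
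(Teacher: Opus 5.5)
Your proof is correct and follows essentially the same route as the paper. You first show $\mathcal{T}_{\langle h\rangle}$ is one-dimensional for every non-identity $h$, using the elliptic/parabolic/hyperbolic normal forms where the paper uses the rank of an explicit $3\times 3$ linear system; you then argue that a one-dimensional $\mathcal{T}_\Gamma$ would place all of $\Gamma$ in a single one-parameter, hence abelian, subgroup, forcing $\Gamma$ to be cyclic by \Cref{lem:cyclic-iff-fix}. The only blemish is your closing remark about $\theta=\pi$: under your own parametrisation that matrix is $-\mathrm{id}$, the identity in $\PSL(2,\RR)$, and in fact no sign issue arises in the elliptic case, since $h=\exp(\theta X_h)$ exactly.
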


\begin{proof}
    We first show that for any $g \in \PSL(2,\mathbb{R})$ such that $g\neq \text{id}$, $\dim \mathcal{T}_{\langle g \rangle} = 1$ (so $\dim C_{\PSL(2,\mathbb{R})}(\langle g \rangle) = 1$).
    Choose any pair
    \begin{equation*}
        g =
        \begin{bmatrix}
            g_{11} & g_{12} \\
            g_{21} & g_{22}
        \end{bmatrix} \in \PSL(2,\mathbb{R}) , \qquad 
        A =
        \begin{bmatrix}
            A_{11} & A_{12} \\
            A_{21} & -A_{22}
        \end{bmatrix}\in\mathfrak{sl}(2,\mathbb{R}).
    \end{equation*}
    By removing any repeated equations, we see that $Ag-gA=\mathbf{0}_{2 \times 2}$ if and only if
    \begin{equation*}
        - g_{21} A_{12} + g_{12} A_{21}  = 0, \qquad 2 g_{21} A_{11} + (g_{22} -g_{11}) A_{21} = 0, \qquad - 2g_{12} A_{11} + (g_{11} - g_{22}) A_{12} = 0.
    \end{equation*}
    It follows that $\dim \mathcal{T}_{\langle g \rangle}$ is exactly the nullity of the matrix
    \begin{equation*}
        M(g) :=
        \begin{bmatrix}
            0 & -g_{21} & g_{12} \\
            2 g_{21} & 0 & -(g_{11} - g_{22}) \\
            -2 g_{12} & g_{11} - g_{22} & 0
        \end{bmatrix}.
    \end{equation*}
    Since $g \in \PSL(2,\mathbb{R})$ and
    \begin{equation}\label{eq:hg is a}
        A = 
        \begin{bmatrix}
            g_{11}-g_{22} & 2g_{12} \\
            2g_{21} & -g_{11}+g_{22}
        \end{bmatrix}
    \end{equation} 
    is a solution we have $0 <\rank M(g) <  3$.
    As either $g_{12}\neq 0$, $g_{21} \neq 0$ or $g_{11} \neq g_{22}$,
    it is easy to check that $\rank M(g) \neq 1$.
    Hence $\rank M(g) = 2$ and $\dim C_{\PSL(2,\mathbb{R})}(\langle g \rangle) = 1$.

    Now suppose $\Gamma$ is not cyclic. 
    Assume, for contradiction, that $\dim C_{\PSL(2,\mathbb{R})}(\Gamma )=1$.
    By \Cref{lem:cyclic-iff-fix}, there exist $g,h \in \Gamma$ such that $\langle g,h \rangle$ is non-abelian.
    Given any $X_g \in \mathfrak{sl}(2,\mathbb{R})$ that satisfies $\exp(X_g) = g$ (guaranteed to exist by the surjectivity of $\exp$ in our setting),
    the connected subgroup $\{\exp(tX_g) : t \in \mathbb{R}\}$ contains both $\textrm{id}$ and $g$.
    Hence the connected component of $C_{\PSL(2,\mathbb{R})}(\langle g \rangle )$ containing id also contains $g$.
    Similarly, the connected component of $C_{\PSL(2,\mathbb{R})}(\langle h \rangle )$ containing id also contains $h$.
    As $C_{\PSL(2,\mathbb{R})}(\Gamma )$ is 1-dimensional and closed in both $C_{\PSL(2,\mathbb{R})}(\langle g \rangle )$ and $C_{\PSL(2,\mathbb{R})}(\langle h \rangle )$,
    its connected component containing id must also contain both $g$ and $h$.
    However, this contradicts the fact that neither $g$ nor $h$ are contained in $C_{\PSL(2,\mathbb{R})}(\Gamma )$.
\end{proof}

\begin{remark}
    We note that \Cref{lem:centraliser_dim} is not true for discrete isometry groups for the Euclidean plane.
    For example, if we choose $\Gamma$ to be the discrete subgroup of translations generated by the vectors $(1,0),(0,1)$,
    then its centraliser contains the 2-dimensional subgroup of all translations.
\end{remark}

\begin{lemma}\label{lem:centraliser_triv}
    Let $\Gamma$ be a Fuchsian group and let $(G,\psi,p)$ be a $\Gamma$-joint-configuration.
    Then $\mathcal{T}_\Gamma(p)$ is a linear subspace of $\ker D_p f_{G,\psi}$.
    Moreover, $\dim \mathcal{T}_\Gamma(p)=\dim C_{\PSL(2,\mathbb{R})}(\Gamma)$ if and only if one of the following holds:
    \begin{itemize}
        \item $\Gamma$ is non-trivial and there exist $v \in V(G)$ such that $p(v)$ is not a fixed point of $C_{\PSL(2,\mathbb{R})}(\Gamma)$,
        \item there exist $u,v \in V(G)$ such that $p(u) \neq p(v)$, or
        \item $\Gamma$ is not cyclic.
    \end{itemize}
\end{lemma}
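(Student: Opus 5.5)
The plan is to handle the two assertions separately. Linearity comes directly from the map $X\mapsto (\eta(X,p(v)))_{v\in V(G)}$. The kernel inclusion will follow from \Cref{eq:trivial_inf_edge} combined with equivariance of $\eta$. The dimension statement reduces to injectivity of that same linear map, which we then check case by case using \Cref{lem:centraliser_dim} and \Cref{eq:hz}.

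\emph{Subspace of the kernel.} The map $\Phi_p:\mathcal{T}_\Gamma\to(\RR^2)^{V(G)}$, $X\mapsto(\eta(X,p(v)))_v$, is linear because $\eta$ is linear in its first argument by \Cref{eq:U}. Its image is $\mathcal{T}_\Gamma(p)$, so $\mathcal{T}_\Gamma(p)$ is a linear subspace.

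Fix $X\in\mathcal{T}_\Gamma$ and a non-loop edge $e=u\xrightarrow{g}v$. As in the proof of \Cref{lem: Lie -algebras-stabilizers1}, the chain rule gives
\[
\eta(gXg^{-1},g\cdot z)=D_z(g)\,\eta(X,z).
\]
Since $gXg^{-1}=X$, this becomes $\eta(X,g\cdot p(v))=D_{p(v)}(g)\,\eta(X,p(v))$. By \Cref{eq:isometries_metric_2}, $D_{p(v)}(g)\,\dot\gamma_{p(v),g^{-1}p(u)}(0)=\dot\gamma_{g p(v),p(u)}(0)$. Applying \Cref{eq:isometries_metric_1}, the second term of \Cref{eq:edgederivative} equals $\langle\dot\gamma_{gp(v),p(u)}(0),\eta(X,gp(v))\rangle_{gp(v)}$. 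Hence
\[
D_pf_e(\Phi_p X)=D_{(p(u),gp(v))}(d_\HH)\big(\eta(X,p(u)),\eta(X,gp(v))\big),
\]
which vanishes by \Cref{eq:trivial_inf_edge}. Loops $v\xrightarrow{g}v$ are handled the same way: each of the two summands in the loop row transforms identically, and the combined expression vanishes.

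\emph{Dimension.} Since $\mathcal{T}_\Gamma$ is the Lie algebra of $C_{\PSL(2,\RR)}(\Gamma)$, we have $\dim\mathcal{T}_\Gamma=\dim C_{\PSL(2,\RR)}(\Gamma)$. So equality of dimensions holds if and only if $\Phi_p$ is injective, i.e.\ if and only if
\[
\mathcal{T}_\Gamma\cap\bigcap_{v}\mathfrak{h}_{p(v)}=\{0\}.
\]
We split according to \Cref{lem:centraliser_dim}.
\begin{itemize}
\item If $\Gamma$ is not cyclic, then $\mathcal{T}_\Gamma=0$ and injectivity is automatic.
\item If $\Gamma$ is trivial, then $\mathcal{T}_\Gamma=\mathfrak{sl}(2,\RR)$ and the condition is $\bigcap_v\mathfrak{h}_{p(v)}=0$. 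By \Cref{eq:hz}, each $\mathfrak{h}_z$ is a line, and distinct $z$ give distinct lines because $z$ is read off from the normalised matrix. So the intersection is zero exactly when $p$ takes at least two distinct values.
\item If $\Gamma$ is non-trivial cyclic, then $\mathcal{T}_\Gamma=\RR X$ for a single $X\ne0$. Injectivity fails exactly when $X\in\mathfrak{h}_{p(v)}$ for every $v$, i.e.\ when every $p(v)$ is fixed by the one-parameter group $\exp(tX)$. Two distinct points of $\HH$ cannot both be fixed by a non-trivial one-parameter subgroup, since its non-identity elements are elliptic with a single fixed point in $\HH$. This is consistent with the second bullet of the statement.
\end{itemize}

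\emph{Main obstacle.} The delicate step is identifying ``fixed by $\exp(tX)$'' with ``fixed point of $C_{\PSL(2,\RR)}(\Gamma)$''. For this I would use that the centraliser of a non-identity element of $\PSL(2,\RR)$ is the one-parameter subgroup through it, i.e.\ it is connected. One verifies this by conjugating to the normal forms listed in the classification of isometries and computing directly. With that in hand, $C_{\PSL(2,\RR)}(\Gamma)=\{\exp(tX)\}$, and the two notions coincide.

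Finally, the three bullet conditions as stated overlap and are disjunctive, so the last task is to check that their disjunction matches the case analysis above.
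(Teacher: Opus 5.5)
Your proposal is correct and follows essentially the same route as the paper. Both arguments obtain the kernel inclusion from invariance of edge lengths under the one-parameter group $\exp(tX)$ commuting with $\Gamma$; you phrase this infinitesimally via $\eta(X,g\cdot z)=D_z(g)\,\eta(X,z)$, where the paper uses a path in $C_{\PSL(2,\RR)}(\Gamma)$. Both then reduce the dimension statement to whether some non-zero $X\in\mathcal{T}_\Gamma$ lies in every $\mathfrak{h}_{p(v)}$, with the same case split using that a non-identity isometry fixes at most one point of $\HH$.

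Your explicit check that the centraliser of a non-identity element of $\PSL(2,\RR)$ is a connected one-parameter subgroup fills in a step the paper leaves implicit. That step identifies ``fixed by $\exp(tX)$'' with ``fixed point of $C_{\PSL(2,\RR)}(\Gamma)$''. It also holds for the order-two elliptic element, whose centraliser is the full stabiliser of its fixed point.
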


\begin{proof}
    It follows from \Cref{eq:U} that $\mathcal{T}_\Gamma(p)$ is a linear space.
    Fix an arbitrary $A \in \mathcal{T}_\Gamma$ and choose a smooth path $A:(-1,1) \rightarrow C_{\PSL(2,\mathbb{R})}(\Gamma)$ such that $\frac{d}{dt}A(t)|_{t=0} = A$ and $A(0) = id$. We will write $A(t) = A_t$. 
    If $e=u\xrightarrow{g}v\in E(G)$, then 
    
    \begin{align*}
        D_p f_e \left(\left(\eta(A,p(u))\right)_{u \in {V(G)}}\right)& =\frac{d}{dt} d_{\mathbb{H}}\big(A_t\cdot p(u), \left(gA_t\right)\cdot p(v) \big)\Big|_{t=0} = \frac{d}{dt} d_{\mathbb{H}}\big(A_t\cdot p(u), \left(A_t g\right)\cdot p(v) \big)\Big|_{t=0} \\ &= \frac{d}{dt} d_{\mathbb{H}}\big(p(u), g\cdot p(v) \big)\Big|_{t=0} =0
    \end{align*} 
    with the final equality following from \Cref{prop:invariant}. 
    Hence $\mathcal{T}_\Gamma(p)$ is a linear subspace of $\ker D_p f_{G,\psi}$.

    Since the linear space $\mathcal{T}_\Gamma(p)$ is the tangent space of the orbit of $p$ under $C_{\PSL(2,\mathbb{R})}(\Gamma)$,
    it follows that $\dim \mathcal{T}_\Gamma(p) < \dim C_{\PSL(2,\mathbb{R})}(\Gamma)$ if and only if there exists a non-zero $A \in \mathcal{T}_\Gamma$ such that 
    $p(v)$ is a fixed point of $\exp(tA)$ for each $v\in V(G)$ and $t \in \mathbb{R}$. 
    If $p(u) \neq p(v)$ then, since any isometry can have at most one fixed point in $\mathbb{H}$, we have that at least one of $p(u)$ and $p(v)$ is not a fixed point of $C_{\PSL(2,\mathbb{R})}(\Gamma)$.
    If $\Gamma$ is not cyclic, then it has no fixed points (see \Cref{lem:cyclic-iff-fix}). 
    Finally, if $\Gamma$ is trivial and $p(u)=p(v)$ for all $u,v \in V(G)$, then we can find $A$ as described previously, and so $\dim \mathcal{T}_\Gamma(p) < \dim C_{\PSL(2,\mathbb{R})}(\Gamma)$.   
\end{proof}

\subsection{Infinitesimal rigidity and independence}

Our aim for introducing the orbit rigidity matrix is to describe a variant of rigidity.
A combination of \Cref{lem:centraliser_dim} and \Cref{lem:centraliser_triv} motivate the following definition.

\begin{definition}
    Let $\Gamma$ be a Fuchsian group and let $(G,\psi,p)$ be a $\Gamma$-joint-configuration.
    We say that $(G,\psi,p)$ (equivalently, $p$) is \emph{regular} if $\rank O(G,\psi,p) \geq \rank O(G,\psi,q)$ for all $q \in \mathbb{H}^{V(G)}$.
    Furthermore $(G,\psi,p)$ is \emph{infinitesimally $\Gamma$-symmetrically rigid} if either $|V(G)|=1$ and $\Gamma$ is trivial, or
    \begin{equation*}
        \rank O(G,\psi,p)  =
        \begin{cases}
            2|V(G)| - 3 &\text{if } \Gamma \text{ is trivial,}\\
            2|V(G)| - 1 &\text{if } \Gamma \text{ is non-trivial cyclic,}\\
            2|V(G)|  &\text{otherwise}.
        \end{cases}
    \end{equation*}
    Otherwise, we say $(G, \psi, p)$ is \emph{infinitesimally $\Gamma$-symmetrically flexible}.
\end{definition}

Infinitesimal $\Gamma$-symmetric rigidity and local $\Gamma$-symmetric rigidity are linked by the following result.

\begin{proposition}\label{prop:asimowroth}
    Let $\Gamma$ be a Fuchsian group and let $(G,\psi,p)$ be a $\Gamma$-joint-configuration.
    \begin{enumerate}
        \item If $(G,\psi,p)$ is infinitesimally $\Gamma$-symmetrically rigid, then $(G,\psi,p)$ is locally $\Gamma$-symmetrically rigid.
        \item If $(G,\psi,p)$ is regular and locally $\Gamma$-symmetrically rigid, then $(G,\psi,p)$ is infinitesimally $\Gamma$-symmetrically rigid.
    \end{enumerate}
\end{proposition}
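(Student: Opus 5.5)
This is an Asimow--Roth type statement, and I would prove it by working with the smooth map $f_{G,\psi}:\HH^{V(G)}\to\RR^{E(G)}$ and the Lie group action of $C:=C_{\PSL(2,\RR)}(\Gamma)$ on $\HH^{V(G)}$. Set $k:=\dim C$. By \Cref{lem:centraliser_dim}, $k=3$ if $\Gamma$ is trivial, $k=1$ if $\Gamma$ is non-trivial cyclic, and $k=0$ otherwise. The definition of infinitesimal rigidity therefore reads $\rank O(G,\psi,p)=2|V(G)|-k$. The degenerate case ($|V(G)|=1$, $\Gamma$ trivial) is immediate, since any two points of $\HH$ are related by an isometry. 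The orbit $C\cdot p$ is an immersed submanifold whose tangent space at $p$ is $\mathcal{T}_\Gamma(p)$. By \Cref{lem:centraliser_triv}, this tangent space lies in $\ker D_pf_{G,\psi}=\ker O(G,\psi,p)$. Since $O$ and $D_pf_{G,\psi}$ have the same rank, I will work with $D_pf_{G,\psi}$ throughout.

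For part (1), suppose $\rank D_pf_{G,\psi}=2|V(G)|-k$. Then $\dim\ker D_pf_{G,\psi}=k$, and since $\dim\mathcal{T}_\Gamma(p)\le k$ we get $\ker D_pf_{G,\psi}\supseteq\mathcal{T}_\Gamma(p)$. I first check that equality holds, i.e.\ $\dim\mathcal{T}_\Gamma(p)=k$. If it failed, then by \Cref{lem:centraliser_triv} we would have $\Gamma$ trivial (or cyclic) with all $p(v)$ equal to a single common fixed point. In that case one can check directly that every row of $O$ is supported on a single repeated point, which forces the rank to be too small.

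Granting equality, rank is lower semicontinuous and $2|V(G)|-k$ is the maximum possible rank, because the kernel always contains $\mathcal{T}_\Gamma(q)$. Hence the rank is constant on a neighbourhood of $p$, and by the constant rank theorem $f_{G,\psi}^{-1}(f_{G,\psi}(p))$ is, near $p$, a submanifold of dimension $k$. It contains the orbit $C\cdot p$ near $p$. The orbit map $C\to\HH^{V(G)}$, $g\mapsto g\cdot p$, has injective differential at the identity, so its image near $p$ is also $k$-dimensional. Two $k$-dimensional submanifolds through $p$, one contained in the other, coincide on a neighbourhood $U$ of $p$. Therefore every $q\in f^{-1}(f(p))\cap U$ equals $g\cdot p$ for some $g\in C$, and $g^{-1}\cdot q=p$ gives local rigidity.

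For part (2), I argue by contrapositive. Suppose $p$ is regular but $\rank D_pf_{G,\psi}<2|V(G)|-k$. Regularity means the rank is maximal at $p$, hence constant near $p$. The constant rank theorem then makes the fibre through $p$ a manifold of dimension $d=2|V(G)|-\rank D_pf_{G,\psi}>k$. Meanwhile $C\cdot p$ has dimension at most $k$ near $p$. Because $C$ is a closed Lie subgroup, its orbit near $p$ is contained in a countable union of embedded pieces of dimension at most $k$. So every neighbourhood $U$ contains points $q$ of the fibre with $q\notin C\cdot p$. These give $q\in f^{-1}(f(p))\cap U$ with no $g\in C$ satisfying $g\cdot q=p$, which contradicts local rigidity.

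I expect the main obstacle to be this orbit-dimension bookkeeping, which may need care. Namely, in (1) I must verify that $C\cdot p$ really fills out the fibre locally, including the degenerate fixed-point configurations ruled out via \Cref{lem:centraliser_triv}. In (2) I must make rigorous that a $k$-dimensional orbit cannot contain a neighbourhood of $p$ in a $d$-dimensional manifold with $d>k$. I would settle the latter with Sard's theorem applied to the smooth orbit map $C\to\HH^{V(G)}$: its image has measure zero in the $d$-dimensional fibre.
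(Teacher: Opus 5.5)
Your proposal is correct and is essentially the paper's argument. The paper also splits on whether $\dim\mathcal{T}_\Gamma(p)=\dim C_{\PSL(2,\mathbb{R})}(\Gamma)$: in the non-degenerate case it invokes the constant rank theorem as in Asimow--Roth, and in the degenerate case it uses \Cref{lem:centraliser_triv} to reduce to all $p(v)$ sitting at a common fixed point, where the orbit rigidity matrix has only zero rows. Your fibre-versus-orbit dimension count, with Sard's theorem in part (2), makes explicit the Asimow--Roth step the paper only cites; it also absorbs the degenerate case, which the paper handles separately via regularity.
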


\begin{proof}
    Since infinitesimally $\Gamma$-symmetrically rigid frameworks are regular, we may assume that $(G,\psi,p)$ is regular.
    If $\dim \mathcal{T}_\Gamma(p) = \dim C_{\PSL(2,\mathbb{R})}(\Gamma)$, then the result is a natural consequence of the constant rank theorem, and follows from a similar proof to that given in \cite{Asimow1978}. So assume $\dim \mathcal{T}_\Gamma(p) < \dim C_{\PSL(2,\mathbb{R})}(\Gamma)$. By \Cref{lem:centraliser_triv},
    $\Gamma = \langle g \rangle$ for some (possibly trivial) $g\in\Gamma$, and
    there exists some $x \in \HH$ such that $g\cdot x = x$ and $p(v) = x$ for all $v \in V(G)$. This implies that $O(G,\psi,p)$ is composed of zero rows, and therefore has rank zero. By regularity of $p$, for all $q\in\mathbb{H}^{V(G)}$, $\rank O(G,\psi,q)=0$. Therefore, $|V(G)|=1$ and $\Gamma$ is trivial. 
    Since any such framework is both infinitesimally and locally $\Gamma$-symmetrically rigid, the result follows.
\end{proof}

We can study certain rigidity properties using `equilibrium stresses', i.e.~left kernel elements of $O(G,\psi,p)$.

\begin{definition}
    Let $\Gamma$ be a Fuchsian group and $(G,\psi,p)$ be a $\Gamma$-joint-configuration. A vector $\omega \in \mathbb{R}^{E(G)}$ is an \emph{equilibrium stress of $(G,\psi,p)$} if for all $v \in V(G)$,
    \begin{equation}
    \label{bal. cond.}
        \sum_{e= v \xrightarrow{g}u, ~ v \neq u } \omega(e) \dot{\gamma}_{p(v), g\cdot p(u)}(0)  + \sum_{l= v \xrightarrow{g} v} \omega(l) \left(\dot{\gamma}_{p(v), g\cdot p(v)}(0) + \dot{\gamma}_{p(v), g^{-1}\cdot p(v)}(0) \right) = \textbf{0}.
    \end{equation}
\end{definition}

\begin{definition}
    Let $\Gamma$ be a Fuchsian group and let $(G,\psi,p)$ be a joint-configuration.
    We say $(G,\psi,p)$ is \emph{$\Gamma$-symmetrically independent} if $\rank O(G,\psi,p) =|E(G)|$ and \emph{$\Gamma$-symmetrically dependent} if it is not $\Gamma$-symmetrically independent. For brevity, we sometimes simply write \textit{$\Gamma$-independent} and \textit{$\Gamma$-dependent}.
    If $(G,\psi,p)$ is both $\Gamma$-symmetrically independent and infinitesimally $\Gamma$-symmetrically rigid,
    then $(G,\psi,p)$ is \emph{$\Gamma$-isostatic}.
\end{definition}

Notice that $(G,\psi,p)$ is $\Gamma$-symmetrically independent if and only if the rows of $O(G,\psi,p)$ are linearly independent. Equivalently, if and only if every equilibrium stress of $(G,\psi,p)$ is \emph{trivial}, i.e.~equal to the all-zeroes vector.
Moreover, $(G,\psi,p)$ is $\Gamma$-isostatic if and only if it is $\Gamma$-symmetrically infinitesimally rigid but $\Gamma$-symmetrically infinitesimally flexible when we remove an edge.
Therefore, for every $\Gamma$-symmetrically infinitesimally rigid $(G,\psi,p)$, there exists a spanning subgraph $H$ of $G$ such that $(H,\psi|_{E(H)},p)$ is $\Gamma$-isostatic. 

\subsection{Combinatorial rigidity}
\label{sec: comb. rig.}

We first note that similarly to the Euclidean case, since the expressions in the rigidity matrix are analytic, one can deduce that the set of regular realisations of a $\Gamma$-gain graph $(G,\psi)$ is a dense open subset of $\mathbb{H}^{V(G)}$.
(This follows from the locus of a real analytic function with connected domain being a closed nowhere dense subset; see, for example, \cite{Mityagin2020}.)
If we combine this with \Cref{prop:asimowroth}, then we can see that either almost all realisations of a $\Gamma$-gain graph are locally and infinitesimally $\Gamma$-symmetrically rigid,
or almost all realisations of a $\Gamma$-gain graph are locally and infinitesimally $\Gamma$-symmetrically flexible.
Moreover, the existence of a single infinitesimally $\Gamma$-symmetrically rigid realisation implies all regular realisations are also $\Gamma$-symmetrically infinitesimally rigid.

\begin{definition}
\label{defn: rigid graph}
    Let $\Gamma$ be a Fuchsian group and let $(G,\psi)$ be a $\Gamma$-gain graph.
    Then $(G,\psi)$ is \emph{$\Gamma$-symmetrically rigid in $\mathbb{H}$} if there exists an infinitesimally $\Gamma$-symmetrically rigid joint-configuration $(G,\psi,p)$.
    Otherwise, $(G,\psi)$ is \emph{$\Gamma$-symmetrically flexible in $\mathbb{H}$}.
    Furthermore, $(G,\psi)$ is \emph{$\Gamma$-symmetrically independent in $\mathbb{H}$} if there exists a $\Gamma$-symmetrically independent joint-configuration $(G,\psi,p)$.
    If $(G,\psi)$ is both $\Gamma$-symmetrically independent and rigid in $\mathbb{H}$,
    then we say $(G,\psi)$ is \emph{$\Gamma$-isostatic in $\mathbb{H}$}. 
\end{definition}

Since we only work with frameworks in $\mathbb{H}$, we often drop the `in $\mathbb{H}$' from the terminology in \Cref{defn: rigid graph}.

\begin{lemma}\label{lem:independence_graphs}
    Let $\Gamma$ be a Fuchsian group and $(G,\psi)$ be a $\Gamma$-gain graph. Suppose $(G,\psi)$ is $\Gamma$-symmetrically independent. Then $(G,\psi)$ is $(2,3,1,0)$-gain sparse.
\end{lemma}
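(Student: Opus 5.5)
Fix a $\Gamma$-symmetrically independent joint-configuration $(G,\psi,p)$. Then every subset of rows of $O(G,\psi,p)$ is linearly independent. The plan is to take an arbitrary subgraph $H\subseteq G$ with $E(H)\neq\emptyset$ and bound $|E(H)|$ by an upper bound on $\rank O(H,\psi|_{E(H)},p|_{V(H)})$. Since sparsity counts are checked on subgraphs, we may restrict to connected $H$. Indeed, if $H$ has components $H_1,\dots,H_k$, each of which is balanced (resp.\ cyclic), then summing the counts over components gives the count for $H$; for the $(2,3)$ count we use that each component with an edge contributes at most $2|V(H_i)|-3$, and edgeless components only help.

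The rows of $H$ are supported on the columns of $V(H)$, so independence gives $|E(H)|=\rank O(H,\psi,p)\le 2|V(H)|$. This proves $(2,0)$-sparsity. For the sharper counts, we exhibit trivial motions in the kernel. First suppose $H$ is connected and cyclic. By \Cref{lem:treegain} and \Cref{prop:balgain} we may switch so that the tree edges have identity gain and $\langle H\rangle=\langle h\rangle$ for some $h\in\Gamma$. Switching does not change rank, by the discussion after the orbit rigidity matrix. Now view $(H,\psi|_{E(H)})$ as a $\langle h\rangle$-gain graph. Its orbit rigidity matrix is the same matrix, since the entries depend only on the gains. By \Cref{lem:centraliser_triv} applied with the group $\langle h\rangle$, the space $\mathcal{T}_{\langle h\rangle}(p)$ lies in the kernel. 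By \Cref{lem:centraliser_dim} this space has dimension $1$, provided some $p(v)$ is not the fixed point of the one-parameter centraliser. If all $p(v)$ equal that fixed point $x$, then every row vanishes, because $\dot\gamma_{x,g x}$ is undefined when $gx=x$. Rather than dwell on this, we argue generically: $h$ is not elliptic when $\Gamma$ is a surface group, and in general the set of independent realisations is open, so we may perturb $p$ to avoid this degenerate case. Hence $|E(H)|\le 2|V(H)|-1$.

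If $H$ is balanced, we switch all gains to the identity. The matrix is then the ordinary hyperbolic rigidity matrix of the finite framework $(H,p|_{V(H)})$. The kernel contains $\eta(X,p(v))$ for all $X\in\mathfrak{sl}(2,\RR)$, a $3$-dimensional space whenever $p$ is not constant on $V(H)$. So $|E(H)|\le 2|V(H)|-3$ when $|V(H)|\ge2$. When $|V(H)|=1$, a balanced $H$ has no edges, since loops have non-identity gain, so it is vacuous. Balanced edges joining $u,v$ with $p(u)=p(v)$ give zero rows, contradicting independence. This forces $p(u)\neq p(v)$ whenever $u,v$ are joined by an edge, and with connectivity of $H$ the map $p$ is non-constant on $V(H)$.

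The main obstacle is verifying the kernel dimension claims, namely that the maps $X\mapsto(\eta(X,p(v)))_v$ are injective on $\mathfrak{sl}(2,\RR)$, respectively on the centraliser of $h$. For $\mathfrak{sl}(2,\RR)$, injectivity means no nonzero $X$ lies in $\mathfrak{h}_{p(u)}\cap\mathfrak{h}_{p(v)}$, which holds by \Cref{eq:hz} when $p(u)\ne p(v)$. For the cyclic case, the kernel element is nonzero unless $p(v)$ is fixed by the centralising one-parameter group. This can only happen when $h$ is elliptic with fixed point $p(v)$. In that case the loops or edges at $v$ would have gain fixing $p(v)$ and hence zero rows. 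That contradicts independence, unless all edges incident to $v$ go to other vertices, whose positions then are not fixed, giving a nonzero motion anyway.
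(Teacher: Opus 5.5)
Your proposal is correct and follows the same route as the paper: you bound the rank of the rows of each connected subgraph by $2|V(H)|$ minus the dimension of the trivial motions coming from the centraliser of its gain group, via \Cref{lem:centraliser_triv} and \Cref{lem:centraliser_dim}. You are more explicit than the paper about three steps it leaves implicit: switching the gains into $\langle h\rangle$ via \Cref{lem:treegain} and \Cref{prop:balgain}, passing to connected subgraphs, and ruling out the degenerate configuration where every point sits at the fixed point (the paper only chooses $p$ with some $p(u)\neq g\cdot p(v)$).
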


\begin{proof}
    If $|V(G)|=1$ and $\Gamma$ is trivial,
    then $G$ has no edges and the result is immediate.
    If $G$ is disconnected, then we consider each connected component: a simple combinatorial argument shows that the gain graph $(G,\psi)$ is $(2,3,1,0)$-gain sparse if and only if each connected component of $G$ is $(2,3,1,0)$-gain sparse.
    Hence we may assume $G$ is connected and either $|V(G)|\geq 1$ or $\Gamma$ is non-trivial.
    Fix $\Gamma' = \langle G \rangle_{v, \psi}$ for any choice of $v \in V(G)$.
    Since either $|V(G)|\geq 1$ or $\Gamma$ is non-trivial,
    we may choose a realisation $p$ such that for some $u,v \in V(G),g \in \Gamma$ we have $p(u) \neq g\cdot p(v)$.
    By \Cref{lem:centraliser_triv}, $|E(G)| \leq 2|V(G)|-\dim C_{\PSL(2,\mathbb{R})}(\Gamma')$.
    Hence, by \Cref{lem:centraliser_dim}, we have $|E(G)| \leq 2|V(G)|-k$,
    where $k=3$ if $\Gamma'$ is trivial, $k = 1$ is $\Gamma'$ is non-trivial and cyclic and $k = 0$ is $\Gamma'$ is not cyclic.
    Thus, $(G,\psi)$ is $(2,3,1,0)$-sparse.
\end{proof}

This implies the following result.

\begin{theorem}\label{thm:necessary}
    Let $\Gamma$ be a Fuchsian group and let $(G,\psi)$ be a $\Gamma$-gain graph which is $\Gamma$-isostatic.
    \begin{enumerate}
        \item If $\Gamma$ is trivial, then $G$ is $(2,3)$-tight.
        \item If $\Gamma$ is non-trivial and cyclic, then $(G,\psi)$ is $(2,3,1)$-gain tight.
        \item If $\Gamma$ is not cyclic, then $(G,\psi)$ is $(2,3,1,0)$-gain tight.
    \end{enumerate}
\end{theorem}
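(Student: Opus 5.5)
The plan is to combine \Cref{lem:independence_graphs} with an exact edge count extracted from the definitions of independence and infinitesimal rigidity. Suppose $(G,\psi)$ is $\Gamma$-isostatic. By \Cref{defn: rigid graph} there are realisations $p$ and $q$ with $(G,\psi,p)$ $\Gamma$-independent and $(G,\psi,q)$ infinitesimally $\Gamma$-symmetrically rigid. We cannot yet assume $p=q$, so first we use the genericity remark preceding \Cref{defn: rigid graph}. Both ``$\rank O(G,\psi,\cdot)=|E(G)|$'' and ``rank equals the required value'' are attained on nonempty Zariski-open (complement of an analytic nowhere dense set) subsets of the connected domain $\mathbb{H}^{V(G)}$. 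These sets therefore intersect, and we may take a single regular realisation $p$ that is both independent and infinitesimally rigid.

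Next we read off the count. Independence gives $|E(G)|=\rank O(G,\psi,p)$. Infinitesimal rigidity gives $\rank O(G,\psi,p)=2|V(G)|-3$, $2|V(G)|-1$ or $2|V(G)|$ according as $\Gamma$ is trivial, non-trivial cyclic, or non-cyclic. The degenerate case $|V(G)|=1$ with $\Gamma$ trivial also has to be covered. There $G$ has no edges, since loops need non-identity gains, and we adopt the convention that this is $(2,3)$-tight in the trivial sense, or we note that it is excluded by the paper's convention of non-empty edge sets. So $|E(G)|=2|V(G)|-k$ with $k\in\{3,1,0\}$ respectively.

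Sparsity comes from \Cref{lem:independence_graphs}, which gives that $(G,\psi)$ is $(2,3,1,0)$-gain sparse. We then specialise by case.
\begin{enumerate}
\item If $\Gamma$ is trivial, every subgraph is balanced, so $(2,3,1,0)$-gain sparse means $(2,3)$-sparse. With the count $2|V(G)|-3$, $G$ is $(2,3)$-tight.
\item If $\Gamma$ is non-trivial cyclic, every subgroup of $\Gamma$ is cyclic, so every connected subgraph is cyclic. Sparsity then yields $(2,1)$-sparsity for all subgraphs. This needs a remark for disconnected subgraphs: apply the bound to each component and add, noting $2|V|-1$ summed over components is at least $2|V|-1$. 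Balanced subgraphs are $(2,3)$-sparse. Hence $(G,\psi)$ is $(2,3,1)$-gain sparse, and with $|E|=2|V|-1$ it is $(2,3,1)$-gain tight.
\item If $\Gamma$ is non-cyclic, the count $2|V(G)|$ combined with $(2,3,1,0)$-gain sparsity is exactly $(2,3,1,0)$-gain tightness.
\end{enumerate}

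The main point needing care is the first step, choosing one realisation witnessing both properties. The fallback, if the genericity argument is considered too informal, is direct. Given any independent realisation $p$, perturb it to a regular one; independence is an open condition, so it persists. A regular realisation has maximal rank, which is at least the rank at the rigid realisation $q$. The rank is also bounded above by $2|V|-\dim \mathcal{T}_\Gamma(p)$ via \Cref{lem:centraliser_triv} and \Cref{lem:centraliser_dim}. So the perturbed $p$ attains the rigid rank and is simultaneously independent.
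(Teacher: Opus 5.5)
Your proposal is correct and follows essentially the same route as the paper, which obtains the theorem as an immediate consequence of \Cref{lem:independence_graphs} (giving $(2,3,1,0)$-gain sparsity, then specialised to each case) together with the count $|E(G)|=\rank O(G,\psi,p)=2|V(G)|-k$ at a realisation that is both independent and infinitesimally rigid. Your first step can be shortened: an independent realisation already has the largest possible rank $|E(G)|$, so it is automatically regular, which gives $|E(G)|\geq 2|V(G)|-k$ from the rigid realisation, while the reverse inequality is already part of the sparsity supplied by \Cref{lem:independence_graphs}.
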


It was shown in \cite{I09,saliola_whiteley_2007_equivalence_rigidity} that generic rigidity in $\HH$ is equivalent to generic rigidity in $\RR^2$. Since generic rigidity is combinatorially characterised in $\RR^2$ \cite{Laman1970,HPG27}, combining gives the following result which shows the converse to \Cref{thm:necessary} holds in case (i). 
In the rest of the paper we show \Cref{thm:necessary} also holds in case (ii) and case (iii) when $\Gamma$ is a surface group.

\begin{theorem}\label{thm:Hlaman}
    Let $\Gamma$ be the trivial group. Then, a $\Gamma$-gain graph $(G,\psi)$ is $\Gamma$-isostatic if and only if it is $(2,3)$-tight. 
\end{theorem}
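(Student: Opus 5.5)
The plan is to reduce \Cref{thm:Hlaman} to the Geiringer--Laman theorem in $\RR^2$ via the projective equivalence between infinitesimal rigidity in $\HH$ and in $\RR^2$ \cite{I09,saliola_whiteley_2007_equivalence_rigidity}. When $\Gamma$ is trivial, every gain is $\text{id}$. By \Cref{def:GG}(i) there are no loops, and by (ii) and (iii) there are no parallel edges, so $G$ is a simple graph. The lift $\tilde{G}$ is just $G$, and $O(G,\psi,p)$ is the ordinary hyperbolic rigidity matrix: the row of $uv$ has $\dot{\gamma}_{p(u),p(v)}(0)$ in the $u$-columns and $\dot{\gamma}_{p(v),p(u)}(0)$ in the $v$-columns. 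Moreover $\mathcal{T}_\Gamma=\mathfrak{sl}(2,\RR)$ is $3$-dimensional, matching the definition of infinitesimal rigidity with rank $2|V(G)|-3$.

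Necessity is already \Cref{thm:necessary}(i). For sufficiency, let $G$ be $(2,3)$-tight. If $|V(G)|=1$, then $G$ has no edges and is isostatic by definition. Otherwise, the Laman count forces $|V(G)|\ge 2$ and $|E(G)|=2|V(G)|-3$. It therefore suffices to find $p$ with $\rank O(G,\psi,p)=|E(G)|$, since that rank equals $2|V(G)|-3$ and so gives independence and infinitesimal rigidity simultaneously.

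To produce such a $p$, I would use the Beltrami--Klein (projective) model. Under the standard isometry from $\HH$ to the Klein disc, hyperbolic geodesics become Euclidean chords. Hence the tangent direction $\dot{\gamma}_{P,Q}(0)$ at $P$ is pulled back from a multiple of the Euclidean direction $K(Q)-K(P)$ in the disc. Concretely, I would show that after an invertible change of coordinates in each vertex's tangent space (the differential of the model map, possibly composed with a fixed projective correction at each vertex), and after nonzero scaling of each row, $O(G,\psi,p)$ becomes the Euclidean rigidity matrix of the framework $(G,K\circ p)$ in $\RR^2$. This is exactly the content of the projective equivalence of \cite{I09,saliola_whiteley_2007_equivalence_rigidity}, which I would cite rather than recompute. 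Since these operations preserve rank, and Laman's theorem \cite{Laman1970,HPG27} supplies a generic, hence independent, Euclidean realisation $q$ inside the open unit disc (by scaling), the hyperbolic realisation $p=K^{-1}\circ q$ has $\rank O=|E(G)|$.

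The main obstacle is the middle step: verifying that the row-scaling and vertex-wise coordinate-change correspondence really holds with the paper's normalisation, where rows use unit tangent vectors and column scaling by $y_v^2$. If this proves messy, a fallback is an inductive argument. I would build $G$ by Henneberg $0$- and $1$-extensions from $K_2$ and show directly in $\HH$ that each move preserves independence: a $0$-extension using \Cref{lem:linear_independence} with a new point off the geodesic through its two neighbours, and a $1$-extension by the standard limiting argument placing the new vertex on the geodesic through $p(v_1),p(v_2)$.
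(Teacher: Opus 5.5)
Your proposal is correct and follows the paper's route. The paper proves this in one line: necessity comes from \Cref{thm:necessary}(i), and sufficiency from the projective equivalence of infinitesimal rigidity in $\HH$ and $\RR^2$ \cite{I09,saliola_whiteley_2007_equivalence_rigidity} combined with the Geiringer--Laman theorem. Your preliminary reductions (no loops or parallel edges, so $G$ is simple; the rank count; scaling a generic Euclidean realisation into the disc) fill in details the paper leaves implicit, and your fallback is also sound, since the paper's own \Cref{lem:0ext} and \Cref{lem:1ext} apply verbatim to trivial $\Gamma$ and combine with Henneberg's construction of Laman graphs from $K_2$.
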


\section{Combinatorial characterisation for cyclic Fuchsian groups}\label{sec:combchar}
In this section we prove the following result.

\begin{theorem}\label{thm:simple_extension}
Let $\Gamma$ be a Fuchsian group and $(G,\psi)$ be a $\Gamma$-isostatic $\Gamma$-gain graph. Suppose $(G',\psi')$ is obtained from $(G,\psi)$ by applying a 0-extension, 1-extension or loop-1-extension.
Then $(G',\psi')$ is $\Gamma$-isostatic.
\end{theorem}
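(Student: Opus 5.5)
Fix a regular $\Gamma$-isostatic realisation $p$ of $(G,\psi)$; by density of regular realisations we may perturb $p$ within an open dense set. In each case we extend $p$ to $p'$ on $V(G')=V(G)\cup\{v\}$ and show that $O(G',\psi',p')$ has independent rows and rank equal to that of $O(G,\psi,p)$ plus $2$. Because the counts $2|V|-k$ (with $k\in\{3,1,0\}$ fixed by $\Gamma$) grow by $2$ when a vertex is added, this gives both independence and infinitesimal rigidity, so $(G',\psi')$ is $\Gamma$-isostatic. Switching leaves rank and left kernel unchanged, so we may switch at $v$ freely to normalise gains. The tool is \Cref{lem:linear_independence}: the tangent vectors $\dot\gamma_{P,P_1}(0)$ and $\dot\gamma_{P,P_2}(0)$ are independent exactly when $P$ does not lie on the geodesic through $P_1$ and $P_2$.

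For a \emph{0-extension}, the new rows are supported on the two columns of $v$ only in their $v$-part. The matrix is therefore block triangular, and it suffices that the $2\times 2$ block at $v$ is non-singular. The two rows at $v$ are $\dot\gamma_{p(v),g_1 p(u_1)}(0)$ and $\dot\gamma_{p(v),g_2 p(u_2)}(0)$. The targets $g_1p(u_1)$ and $g_2p(u_2)$ are distinct: for distinct $u_i$ we use genericity and the fact that $\Gamma$ acts discretely, and for $u_1=u_2$ we use $g_1\ne g_2$ together with the freeness of the action if $\Gamma$ is torsion free, and otherwise avoid fixed points. Choosing $p(v)$ off the geodesic joining the two targets then makes the block non-singular. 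For a \emph{loop-1-extension} we argue the same way. The loop row at $v$ is $\dot\gamma_{p(v),gp(v)}(0)+\dot\gamma_{p(v),g^{-1}p(v)}(0)$, and the edge row is $\dot\gamma_{p(v),hp(u)}(0)$. We need these to be independent for some $p(v)$. I would exhibit a single point where this holds and conclude by analyticity. For example, put $p(v)$ off the axis or fixed point of $g$, so the loop row is a nonzero vector; by the symmetry of the isometry $g$, it is tangent to the equidistant direction. Then choose $p(u)$ (or $p(v)$) so that the edge direction is not parallel to it.

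For a \emph{1-extension} I follow the standard Euclidean argument via a limit or stress argument. First switch at $v$ so that $\psi'(e_1)=\mathrm{id}$; then $\psi'(e_2)=\psi(e)$. Place $p(v)$ in the interior of the geodesic segment from $p(v_1)$ to $\psi(e)p(v_2)$. At this point the rows of $e_1$ and $e_2$ have $v$-components that are negatives of each other, by \Cref{lem:linear_independence}'s proof (the case $0<t<T$). Their other components equal the $v_1$- and $v_2$-components of the row of $e$, up to positive scaling: the direction from $p(v_1)$ toward $p(v)$ is the same as toward $\psi(e)p(v_2)$, and similarly at $v_2$ after using \Cref{eq:isometries_metric_2}. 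Hence a suitable positive combination of the rows of $e_1$ and $e_2$ reproduces the row of $e$ with zero $v$-part. The row space of $O(G',\psi',p')$ thus contains the row space of $O(G,\psi,p)$ together with a vector having nonzero $v$-part. Adding the row of $e_3$, which is independent provided $p(u)$ (or $\psi(e_3)p(u)$) is off the line through $p(v)$, gives rank $+2$ with $|E|+2$ rows. Since rank is lower semicontinuous and we want a full-rank condition, this special position suffices, and genericity then propagates the conclusion. The loop and parallel-edge variants ($v_1=v_2$ or $u\in\{v_1,v_2\}$) are handled the same way; the condition that the 2-cycles are unbalanced guarantees that the relevant targets are distinct points.

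The main obstacle is the degenerate cases. In the 1-extension with $e$ a loop at $v_1$, the reduced row mixes two directions, $\dot\gamma_{p(v_1),gp(v_1)}$ and $\dot\gamma_{p(v_1),g^{-1}p(v_1)}$, so we must check that placing $p(v)$ on the geodesic from $p(v_1)$ to $gp(v_1)$ recovers both terms. It does, because the $e_2$ row contributes the $g^{-1}$ direction via the isometry $g^{-1}$. In the loop-1-extension we must verify that the loop row is nonzero and not parallel to the edge direction. I would handle both by explicit computation in a normal form for $g$ (hyperbolic, elliptic or parabolic, conjugated to diagonal, rotation or translation form), checking one witnessing configuration in each case.
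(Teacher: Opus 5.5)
Your route is essentially the paper's. The $0$-extension is the same block-triangular argument. In the $1$-extension, the identity $\mathrm{row}(e)=\mathrm{row}(e_1)+\mathrm{row}(e_2)$ for $p(v)$ inside the segment is exactly the dependence the paper gets from \Cref{key_lemma_loop2ext}, before treating $G'+e-e_2$ as a $0$-extension of $G$ (as you do with the rows of $e_1,e_3$). For the loop-1-extension, putting $p(v)$ off the axis or fixed point of $g$ and then moving $p(u)$ generically is a valid, slightly simpler alternative to the paper's limit argument as $p(v)\to p(u)$. There is, however, one step that your proposal does not justify.

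\textbf{The gap.} In the $1$-extension you need the $v$-parts of the rows of $e_1$ and $e_3$ to be independent. That is, $\psi'(e_3)\cdot p(u)$ must avoid the geodesic through $p(v_1)$ and $\psi(e)\cdot p(v_2)$. You dispose of every degenerate case by saying that unbalanced $2$-cycles make the targets distinct, but distinct is not the same as non-collinear.

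Consider the case where $e$ is a loop at $v_1$ and $u=v_1$. After switching, the new vertex is joined to $v_1$ by three parallel edges with gains $\mathrm{id},g,g_3$, and there is no free vertex to move. You need some $z\in\HH$ for which $z$, $g\cdot z$, $g_3\cdot z$ do not lie on a common geodesic. Nothing in your proposal rules out that this triple is collinear for every $z$.

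This is exactly the second half of \Cref{claim: lem-1ext}, and it can be filled as follows.
\begin{enumerate}
\item Suppose the triple were collinear for all $z$. Let $z$ tend to a boundary point $x$ and use \Cref{continuity-geodesics}; since a geodesic has only two ideal endpoints, every $x\in\RR\cup\{\infty\}$ satisfies $g\cdot x=x$, $g_3\cdot x=x$ or $g\cdot x=g_3\cdot x$.
\item One of these alternatives then holds for infinitely many $x$. This forces $g$, $g_3$ or $g^{-1}g_3$ to be the identity, contradicting the gain-graph conditions.
\item Once you have one witness $z$, analyticity gives a dense open set of such $z$, which you can intersect with the regular realisations.
\end{enumerate}
In the remaining degenerate cases ($u\in\{v_1,v_2\}$ with $v_1\neq v_2$, or $v_1=v_2\neq u$), your distinctness argument does suffice, because one of the three points can be moved independently.

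\textbf{A smaller inaccuracy.} The loop row $\dot\gamma_{x,g\cdot x}(0)+\dot\gamma_{x,g^{-1}\cdot x}(0)$ is orthogonal to the equidistant curve of $g$ through $x$, not tangent to it. By \Cref{lem:loopequation1} it annihilates $\eta(X_g,x)$, which is tangent to that curve; for hyperbolic $g$ it points along the geodesic through $x$ perpendicular to the axis. This does not break your argument, since a generic edge direction avoids any single line. Your planned normal-form computation should, however, reflect the correct direction.
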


Using \Cref{thm:simple_extension},
we can prove the cyclic group variant of \Cref{main_theorem}. 
For this section, we additionally make extensive use of \Cref{lem:linear_independence}, which we restate below.

\keylemma*

We first require the following.

\begin{lemma}\label{lem: bad-geodesics}
    For every non-trivial $g \in \PSL(2,\RR)$, there exists a dense open set of points $z \in \HH$ such that
    \begin{equation*}
        \dot{\gamma}_{z, g\cdot z}(0)+ \dot{\gamma}_{z, g^{-1}\cdot z}(0) \neq \textup{\textbf{0}}.
    \end{equation*}
\end{lemma}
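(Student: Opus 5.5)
Here $g$ is non-trivial, so $g\cdot z \neq z$ and $g^{-1}\cdot z \neq z$ for every $z$ off the fixed-point set of $g$ in $\HH$. That set is empty or a single point, so it is closed and nowhere dense. On its complement $U_0$, the map
\[
F(z) := \dot{\gamma}_{z, g\cdot z}(0)+ \dot{\gamma}_{z, g^{-1}\cdot z}(0)
\]
is continuous. This uses the remark after \Cref{metric-metric} that $(P,Q)\mapsto\dot{\gamma}_{P,Q}(0)$ is continuous on distinct pairs. Since $g$ is a Möbius map, $F$ is in fact real analytic on $U_0$: the explicit formulas for $A,B$ there are analytic for $P\neq Q$. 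Hence the set $\{z\in U_0 : F(z)\neq \mathbf{0}\}$ is open. It remains to show it is dense.

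For density I plan to use analyticity. The zero set of a real analytic function on the connected open set $U_0$ is either all of $U_0$ or closed and nowhere dense, as in the argument cited in \Cref{sec: comb. rig.}. (The set $U_0$ is $\HH$ minus at most one point, so it is connected.) So it suffices to exhibit a single $z\in U_0$ with $F(z)\neq\mathbf{0}$.

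Both summands are unit vectors in $\langle\cdot,\cdot\rangle_z$, so $F(z)=\mathbf{0}$ exactly when $\dot{\gamma}_{z,g\cdot z}(0) = -\dot{\gamma}_{z,g^{-1}\cdot z}(0)$. In that case the two vectors are linearly dependent. By \Cref{lem:linear_independence} (applicable when $g\cdot z\neq g^{-1}\cdot z$), the point $z$ then lies on the geodesic through $g\cdot z$ and $g^{-1}\cdot z$, and in fact strictly between them. So it is enough to find $z$ not lying on the geodesic $\gamma_{g^{-1}z,gz}$, or lying on it but not between the two points.

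I will split into cases using the classification in \Cref{sec:hypgeom}, after conjugating $g$ to normal form. Conjugation is harmless because of \Cref{eq:isometries_metric_2}: $D_z(h)$ maps the vectors for $(z,g)$ to those for $(hz,hgh^{-1})$.
\begin{itemize}
\item Hyperbolic, $g\cdot z=\lambda^2 z$ with $\lambda^2 \neq 1$. Take $z$ off the imaginary axis, say $z=1+\mathfrak{i}$. Then $\lambda^{2}z$, $z$ and $\lambda^{-2}z$ lie on a Euclidean ray through $0$, which is not a hyperbolic geodesic. Hence $z$ is not on $\gamma_{g^{-1}z,gz}$.
\item Parabolic, $g\cdot z=z+t$. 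Here $z\pm t$ lie on a horocycle, not a geodesic, so again $z$ is not on $\gamma_{g^{-1}z,gz}$.
\item Elliptic rotation about $\mathfrak{i}$ by angle $\theta$. If $g^2\neq\mathrm{id}$, I expect that for $z$ near but not at $\mathfrak{i}$, the points $z$, $gz$, $g^{-1}z$ lie on a hyperbolic circle about $\mathfrak{i}$, which contains no three collinear points. If $g^2=\mathrm{id}$, then $g\cdot z=g^{-1}\cdot z$, the two vectors are equal, and so $F(z)$ is twice a unit vector and nonzero.
\end{itemize}
In the first two cases the points $gz$ and $g^{-1}z$ are distinct, so the lemma applies.

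The main obstacle is the elliptic subcase and keeping the non-collinearity claims rigorous. For that I would fall back on the explicit formulas for $A,B$ at a concrete point, rather than on geometric intuition about circles and horocycles.
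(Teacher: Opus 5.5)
Your proof is correct. It shares the paper's first half: analyticity of $F$ on the connected set $\HH$ minus at most one fixed point, the separate treatment of $g^2=\mathrm{id}$, and \Cref{lem:linear_independence} to reduce non-vanishing to non-collinearity of $z,g\cdot z,g^{-1}\cdot z$. Where you differ is in how you find the witness point.

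The paper avoids normal forms. It assumes $g^{-1}\cdot z$ lies on $\gamma_{z,g\cdot z}$ for every non-fixed $z$, and passes to the ideal boundary using the endpoint map of \Cref{continuity-geodesics}. This gives $g^{-1}\cdot x\in\{x,g\cdot x\}$ for every boundary point $x$, from which it deduces $g^2=\mathrm{id}$. That argument is uniform in the type of $g$ and previews the boundary-point technique used later for 2-extensions. However, it needs a limiting argument at the boundary that goes a little beyond what \Cref{continuity-geodesics} actually states.

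You instead conjugate to normal form, which is legitimate via \Cref{eq:isometries_metric_2}. You then observe that the $\langle g\rangle$-orbit of $z$ lies on a Euclidean ray from $0$, a horizontal line, or a Euclidean circle contained in $\HH$. Each of these meets a vertical line or a semicircle centred on $\RR$ in at most two points. This is entirely elementary, at the price of a case split. Your choice of $z$ off the imaginary axis in the hyperbolic case is genuinely needed, since $F$ vanishes on the axis.

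Finally, your elliptic case needs neither the restriction to $z$ near $\mathfrak{i}$ nor a fallback to the formulas for $A,B$. For every $z\neq\mathfrak{i}$ the orbit lies on a hyperbolic circle about $\mathfrak{i}$, which is a Euclidean circle disjoint from $\RR$. The same two-intersection argument therefore applies, and the three points are distinct once $g^2\neq\mathrm{id}$.
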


\begin{proof}
    We recall that the zero set of a real analytic function with open connected domain is either the entire domain or nowhere dense (see, for example, \cite{Mityagin2020}).
    Since the map $z \mapsto \dot{\gamma}_{z, g\cdot z}(0)+ \dot{\gamma}_{z, g^{-1}\cdot z}(0)$ is a real analytic function on a connected open set,
    it suffices to find one such point where the map is non-zero.
    
    If $g$ has order two, then $\dot{\gamma}_{z, g\cdot z}(0) = \dot{\gamma}_{z, g^{-1}\cdot z}(0)$, and choosing $z$ to be any non-fixed point of $g$, we have $\dot{\gamma}_{z, g\cdot z}(0)+\dot{\gamma}_{z, g^{-1}\cdot z}(0)\neq\textbf{0}$. So assume that $g$ does not have order two.
    By \Cref{lem:linear_independence},
    if $z$ is not a fixed point of $g$, i.e.~$g\cdot z\neq z$, then
    \begin{equation*}
        \dot{\gamma}_{z, g\cdot z}(0)+ \dot{\gamma}_{z, g^{-1}\cdot z}(0) = \textbf{0} \qquad \implies \qquad z,g\cdot z,g^{-1}\cdot z \text{ lie on a common geodesic.}
    \end{equation*}
    So it suffices to show that $z$ can be chosen such that $z, g\cdot z$ and $g^{-1}\cdot z$ do not lie on a common geodesic. Suppose, for a contradiction, that $g^{-1}\cdot z$ lies on the geodesic $\gamma_{z,g\cdot z}$ for each $z \in \HH$ that is not a fixed point of $g$;
    equivalently, $\gamma_{z,g\cdot z}$ and $\gamma_{z,g^{-1}\cdot z}$ contain the same points.
    Then by \Cref{continuity-geodesics} -- combined with the observation that a sequence $(g\cdot z_n)_{n \in \NN}$ in $\HH$ converges to the boundary if $(z_n)_{n \in \NN}$ converges to the boundary --
    we have that the geodesic $(x,g\cdot x) \in \textup{Geod}(\HH)$ is equal to either $(g^{-1}\cdot x,x)$ or $(x,g^{-1}\cdot x)$ for every boundary point $x \in \RR \,\cup\, \{\infty\}$.
    Suppose that $(x,g\cdot x) = (g^{-1}\cdot x,x)$.
    Then 
    \begin{equation*}
        g \cdot (x,g\cdot x) = g \cdot (g^{-1}\cdot x,x) = (x,g\cdot x),
    \end{equation*}
    and so $x$ is a fixed point of $g$.
    Since $g$ is non-trivial, we thus have that $(x,g\cdot x) = (x,g^{-1}\cdot x)$ for all points $x \in \RR \,\cup\, \{\infty\}$ that are not fixed points of $g$. Then, $g\cdot g x = g^{2}\cdot x = x$ for all $x\in \RR \,\cup\, \{\infty\}$, i.e.~$g^2=\text{id}$, contradicting the fact that $g$ does not have order two. This concludes our proof.
\end{proof}

\begin{theorem}\label{cyclic_main_theorem}
    Let $\Gamma$ be a cyclic Fuchsian group and $(G,\psi)$ be a $\Gamma$-gain graph.
    Then $(G,\psi)$ is $\Gamma$-isostatic if and only if it is $(2,3,1)$-gain tight. 
\end{theorem}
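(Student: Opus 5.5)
The plan is to handle the two directions separately, with the converse done by induction along the constructive characterisation.

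Necessity is already available. If $\Gamma$ is non-trivial cyclic, \Cref{thm:necessary}(ii) gives that a $\Gamma$-isostatic gain graph is $(2,3,1)$-gain tight. The trivial group is a degenerate case: there loops are forbidden, so no gain graph can be $(2,3,1)$-gain tight (a balanced graph would need to be $(2,3)$-sparse yet have $2|V|-1$ edges). For $\Gamma$ trivial I would therefore restrict attention to non-trivial cyclic $\Gamma$, or note that the statement degenerates and is covered by \Cref{thm:Hlaman}.

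For sufficiency, let $(G,\psi)$ be $(2,3,1)$-gain tight. By \Cref{thm: recur. con. easy} it is built from a single vertex with an unbalanced loop by $0$-extensions, $1$-extensions and loop-$1$-extensions. \Cref{thm:simple_extension}, which is assumed proved in this section, says each of these moves preserves $\Gamma$-isostaticity. So it suffices to check the base case: a single vertex $v$ with one loop of gain $g\neq \text{id}$.

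The orbit rigidity matrix of this base graph is the single $1\times 2$ row $\dot{\gamma}_{p(v),g\cdot p(v)}(0)+\dot{\gamma}_{p(v),g^{-1}\cdot p(v)}(0)$. By \Cref{lem: bad-geodesics} we can choose $p(v)$ so that this row is non-zero, giving rank $1 = |E(G)| = 2|V(G)|-1$. Hence the realisation is both $\Gamma$-independent and infinitesimally $\Gamma$-symmetrically rigid, as $\Gamma$ is non-trivial cyclic. Induction then finishes the proof.

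The main subtlety is not in this theorem but in \Cref{thm:simple_extension}, above all the $1$-extension case. There one must place the new vertex so that its rows are independent of the rest, using \Cref{lem:linear_independence}. Since that result is given, the only care needed here is the base case and the bookkeeping of the rank count $2|V|-1$ for non-trivial cyclic $\Gamma$.
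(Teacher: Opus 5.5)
Your proposal is correct and follows the paper's proof. Necessity comes from \Cref{thm:necessary}; sufficiency uses \Cref{thm: recur. con. easy} down to a single vertex with an unbalanced loop, handles that base case with \Cref{lem: bad-geodesics}, and finishes with \Cref{thm:simple_extension}. Your remark about the trivial group is also right: for trivial $\Gamma$ no gain graph is $(2,3,1)$-gain tight, yet a single vertex (or a single edge) is $\Gamma$-isostatic, so the statement, and the paper's appeal to \Cref{thm:necessary}, implicitly requires $\Gamma$ to be non-trivial.
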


\begin{proof}
Necessity follows from \Cref{thm:necessary}. By \Cref{thm: recur. con. easy}, $(G,\psi)$ is obtained from a single vertex with an unbalanced loop, say $(G',\psi')$, by applying a series of 0-extensions, 1-extensions and loop-1-extensions. By \Cref{lem: bad-geodesics}, there exists some $x\in \mathbb{H}$ with $\dot{\gamma}_{x,g\cdot x}(0) + \dot{\gamma}_{x,g^{-1}\cdot x}(0) \neq \textbf{0}$, and so $(G',\psi')$ is $\Gamma$-isostatic. Therefore, by \Cref{thm:simple_extension}, $(G,\psi)$ is $\Gamma$-isostatic.
\end{proof}

\subsection{0-extensions and loop-1-extensions}
We first show 0-extensions preserve both $\Gamma$-independence and $\Gamma$-dependence. 

\begin{lemma}\label{lem:0ext}
Let $\Gamma$ be a Fuchsian group, $(G,\psi)$ be a $\Gamma$-gain graph,
and $(G',\psi')$ be obtained from $(G,\psi)$ by applying a 0-extension.
Then $(G,\psi)$ is $\Gamma$-independent if and only if $(G',\psi')$ is $\Gamma$-independent.
\end{lemma}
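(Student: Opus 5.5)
The plan is to work directly with the orbit rigidity matrix and exploit its block structure. Let $v$ be the new vertex of $(G',\psi')$ and $e_1,e_2$ its two new edges; after switching at $v$ (which, as noted above, does not change rank or left kernel) we may write them as $v\xrightarrow{g_1}v_1$ and $v\xrightarrow{g_2}v_2$, with possibly $v_1=v_2$ but then $g_1\neq g_2$. Order columns so that the two columns of $v$ come last. Then
\begin{equation*}
O(G',\psi',p')=\begin{bmatrix} O(G,\psi,p) & \mathbf{0}\\ * & M \end{bmatrix},
\qquad
M=\begin{bmatrix}\dot\gamma_{p'(v),g_1\cdot p(v_1)}(0)^T\\ \dot\gamma_{p'(v),g_2\cdot p(v_2)}(0)^T\end{bmatrix},
\end{equation*}
where $p=p'|_{V(G)}$. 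The two new rows are the only ones with entries in the columns of $v$.

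\emph{($\Leftarrow$)} Suppose $(G',\psi')$ is $\Gamma$-independent. Choose a joint-configuration $p'$ with $\rank O(G',\psi',p')=|E(G')|$. The rows of $O(G,\psi,p)$, padded with zeros, form a subset of the rows of $O(G',\psi',p')$, so they are linearly independent. Hence $(G,\psi,p)$ is $\Gamma$-independent.

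\emph{($\Rightarrow$)} Suppose $(G,\psi)$ is $\Gamma$-independent, with realisation $p$. Since independence is an open condition, we may perturb $p$ slightly and keep it independent, so we can assume $g_1\cdot p(v_1)\neq g_2\cdot p(v_2)$. In the case $v_1=v_2$ this uses $g_1\neq g_2$ together with the fact that, on a dense open set of points, $g_1\cdot z\neq g_2\cdot z$, because $g_1^{-1}g_2\neq\text{id}$ has at most one fixed point in $\HH$. Now place $p'(v)$ off the geodesic through $g_1\cdot p(v_1)$ and $g_2\cdot p(v_2)$, and distinct from both points. By \Cref{lem:linear_independence}, the two rows of $M$ are then linearly independent, so $M$ is nonsingular.

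With $M$ nonsingular, the block lower-triangular form gives $\rank O(G',\psi',p')=\rank O(G,\psi,p)+2=|E(G)|+2=|E(G')|$. Equivalently, any equilibrium stress $\omega$ of $(G',\psi',p')$ satisfies the balance condition (\ref{bal. cond.}) at $v$, which forces $\omega(e_1)=\omega(e_2)=0$; its restriction to $E(G)$ is then a stress of $(G,\psi,p)$ and hence zero.

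The only delicate step is ensuring the two new targets $g_1\cdot p(v_1)$ and $g_2\cdot p(v_2)$ are distinct. This matters in the parallel-edge or double-loop situations, and it is handled by the genericity perturbation together with the gain-graph conditions (ii) and (iii) of \Cref{def:GG}. The reversed-orientation case reduces to the same one, since an edge $v_1\xrightarrow{h}v$ can be re-oriented as $v\xrightarrow{h^{-1}}v_1$.
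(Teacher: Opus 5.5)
Your proposal is correct and follows essentially the same route as the paper. The forward-to-subgraph direction is the same row-subset argument. For the other direction you likewise take a generic realisation with $g_1\cdot p(v_1)\neq g_2\cdot p(v_2)$, place the new vertex off the geodesic through these points, use \Cref{lem:linear_independence} to make the $2\times 2$ block nonsingular, and read off a rank increase of exactly two from the block-triangular structure. The only cosmetic difference is that you perturb an independent realisation using openness of independence, where the paper picks a regular one; this amounts to the same thing.
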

\begin{proof}
Since $(G,\psi)$ is contained in $(G',\psi')$ as a $\Gamma$-gain subgraph, if $(G',\psi')$ is $\Gamma$-independent then $(G,\psi)$ is $\Gamma$-independent.
Choose a regular realisation $p$ of $(G,\psi)$ such that $g_1\cdot p(u_1)\neq g_2\cdot p(u_2)$, which can be achieved since there is a dense open subset of regular configurations and $g_1\cdot p(u_1)\neq g_2\cdot p(u_2)$ defines a dense open subset of $p \in \mathbb{H}^{|V(G)|}$. Suppose that our $0$-extension adds the vertex $v$ and edges $e_1 = v \xrightarrow[]{g_1} u_1$ and $e_2 = v \xrightarrow[]{g_2} u_2$ with $g_1 \neq g_2$ if $u_1=u_2$. 
Take $p':V(G')\rightarrow \HH$ such that $p'(w)=p(w)$ for all $w\in V(G)$ and $p'(v)$ does not lie on the geodesic between $g_1\cdot p(u_1)$ and $g_2\cdot p(u_2)$. 
Then
\begin{equation*}
O(G',\psi',p')=\left[ 
    \begin{array}{c|c}
        \begin{matrix}
        \dot{\gamma}_{p'(v), g_1\cdot p(u_1)}(0)\\
        \dot{\gamma}_{p'(v), g_2\cdot p(u_2)}(0)
        \end{matrix}

        &\begin{matrix}
            0 ~ \cdots ~ 0 & \dot{\gamma}_{p(u_1), g_1^{-1}\cdot p'(v)}(0) & 0 ~ \cdots ~ 0 & \textbf{0}  & 0 ~ \cdots ~ 0 \\
           0 ~ \cdots ~ 0 & \textbf{0} & 0 ~ \cdots ~ 0 & \dot{\gamma}_{p(u_2), g_1^{-1}\cdot p'(v)}(0) & 0 ~ \cdots ~ 0 
        \end{matrix} \\
        \hline \\
        \mathbf{0}_{|E(G)| \times 2} & O(G,\psi,p) \\ \\
    \end{array}
    \right].
\end{equation*}
By \Cref{lem:linear_independence}, the vectors $\dot{\gamma}_{p'(v), g_1\cdot p(u_1)}(0), \dot{\gamma}_{p'(v), g_2\cdot p(u_2)}(0)$ are linearly independent.
Hence,
\begin{equation*}
    \rank O(G',\psi',p') = \rank O(G,\psi,p) + 2.
\end{equation*}
Since $p$ is regular and $|E(G')|= |E(G)|+2$,
it follows that $(G',\psi',p')$ is $\Gamma$-independent whenever $(G,\psi)$ is $\Gamma$-independent.
\end{proof}

The proof of \Cref{lem:0ext} can be adapted to show that loop-1-extensions also preserve preserve both $\Gamma$-independence and $\Gamma$-dependence.

\begin{lemma}\label{lem:loop1ext}
Let $\Gamma$ be a Fuchsian group, $(G,\psi)$ be a $\Gamma$-gain graph,
and $(G',\psi')$ be obtained from $(G,\psi)$ by applying a loop-1-extension.
Then $(G,\psi)$ is $\Gamma$-independent if and only if $(G',\psi')$ is $\Gamma$-independent.
\end{lemma}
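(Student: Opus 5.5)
The plan mirrors the proof of \Cref{lem:0ext}. One direction is immediate: $(G,\psi)$ is a gain subgraph of $(G',\psi')$, so any $\Gamma$-independent realisation of $G'$ restricts to a $\Gamma$-independent realisation of $G$.

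For the converse, suppose the loop-1-extension adds a vertex $v$ with a loop $l = v\xrightarrow{h}v$, where $h\neq \text{id}$, and an edge $e = v\xrightarrow{g}u$ with $u\in V(G)$. Take $p$ to be a regular realisation of $(G,\psi)$; it is $\Gamma$-independent because $(G,\psi)$ is. Extend $p$ to $p'$ by choosing a position $p'(v)=z$. Ordering the rows of $e$ and $l$ first and the columns of $v$ first, $O(G',\psi',p')$ has block lower-triangular form: a $2\times 2$ block
\begin{equation*}
M(z)=\begin{bmatrix} \dot{\gamma}_{z, g\cdot p(u)}(0) \\ \dot{\gamma}_{z, h\cdot z}(0)+\dot{\gamma}_{z,h^{-1}\cdot z}(0)\end{bmatrix}
\end{equation*}
in the top-left, zeros below it (no edge of $G$ meets $v$), and $O(G,\psi,p)$ in the bottom-right. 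If $M(z)$ is non-singular, then $\rank O(G',\psi',p') = \rank O(G,\psi,p)+2 = |E(G')|$, which gives $\Gamma$-independence.

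It remains to find $z$ with $\det M(z)\neq 0$, and this is the main obstacle. The determinant is a real analytic function of $z$ on $\HH$ minus the finitely many exceptional points $g\cdot p(u)$ and the fixed point of $h$ (if any). This domain is connected, so it suffices to find a single good $z$; the good set is then dense and open.

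\begin{itemize}
\item \emph{First attempt:} use \Cref{lem: bad-geodesics} to pick $z$ with $w(z):=\dot{\gamma}_{z,h\cdot z}(0)+\dot{\gamma}_{z,h^{-1}\cdot z}(0)\neq \mathbf{0}$. Then perturb $p(u)$ instead, using regularity (the set of regular $p$ is dense and open), so that $g\cdot p(u)$ does not lie on the geodesic through $z$ with direction $w(z)$. Since $p(u)$ ranges over an open set, this is achievable, and $\dot\gamma_{z,g\cdot p(u)}(0)$ is then not parallel to $w(z)$.
\item \emph{Cleaner alternative:} fix $p$ and note that the second row $w(z)$ does not depend on $p(u)$, while the first row is the unit direction from $z$ towards $g\cdot p(u)$. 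If $\det M$ vanished identically near $z_0$, then $g\cdot p(u)$ would lie on the geodesic through $z$ in direction $w(z)$ for every $z$ near $z_0$. To rule this out, I will instead vary $p(u)$ over an open set for a fixed $z_0$ with $w(z_0)\neq\mathbf 0$, together with $p$ regular. This is legitimate because regularity and independence of $(G,\psi,p)$ are open conditions, so the whole of $p$, not just $p(v)$, can be jointly perturbed.
\end{itemize}

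Either way, we obtain $p'$ with $M$ non-singular, which completes the proof.
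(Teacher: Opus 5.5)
Your argument is correct and follows essentially the paper's route: the same block-triangular reduction of $O(G',\psi',p')$ to the $2\times 2$ block at $v$, with \Cref{lem: bad-geodesics} guaranteeing that the loop row is non-zero and regularity of $p$ supplying independence of the $O(G,\psi,p)$ block. The only difference is how the two rows at $v$ are made independent. The paper fixes $p$, first switching so the new edge has gain $\mathrm{id}$, and slides $p'(v)$ towards $p(u)$ along a geodesic whose initial direction is transverse to the loop vector, using a limit argument. Your first bullet instead fixes $z$ and moves $p(u)$ off the single geodesic through $z$ in direction $w(z)$, which is slightly simpler and needs no switching; your second bullet adds nothing beyond the first.
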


\begin{proof}
Since $(G,\psi)$ is contained in $(G',\psi')$ as a $\Gamma$-gain subgraph, if $(G',\psi')$ is $\Gamma$-independent then $(G,\psi)$ is $\Gamma$-independent.
Suppose that $(G,\psi)$ is $\Gamma$-independent.
Choose a regular realisation $p$ of $(G,\psi)$.
Suppose that our loop-1-extension adds the vertex $v$, the edge $e = v \xrightarrow[]{h} u$, and the loop $l = v \xrightarrow[]{g} v$ with $g \neq\text{id}$. 
By \Cref{lem:treegain}, we may assume $h = \text{id}$.

For each point $x \in \mathbb{H}$,
define a realisation $p_x:V(G')\rightarrow \HH$ such that $p_x(w)=p(w)$ for all $w\in V(G)$ and $p_x(v) = x$. 
Then
\begin{equation*}
O(G',\psi',p_x)=\left[ 
    \begin{array}{c|c}
    \begin{matrix}
    \dot{\gamma}_{x, p(u)}(0)\\
    \dot{\gamma}_{x, g\cdot x}(0) + \dot{\gamma}_{x, g^{-1}\cdot x}(0)
    \end{matrix}
    &\begin{matrix}
        0 ~ \cdots ~ 0 & \dot{\gamma}_{p(u),x}(0) & 0 ~ \cdots ~ 0 \\
        0 ~ \cdots ~ 0 & \textbf{0} & 0 ~ \cdots ~ 0 \\
    \end{matrix} \\
    \hline \\
    \mathbf{0}_{|E(G)| \times 2} & O(G,\psi,p) \\
    \end{array}
    \right].
\end{equation*}
Since $p$ is regular and $(G,\psi)$ is $\Gamma$-independent, $(G',\psi',p_x)$ is $\Gamma$-independent if, for some $x \in \HH$, the vectors 
\begin{equation*}
    z_1(x) := \dot{\gamma}_{x, p(u)}(0), \qquad  z_2(x) := \dot{\gamma}_{x, g\cdot x}(0) + \dot{\gamma}_{x, g^{-1}\cdot x}(0)
\end{equation*}
are defined and linearly independent. We may assume that $z_2(p(u))\neq \textbf{0}$, since $z_2$ is defined on any point in $\HH$ that is not a fixed point for $g$, of which there are at most one, and the set of regular frameworks is dense and open. 
Now choose $w \in \RR^2$ satisfying $\langle w, w \rangle_{p(u)}=1$ which is linearly independent from $z_2(p(u))$.
Given $\gamma_{p(u):w}$ is the geodesic described in \Cref{def:unitspeedbydirection},
we have that the limits
\begin{equation*}
    \lim_{t \rightarrow 0^+} z_1\left( \gamma_{p(u):w} (t) \right) = -\dot{\gamma}_{p(u):w} (0)= - w, \qquad \lim_{t \rightarrow 0^+} z_2\left( \gamma_{p(u):w} (t) \right) = z_2(p(u))
\end{equation*}
are linearly independent.
Hence, there exists $\varepsilon >0$ such that, if we set $x=\gamma_{p(u):w}(\varepsilon)$,
then $z_1(x), z_2(x)$ are defined and linearly independent.
This concludes the proof.
\end{proof}

\subsection{1-extension}

We now prove that 1-extensions preserve $\Gamma$-isostaticity. We first need the following lemma.
\begin{lemma} \label{key_lemma_loop2ext}
    If $x, y, z\in \mathbb{H}$ are distinct points lying on a common geodesic,
    then the matrix
    \begin{align*}
    \begin{bmatrix}
        \dot{\gamma}_{x, y}(0) & \dot{\gamma}_{y, x}(0) & \textup{\textbf{0}}\\
        \dot{\gamma}_{x, z}(0) & \textup{\textbf{0}} & \dot{\gamma}_{z, x}(0)\\
        \textup{\textbf{0}} & \dot{\gamma}_{y, z}(0) &  \dot{\gamma}_{z, y}(0)\\
        \end{bmatrix}
    \end{align*}
    has a non-trivial row-dependence supported on all rows.
    Similarly, if $x,y \in \HH$ and $g\in \PSL(2, \mathbb{R})$ is an isometry for which $x,y,g\cdot x\in\HH$ are distinct points lying on a common geodesic,
    then the matrix
    \begin{align*}
    \begin{bmatrix}
        \dot{\gamma}_{x, y}(0) & \dot{\gamma}_{y, x}(0)\\
        \dot{\gamma}_{x, g^{-1}\cdot y}(0) & \dot{\gamma}_{y, g\cdot x}(0)\\
        \dot{\gamma}_{x, g\cdot x}(0) +  \dot{\gamma}_{x, g^{-1}\cdot x}(0) &  \textup{\textbf{0}}\\
        \end{bmatrix}
    \end{align*}
    has a non-trivial row dependence supported on all rows.
\end{lemma}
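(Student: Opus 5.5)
The plan is to write down an explicit stress, using the fact that all the relevant unit tangent vectors are parallel when the points lie on one geodesic.

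\emph{First matrix.} Parametrise the common geodesic by unit speed, $\gamma(t)$, with $x=\gamma(t_x)$, $y=\gamma(t_y)$, $z=\gamma(t_z)$. By relabelling and symmetry we may assume $t_x<t_y<t_z$; the other orderings are handled the same way, with sign changes. Let $\tau_P:=\dot\gamma(t)$ at the relevant point $P$. Then
\[
\dot\gamma_{x,y}(0)=\dot\gamma_{x,z}(0)=\tau_x,\qquad \dot\gamma_{y,x}(0)=-\tau_y,\qquad \dot\gamma_{y,z}(0)=\tau_y,\qquad \dot\gamma_{z,x}(0)=\dot\gamma_{z,y}(0)=-\tau_z.
\]
We look for $(\omega_1,\omega_2,\omega_3)$ with
\[
\omega_1\tau_x+\omega_2\tau_x=0,\qquad -\omega_1\tau_y+\omega_3\tau_y=0,\qquad -\omega_2\tau_z-\omega_3\tau_z=0 .
\]
The choice $\omega=(1,-1,1)$ works, and every entry is nonzero. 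This is exactly the hyperbolic analogue of the collinear triangle stress; it follows directly from the case analysis in the proof of \Cref{lem:linear_independence}.

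\emph{Second matrix.} The hypothesis is that $x$, $y$, $g\cdot x$ are distinct and collinear on a geodesic $\gamma$. Using \Cref{eq:isometries_metric_2} with $g^{-1}$, we get $\dot\gamma_{y,g\cdot x}(0)=D_{g^{-1}y}(g)\,\dot\gamma_{g^{-1}y,x}(0)$. The third row involves $g^{-1}\cdot x$, which need not lie on $\gamma$, so this case needs care. The key identity to use is
\[
\dot\gamma_{x,g^{-1}\cdot x}(0)=D_{gx}(g^{-1})\,\dot\gamma_{g x,x}(0),
\]
so the loop row is $\dot\gamma_{x,gx}(0)+D_{gx}(g^{-1})\dot\gamma_{gx,x}(0)$. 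I would regard the three rows as the orbit matrix of the lifted triangle on the vertices $x$, $y$, $g x$. In that triangle, the edge $x$--$y$ is row 1, the edge $y$--$gx$ is the translate by $g$ of the edge $g^{-1}y$--$x$ (row 2), and the edge $x$--$gx$ is the loop.

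Take the stress $(1,-1,1)$ (up to sign) from the first part, applied to the collinear points $x,y,gx$. Now fold the equilibrium conditions at the vertex $gx$ back to $x$ via $D_{gx}(g^{-1})$. This uses \Cref{eq:isometries_metric_1} and \Cref{eq:isometries_metric_2}, together with the fact that the pullback at $x$ of the stress at $gx$ is summed into the $x$-column. The $x$-column equation then becomes the sum of the equations at $x$ and at $gx$, and the $y$-column equation is the equation at $y$. Since the lifted stress has all entries nonzero, we obtain a dependence supported on all rows.

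The main obstacle is the bookkeeping in the second part. Row 2 is written at $x$ with target $g^{-1}y$, while the triangle edge sits at $gx$, so I must check carefully that $\dot\gamma_{x,g^{-1}y}(0)=D_{gx}(g^{-1})\dot\gamma_{gx,y}(0)$, which is \Cref{eq:isometries_metric_2} applied to $g^{-1}$. I also need to fix the signs according to the order of $x,y,gx$ on the geodesic. If $y$ is not between $x$ and $gx$, the signs of the stress change, but every entry remains nonzero.
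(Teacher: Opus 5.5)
Your proof is correct and is essentially the paper's argument. The paper writes $\dot\gamma_{x,g\cdot x}(0)=\varepsilon_1\dot\gamma_{x,y}(0)$, $\dot\gamma_{g\cdot x,y}(0)=\varepsilon_2\dot\gamma_{g\cdot x,x}(0)$ and $\dot\gamma_{y,x}(0)=\varepsilon_3\dot\gamma_{y,g\cdot x}(0)$ (exactly one $\varepsilon_i=-1$), which is the collinear-triangle stress on $x,y,g\cdot x$; its pulled-back identity $\dot\gamma_{x,g^{-1}\cdot y}(0)=\varepsilon_2\dot\gamma_{x,g^{-1}\cdot x}(0)$ is your folding via $D_{g\cdot x}(g^{-1})$, and the resulting dependence $\varepsilon_1R_1+\varepsilon_2R_2-R_3=\mathbf{0}$ on the rows coincides with your folded stress, so the only difference is that you derive the loop case from the triangle case rather than checking it directly (and only \Cref{eq:isometries_metric_2} plus linearity of $D_{g\cdot x}(g^{-1})$ is needed, not \Cref{eq:isometries_metric_1}).
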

\begin{proof}
    We prove only the second case since the first case is similar (yet easier). We use similar reasoning as in the proof of \Cref{lem:linear_independence}. If $x,y,g\cdot x$ lie on a geodesic, then $\dot{\gamma}_{x, g\cdot x}(0) = \varepsilon_1 \dot{\gamma}_{x, y}(0)$, $\dot{\gamma}_{g\cdot x, y}(0) =\varepsilon_2 \dot{\gamma}_{g\cdot x, x}(0)$ and $\dot{\gamma}_{y, x}(0) =\varepsilon_3 \dot{\gamma}_{y, g\cdot x}$, where $\varepsilon_i \in \{\pm 1\}$, and exactly one of $\varepsilon_1, \varepsilon_2, \varepsilon_3$ is equal to $-1$. We thus also have $\dot{\gamma}_{x, g^{-1}\cdot y}(0) =\varepsilon_2 \dot{\gamma}_{x, g^{-1}\cdot x}(0)$. Denote the rows of the matrix by $R_1, R_2$ and $R_3$. One may then verify that $\varepsilon_1 R_1 + \varepsilon_2 R_2 - R_3 = \textbf{0}$.
\end{proof}

\begin{lemma}\label{lem:1ext}
Let $\Gamma$ be a Fuchsian group, $(G,\psi)$ a $\Gamma$-gain graph,
and $(G',\psi')$ be obtained from $(G,\psi)$ by applying a 1-extension.
If $(G,\psi)$ is $\Gamma$-isostatic, then $(G',\psi')$ is $\Gamma$-isostatic.
\end{lemma}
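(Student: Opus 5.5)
We follow the classical Euclidean 1-extension argument: a collinear (here, cogeodesic) placement of the new vertex, followed by a perturbation. Let the 1-extension delete $e=v_1\xrightarrow{g}v_2$ and add $v$ with edges $e_1=v\xrightarrow{g_1}v_1$, $e_2=v\xrightarrow{g_2}v_2$, $e_3=v\xrightarrow{g_3}u$, where $g_1^{-1}g_2=g$. By switching at $v$ with gain $g_1^{-1}$ (\Cref{def:gainswitch}), which preserves rank, we may assume $g_1=\text{id}$ and $g_2=g$. Since $|E(G')|=|E(G)|+2$ and $|V(G')|=|V(G)|+1$, and the gain subgroup of $G'$ contains that of $G$ (a closed walk using $e$ becomes one using $e_1^{-1}e_2$ with the same gain), it suffices to find one realisation $p'$ with $\rank O(G',\psi',p')=\rank O(G,\psi,p)+2$. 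This gives both independence and the correct rank for infinitesimal rigidity, and regularity then propagates it to generic realisations.

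Fix a regular realisation $p$ of $(G,\psi)$, chosen generically so that $P_1:=p(v_1)$ and $P_2:=g\cdot p(v_2)$ are distinct and $Q:=g_3\cdot p(u)$ does not lie on the geodesic $\gamma_{P_1,P_2}$. Such a choice is possible by density of regular realisations, except when $u\in\{v_1,v_2\}$ and the relevant points are forced onto the geodesic; those parallel-edge and loop cases need separate treatment, discussed below. Place $p'(v)=x$ at a point of $\gamma_{P_1,P_2}$ strictly between $P_1$ and $P_2$. Then the rows of $e_1,e_2$ restricted to the $v$-columns are $\pm$ parallel, and by the first part of \Cref{key_lemma_loop2ext} (after translating by isometries, using \Cref{eq:isometries_metric_2}), a suitable combination $\alpha R_{e_1}+\beta R_{e_2}$ with $\alpha,\beta\neq0$ kills the $v$-columns. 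Its remaining entries are a nonzero multiple of the row of $e$ in $O(G,\psi,p)$, because $\dot\gamma_{P_1,x}(0)=\dot\gamma_{P_1,P_2}(0)$ and, transported by $g^{-1}$, $\dot\gamma_{p(v_2),g^{-1}x}(0)=\dot\gamma_{p(v_2),g^{-1}P_1}(0)$.

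Row-reduce $O(G',\psi',p')$ by replacing $R_{e_2}$ with $\alpha R_{e_1}+\beta R_{e_2}$. This produces a matrix whose rows consist of $O(G,\psi,p)$ padded with zero $v$-columns, together with $R_{e_1}$ and $R_{e_3}$. By \Cref{lem:linear_independence}, the $v$-parts $\dot\gamma_{x,P_1}(0)$ and $\dot\gamma_{x,Q}(0)$ are linearly independent, since $x$ does not lie on $\gamma_{P_1,Q}$ for a generic choice of $x$ on the segment. Hence the rank is $\rank O(G,\psi,p)+2$. Isostaticity of $(G,\psi)$ then gives isostaticity of $(G',\psi')$ at $p'$. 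Strictly, $p'$ need not be regular, but rank only increases under perturbation and is bounded by $|E(G')|$, so a nearby regular realisation works.

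The main obstacle is the degenerate configurations in which the geometric data are forced. When $e$ is a loop ($v_1=v_2$, $g\ne\text{id}$), its row is $\dot\gamma_{p(v_1),gp(v_1)}+\dot\gamma_{p(v_1),g^{-1}p(v_1)}$, and the new edges form a parallel pair $v\to v_1$ with gains $\text{id},g$. Here we instead place $x$ on the geodesic through $p(v_1)$ and $g\cdot p(v_1)$ and use the second part of \Cref{key_lemma_loop2ext} with the roles adapted, so that a combination of $R_{e_1},R_{e_2}$ reproduces the loop row. The case where $u=v_1$ or $u=v_2$ requires checking that $Q$ avoids $\gamma_{P_1,P_2}$. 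This holds because the 1-extension conditions force the two-cycles $e_ie_j^{-1}$ to be unbalanced, so $g_3\neq g_1,g_2$, and hence $Q$ is a distinct group translate whose position is generic off the geodesic; establishing this, likely via an analyticity argument as in \Cref{lem: bad-geodesics}, is where the real work lies.
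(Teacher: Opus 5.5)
Your strategy is the paper's: place $p'(v)$ on the geodesic through $g_1\cdot p(v_1)$ and $g_2\cdot p(v_2)$, use \Cref{key_lemma_loop2ext} to trade $e_1,e_2$ for $e$, and finish with \Cref{lem:linear_independence}. Your direct row reduction is equivalent to the paper's route via the $0$-extension $G'+e-e_2$ and the unique circuit $\{e,e_1,e_2\}$; in fact $\alpha=\beta=1$ works, including the loop case.

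\textbf{The gap.} The step you defer is the one the whole argument rests on. You need a $p$ for which $g_1\cdot p(v_1)$, $g_2\cdot p(v_2)$, $g_3\cdot p(u)$ do not lie on a common geodesic, and you never establish this. The hard configuration is when $e$ is a loop at $v_1$ and $u=v_1$ (the third picture in \Cref{fig:1ext}). After switching, you need a single point $z=p(v_1)$ with $z$, $g\cdot z$, $g_3\cdot z$ not cogeodesic, where $\text{id},g,g_3$ are pairwise distinct. There is no free vertex to perturb. The appeal to ``distinct translates, hence generic'' is not an argument: analyticity only reduces the problem to exhibiting one such $z$, and that is the entire content.

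\textbf{How the paper closes it.} It argues by contradiction. Suppose $z,g\cdot z,g_3\cdot z$ were cogeodesic for every $z\in\HH$. Letting $z$ tend to a boundary point $\xi$ and using continuity of $\chi$ (\Cref{continuity-geodesics}), for every $\xi\in\RR\cup\{\infty\}$ one gets $g\cdot\xi=\xi$, $g_3\cdot\xi=\xi$ or $g\cdot\xi=g_3\cdot\xi$. One of these three conditions must then hold for infinitely many $\xi$. Since a non-identity isometry fixes at most two boundary points, one of $g$, $g_3$, $g^{-1}g_3$ would be the identity, a contradiction. Without this (or an equivalent explicit computation), your proof does not cover that configuration.

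\textbf{The cases you flagged as hard are easy.} If $v_1\neq v_2$ and $u=v_1$, choose $p(v_1)$ not fixed by $g_3$, then choose $p(v_2)$ with $g\cdot p(v_2)\notin\gamma_{p(v_1),g_3\cdot p(v_1)}$; the case $u=v_2$ is symmetric. If $e$ is a loop but $u\neq v_1$, then $p(u)$ is free.

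\textbf{Two minor points.} Once the three attachment points are not cogeodesic, $\dot\gamma_{x,P_1}(0)$ and $\dot\gamma_{x,Q}(0)$ are independent for every $x\neq P_1$ on $\gamma_{P_1,P_2}$, since two distinct geodesics meet at most once; no genericity in $x$ is needed. Your remark about the gain subgroup of $G'$ is unnecessary, because the target rank in the definition of infinitesimal $\Gamma$-symmetric rigidity depends only on $\Gamma$.
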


\begin{proof}
Suppose that the 1-extension removes an existing edge $e = u_1 \xrightarrow[]{g} u_2$ (allowing for $u_1= u_2$) and adds a new vertex $v$, together with the edges $e_1,e_2,e_3$, where $e_i = v \xrightarrow[]{g_i} u_i$ for some $u_3\in V(G)$.
By \Cref{def:1ext}, $g=g_1^{-1}g_2$.

\begin{claim}\label{claim: lem-1ext}
    There is a choice of $p\in\mathbb{H}^{V(G)}$ such that the points $g_1\cdot p(u_1),g_2\cdot p(u_2),g_3\cdot p(u_3)$ do not lie on a common geodesic. 
\end{claim}

\begin{proof}
    First suppose that $u_1$ is distinct from $u_2,u_3$.
    If there is some $p\in\mathbb{H}^{V(G)}$ such that $g_2\cdot p(u_2) \neq g_3\cdot p(u_3)$, then $p(u_1)$ can be chosen so that $g_1\cdot p(u_1)$ does not lie on the geodesic through $g_2\cdot p(u_2)$ and $g_3\cdot p(u_3)$. 
    So assume that $g_2\cdot p(u_2)=g_3\cdot p(u_3)$ for all choices of $p\in\mathbb{H}^{V(G)}$.
    Then $u_2=u_3$ and $(g_2^{-1}g_3)\cdot p(u_2)=p(u_2)$ for all choices of $p\in\mathbb{H}^{V(G)}$.
    As non-trivial isometries have at most one fixed point, we have $g_1=g_2$, contradicting \Cref{def:1ext}. 
    
    Now suppose $u_1=u_2=u_3$. By \Cref{lem:treegain}, we may assume that $g_3=\text{id}$. Assume, for a contradiction, that $z,g_1\cdot z,g_2\cdot z$ lie on a common geodesic for all $z\in\mathbb{H}$.
    It follows by \Cref{continuity-geodesics} that for any $z \in \RR \,\cup\, \{\infty\}$, we have $g_1\cdot z =z$, $g_2\cdot z = z$ or $g_1\cdot z= g_2\cdot z$.
    If for some $i \in \{1,2\}$ the equality $g_i\cdot z= z$ holds for more than three boundary points $z$, then, since an isometry can fix at most two boundary points, we have $g_i=\text{id}$.
    Similarly, if $g_1 \cdot z = g_2 \cdot z$ for more than three choices of $z$, then $g_1^{-1} g_2 = \text{id}$. In both cases, we contradict \Cref{def:1ext}.
\end{proof}

Now choose $p':V(G)\rightarrow \HH$ such that $p'(w)=p(w)$ for all $w\in V(G)$, and $p'(v)$ lies on the geodesic through $g_1\cdot p(u_1)$ and $g_2\cdot p(u_2)$ but is neither $g_1\cdot p(u_1)$ nor $g_2\cdot p(u_2)$.
We also define $G_1 := G' + e - e_2$ and $G_2 := G' + e$, $\psi_1;E(G_1)\rightarrow\Gamma$ to be the map with $\psi_1(f) = \psi'(f)$ for each $f \neq e$ and $\psi_1(e) = \psi(e)$, and $\psi_2:E(G_2)\rightarrow\Gamma$ to be the map with $\psi_2(f) = \psi_1(f)$ for each $f \neq e$ and $\psi_2(e) = \psi(e)$.
With this, we observe that $(G_1,\psi_1)$ is formed by applying a 0-extension to $(G,\psi)$, and $(G_2,\psi_2)$ is formed from $(G',\psi')$ by adding back in the edge $e$.

By \Cref{lem:linear_independence}, $(G_1,\psi_1,p')$ is $\Gamma$-isostatic.
Thus $(G_2,\psi_2)$ is $\Gamma$-symmetrically infinitesimally rigid and $O(G_2,\psi_2,p')$ has a 1-dimensional left kernel. By \Cref{key_lemma_loop2ext}, the set $\{e,e_1,e_2\}$ has a linear dependence, and thus it is the unique circuit of the row matroid of $O(G_2,\psi_2,p')$. It follows that removing the row corresponding to $e$ from $O(G_2,\psi_2,p')$ does not reduce the rank.
Hence $\rank O(G',\psi',p') =2|V(G')|-1$ and so $(G',\psi')$ is $\Gamma$-isostatic.
\end{proof}

\section{Irreducible (2,3,1,0)-gain graphs are \texorpdfstring{$\Gamma$}{Gamma}-isostatic}
\label{sec: irr.}
A $(2,3,1,0)$-gain tight graph is \textit{irreducible} if no reduction operation applied to it yields a $(2,3,1,0)$-gain tight graph. In this section we show that, for certain Fuchsian groups $\Gamma$, any irreducible $(2,3,1,0)$-gain tight graph is $\Gamma$-isostatic.

\begin{theorem}\label{main_irreducible}
    Let $\Gamma$ be a Fuchsian group that is isomorphic to either a surface group or a non-cyclic finitely generated free group.
    If $(G,\psi)$ is an irreducible $(2,3,1,0)$-gain tight $\Gamma$-gain graph, then $(G,\psi)$ is $\Gamma$-isostatic.
\end{theorem}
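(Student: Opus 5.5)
The plan is to reduce the statement to a finite list of explicit graphs and then exhibit one full-rank realisation for each. First I would show, combinatorially, that an irreducible $(2,3,1,0)$-gain tight graph is a disjoint union of base graphs in the sense of \Cref{subsec:basegraphs}. This should follow from the proof of \Cref{thm: recur. con.} (see \Cref{appA}): every $(2,3,1,0)$-gain tight graph that is not a union of base graphs admits an admissible reduction. For a disjoint union $G=G_1\sqcup\cdots\sqcup G_k$, the matrix $O(G,\psi,p)$ is block diagonal. It therefore suffices to show that each base graph is $\Gamma$-isostatic, i.e.\ has a realisation $p$ with $\rank O=2|V|=|E|$. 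Throughout I use two facts about $\Gamma$. By \Cref{lem:noparabolic}, every non-identity element of a surface group is hyperbolic; this must be checked separately for the free-group case. By \Cref{lem:cyclic-iff-fix}, two non-commuting elements have disjoint fixed-point sets on the boundary, and hence distinct axes.

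\emph{Trivial graph.} There is one vertex $v$ with loops of gains $g,h$, and $\langle g,h\rangle$ is not cyclic. Write $z_g(x)=\dot{\gamma}_{x,g\cdot x}(0)+\dot{\gamma}_{x,g^{-1}\cdot x}(0)$. The points $g\cdot x$ and $g^{-1}\cdot x$ are swapped by the reflection in the geodesic through $x$ perpendicular to the axis of $g$. It follows that $z_g(x)$ is tangent to that perpendicular, and it is non-zero off the axis by \Cref{lem: bad-geodesics}. Hence $z_g(x),z_h(x)$ are dependent only if the perpendiculars from $x$ to the two distinct axes coincide. That happens only on a common perpendicular of the axes (when one exists) or on the axes themselves, which is a nowhere dense set. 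So a generic $x$ works.

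\emph{Fancy triangle and fancy hat.} Here the balanced part $B$ has identity gains after switching (\Cref{lem:treegain}). It is a $(2,3)$-tight graph, so by \Cref{thm:Hlaman} a generic $p$ makes its rows independent, with kernel exactly $\{(\eta(X,p(w)))_w : X\in\mathfrak{sl}(2,\RR)\}$. The three loops are then independent of these rows exactly when the $3\times 3$ matrix $X\mapsto \big(\langle z_{g_i}(p(v_i)),\eta(X,p(v_i))\rangle_{p(v_i)}\big)_{i}$ is non-singular. I would compute each entry as $\frac{d}{dt}d_{\HH}(x,e^{-tX}ge^{tX}x)$, which depends only on $[X,g]$. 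Non-singularity is an analytic condition in $p$, so it suffices to find one good $p$. I would try a degeneration first: let two of the looped vertices approach the axis of their own loop gain, so that the corresponding functionals become simple, and use that the loop gains do not all commute (the graph is not cyclic).

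\emph{Near-cyclic and double-cycle graphs.} For a near-cyclic graph $G=H+e$, the graph $H$ is cyclic, say with gains in $\langle g\rangle$, and is $(2,3,1)$-tight. By \Cref{cyclic_main_theorem} it is $\langle g\rangle$-isostatic, so its kernel at a regular $p$ is the $1$-dimensional space $\mathcal{T}_{\langle g\rangle}(p)$ spanned by $\eta(A,\cdot)$ with $A$ generating the centraliser (\Cref{lem:centraliser_dim,lem:centraliser_triv}). The added edge $u\xrightarrow{h}w$ gives a full-rank matrix iff $\frac{d}{dt}d_{\HH}(e^{tA}p(u),he^{tA}p(w))|_{t=0}\neq0$. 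If this vanished for all $p$, then $d(x,e^{-tA}he^{tA}y)$ would be constant in $t$, and taking $x=h\cdot y$ as in \Cref{prop:invariant} forces $h$ to commute with $e^{tA}$, contradicting non-cyclicity. For the double cycles $C_n^2$, I would again switch so that one edge of each parallel pair except one has identity gain. I would then specialise: contract along a path by placing vertices near each other, or proceed by induction on $n$, reducing to the $n=2$ case of four parallel edges, whose $4\times4$ matrix I would analyse directly.

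I expect the double cycles to be the main obstacle, together with the explicit non-degeneracy check for the loop functionals in the fancy triangle and hat. There the geometry of several distinct hyperbolic axes must be controlled simultaneously, and no subgraph reduces to an earlier case.
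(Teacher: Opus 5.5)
Your architecture (irreducible $\Rightarrow$ disjoint union of base graphs, block-diagonal orbit matrix, treat each base graph separately) agrees with the paper for trivial graphs, fancy triangles/hats and near-cyclic graphs. The double-cycle step, however, is a genuine gap, and it is precisely the step the paper never takes.

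You propose to show every $(2,3,1,0)$-gain tight double cycle is $\Gamma$-isostatic by contracting or inducting down to $C_2^2$ and then ``analysing the $4\times 4$ matrix directly''. Neither half is an argument.
\begin{itemize}
\item Collapsing two adjacent vertices of $C_n^2$ does not give a smaller tight graph. One edge of the collapsed pair degenerates and the other becomes a loop, leaving $2(n-1)+1$ edges on $n-1$ vertices. You would therefore need a vertex-splitting lemma that you have not formulated.
\item The $C_2^2$ case with gains $\mathrm{id},g_1,g_2,g_3$ is exactly \Cref{lem: case_x}, the most technical result of the paper (adjoint action, Killing form, Lorentzian cross product). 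It needs $\Gamma$ to have no parabolic elements and no elements of order two. The Bottema example (\Cref{bottema}) shows that rigidity of double cycles depends on the group, so any direct proof must exploit specific properties of $\Gamma$.
\end{itemize}

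The missing idea is \Cref{lem:dcirr}: for the groups in the statement, a double cycle is never irreducible. At any vertex with incident gains $g_1,g_2,g_3,\mathrm{id}$, the three possible 2-reductions are admissible exactly when $\langle g_1g_2^{-1},g_3\rangle$, $\langle g_1g_3^{-1},g_2\rangle$ or $\langle g_1,g_2g_3^{-1}\rangle$ is non-cyclic. \Cref{l:one non-cyclic} shows one of them is. It first notes that $\langle g_1,g_2,g_3\rangle$ is free: by Nielsen--Schreier in the free case, and in the surface case because it has infinite index (its image in the abelianisation $\mathbb{Z}^{2g}$ has rank at most $3<2g$). It then abelianises that free group to $\mathbb{Z}^k$ (\Cref{lem:dc}). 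This is where the surface/free hypothesis actually enters, and it removes double cycles from the theorem altogether. Moreover, in the free-group case your route cannot be completed with the paper's tools. Free Fuchsian groups may contain parabolics (e.g.\ $\Gamma(2)$, generated by $z\mapsto z+2$ and $z\mapsto z/(2z+1)$), so \Cref{lem: case_x} is unavailable there.

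Smaller points:
\begin{itemize}
\item \emph{Trivial graphs.} For the same reason, your assertion that hyperbolicity ``must be checked separately'' for free groups is false, so your axis/perpendicular argument misses parabolic gains. The paper's \Cref{lem:trivial} avoids axes entirely: it shows each loop functional on $\mathfrak{sl}(2,\RR)$ has kernel $\mathcal{T}_{\langle k\rangle}+\mathfrak{h}_{p(v)}$ and uses $\mathcal{T}_{\langle g\rangle}\cap\mathcal{T}_{\langle h\rangle}=\{0\}$.
\item \emph{Fancy triangles/hats.} You reach the same reduction as the paper: the three planes $\mathcal{T}_{\langle g_i\rangle}+\mathfrak{h}_{p(v_i)}$ must meet trivially. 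The degeneration you sketch is left unfinished. The paper closes this step with \Cref{lin_alg_lemma} together with the openness of $\{\mathfrak{h}_z\}$ in $\mathbb{P}(\mathfrak{sl}(2,\RR))$.
\item \emph{Near-cyclic graphs.} Your argument via $x=h\cdot y$ is correct for a non-loop extra edge. It is cleaner than the paper's \Cref{lem:near}, since it does not need the absence of order-two elements. It says nothing when the extra edge is a loop, which needs a separate argument; the paper reduces that case to the trivial graph.
\end{itemize}
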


When $\Gamma$ is a non-cyclic Fuchsian group, \Cref{thm: recur. con.} implies that every irreducible $(2,3,1,0)$-gain tight $\Gamma$-gain graph is a base graph. However this is not the end of the story.
Recall, from \Cref{subsec:basegraphs}, that there are three classes of base graph: the class consisting of trivial graphs, fancy triangles and fancy hats (see \Cref{fig:base}(a),(c) and (d)); the class consisting of double cycles (see, e.g., \Cref{fig:base}(b)); and the class of near-cyclic graphs. 
The first and third classes are irreducible and \Cref{subsec:triv,{subsec:nearcyclic}} show that each $\Gamma$-gain graph in these classes is $\Gamma$-isostatic. However the class of double cycles is not, and the proof of \Cref{main_irreducible} is completed in \Cref{sec: DC} by showing that all such gain graphs are reducible.

\subsection{Trivial graphs, fancy triangles and fancy hats}
\label{subsec:triv}

We require the following result.
\begin{lemma}\label{lem:loopequation1}
    For each $g \in \PSL(2,\mathbb{R}) \setminus \{\textup{id}\}$, $X_g\in \mathcal{T}_{\langle g \rangle}$ and $x \in \mathbb{H}$ such that $x\neq g\cdot x$, we have
    \begin{equation*}
        \Big\langle \dot{\gamma}_{x,g\cdot x}(0) + \dot{\gamma}_{x,g^{-1}\cdot x}(0) ~ , ~ \eta (X_g,x) \Big\rangle_{x} = 0.
    \end{equation*}
\end{lemma}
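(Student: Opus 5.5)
The plan is to realise the loop term as the derivative of a distance function along a symmetric flow, and then invoke $g$-invariance, in the same spirit as the proof of \Cref{lem: eqfrom_lem_inf_motions}. Since $X_g\in\mathcal{T}_{\langle g\rangle}$, we have $X_g g = g X_g$, so $\exp(tX_g)$ commutes with $g$ (and with $g^{-1}$) for all $t\in\RR$. Set $x_t := \exp(tX_g)\cdot x$. Then
\begin{equation*}
d_{\HH}\big(x_t, g\cdot x_t\big) = d_{\HH}\big(\exp(tX_g)\cdot x, \exp(tX_g)g\cdot x\big) = d_{\HH}(x, g\cdot x),
\end{equation*}
which is constant in $t$. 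The same holds with $g^{-1}$ in place of $g$.

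Next, differentiate at $t=0$ using \Cref{metric-metric} and the chain rule. The velocity of $x_t$ at $t=0$ is $\eta(X_g,x)$. The velocity of $g\cdot x_t$ is $D_x(g)\eta(X_g,x)$. By the computation in the proof of \Cref{lem: Lie -algebras-stabilizers1} together with $gX_gg^{-1}=X_g$, this equals $\eta(X_g, g\cdot x)$. Hence
\begin{equation*}
0=\big\langle \dot\gamma_{x,g\cdot x}(0),\eta(X_g,x)\big\rangle_x + \big\langle \dot\gamma_{g\cdot x, x}(0), D_x(g)\eta(X_g,x)\big\rangle_{g\cdot x}.
\end{equation*}
Here $x\neq g\cdot x$ ensures that the derivative of $d_{\HH}$ is defined. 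To rewrite the second term, apply \Cref{eq:isometries_metric_1} with the isometry $g$ at the point $g^{-1}\cdot\,$: by \Cref{eq:isometries_metric_2}, $\dot\gamma_{g\cdot x,x}(0)=D_x(g)\dot\gamma_{x,g^{-1}\cdot x}(0)$. Combined with \Cref{eq:isometries_metric_1}, the second term becomes $\langle \dot\gamma_{x,g^{-1}\cdot x}(0),\eta(X_g,x)\rangle_x$. Summing the two terms gives exactly the claimed identity.

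The main step requiring care is the bookkeeping in this last rewriting. We must check that $D_x(g)$ carries $\dot\gamma_{x,g^{-1}\cdot x}(0)$ to $\dot\gamma_{g\cdot x,x}(0)$, which is \Cref{eq:isometries_metric_2} applied to the pair $(x,g^{-1}\cdot x)$. We must also check that both vectors inside the inner product are transported by the same Jacobian $D_x(g)$, so that the metric identity applies. Everything else is the standard chain rule argument already used for \Cref{eq:trivial_inf_edge}.
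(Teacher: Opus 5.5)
Your proof is correct and is the same argument as the paper's. In both, $d_{\HH}(\exp(tX_g)\cdot x,\, g\exp(tX_g)\cdot x)$ is constant in $t$ because $\exp(tX_g)$ commutes with $g$, and its derivative at $t=0$ is exactly the loop-row pairing. Your rewriting of the second term via \Cref{eq:isometries_metric_2} and \Cref{eq:isometries_metric_1} spells out the chain-rule identification that the paper leaves implicit. One small slip: the metric identity should be applied at the point $x$, not ``at the point $g^{-1}\cdot$''.
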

\begin{proof}
    Since $g$ commutes with $\exp(t X_g)$ for any $t \in \mathbb{R}$, we have that
    \begin{equation*}
        d_{\mathbb{H}}(\exp(tX_g) x, g\cdot \exp(tX_g) x) = d_{\mathbb{H}}(\exp(tX_g) x, \exp(t X_g) g\cdot x) = d_{\mathbb{H}}(x, g\cdot x).
    \end{equation*}
    It follows that
    \begin{equation*}
        \Big\langle \dot{\gamma}_{x,g\cdot x}(0) + \dot{\gamma}_{x,g^{-1}\cdot x}(0) ~ , ~  \eta (X_g,x)  \Big\rangle_x = \frac{d}{dt} d_{\mathbb{H}}\Big(\exp(tX_g) \cdot x, g \exp(tX_g)\cdot x \Big) \Big|_{t=0} = \frac{d}{dt} d_{\mathbb{H}}(x, g\cdot x) \Big|_{t=0} = 0. \qedhere
    \end{equation*}
\end{proof}

We now show that every trivial $\Gamma$-gain graph is $\Gamma$-isostatic.

\begin{lemma}\label{lem:trivial}
    Let $\Gamma$ be a non-cyclic Fuchsian group. Then every trivial $\Gamma$-gain graph is $\Gamma$-isostatic.
\end{lemma}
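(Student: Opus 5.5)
The trivial graph has one vertex $v$ and two loops $v\xrightarrow{g}v$ and $v\xrightarrow{h}v$, with $g,h\neq\text{id}$ and $\langle g,h\rangle$ not cyclic. Its orbit rigidity matrix at a realisation $p(v)=x$ is the $2\times 2$ matrix with rows
\begin{equation*}
z_g(x):=\dot{\gamma}_{x,g\cdot x}(0)+\dot{\gamma}_{x,g^{-1}\cdot x}(0), \qquad z_h(x):=\dot{\gamma}_{x,h\cdot x}(0)+\dot{\gamma}_{x,h^{-1}\cdot x}(0).
\end{equation*}
Since $\Gamma$ is non-cyclic, being $\Gamma$-isostatic means rank $2=|E(G)|=2|V(G)|$. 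So it suffices to find a single $x\in\HH$ at which $z_g(x)$ and $z_h(x)$ are defined and linearly independent.

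\textbf{Step 1: tangent vectors tied to the loops.} Using surjectivity of $\exp$, pick $X_g,X_h\in\mathfrak{sl}(2,\RR)$ with $\exp(X_g)=g$ and $\exp(X_h)=h$. They are non-zero because $g,h\neq\text{id}$. Since $\exp(tX_g)$ commutes with $g$, we have $X_g\in\mathcal{T}_{\langle g\rangle}$, and likewise $X_h\in\mathcal{T}_{\langle h\rangle}$. By \Cref{lem:loopequation1}, $z_g(x)$ is $\langle\cdot,\cdot\rangle_x$-orthogonal to $\eta(X_g,x)$, and $z_h(x)$ is orthogonal to $\eta(X_h,x)$, whenever these are defined.

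\textbf{Step 2: reduce to a generic choice of $x$.} By \Cref{lem: bad-geodesics}, $z_g(x)\neq\mathbf 0$ and $z_h(x)\neq\mathbf 0$ on a dense open set. The vector $\eta(X_g,x)$ vanishes only where $X_g\in\mathfrak h_x$, which by \Cref{eq:hz} is at most one point; the same holds for $X_h$. On the remaining dense open set, each $z$ is a non-zero vector orthogonal to a non-zero vector in the 2-dimensional space $T_x\HH$. Hence $z_g(x)$ and $z_h(x)$ are dependent exactly when $\eta(X_g,x)$ and $\eta(X_h,x)$ are parallel as real vectors. So it suffices to show that $\eta(X_g,\cdot)$ and $\eta(X_h,\cdot)$ are not parallel everywhere on an open set.

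\textbf{Step 3: the main obstacle.} Suppose, for contradiction, that $\eta(X_g,\cdot)$ and $\eta(X_h,\cdot)$ are parallel on an open set. By \Cref{eq:U}, both are holomorphic quadratic polynomials in $z$, and $\eta(X_h,\cdot)$ is not identically zero. Parallelism means the quotient $\eta(X_g,z)/\eta(X_h,z)$ is real-valued wherever it is defined. A real-valued holomorphic function on an open set is constant by the open mapping theorem, so $\eta(X_g,\cdot)=\lambda\,\eta(X_h,\cdot)$ for some $\lambda\in\RR$. The coefficients $b,2a,-c$ of the polynomial determine $X$, so $X_g=\lambda X_h$. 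Then $g$ and $h$ both lie in the one-parameter subgroup $\{\exp(tX_h) : t\in\RR\}$. By \Cref{lem:cyclic-iff-fix}, $\langle g,h\rangle$ is then cyclic, which contradicts the definition of a trivial graph.

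Hence some $x$ gives rank $2$. That realisation is $\Gamma$-independent and infinitesimally $\Gamma$-symmetrically rigid, so the trivial graph is $\Gamma$-isostatic.
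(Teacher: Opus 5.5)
Your proof is correct, but it works in the tangent plane $T_x\HH$ rather than in $\mathfrak{sl}(2,\RR)$ as the paper does.

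The paper composes $X\mapsto\eta(X,p(v))$ with the loop functionals and uses a dimension count to show $\ker\Phi_k=\mathcal{T}_{\langle k\rangle}+\mathfrak{h}_{p(v)}$ for $k\in\{g,h\}$. It then reduces the claim to $\ker\Phi_g\cap\ker\Phi_h=\mathfrak{h}_{p(v)}$, with non-cyclicity entering through \Cref{lem:centraliser_dim} ($\mathcal{T}_{\langle g\rangle}\cap\mathcal{T}_{\langle h\rangle}=\{\mathbf 0\}$).

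You instead apply \Cref{lem:loopequation1} pointwise, so each loop row is normal to the Killing vector $\eta(X_k,x)$. Dependence then becomes parallelism of $\eta(X_g,x)$ and $\eta(X_h,x)$, which you rule out on open sets by holomorphy of $z\mapsto b+2az-cz^2$. Choosing $X_g,X_h$ as logarithms of $g,h$ lets \Cref{lem:cyclic-iff-fix} do the work of \Cref{lem:centraliser_dim}.

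The two formulations match exactly. For independent $X_g,X_h$, the vectors $\eta(X_g,x)$ and $\eta(X_h,x)$ are parallel iff $\mathfrak{h}_x\subseteq\langle X_g,X_h\rangle=\mathcal{T}_{\langle g\rangle}+\mathcal{T}_{\langle h\rangle}$. This matters because the paper's final step needs precisely $\mathfrak{h}_{p(v)}\not\subseteq\mathcal{T}_{\langle g\rangle}+\mathcal{T}_{\langle h\rangle}$. That step asserts the planes $\mathcal{T}_{\langle g\rangle}+\mathfrak{h}_{p(v)}$ and $\mathcal{T}_{\langle h\rangle}+\mathfrak{h}_{p(v)}$ meet only in $\mathfrak{h}_{p(v)}$, and the required condition is stronger than the stated choice $\mathfrak{h}_{p(v)}\neq\mathcal{T}_{\langle g\rangle},\mathcal{T}_{\langle h\rangle}$. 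In the paper's language it follows from \Cref{lem: Lie -algebras-stabilizers0}, since an open subset of $\PP(\mathfrak{sl}(2,\RR))$ cannot lie in a projective line. Your open-mapping step proves this genericity directly.

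What the paper's Lie-algebra formulation buys is uniformity. Describing motion spaces as subspaces such as $\mathcal{T}_{\langle g_i\rangle}+\mathfrak{h}_{p(v_i)}$ of $\mathfrak{sl}(2,\RR)$ is the template reused for fancy triangles and hats (\Cref{base_case_lemma}) and near-cyclic graphs (\Cref{lem:near}), where constraints at several vertices must be intersected. Your argument is shorter and more elementary, but it relies on a one-vertex graph having all its constraints in a single tangent plane.
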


\begin{proof}
    By sparsity, $G$ has two (loop) edges $e=v\xrightarrow[]{g}v$ and $f=v\xrightarrow[]{h}v$. Using \Cref{lem: bad-geodesics} and \Cref{lem: Lie -algebras-stabilizers0}, we choose $p$ such that 
\begin{equation*}
    \gamma_{p(v),g\cdot p(v)}(0)+\dot\gamma_{p(v),g^{-1}\cdot p(v)}(0)\neq\textbf{0},\qquad \gamma_{p(v),h\cdot p(v)}(0)+\dot\gamma_{p(v),h^{-1}\cdot p(v)}(0)\neq\textbf{0}
\end{equation*}
and $\mathfrak{h}_{p(v)}\neq\mathcal{T}_{\langle g\rangle},\mathcal{T}_{\langle h\rangle}$. Fix $k\in\{g,h\}$.
It is easy to verify that, for any choice of $p(v)$, the map 
\begin{equation*}
    f: \mathfrak{sl}(2, \mathbb{R}) \longrightarrow T_{p(v)}\mathbb{H},\qquad X\longmapsto \eta(X, p(v))
\end{equation*}
is surjective and has kernel $\mathfrak{h}_{p(v)}$. Moreover, by our choice of $p(v)$, the map
\begin{equation*}
    \alpha_k : T_{p(v)}\HH\longrightarrow\RR,\qquad u\longmapsto \Big\langle \dot\gamma_{p(v),k\cdot p(v)}(0)+\dot\gamma_{p(v),k^{-1}\cdot p(v)}(0)~,~u\Big\rangle_{p(v)}
\end{equation*}
is surjective. Therefore, the composition $\Phi_k = \alpha_k \circ f:\mathfrak{sl}(2,\RR)\rightarrow\RR$ is a surjective map and therefore $\text{null}\,\Phi_k=2$.
We observe here that the kernel of $D_p f_{G,\psi}$ is exactly the intersection $\ker \alpha_g \,\cap\, \ker \alpha_h$,
and the preimage of $\ker \alpha_g $ (respectively, $\ker \alpha_h$) under $f$ is exactly $\ker \Phi_g $ (respectively, $\ker \Phi_h$).
Since $\ker f=\mathfrak{h}_{p(v)}$, it now suffices to show that $\ker \Phi_g \,\cap\, \ker \Phi_h = \mathfrak{h}_{p(v)}$.

Since $\text{null}\,\Phi_k=2$ is surjective, any 2-dimensional subspace of $\ker\Phi_k$ is exactly $\ker\Phi_k$. 
As $\ker f \subset \ker\Phi_k$,
we have $\mathfrak{h}_{p(v)} \subset \ker\Phi_k$.
Additionally, $\mathcal{T}_{\langle k \rangle} \subset \ker\Phi_k$ by \Cref{lem:loopequation1}.
Thus $\mathcal{T}_{\langle k \rangle}+ \mathfrak{h}_{p(v)}$ is contained in $\ker\Phi_k$.
By our choice of $p$, 
$\mathcal{T}_{\langle k \rangle}+ \mathfrak{h}_{p(v)}$ is 2-dimensional, and therefore it is exactly $\ker \Phi_k$.

Since $\langle g,h\rangle$ is non-cyclic,
the linear space $\mathcal{T}_{\langle g, h\rangle}$ is 0-dimensional by \Cref{lem:centraliser_dim}.
It is simple to check that $\mathcal{T}_{\langle g, h\rangle} = \mathcal{T}_{\langle g\rangle} \cap \mathcal{T}_{\langle h\rangle}$.
Therefore,
\begin{equation*}
    \ker \Phi_g \,\cap\, \ker \Phi_h=\left(\mathcal{T}_{\langle g \rangle}+ \mathfrak{h}_{p(v)}\right)\,\cap\,\left(\mathcal{T}_{\langle h \rangle}+ \mathfrak{h}_{p(v)}\right) = \mathfrak{h}_{p(v)},
\end{equation*}
as required. 
\end{proof}

In the rest of this subsection, we provide an analogous result for fancy hats and fancy triangles. Note that, in both cases, we have a graph obtained from a $(2,3)$-tight graph by adding loops $e_1, e_2$ and $e_3$ at three distinct vertices. We consider both cases together in \Cref{base_case_lemma}. The general strategy is as follows. We know that the trivial infinitesimal motions of the balanced graph are all given by $\eta(X, p)$. Hence, it suffices to consider the subsets $M_i \subseteq \mathfrak{sl}(2, \mathbb{R})$ of these infinitesimal motions which are also infinitesimal motions of the balanced graph with one added loop $e_i$. If $M_1 \,\cap\, M_2\,\cap\, M_3 =\{\textbf{0}_{2\times 2}\}$, then the resulting graph will be independent. We will use similar a strategy for near-cyclic graphs (see \Cref{lem:near}). 

Before we begin the next lemma, we recall the following terminology.
Given a real vector space $V$,
we define the projectivisation $\mathbb{P}(V)$ of $V$ to be the quotient of $V\setminus \{\mathbf{0}\}$ by the equivalence relation $x \sim y$ if and only if $x = \lambda y$ for some real non-zero scalar $\lambda$.
We note that $\mathbb{P}(V)$ can also be considered to be the set of 1-dimensional lines in $V$: any line $\{t x : t \in \RR\}$ can be uniquely identified with the equivalence class $[x]$ in $\PP(V)$.
If $V = \mathbb{R}^n$, then we use $\RR \mathbb{P}^{n-1}$ to denote $\PP(\RR^n)$.

\begin{lemma}\label{lin_alg_lemma}
Let $V_1, V_2$ and $V_3$ be one-dimensional subspaces of $\mathbb{R}^{3}$ with 
\begin{equation*}
\bigcap_{i=1}^{3}V_i =\{\textup{\textbf{0}}\}
\end{equation*} 
and $O$ be a non-empty open subset of $(\mathbb{RP}^{2})^3$.
Then there exists a non-empty open subset $O' \subset O$ of triples $(W_1,W_2,W_3)$ such that (when considering each $W_i$ to be a 1-dimensional linear subspace of $\RR^3$) we have $W_i \neq V_i$ for each $i \in \{1,2,3\}$ and 
\begin{equation}\label{eq:lin_alg_lemma}
\bigcap_{i=1}^{3} (V_i + W_i) = \{\textup{\textbf{0}}\}.
\end{equation}
\end{lemma}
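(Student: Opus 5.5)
The plan is to show that the ``bad'' triples in $(\mathbb{RP}^2)^3$ form a proper real algebraic subset, whose complement is open and dense. It then meets $O$ in a non-empty open set. The conditions $W_i \neq V_i$ are open and dense, so they can be imposed at the end.

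Choose representatives $v_i$ spanning $V_i$ and $w_i$ spanning $W_i$. Whenever $W_i\neq V_i$, the sum $V_i+W_i$ is the plane orthogonal to the cross product $n_i := v_i\times w_i \neq \textbf{0}$. Three planes through the origin in $\RR^3$ intersect trivially if and only if their normals are linearly independent, that is,
\begin{equation*}
F(w_1,w_2,w_3) := \det\big[\, v_1\times w_1 \;\; v_2\times w_2 \;\; v_3\times w_3 \,\big] \neq 0 .
\end{equation*}
The function $F$ is a polynomial, homogeneous of degree one in each $w_i$, so its zero set is a well-defined closed subset of $(\mathbb{RP}^2)^3$. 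Triples with $W_i=V_i$ for some $i$ also satisfy $F=0$, because then $n_i=\textbf{0}$. So it suffices to show that $F$ is not identically zero. Once that holds, $Z(F)$ is a closed set with empty interior: a nonzero polynomial cannot vanish on an open subset of $(\RR^3)^3$, since analytic functions vanishing on an open set vanish identically. Setting $O' := O\setminus Z(F)$ then gives a non-empty open set with all the required properties.

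The main step, and the only place where the hypothesis $\bigcap V_i=\{\textbf{0}\}$ could enter, is exhibiting one triple with $F\neq 0$. The map $w\mapsto v_i\times w$ has image the whole plane $v_i^\perp$. So we need to choose vectors $n_i\in v_i^\perp$, one for each $i$, that are linearly independent. We split into cases on the span of $v_1,v_2,v_3$, which is at least $2$-dimensional because the $V_i$ are lines whose intersection is trivial; in particular they are not all equal.

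\textbf{Case 1:} $v_1,v_2,v_3$ form a basis. Take $n_1,n_2,n_3$ to be the dual basis vectors, with $n_i$ orthogonal to $v_j$ for every $j\neq i$. These are independent, but they are not in $v_i^\perp$, so instead argue directly. Choose $n_1\in v_1^\perp$ arbitrary and nonzero. Then choose $n_2\in v_2^\perp$ not parallel to $n_1$; this is possible because $v_2^\perp$ is a plane. Finally choose $n_3\in v_3^\perp$ outside the plane $\mathrm{span}(n_1,n_2)$. This is possible unless $v_3^\perp=\mathrm{span}(n_1,n_2)$. That can be avoided by picking $n_1,n_2$ so that their span differs from $v_3^\perp$: at least one of $v_1^\perp, v_2^\perp$ differs from $v_3^\perp$ whenever $V_1,V_2,V_3$ are not all equal, and we pick $n_1$ (or $n_2$) outside $v_3^\perp$.

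\textbf{Case 2:} the span is $2$-dimensional. The same greedy argument works, because it only used that the three lines $V_i$ are not all equal.

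Either way a triple with $F\neq0$ exists, which completes the argument.
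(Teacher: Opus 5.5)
Your proposal is correct and follows the paper's argument: it uses the same tri-homogeneous polynomial $\det[v_i\times w_i]$, shows it is not identically zero at one point, and takes $O'$ to be $O$ minus its zero set. Your remark that $W_i=V_i$ already forces $F=0$ is a tidy way to get $W_i\neq V_i$ for free. The only real difference is how you find the nonvanishing point. You use a uniform greedy choice of normals $n_i\in v_i^\perp$, which needs only that the $V_i$ are not all equal, whereas the paper makes explicit choices in two separate cases. Incidentally, the dual-basis idea you abandoned does work after a cyclic relabelling, and that relabelling is exactly the paper's choice $W_1=V_2$, $W_2=V_3$, $W_3=V_1$.
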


\begin{proof}
    We use homogeneous coordinates for $V_i$ and $W_i$, so we may write $V_i = [\alpha_i],W_i=[\beta_i]$, with $\alpha_{i},\beta_i \in \mathbb{R}^{3}$. Note that \Cref{eq:lin_alg_lemma} holds if and only if
    \begin{equation*}
    \det
    \begin{bmatrix}
        (\alpha_{1} \times \beta_{1})^T\\
        (\alpha_{2} \times \beta_{2})^T\\
       (\alpha_{3} \times \beta_{3})^T\\
    \end{bmatrix}\neq 0,
    \end{equation*}
    where $\times$ denotes the cross product.
    This is a tri-homogeneous polynomial $Q(\beta_{1}, \beta_{2}, \beta_{3})$. Thus, if we can find some point at which the polynomial does not vanish, there is a Zariski open subset of $(\mathbb{RP}^{2})^{3}$ where the polynomial does not vanish. Since Zariski open subsets are dense open for the usual topology on $\mathbb{RP}^{2}$, the intersection with $O \,\cap\, \{W_1, W_2, W_3 \in \mathbb{RP}^{2}~\vert~ W_i \neq V_i\}$ is non-empty and open; the lemma then follows.
      We now show that there exists some point where $Q(\beta_1, \beta_2, \beta_3)$ does not vanish. First, suppose that $\mathbb{R}^3 = V_1 + V_2 + V_3$.
    If we pick $W_1=V_2$, $W_2=V_3$, and $W_3 = V_1$, then (since each $V_i$ does not lie in the span of the other two) we have that
    \begin{equation*}
        \bigcap_{i=1}^{3} (V_i + W_i) = (V_1 + V_2) \,\cap\, (V_2 + V_3) \,\cap\, (V_3 + V_1) = \{\textbf{0}\}.
    \end{equation*}
    Now suppose that $V_1 + V_2 + V_3$ is 2-dimensional. Without loss of generality we have 
    \begin{equation*}
    \alpha_1 = (1, 0, 0), \qquad 
        \alpha_2 = (0, 1, 0), \qquad 
        \alpha_3 = (\lambda_1, \lambda_2, 0)
    \end{equation*}
    for some $\lambda_1, \lambda_2 \in \mathbb{R}$, not both zero. Then, picking
        \begin{equation*}
        \beta_1 = (0, 0, 1), \qquad
        \beta_2 = (0, 0, 1), \qquad
        \beta_3 = (-\lambda_2, \lambda_1, 0)
    \end{equation*}
    yields a point with $Q(\beta_1, \beta_2, \beta_3)=\lambda_1^2+\lambda_2^2\neq 0$. This concludes our proof.
\end{proof}

\begin{lemma}\label{base_case_lemma}
Let $\Gamma$ be a non-cyclic Fuchsian group and $(G,\psi)$ be a $(2,3)$-tight balanced $\Gamma$-gain graph. Let $(G',\psi')$ be obtained from $(G,\psi)$ by choosing three distinct vertices $v_1,v_2,v_3\in V(G)$ and adding $v_i \xrightarrow{g_i} v_i$ for $i\in \{1,2,3\}$. If $\left<g_1,g_2,g_3\right>$ is not cyclic, then $(G',\psi')$ is $\Gamma$-isostatic.
\end{lemma}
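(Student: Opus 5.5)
Since $(G,\psi)$ is balanced, \Cref{lem:treegain} lets us switch so that every edge of $G$ has gain $\mathrm{id}$; switching only conjugates the loop gains, so $\langle g_1,g_2,g_3\rangle$ stays non-cyclic. Then $(G,\psi)$ is an ordinary $(2,3)$-tight graph. By \Cref{thm:Hlaman} it has a realisation $p$ that is infinitesimally rigid and independent, and the set of such $p$ is open and dense. For such $p$, $\ker O(G,\psi,p)$ consists exactly of the trivial motions $(\eta(X,p(v)))_{v}$ with $X\in\mathfrak{sl}(2,\RR)$. Because $p$ is not constant, $X\mapsto(\eta(X,p(v)))_v$ is injective, so the kernel is a $3$-dimensional space parametrised by $\mathfrak{sl}(2,\RR)$. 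Since $|E(G')|=2|V(G')|$ and the rows of $G$ are already independent, it suffices to show that the three loop rows cut this $3$-dimensional kernel down to $\{\mathbf 0\}$.

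For each $i$, let $\Phi_i:\mathfrak{sl}(2,\RR)\to\RR$ be $X\mapsto\langle \dot\gamma_{p(v_i),g_i\cdot p(v_i)}(0)+\dot\gamma_{p(v_i),g_i^{-1}\cdot p(v_i)}(0),\eta(X,p(v_i))\rangle_{p(v_i)}$, and put $M_i=\ker\Phi_i$. We then need $M_1\cap M_2\cap M_3=\{0\}$. Exactly as in \Cref{lem:trivial}, if $p(v_i)$ is chosen with the loop vector nonzero (possible by \Cref{lem: bad-geodesics}) and with $\mathfrak h_{p(v_i)}\neq\mathcal T_{\langle g_i\rangle}$, then $\Phi_i$ is surjective. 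Also $\mathfrak h_{p(v_i)}\subseteq M_i$ trivially, and $\mathcal T_{\langle g_i\rangle}\subseteq M_i$ by \Cref{lem:loopequation1}. Hence $M_i=\mathcal T_{\langle g_i\rangle}+\mathfrak h_{p(v_i)}$.

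Next apply \Cref{lin_alg_lemma} in $\mathfrak{sl}(2,\RR)\cong\RR^3$ with $V_i=\mathcal T_{\langle g_i\rangle}$, which is $1$-dimensional by \Cref{lem:centraliser_dim}. The hypothesis $\bigcap V_i=\{0\}$ holds because $\mathcal T_{\langle g_1\rangle}\cap\mathcal T_{\langle g_2\rangle}\cap\mathcal T_{\langle g_3\rangle}=\mathcal T_{\langle g_1,g_2,g_3\rangle}$, and this is $0$ by \Cref{lem:centraliser_dim} since the group is non-cyclic. (If two of the $V_i$ coincide the intersection is still the triple intersection, so no extra case is needed.) For the open set $O$, take the image of the open dense set of good realisations $p$ under $p\mapsto(\mathfrak h_{p(v_1)},\mathfrak h_{p(v_2)},\mathfrak h_{p(v_3)})$. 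Because $v_1,v_2,v_3$ are distinct, their positions can be varied independently, and by \Cref{lem: Lie -algebras-stabilizers0} the map $z\mapsto\mathfrak h_z$ is an open map onto an open subset of $\mathbb P(\mathfrak{sl}(2,\RR))$ (it is a diffeomorphism onto $\mathbb P(S)$). So $O$ is a nonempty open subset of $(\RR\PP^2)^3$. Intersect it with the open conditions $\mathfrak h_{p(v_i)}\neq V_i$ and the loop vectors being nonzero. \Cref{lin_alg_lemma} then gives $W_i=\mathfrak h_{p(v_i)}$ in $O$ with $\bigcap(V_i+W_i)=\{0\}$, and pulling back yields a realisation $p'$ that is still a rigid, independent realisation of $G$ (an open condition, so we shrink $O$ first).

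For this $p'$, the kernel of $O(G',\psi',p')$ is trivial, so the rank equals $2|V(G')|=|E(G')|$ and $(G',\psi')$ is $\Gamma$-isostatic. The main obstacle is making this openness argument precise: the realisation must simultaneously keep $G$ isostatic, keep each loop vector nonzero, and land in the generic set provided by \Cref{lin_alg_lemma}. The dimension count $M_i=V_i+W_i$ is comparatively routine.
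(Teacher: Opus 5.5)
Your argument is the paper's proof. You normalise the edges of $G$ to gain $\mathrm{id}$ and use \Cref{thm:Hlaman} to identify the generic kernel with $\{\eta(X,p):X\in\mathfrak{sl}(2,\RR)\}$. You then show that the loop at $v_i$ cuts this down to $\mathcal T_{\langle g_i\rangle}+\mathfrak h_{p(v_i)}$, and finish with \Cref{lin_alg_lemma}.

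One claim is false, however: switching does not preserve non-cyclicity of $\langle g_1,g_2,g_3\rangle$. Normalising a spanning tree switches different vertices by different elements $k_v$. The loop gains therefore become $k_{v_i}g_ik_{v_i}^{-1}$ with conjugators depending on $i$, and the group they generate need not be conjugate to $\langle g_1,g_2,g_3\rangle$. For example, take the balanced triangle $v_1\xrightarrow{b}v_2$, $v_2\xrightarrow{b^{-1}}v_3$, $v_1\xrightarrow{\mathrm{id}}v_3$ with loop gains $a$, $b^{-1}ab$, $a$. Then $\langle a,b^{-1}ab\rangle$ is non-cyclic whenever $a$ and $b^{-1}ab$ do not commute. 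Yet every closed walk at $v_1$ has gain in $\langle a\rangle$, so $(G',\psi')$ is cyclic with $2|V(G')|$ edges, is not $(2,3,1,0)$-gain sparse, and is not $\Gamma$-isostatic.

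So the hypothesis has to be read for the normalised gains. Equivalently, it means the gain group of $(G',\psi')$ is non-cyclic; that group is switching-invariant up to conjugacy and equals $\langle g_1,g_2,g_3\rangle$ once the edges of $G$ have gain $\mathrm{id}$ (\Cref{prop:balgain}). The paper makes this normalisation silently, and uses the lemma for fancy triangles and hats, whose non-loop edges are labelled $\mathrm{id}$. With that reading, delete your sentence and the rest of your proof goes through.

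Within this common framework you differ from the paper in two minor ways.
\begin{itemize}
\item You get $\ker\Phi_i=\mathcal T_{\langle g_i\rangle}+\mathfrak h_{p(v_i)}$ directly by the argument of \Cref{lem:trivial} (nonzero loop vector, two inclusions, dimension count). The paper instead quotes \Cref{cyclic_main_theorem} for $(G_i,\psi_i)$. But $(G_i,\psi_i)$ has only $2|V(G)|-2$ edges, so it is $(2,3,1)$-gain sparse but not tight, and that citation tacitly needs an extra step. Your computation sidesteps this, because independence of a loop row is just $\Phi_i\neq 0$.
\item You justify openness of $O$ by observing that $z\mapsto\mathfrak h_z$ is a diffeomorphism onto its open image, so the relevant product map is open. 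The paper's remark that the three positions vary independently makes the same point less precisely.
\end{itemize}
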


\begin{proof}
By \Cref{lem:treegain} and \Cref{prop:balgain}, we may assume that each edge of $(G,\psi)$ has gain $\text{id}$. 
Since $G$ is $(2,3)$-tight, it follows from \Cref{thm:Hlaman} that there a exists dense open subset $U \subseteq \mathbb{H}^{V(G)}$ such that for all $p\in U$ and $u\in (\mathbb{R}^2)^{V(G)}$,  $u\in\ker D_p f_{G,\psi}$ if and only if $u = \eta(X,p)$ for some $X \in \mathfrak{sl}(2,\mathbb{R})$.
For $1\leq i\leq3$, define $(G_i,\psi_i)$ to be the gain graph obtained from $(G,\psi)$ by adding the loop $v_i \xrightarrow{g_i} v_i$. 

For each $p \in \mathbb{H}^{V(G)}$ and each $1\leq i\leq3$, we set $\mathfrak{h}_{p(v_i)}$ as in \Cref{def:hz}, and 
we see that $X\in \mathcal{T}_{\langle g_i \rangle} + \mathfrak{h}_{p(v_i)}$ gives $\eta(X, p)\in \ker O(G_i, \psi_i, p)$. By \Cref{cyclic_main_theorem}, and since $ \mathfrak{h}_{p(v_i)} \neq \mathcal{T}_{\langle g_i \rangle}$ if and only if $g_i\cdot p(v_i) \neq p(v_i)$, it follows that there exists a dense open subset $U_i\subseteq U$ such that for all $p\in U_i$, one has
$$\ker O(G_i, \psi_i,p) = \left\{\eta(X, p)~:~ X\in \left( \mathcal{T}_{\langle g_i \rangle} + \mathfrak{h}_{p(v_i)}\right) \right\}.$$
Hence, a realisation $p \in \bigcap_{i=1}^3 U_i$ describes a $\Gamma$-isostatic framework $(G',\psi',p)$ if and only if
\begin{equation}\label{eq:base_case_lemma}
    \bigcap_{i=1}^3 \left( \mathcal{T}_{\langle g_i \rangle} + \mathfrak{h}_{p(v_i)} \right) = \{\textbf{0}_{2\times 2}\}.
\end{equation} 
By \Cref{lem: Lie -algebras-stabilizers0}, $\{\mathfrak{h}_{p(v_i)}~ \vert~ p \in U_i \}$ is a non-empty open subset of $\mathbb{P}(\mathfrak{sl}(2, \mathbb{R}))\cong \mathbb{RP}^2$. As $\langle g_1,g_2,g_3 \rangle$ is non-cyclic, it follows from \Cref{lem:centraliser_dim} that
\begin{equation*}
    \bigcap_{i=1}^3 \mathcal{T}_{\langle g_i \rangle} = \mathcal{T}_{\langle g_1,g_2,g_3 \rangle} =  \{\textbf{0}_{2\times 2}\}.
\end{equation*}
Now fix the set
\begin{equation*}
    O := \left\{ \big(\mathfrak{h}_{p(v_1)},\mathfrak{h}_{p(v_2)},\mathfrak{h}_{p(v_3)} \big)  : p \in  \bigcap_{i=1}^3 U_i  \right\} \subset \Big( \mathbb{P}\big(\mathfrak{sl}(2,\mathbb{R}) \big) \Big)^3.
\end{equation*}
As the vertices $v_1,v_2,v_3$ are pairwise distinct, each of the matrices $h (p(v_1) ), h(p(v_2) ), h (p(v_3) )$ is independent of the choice of the other two.
Hence, the set $O$ is non-empty and open. \Cref{lin_alg_lemma} now implies that there exists some realisation $p$ for which \Cref{eq:base_case_lemma} holds,
so $(G',\psi')$ is $\Gamma$-isostatic.
\end{proof}

\subsection{Near-cyclic graphs}
\label{subsec:nearcyclic}

Let $\Gamma$ be a non-cyclic Fuchsian group and $(G,\psi)$ be a near-cyclic $\Gamma$-gain graph. In this section we show that $(G,\psi)$ is $\Gamma$-isostatic. By \Cref{lem:trivial} we assume that $|V(G)|\geq2$. The following lemma allows us to understand which trivial infinitesimal motions yield infinitesimal motions of a single edge $u\xrightarrow[]{g}v$.

\begin{lemma}\label{inf_motions_edge}
     Let $g \in \PSL(2,\RR)$. 
     For any $P_1,P_2\in\HH$ with $P_1 \neq g\cdot P_2$, define the linear map
     \begin{equation*}
         R_{P_1,P_2}: \mathfrak{sl}(2,\mathbb{R})\longrightarrow\mathbb{R}, ~ X \longmapsto \Big\langle \dot{\gamma}_{P_1, g\cdot P_2}(0) ~ , ~ \eta(X, P_1) \Big\rangle_{P_1} + \Big\langle \dot{\gamma}_{P_2, g^{-1}\cdot P_1}(0) ~ , ~ \eta(X,P_2)  \Big\rangle_{P_2}.
     \end{equation*}
     Then
     $$A_{P_1,P_2}:= \mathcal{T}_{\langle g\rangle} + \left( \left( \mathfrak{h}_{P_1} +\mathfrak{h}_{g\cdot P_2} \right) \,\cap\, \left( \mathfrak{h}_{P_2} +\mathfrak{h}_{g^{-1}\cdot P_1} \right)\right)$$
     is contained in the kernel of $R_{P_1,P_2}$.
     Moreover, if $g \neq \textrm{id}$ and $P_1,P_2,g\cdot P_1,g^{-1}\cdot P_2$ do not lie on a common geodesic,
     then $\dim A_{P_1,P_2}=2$ and $\ker R_{P_1,P_2} = A_{P_1,P_2}$.
\end{lemma}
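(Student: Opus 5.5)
The plan is to read $R_{P_1,P_2}$ as the derivative of an edge length along a trivial motion, and to treat the two terms of $R_{P_1,P_2}$ separately. I will show that each summand of $A_{P_1,P_2}$ lies in the kernel, and then get the equality $\ker R_{P_1,P_2}=A_{P_1,P_2}$ from a dimension count.

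\emph{Containment.} For $X\in\mathcal{T}_{\langle g\rangle}$, by the chain rule $R_{P_1,P_2}(X)=\frac{d}{dt}\, d_{\HH}\big(\exp(tX)P_1,\ g\exp(tX)P_2\big)\big|_{t=0}$. Since $g$ commutes with $\exp(tX)$, this equals $\frac{d}{dt}\, d_{\HH}\big(\exp(tX)P_1, \exp(tX)gP_2\big)\big|_{t=0}=0$; this is the argument of \Cref{lem:loopequation1} and \Cref{lem:centraliser_triv}.

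For the other summand, take $X\in(\mathfrak{h}_{P_1}+\mathfrak{h}_{gP_2})\cap(\mathfrak{h}_{P_2}+\mathfrak{h}_{g^{-1}P_1})$ and write $X=Y+Z$ with $Y\in\mathfrak{h}_{P_1}$ and $Z\in\mathfrak{h}_{gP_2}$.
\begin{itemize}
\item[(a)] $\eta(Y,P_1)=\mathbf{0}$ by \Cref{def:hz}.
\item[(b)] $\langle\dot\gamma_{P_1,gP_2}(0),\eta(Z,P_1)\rangle_{P_1}=0$ by \Cref{lem: eqfrom_lem_inf_motions}.
\end{itemize}
Together (a) and (b) make the first term of $R_{P_1,P_2}(X)$ vanish. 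The same argument, using the decomposition $X\in\mathfrak{h}_{P_2}+\mathfrak{h}_{g^{-1}P_1}$, kills the second term. Since $R_{P_1,P_2}$ is linear, $A_{P_1,P_2}\subseteq\ker R_{P_1,P_2}$.

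\emph{Equality.} Now assume $g\neq\mathrm{id}$ and that $P_1,P_2,gP_1,g^{-1}P_2$ are not on a common geodesic. The argument has three steps.

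First, $R_{P_1,P_2}\neq0$, which gives $\dim\ker R_{P_1,P_2}\le 2$. Take $X\in\mathfrak{h}_{P_2}\setminus\{0\}$, an infinitesimal rotation about $P_2$. The second term of $R_{P_1,P_2}(X)$ then vanishes. By \Cref{lem: eqfrom_lem_inf_motions}, $\eta(X,P_1)$ is $\langle\cdot,\cdot\rangle_{P_1}$-orthogonal to $\dot\gamma_{P_1,P_2}(0)$, and it is non-zero. Hence the first term is non-zero unless $\dot\gamma_{P_1,gP_2}(0)$ and $\dot\gamma_{P_1,P_2}(0)$ are parallel. By \Cref{lem:linear_independence} that happens only when $P_1,P_2,gP_2$ are collinear. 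I will apply this to whichever of the triples $\{P_1,P_2,gP_2\}$ or, symmetrically with $X\in\mathfrak{h}_{P_1}$, $\{P_2,P_1,g^{-1}P_1\}$ is non-collinear under the hypothesis.

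Second, I need $\dim A_{P_1,P_2}\ge2$. By \Cref{eq:hz} the map $z\mapsto\mathfrak{h}_z$ is injective. Since $P_1\neq gP_2$, both $\Pi_1:=\mathfrak{h}_{P_1}+\mathfrak{h}_{gP_2}$ and $\Pi_2:=\mathfrak{h}_{P_2}+\mathfrak{h}_{g^{-1}P_1}$ are planes in the $3$-dimensional space $\mathfrak{sl}(2,\RR)$. Moreover $\Pi_2=g^{-1}\Pi_1 g$ by \Cref{lem: Lie -algebras-stabilizers1}. Two planes in a $3$-dimensional space meet in at least a line, so $\Pi_1\cap\Pi_2$ is at least $1$-dimensional. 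By \Cref{lem:centraliser_dim}, $\mathcal{T}_{\langle g\rangle}$ is a line. So $\dim A_{P_1,P_2}\geq 2$ as soon as $\mathcal{T}_{\langle g\rangle}\not\subseteq\Pi_1\cap\Pi_2$. Combining this with the first step and the containment gives $\dim A_{P_1,P_2}=2=\dim\ker R_{P_1,P_2}$, and hence equality.

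Third, to control $\Pi_1$ and $\Pi_2$ I will use the trace (Killing) form on $\mathfrak{sl}(2,\RR)$. I expect $\Pi_1$ to be the Killing-orthogonal complement of the generator $X_{L_1}$ of hyperbolic translation along the geodesic $L_1$ through $P_1$ and $gP_2$. Equivalently, $\{z:\mathfrak{h}_z\subseteq\Pi_1\}=L_1$, which can be checked directly from \Cref{eq:hz}. Then $\Pi_1=\Pi_2$ forces $L_1=g^{-1}L_1$ to contain all four points, which is excluded by hypothesis. So $\Pi_1\cap\Pi_2$ is exactly a line.

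The main obstacle is excluding $\mathcal{T}_{\langle g\rangle}\subseteq\Pi_1\cap\Pi_2$. In Killing-form terms this condition says $X_g\perp X_{L_1}$ and $X_g\perp X_{g^{-1}L_1}$. That means $L_1$ is orthogonal to the axis of $g$, or passes through its fixed point, or the analogous parabolic statement. The two conditions are compatible with each other, since $g$ preserves the geodesics orthogonal to its axis. So the non-collinearity hypothesis alone does not visibly rule this case out, and I must check it directly. I would first try a normal-form computation: conjugate $g$ to diagonal, rotation, or unipotent form, and write out explicitly when $X_g\in\Pi_1$. If this configuration really is compatible with the hypothesis, the proof will need an extra genericity assumption to exclude it. Either way, this step is where the geometry of $g$ actually enters.
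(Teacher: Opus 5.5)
The step you leave open cannot be closed, because the ``moreover'' part of the lemma is false as stated. Your suspicion is right.

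In fact your two conditions are a single condition. By \Cref{lem: Lie -algebras-stabilizers1}, $\mathfrak{h}_{P_2}+\mathfrak{h}_{g^{-1}\cdot P_1}=\textup{Ad}(g^{-1})\big(\mathfrak{h}_{P_1}+\mathfrak{h}_{g\cdot P_2}\big)$, and $\textup{Ad}(g)$ fixes $\mathcal{T}_{\langle g\rangle}$. So $\mathcal{T}_{\langle g\rangle}$ lies in one plane iff it lies in the other, which is one codimension-one condition and is easily met.

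Here is a concrete counterexample.
\begin{itemize}
\item[(a)] Take $g=\mathrm{diag}(2,\tfrac12)$, so $g\cdot z=4z$ and $\mathcal{T}_{\langle g\rangle}=\langle\mathrm{diag}(1,-1)\rangle$. Take $P_1=e^{\mathfrak{i}\pi/3}$ and $P_2=\tfrac14e^{2\pi\mathfrak{i}/3}$, so $g\cdot P_2=e^{2\pi\mathfrak{i}/3}$ and $g^{-1}\cdot P_1=\tfrac14e^{\mathfrak{i}\pi/3}$.
\item[(b)] By \Cref{eq:hz}, $\mathfrak{h}_{P_1},\mathfrak{h}_{g\cdot P_2}$ are spanned by $\left(\begin{smallmatrix}\pm1/2&-1\\1&\mp1/2\end{smallmatrix}\right)$, and $\mathfrak{h}_{g^{-1}\cdot P_1},\mathfrak{h}_{P_2}$ by $\left(\begin{smallmatrix}\pm1/8&-1/16\\1&\mp1/8\end{smallmatrix}\right)$.
\item[(c)] In each pair the difference is a multiple of $\mathrm{diag}(1,-1)$. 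The sums $\left(\begin{smallmatrix}0&-2\\2&0\end{smallmatrix}\right)$ and $\left(\begin{smallmatrix}0&-1/8\\2&0\end{smallmatrix}\right)$ show the two planes differ. Hence $A_{P_1,P_2}=\mathcal{T}_{\langle g\rangle}$ is a line.
\item[(d)] The hypothesis holds. $P_1,g\cdot P_2$ lie on the unit circle and $P_2,g^{-1}\cdot P_1$ lie on the disjoint geodesic $|z|=\tfrac14$. The literal version with $g\cdot P_1,g^{-1}\cdot P_2$ also holds, since $P_2$ is off the geodesic through $P_1$ and $4P_1$.
\item[(e)] Your own first step gives $R_{P_1,P_2}\neq0$. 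In fact $R_{P_1,P_2}\left(\begin{smallmatrix}a&b\\c&-a\end{smallmatrix}\right)=3b-\tfrac34c$, so $\ker R_{P_1,P_2}$ is a plane strictly containing $A_{P_1,P_2}$.
\end{itemize}
This is exactly your case ``$L_1$ orthogonal to the axis of $g$''.

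The paper's proof passes over this point. It asserts that $(\mathfrak{h}_{P_1}+\mathfrak{h}_{g\cdot P_2})\cap(\mathfrak{h}_{P_2}+\mathfrak{h}_{g^{-1}\cdot P_1})$ is two-dimensional. Under the hypothesis the planes are distinct, so this intersection is a line, and the proof never asks whether $\mathcal{T}_{\langle g\rangle}$ is that line.

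The repair is to add the hypothesis $\mathcal{T}_{\langle g\rangle}\not\subseteq\mathfrak{h}_{P_1}+\mathfrak{h}_{g\cdot P_2}$, which is your $B(H_g,X_{L_1})\neq0$.
\begin{itemize}
\item[(i)] It is automatic when $P_1=P_2=P$. The intersection is then $\mathfrak{h}_P$, and $\mathfrak{h}_P\neq\mathcal{T}_{\langle g\rangle}$ because $g\cdot P\neq P$.
\item[(ii)] It fails only on a proper analytic subset. So the applications in \Cref{lem:near} (with $P_1=P_2$) and \Cref{lem: case_x} (generic $p$) survive.
\end{itemize}

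With this hypothesis added, your plan is a complete proof. Your containment argument is the paper's. Your test-vector proof that $R_{P_1,P_2}\neq0$ is a direct version of the paper's argument by contradiction, which uses a rank drop in the orbit rigidity matrix of the two-edge graph with gains $\id$ and $g$. Both rest on \Cref{lem:linear_independence}.

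One small hole remains in that step: your test needs $P_1\neq P_2$. Otherwise $\dot\gamma_{P_1,P_2}(0)$ is undefined and $\eta(X,P_1)=\mathbf{0}$. The paper's rank argument tacitly needs $P_1\neq P_2$ too, yet $P_1=P_2$ is precisely the case used in \Cref{lem:near}. In that case $R_{P,P}(X)=\langle\dot\gamma_{P,g\cdot P}(0)+\dot\gamma_{P,g^{-1}\cdot P}(0),\eta(X,P)\rangle_P$, and the map $X\mapsto\eta(X,P)$ is onto $T_P\HH$. The sum is non-zero by \Cref{lem:linear_independence}, because $P,g\cdot P,g^{-1}\cdot P$ are not on a common geodesic.
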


\begin{proof} 
We first prove $A_{P_1,P_2} \subset \ker R_{P_1,P_2}$ if $P_1 \neq g\cdot P_2$.
Let $X\in \left( \mathfrak{h}_{P_1} +\mathfrak{h}_{g\cdot P_2} \right) \,\cap\, \left( \mathfrak{h}_{P_2} +\mathfrak{h}_{g^{-1}\cdot P_1} \right)$. We can write $X = X_{P_1} + X_{g\cdot P_2} = X_{P_2} + X_{g^{-1}\cdot P_1}$ where $X_{a} \in \mathfrak{h}_{a}$ for $a\in \{P_1, P_2, g\cdot P_2, g^{-1}\cdot P_1\}$. Since $\eta$ is linear and $X_{P_1} \in \mathfrak{h}_{P_1}$, we have
\begin{equation*}
  \eta( X,P_1)  = \eta( X_{P_1}, P_1) + \eta(X_{g\cdot P_2},P_1) =  \eta(X_{g\cdot P_2},P_1).
\end{equation*}
Similarly, $\eta(X,P_2)=\eta(X_{g^{-1}\cdot P_1},P_2)$.
Therefore, by \Cref{lem: eqfrom_lem_inf_motions}
\begin{align*}
    R_{P_1,P_2}(X)&=\langle \dot{\gamma}_{P_1, g\cdot P_2}(0),\eta(X_{g\cdot P_2}, P_1) \rangle_{P_1} + \langle \dot{\gamma}_{P_2, g^{-1}\cdot P_1}(0),\eta(X_{g^{-1}\cdot P_1},P_2)  \rangle_{P_2}=0,
\end{align*}
and so $X\in \ker R_{P_1,P_2}$.
By \Cref{lem:centraliser_triv}, $\mathcal{T}_{\langle g \rangle} \subseteq \ker R_{P_1,P_2}$,
and thus $A_{P_1,P_2}\subseteq\ker R_{P_1,P_2}$.

We now suppose that $g \neq \textrm{id}$ and $P_1,P_2,g\cdot P_1,g^{-1}\cdot P_2$ do not lie on a common geodesic.
To prove that $\dim A_{P_1,P_2}=2$, we need the following.

\begin{claim}\label{claim: collinear-iff-collinear} 
   Three points $x,y,z \in \HH$ lie on a common geodesic if and only if $\mathfrak{h}_{x} \subseteq \mathfrak{h}_{y} +  \mathfrak{h}_{z}$.
\end{claim} 
\begin{proof}
Suppose first $\mathfrak{h}_x \subseteq \mathfrak{h}_y + \mathfrak{h}_{z}$ and choose some non-zero $W_x = W_y + W_z\in\mathfrak{h}_x$ for some $ W_y\in \mathfrak{h}_y$, $W_z\in \mathfrak{h}_z$. Since $\eta(W_x, x)=\textbf{0}$, it follows that $\eta(W_y, x) = \eta(-W_z, x)$. Note $x=z$ whenever $W_y=\textbf{0}_{2\times 2}$, and $x=y$ whenever $W_{z}=\textbf{0}_{2\times 2}$. In both cases, $x$ lies on the geodesic through $y$ and $z$. So we may assume $W_y,W_z\neq \textbf{0}_{2\times 2}$. By \Cref{lem: eqfrom_lem_inf_motions},  $\eta(W_y, x)$ is orthogonal to $\dot{\gamma}_{x, y}(0)$ and $\eta(-W_z, x)$ is orthogonal to $\dot{\gamma}_{x, z}(0)$. It follows that $\dot{\gamma}_{x, y}(0) = \pm \dot{\gamma}_{x, z}(0)$. By \Cref{lem:linear_independence}, $x,y,z$ lie on one geodesic. 

 Conversely, suppose $x,y,z$ lie on a common geodesic. Choose any $W_y\in \mathfrak{h}_{y}$, $W_z\in \mathfrak{h}_{z}$. Note that $W_y \in \mathfrak{h}_x$ whenever $\eta(W_y, x) = \textbf{0}$ and $W_z \in \mathfrak{h}_x$ whenever $\eta(W_z, x) = \textbf{0}$. In the former case we have $\mathfrak{h}_x=\mathfrak{h}_y$ and in the latter we have $\mathfrak{h}_x=\mathfrak{h}_z$. Hence the result holds in both cases and we assume that $\eta(W_y, x),\eta(W_z, x) \neq \textbf{0}_{2\times 2}$. By \Cref{lem: eqfrom_lem_inf_motions}, $\langle \dot{\gamma}_{x, y}(0) , \eta(W_y, x) \rangle_x= 0$ and $\langle \dot{\gamma}_{x, z}(0) , \eta(W_z, x) \rangle_x= 0$. Therefore $\{\dot{\gamma}_{x,y}(0), \eta(W_y, x)\}$ forms a basis at $T_{x}\HH$, as  does $\{\dot{\gamma}_{x,z}(0), \eta(W_z, x)\}$. Since $x,y,z$ all lie on the same geodesic, $\dot{\gamma}_{x,z}(0) = \pm \dot{\gamma}_{x,y}(0)$ by \Cref{lem:linear_independence}, which implies that there exist $\mu, \kappa$ such that $\eta(\mu W_y, x) + \eta(\kappa W_z, x) =\textbf{0}$. So $\mu W_y + \kappa W_z \in \mathfrak{h}_x$. Hence $(\mathfrak{h}_y+\mathfrak{h}_z) \,\cap\, \mathfrak{h}_x\neq \{\textbf{0}_{2\times 2}\}$ and, since $\dim(\mathfrak{h}_x) =1$, the claim holds. 
\end{proof}

It now follows that $\dim\left( \mathfrak{h}_{P_1} + \mathfrak{h}_{g P_2}\right) \,\cap\, \left(\mathfrak{h}_{P_2} +  \mathfrak{h}_{g^{-1}P_1} \right)=2$ as $P_1,P_2, g\cdot P_2, g^{-1}\cdot P_1$ do not lie on a common geodesic.

Finally, we prove that $\ker R_{P_1,P_2} = A_{P_1,P_2}$.
As $A_{P_1,P_2} \subseteq \ker R_{P_1,P_2}$, it suffices to show $\text{null}\, R_{P_1,P_2}=2$, i.e.~that $R_{P_1,P_2}$ is not the zero map.
Suppose for a contradiction that $R_{P_1,P_2} \equiv 0$ and $\ker R_{P_1,P_2}=\mathfrak{sl}(2,\RR)$.
Fix $\Gamma = \langle g \rangle$ and let $(G,\psi,p)$ be the $\Gamma$-joint-configuration with vertices $u,v$, exactly two edges $u \xrightarrow[]{\id} v$ and $u \xrightarrow[]{g} v$, and realisation with $p(u)=P_1$ and $p(v) = P_2$.
By \Cref{lem:centraliser_triv}, the image of the linear map
\begin{equation*}
    T: \mathfrak{sl}(2,\RR) \longrightarrow (\RR^2)^2, ~ X \longmapsto \Big( \eta\big(X,p(u)\big), \eta\big(X,p(v)\big) \Big)
\end{equation*}
is 3-dimensional (specifically, it is $\mathcal{T}_\Gamma(p)$). 
Note that the kernel of $D_p f_{G,\psi}$ is exactly the kernel of $R_{P_1,P_2}$, which, by assumption, is $\mathfrak{sl}(2,\RR)$.
Thus the image of $T$ is contained in the kernel of $D_p f_{G,\psi}$, and so $\rank D_p f_{G,\psi} \leq 1$.
Since $\rank D_p f_{G,\psi} = \rank O(G,\psi,p)$,
we have that the matrix
\begin{equation*}
    O(G,\psi,p)
    =
    \begin{bmatrix}
        \dot{\gamma}_{P_1,P_2}(0) & \dot{\gamma}_{P_2,P_1}(0) \\
        \dot{\gamma}_{P_1,g\cdot P_2}(0) & \dot{\gamma}_{P_2,g^{-1}\cdot P_1}(0)
    \end{bmatrix}
\end{equation*}
has linearly dependent rows.
It now follows from \Cref{lem:linear_independence} that $P_1,P_2, g\cdot P_2$ lie on a common geodesic and $P_1,P_2, g^{-1}\cdot P_1$ lie on a common geodesic.
However, as $P_1 \neq g\cdot P_2$ (and $P_2 \neq g^{-1}\cdot P_1$), this now implies all four points lie on a common geodesic,
a contradiction.
\end{proof}

\begin{lemma}\label{lem:near}
Let $\Gamma$ be a Fuchsian group that is isomorphic to either a surface group or a non-cyclic finitely generated free group.
Then every near-cyclic $\Gamma$-gain graph is $\Gamma$-isostatic.
\end{lemma}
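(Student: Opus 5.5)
The plan is to reduce to the cyclic case, \Cref{cyclic_main_theorem}, and then check that the one remaining trivial motion is killed by the extra edge.

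Let $(G,\psi)$ be near-cyclic. Then $G=H+e$, where $H$ is a connected spanning cyclic subgraph and $e=u\xrightarrow{g}v$; I assume $H$ is connected and spanning, as is implicit in the definition. By \Cref{lem:treegain} and \Cref{prop:balgain}, I switch so that a spanning tree of $H$ has identity gains. Then $\langle H\rangle_{\psi}=\langle k\rangle$ for some non-identity $k\in\Gamma$; it is non-trivial because $(2,3,1,0)$-sparsity forbids $H$ from being balanced with $2|V|-1$ edges.

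\textbf{Step 1: $H$ is $\langle k\rangle$-isostatic.} Since $(G,\psi)$ is $(2,3,1,0)$-gain tight, the cyclic subgraph $H$ is $(2,1)$-sparse with $|E(H)|=2|V(G)|-1$. Its balanced subgraphs are $(2,3)$-sparse, so $(H,\psi|_H)$ is $(2,3,1)$-gain tight as a $\langle k\rangle$-gain graph. As $\langle k\rangle$ is a cyclic Fuchsian group, \Cref{cyclic_main_theorem} applies. Hence, for $p$ in a dense open set, the kernel of $O(H,\psi,p)$ is exactly $\mathcal{T}_{\langle k\rangle}(p)=\{\eta(X_k,p)\}$, using \Cref{lem:centraliser_dim} and \Cref{lem:centraliser_triv}, where $\mathcal{T}_{\langle k\rangle}=\RR X_k$.

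Consequently, $G$ is $\Gamma$-isostatic as soon as the row of $e$ does not annihilate $\eta(X_k,p)$ for some such $p$. For a non-loop edge this row gives the map $R_{p(u),p(v)}$ of \Cref{inf_motions_edge}. For a loop $v\xrightarrow{g}v$ it gives the map $\Phi_g$ from the proof of \Cref{lem:trivial}.

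\textbf{Step 2: $X_k\notin\mathcal{T}_{\langle g\rangle}$.} Since $G$ is not cyclic, which would violate $(2,1)$-sparsity, $\langle k,g\rangle$ is not cyclic. By \Cref{lem:cyclic-iff-fix} it is non-abelian, and \Cref{lem:centraliser_dim} then gives $\mathcal{T}_{\langle k\rangle}\cap\mathcal{T}_{\langle g\rangle}=\mathcal{T}_{\langle k,g\rangle}=\{0\}$. The hypothesis that $\Gamma$ is a surface group or a free group enters here, to identify $\langle H\rangle$ with a cyclic group and to ensure commuting elements lie in a common cyclic subgroup. I will check that this is exactly what is needed to conclude $X_k\notin\mathcal{T}_{\langle g\rangle}$.

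\textbf{Step 3a: loop case.} If $e$ is a loop at $v$, the proof of \Cref{lem:trivial} gives $\ker\Phi_g=\mathcal{T}_{\langle g\rangle}+\mathfrak{h}_{p(v)}$ for generic $p(v)$. The subspace $\Pi:=\mathcal{T}_{\langle g\rangle}+\RR X_k$ is a fixed plane, by Step 2. By \Cref{lem: Lie -algebras-stabilizers0}, the lines $\mathfrak{h}_{p(v)}$ range over an open subset of $\mathbb{P}(\mathfrak{sl}(2,\RR))$, so I choose $p(v)$ generic for $H$ with $\mathfrak{h}_{p(v)}\not\subset\Pi$. Then $X_k\notin\ker\Phi_g$.

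\textbf{Step 3b: non-loop case.} Here \Cref{inf_motions_edge} gives $\ker R=\mathcal{T}_{\langle g\rangle}+L(p)$, with $L(p)=(\mathfrak{h}_{P_1}+\mathfrak{h}_{gP_2})\cap(\mathfrak{h}_{P_2}+\mathfrak{h}_{g^{-1}P_1})$, valid whenever the four points are not on a common geodesic. It suffices to find one generic $p$ with $L(p)\not\subset\Pi$; by analyticity this then holds on a dense open set, which meets the open set from Step 1.

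\textbf{Main obstacle.} The hard step is excluding the possibility that $L(p)\subset\Pi$ for every $p$. Since $u\neq v$, the points $P_1=p(u)$ and $P_2=p(v)$ move independently. I would first fix $P_2$ and let $P_1$ vary, aiming to show that the line $L(p)$ sweeps out more than the plane $\Pi$. One route is to use the description of $\mathfrak{h}_z$ in \Cref{eq:hz} to compute $L(p)$ explicitly after conjugating $g$ to a diagonal matrix; this is legitimate since surface-group elements are hyperbolic by \Cref{lem:noparabolic}. Another route is to take a limit $P_1\to gP_2$ along a geodesic transverse to $\Pi$-related directions, in which case $L(p)\to$ a limit involving $\mathfrak{h}_{gP_2}$. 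Varying $P_2$ then moves $\mathfrak{h}_{gP_2}$ through an open set of lines, which cannot all lie in $\Pi$. If the limit argument proves delicate, the fallback is the explicit computation, checking a single numerical example against a general $X_k\in\mathfrak{sl}(2,\RR)$.
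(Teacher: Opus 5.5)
Your Steps 1, 2 and 3a match the paper, and so does your reduction of 3b to exhibiting one $p$ with $L(p)\not\subseteq\Pi:=\mathcal{T}_{\langle g\rangle}+\RR X_k$. (The paper handles the loop case by appealing to \Cref{lem:trivial} for the two-loop graph with gains $g,k$.) The gap is that you correctly identify 3b as the crux but leave it unresolved, and neither suggested route works as written.

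By \Cref{lem: Lie -algebras-stabilizers1}, $L(p)=\Sigma\cap\textup{Ad}(g^{-1})(\Sigma)$ with $\Sigma=\mathfrak{h}_{P_1}+\mathfrak{h}_{g\cdot P_2}$. By \Cref{claim: collinear-iff-collinear}, $\Sigma$ contains exactly the lines $\mathfrak{h}_z$ with $z$ on the geodesic $\ell$ through $P_1$ and $g\cdot P_2$. So $L(p)$ is the projective intersection of $\ell$ and $g^{-1}\ell$.
\begin{itemize}
\item \emph{Limit route.} Letting $P_1\to g\cdot P_2$ only freezes $\ell$ as some geodesic through $g\cdot P_2$. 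The limiting line is $\mathfrak{h}_{g\cdot P_2}$ only if $\ell$ also passes through $g^2\cdot P_2$. So this route does not produce the open family $\{\mathfrak{h}_{g\cdot P_2}\}$ you want.
\item \emph{Computational fallback.} A single $p$ cannot work: $L(p)$ always lies in the plane $\mathcal{T}_{\langle g\rangle}+L(p)$, so every fixed $p$ fails for the $X_k$ in that plane, and neither $g$ nor $X_k$ is at your disposal. Moreover, diagonalising $g$ is unavailable in the free-group case, where $\Gamma$ may contain parabolic elements.
\end{itemize}

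The missing idea is that realisations need not be injective. Since $u\neq v$, take $p(u)=p(v)=P$. Then $\ell$ passes through $P$ and $g\cdot P$, and $g^{-1}\ell$ passes through $g^{-1}\cdot P$ and $P$, so $\mathfrak{h}_P\subseteq L(p)$. If $P,g\cdot P,g^{-1}\cdot P$ are not on a common geodesic, the two planes differ. Hence $L(p)=\mathfrak{h}_P$, and \Cref{inf_motions_edge} gives $\ker R=\mathcal{T}_{\langle g\rangle}+\mathfrak{h}_P$.

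Step 3b is then literally Step 3a: choose such a $P$ with $\mathfrak{h}_P\not\subseteq\Pi$ (a non-empty open condition by \Cref{lem: Lie -algebras-stabilizers0}), and perturb into the dense set from Step 1. This is the paper's argument.

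This is also where the hypothesis on $\Gamma$ actually enters. Non-collinearity of $P,g\cdot P,g^{-1}\cdot P$ on a dense open set of $P$ needs $g^2\neq\textup{id}$, as in the proof of \Cref{lem: bad-geodesics}; if $g^2=\textup{id}$ then $g\cdot P=g^{-1}\cdot P$ always. Surface groups and free groups are torsion-free (\Cref{lem:noparabolic}). Your Step 2, by contrast, needs no hypothesis on $\Gamma$, since \Cref{lem:cyclic-iff-fix} and \Cref{lem:centraliser_dim} hold for every Fuchsian group.
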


\begin{proof}
    We first observe that $\Gamma$ does not contain any element of order 2:
    it is clear that free groups have no finite-order elements, and surface groups have no finite-order elements by \Cref{lem:noparabolic}. 
    
    Let $(G,\psi)$ be a near-cyclic $\Gamma$-gain graph. Fix an edge $e=u\xrightarrow[]{g}v$ of $G$ for which, given $\psi' := \psi|_{E(G)-e}$, the $\Gamma$-gain graph $(G-e, \psi')$ is cyclic.
    By \Cref{lem:treegain} and \Cref{prop:balgain} we may assume 
    there is a non-trivial subgroup $\Gamma'=\langle h\rangle \subsetneq \Gamma$ for which every edge $f \in E(G-e)$ has gain $\psi(f) \in \Gamma'$. Since $\Gamma$ is not cyclic, $g\not\in\Gamma'$. In particular, $g\neq\text{id}$.
    
    By \Cref{cyclic_main_theorem},
    $(G-e,\psi')$ is $\Gamma'$-isostatic, and so there is a dense open set $W\subseteq \mathbb{H}^{V(G)}$ such that, for all $p\in W$, the space of infinitesimal motions of $(G-e,\psi',p)$ is $\mathcal{T}_{\langle h\rangle}(p)$. 
    Fix some non-zero $X_h \in \mathcal{T}_{\langle h\rangle}$.
    It is now sufficient for us to show there exists $p \in W$ for which $\eta(X_h,p)$ restricted to the vertices $u,v$ is not an infinitesimal motion of $(G[e],\psi|_{\{e\}},p|_{\{u,v\}})$.
    In fact, since the maps $p \mapsto \eta(X_h,p)$ and $p \mapsto D_{p|_{\{u,v\}}} f_{G[e],\psi|_{\{e\}}}$ are continuous on the choices of $p$ where $p(u) \neq g \cdot p(v)$,
    we can relax our assumption that our witness $p$ is contained in $W$ to just being some $p \in \HH^{V(G)}$ with $p(u) \neq g \cdot p(v)$.
    We consider two separate cases.

\medskip
\noindent\textbf{\textit{Case (i)} $u\neq v$:} 
For any points $P_1,P_2 \in \HH$ with $P_1 \neq g \cdot P_2$,
let $A_{P_1,P_2}$ be the linear space described in \Cref{inf_motions_edge} with respect to the group element $g$.
By \Cref{inf_motions_edge},
it now suffices to find $P_1,P_2 \in \HH$ such that $P_1,P_2,g \cdot P_1, g^{-1} \cdot P_2$ do not lie on a common geodesic and $X_h \notin A_{P_1,P_2}$.

We now make the following two observations regarding points $P \in \HH$:
\begin{enumerate}
    \item Since $g$ does not have order two, by~\Cref{claim: lem-1ext} there exists $P \in \HH$ such that the points $P, g\cdot P, g^{-1}\cdot P$ do not lie on a common geodesic.
    It can be easily checked that this implies that there exists a dense open set of such points $P$ satisfying this property.
    \item As $\{\mathfrak{h}_{P}~\vert~ P\in \mathbb{H}\}$ is a non-empty open set (\Cref{lem: Lie -algebras-stabilizers0}),
    there exists a non-empty open set of $P \in \HH$ for which $\mathfrak{h}_{P}\not\subseteq\mathcal{T}_{\langle g \rangle } + \mathcal{T}_{\langle h \rangle }$.
\end{enumerate}
Since the points satisfying both properties form an non-empty open set,
we can now fix a point $P \in \HH$ satisfying the above two points.
With this choice of $P$, fix $P_1=P_2=P$.
By \Cref{inf_motions_edge}, $\dim A_{P_1, P_2} = 2$.
Since $\mathfrak{h}_{P}\not\subseteq\mathcal{T}_{\langle g \rangle }$,
this implies the right-hand side of the inclusion
\begin{equation*}
     \mathfrak{h}_{P} \subseteq \left( \left( \mathfrak{h}_{P} +\mathfrak{h}_{g\cdot P} \right) \,\cap\, \left( \mathfrak{h}_{P} +\mathfrak{h}_{g^{-1}\cdot P} \right)\right)
\end{equation*}
is 1-dimensional,
and hence the inclusion is an equality.
Thus $A_{P_1, P_2} =\mathcal{T}_{\langle g\rangle} +  \mathfrak{h}_{P}$. 
Since $\langle g,h\rangle$ is non-cyclic,
the linear space $\mathcal{T}_{\langle g, h\rangle}$ is 0-dimensional by \Cref{lem:centraliser_dim}.
It is simple to check that $\mathcal{T}_{\langle g, h\rangle} = \mathcal{T}_{\langle g\rangle} \cap \mathcal{T}_{\langle h\rangle}$, and thus $\mathcal{T}_{\langle g \rangle} \neq \mathcal{T}_{\langle h \rangle}$.
Since $\mathfrak{h}_{P}\not\subseteq\mathcal{T}_{\langle g \rangle } + \mathcal{T}_{\langle h \rangle }$, we have $\mathcal{T}_{\langle h\rangle} \,\cap\, A_{P_1, P_1}=\{\textbf{0}_{2\times 2}\}$, which implies that $X_h \notin A_{P_1,P_2}$.

\medskip
\noindent\textbf{\textit{Case (ii)} $u= v$:}
In this case $(G[e],\psi|_{\{e\}},p|_{\{u,v\}})$ has a single vertex and a single (loop) edge.
Fix $(H,\psi_H)$ to be the $\Gamma$-gain graph with a single vertex $v$ and two loops $l_g=v\xrightarrow[]{g}v$ and $l_h=v\xrightarrow[]{h}v$.
We now observe that finding $p$ with $p(u) \neq g \cdot p(v)$ for which $\eta(X_h,p)$ restricted to the vertices $u,v$ is not an infinitesimal motion of $(G[e],\psi|_{\{e\}},p|_{\{u,v\}})$ is equivalent to finding $q \in \HH^{V(H)} = \HH$ where $(H,\psi_H,q)$ is $\Gamma$-isostatic.
As trivial $\Gamma$-gain graphs are $\Gamma$-isostatic (\Cref{lem:trivial}), this now concludes the proof.
\end{proof}

\begin{remark}
    The proof of \Cref{lem:near} given above only requires that no element of $\Gamma$ has order 2.
    The proof can be adapted to allow for order-2 elements (and hence apply for all Fuchsian groups) using the Lorentzian geometry techniques from \Cref{subsub:the hardest lemma on earth}.
    Since we do not require this level of generality to prove \Cref{main_irreducible},
    we have omitted this more technical case.
\end{remark}

\subsection{Double cycles}
\label{sec: DC}
The remaining family of base graphs are double cycles. It will follow, from \Cref{thm:main}, that they are $\Gamma$-isostatic for surface groups $\Gamma$. 
Here we complete the proof of \Cref{main_irreducible} by showing that double cycles are reducible.
By contrast, there exist symmetry groups where this is known to fail in the setting of Euclidean rigidity. The smallest known example is the Dihedral group $\mathcal{D}_{2v}=\langle s,r:s^2=r^2=(sr)^2=\text{id}\rangle$ of order 4. The double cycle $C_2^2$ in \Cref{bottema}(a) is irreducible. This lifts to the Bottema mechanism (see \Cref{bottema}(b)), which is flexible \cite{jzt2016,Whiteley84,bottema}. 
We first need the following.

\begin{figure}[ht]
    \begin{subfigure}[scale=0.5]{0.55\textwidth}\centering
        \begin{tikzpicture}
  [scale=.9,auto=left]
            \node(u)[draw, circle, scale=0.75] at (0,0){};
            \node(v)[draw, circle, scale=0.75] at (0,2){};
            \node at (0,-0.75){};
            \draw[->] (u) .. controls (0.25,1) .. (v);
            \draw[->] (u) .. controls (-0.25,1) .. (v);
            \draw[->] (u) .. controls (0.75,1) .. (v);
            \draw[->] (u) .. controls (-0.75,1) .. (v);
            \node at (-0.9,1) {id};
            \node at (-0.4,1) {$s$};
            \node at (0.9,1) {$sr$};
            \node at (0.4,1) {$r$};
            \node at (0,-0.75){(a)};
        \end{tikzpicture}
    \end{subfigure}
    \begin{subfigure}[scale=0.5]{0.25\textwidth}\centering
        \begin{tikzpicture}
  [scale=.7,auto=left]
        \node(u)[inner sep=1pt,circle,draw,fill] at (2,0.75){};
        \node(su)[inner sep=1pt,circle,draw,fill] at (-2,0.75){};
        \node(ru)[inner sep=1pt,circle,draw,fill] at (-2,-0.75){};
        \node(sru)[inner sep=1pt,circle,draw,fill] at (2,-0.75){};

        \node(v)[inner sep=1pt,circle,draw,fill] at (1,1.5){};
        \node(sv)[inner sep=1pt,circle,draw,fill] at (-1,1.5){};
        \node(rv)[inner sep=1pt,circle,draw,fill] at (-1,-1.5){};
        \node(srv)[inner sep=1pt,circle,draw,fill] at (1,-1.5){};
        \draw(u) -- (v) -- (ru) -- (rv) -- (u);
        \draw(su) -- (sv) -- (sru) -- (srv) -- (su);
        \draw(u) -- (sv) -- (ru) -- (srv) -- (u);
        \draw(su) -- (v) -- (sru) -- (rv) -- (su);
        \draw[dashed,gray] (-3, 0) -- (3,0);
        \draw[dashed,gray] (0,-2) -- (0,2);
        \node at (0,-2.5){(b)};
    \end{tikzpicture}\end{subfigure}
    \caption{A double cycle $C_2^2$ and the $\mathcal{D}_{2v}$-lift of a generic $\mathcal{D}_{2v}$-joint-configuration of $C_2^2$.}
    \label{bottema}
\end{figure}

\begin{lemma}\label{lem:dc}
    Let $\Gamma$ be a non-cyclic free group and let $\{g_1,g_2,g_3\}$ be a (not necessarily free) generating set for $\Gamma$. Then at least one of $\Gamma_1=\langle g_1g_2^{-1}, g_3\rangle,\Gamma_2=\langle g_1 g_3^{-1}, g_2\rangle$ and $\Gamma_3=\langle g_1, g_2g_3^{-1}\rangle$ is a rank two free group.
\end{lemma}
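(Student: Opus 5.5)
The plan is to reduce the statement to a rank computation in the abelianisation of $\Gamma$. Two standard facts about free groups come first. By the Nielsen--Schreier theorem, every subgroup of a free group is free. A subgroup generated by two elements is therefore free of rank at most two, and it has rank exactly two unless it is cyclic. Equivalently, by the free-group analogue of \Cref{lem:cyclic-iff-fix}, it has rank two unless the two generators commute, since commuting elements of a free group are powers of a common element. So we argue by contradiction and assume that each of $\Gamma_1,\Gamma_2,\Gamma_3$ is cyclic. Concretely, each pair
\begin{equation*}
(g_1g_2^{-1},\,g_3),\qquad (g_1g_3^{-1},\,g_2),\qquad (g_1,\,g_2g_3^{-1})
\end{equation*}
lies in a cyclic subgroup $\langle a_i\rangle$ of $\Gamma$.

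Next, pass to the abelianisation $\pi:\Gamma\to\Gamma/[\Gamma,\Gamma]\cong\mathbb{Z}^r$, where $r\ge 2$ because $\Gamma$ is a non-cyclic free group. Tensoring with $\mathbb{Q}$ gives vectors $x_i=\pi(g_i)\in\mathbb{Q}^r$. These span $\mathbb{Q}^r$, because the $g_i$ generate $\Gamma$, and hence their span has dimension at least $2$. Since the image of a cyclic group is contained in $\mathbb{Q}\,\pi(a_i)$, the cyclicity assumption yields three rank conditions:
\begin{equation*}
\dim\operatorname{span}\{x_1-x_2,\,x_3\}\le 1,\qquad \dim\operatorname{span}\{x_1-x_3,\,x_2\}\le 1,\qquad \dim\operatorname{span}\{x_1,\,x_2-x_3\}\le 1.
\end{equation*}
It remains to show that these three conditions force $\dim\operatorname{span}\{x_1,x_2,x_3\}\le 1$, which is a contradiction.

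This linear algebra is the only real work, and the main obstacle is handling the degenerate cases in which some of the vectors vanish. If some $x_i=0$, the condition pairing $x_i$'s complementary difference with the remaining generator becomes a parallelism. For example, $x_3=0$ makes the second condition read $x_1\parallel x_2$, so the span has dimension at most $1$. The cases $x_1=0$ and $x_2=0$ are symmetric. If all $x_i\neq 0$, the first condition gives $x_1-x_2=t\,x_3$ for some $t\in\mathbb{Q}$.
\begin{itemize}
\item If $t=0$, then $x_1=x_2$, and the second condition says $x_1-x_3\parallel x_1$. Hence $x_3\in\mathbb{Q}x_1$ and the span is $1$-dimensional.
\item If $t\neq 0$, suppose for contradiction that $x_1,x_2,x_3$ span a $2$-dimensional space with basis $x_2,x_3$, so that $x_1=x_2+t x_3$. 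The second condition says $x_1-x_3=x_2+(t-1)x_3$ is parallel to $x_2$, which forces $t=1$. The third condition then says $x_2-x_3$ is parallel to $x_1=x_2+x_3$, which is impossible since $x_2,x_3$ are independent.
\end{itemize}
Each case collapses the span to dimension at most $1$, contradicting $r\ge 2$. Therefore at least one $\Gamma_i$ is non-cyclic and hence free of rank two.
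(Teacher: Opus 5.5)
Your proof is correct and takes essentially the same route as the paper: Nielsen--Schreier reduces the claim to ruling out that all three $\Gamma_i$ are cyclic, and the abelianisation $\Gamma\to\mathbb{Z}^k$ then turns this into a linear-algebra contradiction in $\mathbb{Q}^k$. The only difference is organisational: the paper splits into $k=3$ (the images form a basis) and $k=2$ (taking $\theta(g_1),\theta(g_2)$ independent without loss of generality), whereas you show directly, treating the zero-vector cases separately, that the three dependence conditions force the images $x_1,x_2,x_3$ to span at most a line.
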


\begin{proof}
    By the Nielsen–Schreier Theorem (see, e.g. \cite[Section 7.2]{nielsen}),
    every subgroup of a free group is a free group;
    in particular, each of $\Gamma_1,\Gamma_2,\Gamma_3$ is a free group with rank at most two.
    Suppose for contradiction that each of $\Gamma_1,\Gamma_2,\Gamma_3$ is cyclic, i.e.~has rank at most one.
    Since $\Gamma$ is a free group of rank $k \in \{2,3\}$,
    there exists a surjective group homomorphism $\theta : \Gamma \rightarrow \mathbb{Z}^k$: if $x_1,\ldots,x_k$ is a free generating set for $\Gamma$ and $y_1,\ldots,y_k$ is a free generating set for $\mathbb{Z}^k$, we can define $\theta$ by setting $\theta(x_i) = y_i$.
    Thus $\{\theta(g_1),\theta(g_2),\theta(g_3)\}$ is a generating set for $\mathbb{Z}^k$ and, by our initial assumption,
    each of the subgroups
    \begin{equation*}
        \theta(\Gamma_1) = \langle \theta(g_1) - \theta(g_2) , \theta(g_3) \rangle, \qquad \theta(\Gamma_2) = \langle \theta(g_1) - \theta(g_3) , \theta(g_2) \rangle  \qquad \mbox{and} \qquad \theta(\Gamma_3) = \langle \theta(g_2) - \theta(g_3) , \theta(g_1) \rangle
    \end{equation*}
    is cyclic. 
    Hence,
    $\{\theta(g_1),\theta(g_2),\theta(g_3)\}$ is a spanning subset of $\mathbb{Q}^k$ and each pair
    \begin{equation*}
        \{\theta(g_1) - \theta(g_2) , \theta(g_3) \}, \qquad \{ \theta(g_1) - \theta(g_3) , \theta(g_2) \} \qquad \mbox{and} \qquad \{ \theta(g_2) - \theta(g_3) , \theta(g_1) \}
    \end{equation*}
    is linearly dependent in $\mathbb{Q}^k$.

    If $k=3$, then $\{\theta(g_1),\theta(g_2),\theta(g_3)\}$ is a basis of $\mathbb{Q}^3$, implying that $\theta(g_1) - \theta(g_2) , \theta(g_3)$ are linearly independent, and hence contradicting our original assumption.
    So $k=2$.
    Without loss of generality, we may assume that $x_1,x_2$ are linearly independent.
    As $\theta(g_1) - \theta(g_2) , \theta(g_3)$ are linearly dependent, $\theta(g_3) = \lambda ( \theta(g_1) - \theta(g_2))$ for some $\lambda \in \mathbb{Q}$.
    Since $\theta(g_1) - \theta(g_3) , \theta(g_2)$ are linearly dependent and $\theta(g_1),\theta(g_2)$ are linearly independent,
    the following equality holds for some $\mu \in \mathbb{Q}$:
    \begin{equation*}
        \mu \theta(g_2) = \theta(g_1) - \theta(g_3) = \theta(g_1) - \lambda ( \theta(g_1) - \theta(g_2)) = (1-\lambda) \theta(g_1) + \lambda \theta(g_2).
    \end{equation*}
    Thus, $\lambda = 1$ and $\theta(g_3) = \theta(g_1) - \theta(g_2)$.
    However,
    this implies that 
    \begin{equation*}
        \theta(g_2) - \theta(g_3) = 2\theta(g_2) - \theta(g_1),
    \end{equation*}
    which contradicts the fact that $\theta(g_2) - \theta(g_3) , \theta(g_1)$ are linearly dependent. This concludes our proof.
\end{proof}

\begin{lemma}
\label{l:one non-cyclic}
    Let $\Gamma$ be a non-cyclic free group or a surface group of genus $g \geq 2$.
    Let $g_1,g_2,g_3\neq\text{id}$ be distinct elements of $\Gamma$ such that $\Gamma':=\langle g_1,g_2, g_3\rangle$ is not cyclic. Then one of $\Gamma_1=\langle g_1g_2^{-1}, g_3\rangle,\Gamma_2=\langle g_1 g_3^{-1}, g_2\rangle$ and $\Gamma_3=\langle g_1, g_2g_3^{-1}\rangle$ is not cyclic.
\end{lemma}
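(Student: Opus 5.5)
The plan is to reduce to \Cref{lem:dc} by showing that $\Gamma'=\langle g_1,g_2,g_3\rangle$ is itself a non-cyclic free group. We then apply \Cref{lem:dc} to $\Gamma'$ with generating set $\{g_1,g_2,g_3\}$. Each $\Gamma_i$ is a subgroup of $\Gamma'$, and \Cref{lem:dc} gives that at least one of $\Gamma_1,\Gamma_2,\Gamma_3$ is free of rank two, hence not cyclic. So the whole proof is the claim that $\Gamma'$ is free.

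If $\Gamma$ is a non-cyclic free group, this is the Nielsen--Schreier theorem. We have $\Gamma'$ free, non-cyclic by hypothesis, and generated by three elements. Its rank equals the rank of its abelianisation, which is at most $3$, so the rank lies in $\{2,3\}$, as required in the proof of \Cref{lem:dc}.

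If $\Gamma$ is a surface group of genus $g\geq 2$, I would use covering space theory. Consider the cover $\HH/\Gamma'\to\HH/\Gamma$, and split into two cases.
\begin{enumerate}
\item If $[\Gamma:\Gamma']<\infty$, then $\HH/\Gamma'$ is a compact orientable surface. Its genus is $g'$, where $2-2g'=[\Gamma:\Gamma'](2-2g)<0$ by the multiplicativity of the Euler characteristic, so $g'\geq 2$ and $\Gamma'\cong\Gamma_{g'}$. The abelianisation of $\Gamma_{g'}$ is $\mathbb{Z}^{2g'}$ with $2g'\geq 4$. This cannot be generated by the three images of $g_1,g_2,g_3$, a contradiction.
\item If $[\Gamma:\Gamma']=\infty$, then $\HH/\Gamma'$ is a non-compact surface, since a finite-sheeted cover of a compact surface is compact and conversely a compact cover has finitely many sheets. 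The fundamental group of a non-compact surface is free; this is the standard fact that such a surface deformation retracts onto a graph. Hence $\Gamma'\cong\pi_1(\HH/\Gamma')$ is free. Here the action of $\Gamma'$ is free and properly discontinuous because that of $\Gamma$ is, so $\HH$ is still the universal cover.
\end{enumerate}
In either admissible case $\Gamma'$ is free, non-cyclic and $3$-generated, so the rank bound and \Cref{lem:dc} apply as before.

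The main obstacle is the surface-group case, specifically justifying that infinite-index subgroups of surface groups are free. I would cite the classical result (for instance via the fact that open surfaces have free fundamental group, or Scott's work on surface groups) rather than prove it. The finite-index case is elementary, using the Riemann--Hurwitz / Euler characteristic count together with the abelianisation rank. Note that the hypotheses $g_i\neq\mathrm{id}$ and distinctness are not really needed beyond guaranteeing that $\Gamma'$ is non-cyclic.
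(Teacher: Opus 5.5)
Your proposal is correct and follows the paper's approach. Both arguments show that $\Gamma'$ is free, via Nielsen--Schreier in the free case and via the fact that infinite-index subgroups of surface groups are free in the surface case, and then apply \Cref{lem:dc}. The only difference is how you get infinite index: the paper does it in one line from the surjection $\Gamma\to\mathbb{Z}^{2g}$ (the image of $\Gamma'$ has rank at most $3<2g$), whereas you rule out finite index by computing the genus of the finite cover $\HH/\Gamma'$ from its Euler characteristic and comparing abelianisation ranks.
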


\begin{proof}
If $\Gamma$ is a non-cyclic free group, then $\Gamma'$ is also free by the Nielsen-Schreier Theorem \cite[Section I.3.4]{MR1954121}, and the result follows from \Cref{lem:dc}. Therefore, we may assume that $\Gamma$ is a surface group of genus $g\geq2$.

Hurewicz theorem \cite[Chapter IV, Theorem 7.1]{hurewicz} provides an isomorphism between the abelianisation of $\Gamma$ and the homology group $H_1(S_g,\mathbb{Z}) \cong \mathbb{Z}^{2g}$. It follows that there exists a surjective homomorphism $\theta : \Gamma \rightarrow \mathbb{Z}^{2g}$. The subgroup $\theta(\Gamma')$ of $\mathbb{Z}^{2g}$ has rank at most $3 < 2g$, and so $\theta(\Gamma')$ is an infinite index subgroup of $\mathbb{Z}^{2g}$. Since $\theta$ is a surjective map from the left cosets of $\Gamma'$ in $\Gamma$ to the left cosets of $\theta(\Gamma')$ in $\mathbb{Z}^{2g}$, we have
\begin{equation*}
    [\Gamma: \Gamma'] \geq  [\mathbb{Z}^{2g} : \theta(\Gamma')]=  \infty. 
\end{equation*}
Hence $\Gamma'$ is a subgroup of infinite index. The group $\Gamma'$ is thus a free group, since any infinite index subgroup of a surface group is a free group \cite[Theorem 1]{jaco1970certain}, and the result follows from \Cref{lem:dc}.
\end{proof}

\begin{lemma}\label{lem:dcirr}
    Let $\Gamma$ be a Fuchsian group that is isomorphic to either a surface group of genus $g \geq 2$ or a non-cyclic finitely-generated free group. Then every $(2,3,1,0)$-gain tight double cycle admits a reduction.
\end{lemma}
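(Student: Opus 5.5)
The plan is to perform an explicit $2$-reduction at a vertex of the double cycle. I will choose the pairing of its four incident edges so that the reduced gain graph is again $(2,3,1,0)$-gain tight. \Cref{l:one non-cyclic} will supply the group-theoretic input, and I will use switching (\Cref{def:gainswitch}, \Cref{lem:treegain}, \Cref{prop:balgain}) to normalise gains. Throughout, $(G,\psi)$ is a $(2,3,1,0)$-gain tight double cycle $C_n^2$. Since $|E(G)|=2|V(G)|$, the graph $G$ itself is not cyclic.

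\textbf{Case $n=2$.} The vertices are $u,v$ with four parallel edges. Switching at $v$, I make all edges directed $v\to u$ with gains $\mathrm{id},g_1,g_2,g_3$. By conditions (ii),(iii) of \Cref{def:GG}, the $g_i$ are distinct and non-identity. By \Cref{prop:balgain}, $\langle G\rangle=\langle g_1,g_2,g_3\rangle$, which is not cyclic. A $2$-reduction at $v$ deletes $v$ and pairs the four edges into two loops at $u$. The three pairings give loop gains
\[
\{g_1,\ g_2^{-1}g_3\},\qquad \{g_2,\ g_1^{-1}g_3\},\qquad \{g_3,\ g_1^{-1}g_2\}.
\]
After replacing elements by inverses, which does not change the generated subgroups, these match the three subgroups in \Cref{l:one non-cyclic}, possibly after relabelling the $g_i$ or using the variant with left/right quotients. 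I would check that the proof of \Cref{lem:dc} is symmetric under this, since abelianisation kills the order. The lemma gives a pairing whose two loops generate a non-cyclic group. Both loops are non-identity because the $g_i$ are distinct. A single vertex with two loops generating a non-cyclic group is a trivial graph, hence $(2,3,1,0)$-gain tight, so the reduction is admissible.

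\textbf{Case $n\ge 3$.} Pick a vertex $v$ with neighbours $a\ne b$. Switching at $a$ and $b$, I assume the edges at $v$ are $v\to a$ with gains $\mathrm{id},x$ and $v\to b$ with gains $\mathrm{id},y$, where $x,y\neq\mathrm{id}$. I use the two pairings that produce a double edge $a\to b$ rather than loops. These give parallel edges with gains $\{\mathrm{id},x^{-1}y\}$ or $\{x^{-1},y\}$, and the result is a double cycle $C_{n-1}^2$.

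First I would verify the sparsity bookkeeping for a double cycle. Every proper subgraph is a union of ``double paths''. Such a subgraph satisfies the $(2,1)$ count automatically. It satisfies the $(2,3)$ count for balanced subgraphs as long as every parallel pair is unbalanced, i.e.\ the two gains in each pair are distinct after orientation. Hence the reduced graph is $(2,3,1,0)$-gain tight if and only if the new parallel pair is unbalanced and the whole graph is not cyclic.

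For the group condition, choose the spanning path through $a,\dots,b$ avoiding $v$ as the tree, and switch it to identity gains. Let $R$ be the subgroup generated by the gains on the remaining edges of $G-v$. Then $\langle G\rangle=\langle R,x,y\rangle$. The reduced graphs have gain groups $\langle R,x^{-1}y\rangle$ and $\langle R,x^{-1},y\rangle$ respectively, up to conjugation.
\begin{itemize}
\item If $R$ is already non-cyclic, I take a pairing in which the new pair is unbalanced; at most one of $x=y$, $x^{-1}=y^{-1}$ can fail, so such a pairing exists.
\item If $R$ is cyclic but non-trivial, I need to choose a generator of $R$ together with $x,y$ as a triple and apply \Cref{l:one non-cyclic}, exactly as in the case $n=2$. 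Alternatively, I would reduce at a different vertex, using that $n\ge 3$ gives several candidate vertices.
\end{itemize}

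\textbf{Main obstacle.} The delicate step is this last subcase, where $R$ is cyclic. It requires matching the gain groups of the available pairings, including the loop-producing pairing that yields loops at $a$ and $b$, to the three subgroups $\Gamma_1,\Gamma_2,\Gamma_3$ of \Cref{l:one non-cyclic}. Everything else is routine switching and counting.
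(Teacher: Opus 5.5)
Your case $n=2$ is correct and is exactly the paper's argument. Your left-quotient subgroups are precisely the three subgroups of \Cref{l:one non-cyclic} applied to $g_1^{-1},g_2^{-1},g_3^{-1}$, so nothing in \Cref{lem:dc} needs re-checking.

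The genuine gap is $n\ge 3$, which you leave open. The paper runs the same three pairings for every $n$ and cites \Cref{l:one non-cyclic} once. For $n\ge 3$, however, tightness only makes the \emph{whole} gain group non-cyclic, not the group generated by the gains at $v$; for instance, these can all be powers of one element while $R$ is not. So the difficulty you flag is real and needs an argument.

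Your set-up for $n\ge 3$ also contains errors:
\begin{itemize}
\item \emph{Normalisation.} You cannot make both ``$\mathrm{id}$'' edges at $v$ and the whole $a$--$b$ path have identity gain, because the cycle they form has a gain that switching only conjugates. Choose the tree to be one $v$--$a$ edge plus the path. Then the edges at $v$ are $v\xrightarrow{\mathrm{id}}a$, $v\xrightarrow{x}a$, $v\xrightarrow{y_1}b$, $v\xrightarrow{y_2}b$, and $\langle G\rangle=\langle R,x,y_1,y_2\rangle$ rather than $\langle R,x,y\rangle$. The three reductions have gain groups $\langle R,x,y_1^{-1}y_2\rangle$ (loops at $a,b$), $\langle R,y_1,x^{-1}y_2\rangle$ and $\langle R,y_2,x^{-1}y_1\rangle$.
\item \emph{The case $n=3$.} The new pair merges with the existing $a$--$b$ pair into a $C_2^2$. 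So all four gains must be distinct, not merely the two new ones.
\item \emph{Your remedy for cyclic $R$.} It cannot work. A reduction at $v$ only recombines the gains at $v$ and carries $R$ over unchanged. Hence the subgroups that \Cref{l:one non-cyclic} produces from a triple containing a generator of $R$ are not gain groups of any reduced graph.
\end{itemize}

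The missing idea is to try the loop pairing first; then \Cref{l:one non-cyclic} is not needed at all for $n\ge 3$.
\begin{itemize}
\item Every parallel pair in $G-v$ is unbalanced, since a balanced $2$-cycle violates the $(2,3)$-count. Hence $R\neq\{\mathrm{id}\}$.
\item The loop reduction is therefore automatically sparse: its loops $x$ and $y_1^{-1}y_2$ are non-identity and every pair is unbalanced. So it is admissible if and only if $\langle R,x,y_1^{-1}y_2\rangle$ is non-cyclic. This already covers the case of non-cyclic $R$.
\item If it fails, pick $r\in R\setminus\{\mathrm{id}\}$ and let $M=C_\Gamma(r)$. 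By \Cref{lem:cyclic-iff-fix}, $M$ is cyclic, because all its non-identity elements share the fixed points of $r$. Moreover $M$ contains $R$, $x$ and $y_1^{-1}y_2$.
\item Non-cyclicity of $G$ forces $y_1\notin M$, and hence $y_2,x^{-1}y_1,x^{-1}y_2\notin M$. None of these commutes with $r$, so both double-edge reductions have non-cyclic gain groups.
\item Their new gains avoid $M\supseteq R\cup\{\mathrm{id}\}$, which settles the extra distinctness requirement when $n=3$.
\item The two new pairs cannot both be balanced. The equalities $y_1=x^{-1}y_2$ and $y_2=x^{-1}y_1$ together give $x^2=\mathrm{id}$, which is impossible since $\Gamma$ is torsion-free (\Cref{lem:noparabolic} for surface groups; free groups are torsion-free).
\end{itemize}
Hence some reduction at $v$ is always admissible.
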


\begin{proof}
    Let $(G,\psi)$ be a $(2,3,1,0)$-gain tight graph with underlying graph $G:=C_n^2$ for some $n\geq2$, and let $v\in V(G)$. Let $v_1,v_2$ be the neighbours of $v$ and $e_1,e_2,e_3,e_4$ be the edges incident to $v$. Assume that $e_i$ is directed from $v$ for $1\leq i\leq4$, that $e_1,e_2$ are incident to $v_1$, and that $e_3,e_4$ are incident to $v_2$. Note, if $n=2$, then $v_1=v_2$ and $e_1,e_2,e_3,e_4$ are all parallel. For $1\leq i\leq4$, let $g_i$ denote the gain of $e_i$ and assume, without loss of generality, $g_4=\text{id}$.

    There are three possible ways of applying a 2-reduction at $v$. Starting from $C_n^2-v$ we can add: (a) a loop at $v_1$ and a loop at $v_2$ with gains $g_1g_2^{-1}$ and $g_3$, respectively; (b) parallel edges between $v_1,v_2$ with gains $g_1g_3^{-1}$ and $g_2$; (c) parallel edges between $v_1,v_2$ with gains $g_1$ and $g_2^{-1}g_3$. Let $(G_1,\psi_1),(G_2,\psi_2)$ and $(G_3,\psi_3)$ be the graphs obtained from applying (a),(b) and (c), respectively. 
    
    If $n\geq3$, then $G_1$ is a path of parallel edges with loops at its end-vertices, and $G_2,G_3$ are double cycles on $n-1$ vertices. If $n=2$, then all three graphs are a vertex with two loops. In each case if one of $\Gamma_1=\langle g_1g_2^{-1}, g_3\rangle,\Gamma_2=\langle g_1 g_3^{-1}, g_2\rangle$ and $\Gamma_3=\langle g_1, g_2g_3^{-1}\rangle$ is not cyclic, then one of $(G_1,\psi_1),(G_2,\psi_2)$ and $(G_3,\psi_3)$ is admissible. By $(2,3,1,0)$-gain tightness of $C_n^2$ and \Cref{l:one non-cyclic}, it follows that one of $\Gamma_1,\Gamma_2$ and $\Gamma_3$ is not cyclic.
\end{proof}

\section{2-extensions and loop-2-extensions}\label{sec:2ext}

In this section we prove the following.

\begin{theorem}\label{thm:deg4ext}
    Let $\Gamma$ be a surface group, $(G,\psi)$ be a $\Gamma$-gain graph,
    and $(G',\psi')$ be obtained from $(G,\psi)$ by applying a 2-extension or a loop-2-extension.
    If $(G,\psi)$ is $\Gamma$-isostatic, then $(G',\psi')$ is $\Gamma$-isostatic.
\end{theorem}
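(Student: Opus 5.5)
We follow the strategy of \Cref{lem:1ext}: place the new vertex $v$ in a special position where the extended framework contains a $0$-extension of $(G,\psi)$, then show the extra edges are not redundant. The two operations are handled separately, and in each case the new framework has $|E(G')| = 2|V(G')|$. So it suffices to find one realisation $p'$ with $\rank O(G',\psi',p') = 2|V(G')|$. By regularity this then holds generically.

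\textbf{Loop-2-extension.} The loop-2-extension removes $e=u_1\xrightarrow{g}u_2$ and adds $v$ with the loop $l=v\xrightarrow{h}v$ and the edges $e_i=v\xrightarrow{g_i}u_i$. After switching we may take $g_1=\text{id}$, so $g_2=g$.
\begin{enumerate}
\item Take a generic $\Gamma$-isostatic realisation $p$ of $(G,\psi)$. Place $p'(v)$ on the geodesic through $p(u_1)$ and $g\cdot p(u_2)$, strictly between them. Then the rows of $e_1,e_2$ at $v$ are linearly dependent, and by \Cref{key_lemma_loop2ext} the rows of $e,e_1,e_2$ in $O(G'+e,\psi',p')$ form a circuit.
\item $(G'+e)-e_2$ is a loop-1-extension of $(G,\psi)$. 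By \Cref{lem:loop1ext} it is $\Gamma$-independent at a suitable position of $v$, and hence $\Gamma$-isostatic.
\item We need this to hold at a point $p'(v)$ lying on the chosen geodesic segment. By \Cref{lem:linear_independence}, this requires the loop vector $\dot\gamma_{x,hx}(0)+\dot\gamma_{x,h^{-1}x}(0)$ not to be parallel to the geodesic direction at $x=p'(v)$. This fails only on a closed nowhere dense set, by an analytic argument as in \Cref{lem: bad-geodesics}, since the geodesic and $p$ can be varied.
\item Given this, the left kernel of $O(G'+e,\psi',p')$ is one-dimensional and is supported on $\{e,e_1,e_2\}$. Deleting $e$ therefore preserves the rank, which gives $\Gamma$-isostaticity of $(G',\psi')$.
\end{enumerate}

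\textbf{2-extension.} The 2-extension removes $f_1=v_1\xrightarrow{}v_2$ and $f_2=v_3\xrightarrow{}v_4$.
\begin{enumerate}
\item Place $v$ at the intersection point of the geodesic through $g_1p(v_1),g_2p(v_2)$ and the geodesic through $g_3p(v_3),g_4p(v_4)$.
\item Then both triples $\{f_1,e_1,e_2\}$ and $\{f_2,e_3,e_4\}$ are dependent, by \Cref{key_lemma_loop2ext}. The graph $G'+f_1+f_2$ has two independent stresses, supported on these two triples.
\item The subgraph $(G'+f_1+f_2)-e_2-e_4$ is a $0$-extension of $(G,\psi)$, so it is $\Gamma$-isostatic by \Cref{lem:0ext} whenever the rows of $e_1,e_3$ at $v$ are independent.
\item The rank count then gives $\rank O(G',\psi',p')=2|V(G')|$, provided the two stresses restricted to $\{f_1,f_2\}$ are independent. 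This holds automatically because their supports on $\{f_1,f_2\}$ are disjoint.
\item The independence of the rows of $e_1,e_3$ at $v$ holds as long as the two geodesics meet transversally at a single point, by \Cref{intersecting geodesics}.
\end{enumerate}

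\textbf{Main obstacle.} The hard step is the existence of a realisation $p$ of $(G,\psi)$ that is simultaneously regular and for which the two geodesics genuinely cross, with $v$ distinct from the four endpoints. By \Cref{intersecting geodesics} this is an interleaving condition on ideal endpoints, which is an open condition, so it suffices to exhibit one such $p$. The difficulty is in the degenerate configurations where the $v_i$ coincide, for example all four equal to a single vertex with two loops. There the endpoints are tied together by the group action, e.g.\ they are $p(u)$, $gp(u)$, $p(u)$, $hp(u)$, and crossing of the geodesics must be forced via the group elements.

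For these cases the plan is as follows:
\begin{enumerate}
\item Reduce by switching so that the relevant gains are $\text{id}$ and a few elements of $\Gamma$.
\item Use that $\Gamma$ has only hyperbolic elements (\Cref{lem:noparabolic}), and that the sparsity condition on $G[\{v_1,\dots,v_4\}]$ in \Cref{def:2ext} ensures the relevant elements generate a non-cyclic subgroup.
\item Choose $p(u)$ near an appropriate axis so that the geodesics interleave.
\end{enumerate}

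If no crossing position is available in some subcase, the fallback is to work at the limiting intersection point on the boundary. One would then use the projective/Lorentzian model referenced before \Cref{lem:near} to extend the rank argument to ideal points, and perturb back into $\HH$.
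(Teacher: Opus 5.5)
Your loop-2-extension argument and your steps 1--5 for the 2-extension match the paper. The former is its loop-2-extension argument: place $v$ on the geodesic, use the dependence among the rows of $e,e_1,e_2$, and reduce to independence of $\dot\gamma_{x,p(u_1)}(0)$ from the loop vector at $x$. The latter is \Cref{lem:2-extcrossedgeimplies extension}, with your count of the two circuit stresses replacing the paper's explicit transfer of an equilibrium stress from $(G',\psi',p')$ to $(G,\psi,p)$.

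The gap is the step you yourself flag as the main obstacle. You never show that a crossing realisation exists when the $v_i$ coincide, and this is where nearly all the work in the paper's proof lies. After \Cref{lem:easy2-exts1} handles $v_1\notin\{v_2,v_3,v_4\}$, three configurations remain:
\begin{itemize}
\item[(a)] two loops at distinct vertices;
\item[(b)] two parallel edges;
\item[(c)] two loops at a single vertex.
\end{itemize}
For (a) and (b) the paper constructs explicit ideal points $z,w$ whose geodesics interleave (\Cref{lem:repeat1,lem:repeat2,lem:nongeneric2-extcrossedgeimplies extension1,lem:nongeneric2-extcrossedgeimplies extension2}). It treats the special subcases (equal gains in (a), mutually inverse gains in (b)) separately via the elliptic/parabolic/hyperbolic normal forms. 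It then transfers interleaving at infinity to a crossing pair in $\HH$ (\Cref{lem: cross-boundary-cross-plane0,lem: cross-boundary-cross-plane}). ``Choose $p(u)$ near an appropriate axis'' does not replace this: you would have to say which axis and prove that the endpoints interleave for every admissible choice of gains.

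For (c) the paper does not use crossing at all. It proves directly (\Cref{lem: case_x}) that the two-vertex gain graph with edges $u\xrightarrow{\id}v$ and $u\xrightarrow{g_i}v$ ($i=1,2,3$) is $\Gamma$-isostatic. It does this by describing each single-edge motion space $S_i$ via \Cref{inf_motions_edge} and showing $S_1\cap S_2\cap S_3=\{\mathbf 0\}$ with the Killing form and the Lorentzian cross product. This is exactly where the hypotheses are used: no parabolics gives $B(H_g,H_g)\neq 0$, and no order-two elements gives distinct eigenvalues of $\mathrm{Ad}$. This rigid block then pins $u$ and $v$, just as the two independent loops pinned $u$ in $G$.

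Your fallback of placing $v$ at an ideal intersection point and perturbing back is not a workable argument. Your circuit structure needs exact collinearity of $p'(v)$ with both endpoint pairs at a point of $\HH$, and perturbation destroys that.

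Your model configuration $p(u),g\,p(u),p(u),h\,p(u)$ also cannot occur. The edges $e_1,\dots,e_4$ are parallel edges from $v$ to $u$, so their gains are pairwise distinct. After switching, the endpoints are $p(u),a\,p(u),b\,p(u),c\,p(u)$ with $\id,a,b,c$ distinct, and sparsity forces $\langle a,b^{-1}c\rangle$ to be non-cyclic. A crossing argument for (c) might be made to work, e.g.\ by letting $p(u)$ tend to a boundary point $\xi$ and requiring $\{\xi,a\xi\}$ and $\{b\xi,c\xi\}$ to interleave, but it would have to be proved from that non-cyclicity. As written, the 2-extension case of the theorem is not established.
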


With \Cref{thm:deg4ext}, we can finally prove \Cref{main_theorem}.

\begin{proof}[Proof of \Cref{main_theorem}]
    Necessity follows from \Cref{thm:necessary}. Assume $(G,\psi)$ is $(2,3,1,0)$-gain tight. By \Cref{thm: recur. con.} and \Cref{lem:dcirr}, $(G,\psi)$ can be obtained from the disjoint union of irreducible $(2,3,1,0)$-gain tight graphs $(G_1,\psi_1),\dots,(G_t,\psi_t)$ by applying a series of 0-extensions, 1-extensions, loop-1-extension, 2-extension, and loop-2-extensions. By \Cref{main_irreducible}, each $(G_i,\psi_i)$ is $\Gamma$-isostatic, as is their union. Therefore, by \Cref{thm:simple_extension,thm:deg4ext}, $(G,\psi)$ is $\Gamma$-isostatic.
\end{proof}
 
\subsection{Loop-2-extensions}

\begin{lemma}
Let $\Gamma$ be a non-cyclic Fuchsian group, $(G,\psi)$ be a $\Gamma$-gain graph,
    and $(G',\psi')$ be obtained from $(G,\psi)$ by applying a loop-2-extension.
    If $(G,\psi)$ is $\Gamma$-isostatic, then $(G',\psi')$ is $\Gamma$-isostatic.
\end{lemma}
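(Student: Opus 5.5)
We mimic the proof of \Cref{lem:1ext}: place the new vertex on the geodesic of the removed edge, so that the new configuration contains the old one as a "degenerate sub-configuration". Since the resulting rank condition is open in realisations, this degenerate position suffices to conclude generic isostaticity.

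Setup. Let the loop-2-extension remove $e=v_1\xrightarrow{g}v_2$ and add a vertex $v$ with a loop $l=v\xrightarrow{h}v$, $h\neq\text{id}$, and edges $e_i=v\xrightarrow{g_i}v_i$ with $g_1^{-1}g_2=g$. Since $\Gamma$ is non-cyclic, isostaticity of $(G,\psi)$ means $\rank O(G,\psi,p)=2|V(G)|=|E(G)|$ for a regular $p$. We may take $p$ with $g_1\cdot p(v_1)\neq g_2\cdot p(v_2)$, since the regular realisations are dense and open. Note that $|E(G')|=2|V(G')|$, so it suffices to find one realisation of $(G',\psi')$ whose orbit matrix has full rank $|E(G')|$.

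Degenerate placement and matroid argument. Set $p'(w)=p(w)$ for $w\in V(G)$, and put $p'(v)=x$ on the open geodesic segment $\gamma_{g_1p(v_1),\,g_2p(v_2)}$, away from the endpoints. Consider the auxiliary graph $(G_2,\psi_2)=(G',\psi')+e$.
\begin{enumerate}
\item The rows of $e_1,e_2$ together with $O(G,\psi,p)$, which contains row $e$, have rank $|E(G)|+1$. This holds because the $v$-columns of $e_1,e_2$ are $\pm$ the same vector $\dot\gamma_{x,g_1p(v_1)}(0)$, so that block has rank $1$. The loop row contributes a $v$-block $z(x)=\dot\gamma_{x,hx}(0)+\dot\gamma_{x,h^{-1}x}(0)$. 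If $z(x)$ is linearly independent of $\dot\gamma_{x,g_1p(v_1)}(0)$, then adding the loop raises the rank to $|E(G)|+2=2|V(G')|$, so $O(G_2,\psi_2,p')$ has full column rank with a $1$-dimensional left kernel.
\item By the first part of \Cref{key_lemma_loop2ext}, applied after switching to $x,\,g_1p(v_1),\,g_2p(v_2)$ collinear (exactly as in the proof of \Cref{lem:1ext}), the rows $e,e_1,e_2$ carry a dependence involving all three. This is therefore the unique circuit, and it contains $e$. Deleting row $e$ keeps the rank at $2|V(G')|$, which gives $|E(G')|$ independent rows. Hence $(G',\psi')$ is $\Gamma$-isostatic.
\end{enumerate}

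Main obstacle. The crux is choosing $x$ on the segment so that $z(x)\neq\mathbf 0$ and $z(x)$ is not tangent to the geodesic through $g_1p(v_1),g_2p(v_2)$. By \Cref{lem:linear_independence}, failure means $x,hx,h^{-1}x$ lie on that same geodesic $\gamma$, or else $z(x)=\mathbf 0$. I would handle this in two steps.
\begin{enumerate}
\item The map $x\mapsto z(x)$ is real analytic along $\gamma$. So either the good condition holds for all but finitely many $x$ on the segment, or it fails identically on an interval.
\item In the latter case, $h\cdot x\in\gamma$ for a whole interval of $x\in\gamma$, which forces $h$ to preserve $\gamma$. Since $p$ ranges over an open set, we may perturb $p(v_1)$ to move $\gamma$ off the axis of $h$, reaching a generic geodesic that is not $h$-invariant (an $h$ has at most one invariant geodesic). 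The degenerate case $z\equiv\mathbf 0$ is excluded in the same way by \Cref{lem: bad-geodesics}.
\end{enumerate}
Finally, if $v_1=v_2$, I would first switch so that $g_1=\text{id}$ and argue identically. Here the two targets $p(v_1)$ and $g\,p(v_1)$ are distinct, because $g\neq\text{id}$ and $\Gamma$ acts freely.
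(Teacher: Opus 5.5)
Your reduction matches the paper's. You put $v$ on the open segment between $g_1\cdot p(v_1)$ and $g_2\cdot p(v_2)$ and use the collinear dependence among the rows of $e,e_1,e_2$. Your unique-circuit argument is equivalent to the paper's row reduction to block-triangular form; when $v_1=v_2$ the dependence comes from the second part of \Cref{key_lemma_loop2ext}, not the first. Everything then reduces to the linear independence of $z(x)=\dot\gamma_{x,h\cdot x}(0)+\dot\gamma_{x,h^{-1}\cdot x}(0)$ from the tangent of the geodesic $\gamma$ at $x$.

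That part is fine; the gap is in how you settle this crux. \Cref{lem:linear_independence} compares two unit directions $\dot\gamma_{P,P_1}(0),\dot\gamma_{P,P_2}(0)$. But $z(x)$ is a \emph{sum} of two unit vectors, so $z(x)\parallel\dot\gamma$ does not give $h\cdot x\in\gamma$. It only says that $\gamma$ bisects the angle at $x$ between the directions to $h\cdot x$ and $h^{-1}\cdot x$, i.e.\ the reflection $\sigma_\gamma$ in $\gamma$ swaps $h\cdot x$ and $h^{-1}\cdot x$. Failure on an interval therefore forces $\sigma_\gamma h\sigma_\gamma=h^{-1}$, not $h(\gamma)=\gamma$. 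This holds for every geodesic perpendicular to the axis of a hyperbolic $h$, and likewise for every geodesic through the fixed point of an elliptic $h$, or ending at the fixed point of a parabolic $h$.

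Concretely, take $h\cdot w=4w$ and $\gamma$ the unit semicircle. The inversion $w\mapsto 1/\bar w$ fixes $\gamma$ pointwise and sends $4x$ to $x/4$ for every $x\in\gamma$. Hence $z(x)$ is a non-zero multiple of the tangent of $\gamma$ at every $x\in\gamma$ off the imaginary axis, although $h\cdot x\notin\gamma$ and $\gamma$ is not $h$-invariant. So ``moving $\gamma$ off the axis of $h$'' does not guarantee a good $x$, and your argument would accept such a $\gamma$. Separately, an order-two elliptic $h$ preserves every geodesic through its fixed point, so ``at most one invariant geodesic'' is false for general non-cyclic Fuchsian groups.

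The paper avoids classifying bad geodesics by reversing the order of choices. It first fixes the position $x$ of $v$ with $z(x)\neq\mathbf{0}$ (\Cref{lem: bad-geodesics}). It then takes the geodesic through $x$ in any direction other than that of $z(x)$, placing $g_1\cdot p(v_1)$ in that direction and $g_2\cdot p(v_2)$ beyond $x$. Finally it uses that the $\Gamma$-isostatic realisations of $(G,\psi)$ are dense and open.

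If you want to keep your order, you must use the correct bad family: the axis of $h$ (where $z\equiv\mathbf{0}$) together with the one-parameter family $\{\gamma:\sigma_\gamma h\sigma_\gamma=h^{-1}\}$. For $v_1\neq v_2$, the pencil of geodesics through a generic $g_2\cdot p(v_2)$ contains at most two of these, so perturbing $p(v_1)$ then works. When $v_1=v_2$, however, $\gamma$ is forced through $p(v_1)$ and $g\cdot p(v_1)$ and cannot be steered freely. That case therefore needs its own check in either route.
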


\begin{proof}
Suppose that $G'$ is obtained from $G$ by removing an edge $e=u_1\xrightarrow{g}u_2$ (here, $u_1,u_2$ are allowed to coincide) and adding a vertex $v$, together with the edges $e_1=v\xrightarrow{g_1}u_1,e_2=v\xrightarrow{g_2}u_2$ and  $l=v\xrightarrow{h}v$. By \Cref{def:loop2ext}, $\psi(e)=g_1^{-1}g_2$.

For any $\Gamma$-isostatic $(G,\psi,p)$ and any $t \in \RR$, define $p_t:V(G')\rightarrow\mathbb{H}$ by letting $p_t(u)=p(u)$ for all $u\in V(G)$ and $p_t(v)=\gamma_{p(u_1),g\cdot p(u_2)}(t)$. 
Using \Cref{key_lemma_loop2ext} applied to rows corresponding to $e_1,e_2,l$, we can apply row operations to $O(G',\psi',p')$ to obtain the matrix
\begin{equation*}
\left[ 
    \begin{array}{c|c}
        \begin{matrix}
        \dot{\gamma}_{p_t(v), h\cdot p_t(v)}(0)+\dot{\gamma}_{p_t(v), h^{-1}\cdot p_t(v)}(0)\\
        \dot{\gamma}_{p_t(v), g_1\cdot p(u_1)}(0)
        \end{matrix}

        &\begin{matrix}
            0 ~ \cdots ~ 0 & \textbf{0}  & 0 ~ \cdots ~ 0 \\
           0 ~ \cdots ~ 0 &  \dot{\gamma}_{p(u_1), g_1^{-1}\cdot p'(v)}(0) & 0 ~ \cdots ~ 0 
        \end{matrix} \\
        \hline \\
        \mathbf{0}_{|E(G)| \times 2} & O(G,\psi,p) \\ \\
    \end{array}
    \right].
\end{equation*}
Hence, $(G',\psi',p_t)$ is $\Gamma$-isostatic if and only if $(G,\psi,p)$ is $\Gamma$-isostatic and the vectors $\dot{\gamma}_{p_t(v), g_1\cdot p(u_1)}(0)$ and $\dot{\gamma}_{p_t(v), h\cdot p_t(v)}(0)+\dot{\gamma}_{p_t(v), h^{-1}\cdot p_t(v)}(0)$ are linearly independent.
Because the set of $p \in \HH^{V(G)}$ where $(G,\psi,p)$ is $\Gamma$-isostatic is dense and open,
it is now sufficient to show that there exists a dense set of $p \in \HH^{V(G)}$ where the two vectors are linearly independent for some choice of $t \in \RR$.
Since the position of $p(u_2)$ can be deduced from the positions of $p_t(v)$ and $p(u_1)$,
it is sufficient to instead show that there exists a dense set of distinct points $(z,w) \in \HH^2$ such that the vectors $\dot{\gamma}_{z, g_1 \cdot w}(0)$ and $\dot{\gamma}_{z, h\cdot z}(0)+\dot{\gamma}_{z, h^{-1}\cdot z}(0)$ are linearly independent.

By \Cref{lem: bad-geodesics},
there exists a dense open set $U \subset \HH$ for which $\dot{\gamma}_{z, h\cdot z}(0)+\dot{\gamma}_{z, h^{-1}\cdot z}(0)$ is non-zero for each $z \in U$.
Now fix $z \in U$.
It is then easy to see that the set $U_z\subset\HH$ for which $\dot\gamma_{z,g_1\cdot w}(0)$ has any direction other than (and so is linearly independent from) $\dot{\gamma}_{z, h\cdot z}(0)+\dot{\gamma}_{z, h^{-1}\cdot z}(0)$ for all $w\in U_z$ is both open and dense.
This now guarantees a dense subset $U' \subset \HH^2$ of pairs $(z,w)$ where he vectors $\dot{\gamma}_{z, g_1 \cdot w}(0)$ and $\dot{\gamma}_{z, h\cdot z}(0)+\dot{\gamma}_{z, h^{-1}\cdot z}(0)$ are linearly independent. This now concludes the proof.
\end{proof}

\subsection{2-extensions}
Next we will prove the following.

\begin{lemma}\label{2-ext}
    Let $\Gamma$ be a surface group, $(G,\psi)$ be a $\Gamma$-gain graph,
    and $(G',\psi')$ be obtained from $(G,\psi)$ by applying a 2-extension.
    If $(G,\psi)$ is $\Gamma$-isostatic, then $(G',\psi')$ is $\Gamma$-isostatic.
\end{lemma}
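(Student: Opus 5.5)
Set-up: the 2-extension deletes $f_1=v_1\xrightarrow{\psi(f_1)}v_2$ and $f_2=v_3\xrightarrow{\psi(f_2)}v_4$ and adds a vertex $v$ with edges $e_i=v\xrightarrow{g_i}v_i$. Here $g_1^{-1}g_2=\psi(f_1)$ and $g_3^{-1}g_4=\psi(f_2)$.

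I would follow the standard Euclidean 2-extension argument, using a special position for $p(v)$ and then a limiting argument. Start from a $\Gamma$-isostatic regular realisation $p$ of $(G,\psi)$, chosen so that the geodesics $\gamma_{g_1\cdot p(v_1),g_2\cdot p(v_2)}$ and $\gamma_{g_3\cdot p(v_3),g_4\cdot p(v_4)}$ are distinct and intersect transversally at a point $x$ (checked with \Cref{intersecting geodesics}). Put $p'(v)=x$.

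Step 1 (special position). Apply \Cref{key_lemma_loop2ext}, first case, to the collinear triple $x,g_1p(v_1),g_2p(v_2)$. This lets us row-reduce $O(G',\psi',p')$. The rows of $e_1,e_2$ together with the rows of $O(G,\psi,p)$ except $f_1$ span the same space as the rows of $e_1$ and $f_1$. The same holds for $e_3,e_4$ with $f_2$. So, as in the proofs of \Cref{lem:1ext} and \Cref{lem:0ext}, $O(G',\psi',p')$ row-reduces to a block matrix. Its lower block is $O(G,\psi,p)$, and its upper $2\times 2$ block at $v$ is formed by $\dot\gamma_{x,g_1p(v_1)}(0)$ and $\dot\gamma_{x,g_3p(v_3)}(0)$. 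Transversality makes these two vectors linearly independent. However, collinearity creates exactly two dependencies, one among $\{e_1,e_2,f_1\}$ and one among $\{e_3,e_4,f_2\}$. These prevent us from deleting $f_1,f_2$ outright.

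Step 2 (count). The augmented matrix for $G'+f_1+f_2$ has rank $2|V(G')|$ with a 2-dimensional left kernel supported on these two circuits. Deleting $f_1$ and $f_2$ therefore leaves rank at least $2|V(G')|-1$ at $p'$. This is not enough: at the special position one infinitesimal flex may survive.

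Step 3 (main obstacle). Rule out the surviving flex by moving $p(v)$ off the special point. I expect this to be the hardest step. Here the hypothesis that $\Gamma$ is a surface group, and the sparsity condition on $G[\{v_1,\dots,v_4\}]$, must enter. I would characterise the kernel of $O(G-f_1-f_2,\psi,p)$ (which contains $\ker O(G,\psi,p)=\{0\}$ plus a 2-dimensional space) via trivial-motion-like objects. The approach would mimic \Cref{base_case_lemma} and \Cref{inf_motions_edge}. Flexes of $G'$ correspond to elements of $\mathfrak{sl}(2,\RR)$ acting at $v$ compatibly with the four edge constraints $\mathfrak{h}_{g_i p(v_i)}$. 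Concretely, I would show that a flex surviving for all nearby $p(v)$ yields a nonzero $X$ with $X\in(\mathfrak{h}_{x}+\mathfrak{h}_{g_1p(v_1)})\cap(\mathfrak{h}_x+\mathfrak{h}_{g_3p(v_3)})$, or a relation forcing $g_1p(v_1),\dots,g_4p(v_4)$ onto one geodesic. \Cref{claim: collinear-iff-collinear} would then give a contradiction for generic $p$. If this direct argument fails, the fallback is the Lorentzian projectivisation alluded to in \Cref{subsub:the hardest lemma on earth}: view $\mathfrak{h}_z$ as timelike points of $\mathbb{P}(\mathfrak{sl}(2,\RR))$ with the Killing form, and do a Euclidean-style projective 2-extension argument there. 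The surface-group hypothesis (no order-2 or parabolic elements, \Cref{lem:noparabolic}) would be used to guarantee that the needed geodesics intersect and that the point configuration is generic. Finally, lower semicontinuity of rank transfers rigidity from nearby generic $p$ to $(G',\psi')$, giving rank $2|V(G')|=|E(G')|$.
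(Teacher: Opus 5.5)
Your Step 2 is miscounted; in fact your Step 1 already finishes the argument at the special position. In the matrix for $G'+f_1+f_2$, the left kernel $K$ is $2$-dimensional. It is spanned by two dependencies with disjoint supports $\{e_1,e_2,f_1\}$ and $\{e_3,e_4,f_2\}$, each nonzero on its $f_i$ (\Cref{key_lemma_loop2ext} gives dependencies supported on all rows). Deleting a set $S$ of rows lowers the rank by exactly $|S|-\dim\pi_S(K)$, where $\pi_S$ restricts to the coordinates in $S$. Here $\pi_{\{f_1,f_2\}}(K)=\RR^2$, so no rank is lost and $\rank O(G',\psi',p')=2|V(G')|=|E(G')|$. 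Equivalently, the row of $f_1$ lies in the span of the rows of $e_1,e_2$, and the row of $f_2$ lies in the span of those of $e_3,e_4$. So the row space of $O(G',\psi',p')$ contains every row of $O(G,\psi,p)$ together with the rows of $e_1,e_3$. This is precisely \Cref{lem:2-extcrossedgeimplies extension}, which the paper proves by pushing an equilibrium stress of $(G',\psi',p')$ down to one of $(G,\psi,p)$. There is no surviving flex. Step 3 is therefore aimed at nothing (and as written it is too vague to check, since flexes of $G'$ are not in general of the form $\eta(X,p)$). No limiting argument is needed either: a single isostatic realisation suffices.

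The genuine gap is the sentence you treat as routine: the existence of $p$ for which $\gamma_{g_1\cdot p(v_1),g_2\cdot p(v_2)}$ and $\gamma_{g_3\cdot p(v_3),g_4\cdot p(v_4)}$ are \emph{crossing}. That is, they must meet at a point $x$ different from all four points $g_i\cdot p(v_i)$, with independent tangents there. Transversal intersection of distinct geodesics is not enough: if $x=g_1\cdot p(v_1)$, the row of $e_1$ is not even defined.

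Finding such a $p$ is easy only when $v_1\notin\{v_2,v_3,v_4\}$ (\Cref{lem:easy2-exts1}). If the deleted edges are loops at distinct vertices, then after switching the geodesics are $\gamma_{z,g_2\cdot z}$ and $\gamma_{w,g_4\cdot w}$. If they are parallel edges, they are $\gamma_{z,w}$ and $\gamma_{g_3\cdot z,g_4\cdot w}$. In both cases the four points are not free. The paper needs ideal-point arguments through the elliptic, parabolic and hyperbolic normal forms (\Cref{lem:repeat1}, \Cref{lem:repeat2}, \Cref{lem:nongeneric2-extcrossedgeimplies extension1}, \Cref{lem:nongeneric2-extcrossedgeimplies extension2}), transferred into $\HH$ by \Cref{lem: cross-boundary-cross-plane0} and \Cref{lem: cross-boundary-cross-plane}.

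If both deleted edges are loops at the same vertex, all four points are $g_i\cdot z$ for a single $z$, and the paper does not produce a crossing at all. Instead it uses that the two deleted loops have independent rows supported only on the columns of $v_1$. So it suffices that the two-vertex gain graph on $v,v_1$ with the four new parallel edges (one switched to gain $\id$) is $\Gamma$-isostatic (\Cref{lem: case_x}). This is proved with the adjoint representation, the Killing form and the Lorentzian cross product.

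That is where the surface-group hypothesis enters. Having no parabolic elements gives $B(H_g,H_g)\neq 0$ (\Cref{l: hg not orth to hg}). Having no elements of order two rules out a repeated eigenvalue of $\textup{Ad}(g_1^{-1}g_2)$ in \Cref{lem:hard lemma}. Your proposal has none of this case analysis, and the only place it invokes the surface-group hypothesis is the nonexistent flex of Step 3.
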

For the remainder of the section we assume that $\Gamma$ is a non-cyclic Fuchsian group, $(G,\psi)$ is a $\Gamma$-isostatic $\Gamma$-gain graph (and hence $(2,3,1,0)$-gain tight - recall \Cref{thm:necessary}), and $(G',\psi')$ is the $\Gamma$-gain graph obtained from $(G,\psi)$ by applying the 2-extension that removes the edges $e= v_1 \xrightarrow{g} v_2$ and $f= v_3 \xrightarrow{h} v_4$ and adds the vertex $v$, together with the edges $e_{1} = v \xrightarrow{g_1} v_1$, $e_{2}= v \xrightarrow{g_2} v_2$, $e_{3}= v \xrightarrow{g_3} v_3$, $e_{4}= v \xrightarrow{g_4} v_4$. (Here, $g=g_1^{-1}g_2$ and $h=g_3^{-1} g_4$.) 

To prove \Cref{2-ext}, we require a slight alteration to the concept of transverse intersection to keep track of the choice of points defining the lines.
For $P_1,Q_1,P_2,Q_2\in\HH$, we say point-labelled geodesics $\gamma_{P_1,Q_1}, \gamma_{P_2,Q_2}$ \emph{are crossing} if there are $s,t\in\RR$ such that:
\begin{itemize}
    \item[(i)] $\gamma_{P_1,Q_1}(s)=\gamma_{P_2,Q_2}(t)$;
    \item[(ii)] $\dot\gamma_{P_1,Q_1}(s),\dot\gamma_{P_2,Q_2}(t)$ are linearly independent; and
    \item[(iii)] $\gamma_{P_1,Q_1}(s) \notin \{P_1,Q_1\}$ and $\gamma_{P_2,Q_2}(t) \notin \{P_2,Q_2\}$. 
\end{itemize}
Note that, if $\gamma_{P_1,Q_1}, \gamma_{P_2,Q_2}$ are crossing, then they intersect transversally. However, the converse need not be true: if $P,Q_1,Q_2\in\HH$ do not all lie on the same geodesic, then $\gamma_{P,Q_1} ,\gamma_{P,Q_2}$ intersect transversally but are not crossing. 

\begin{lemma}\label{lem:2-extcrossedgeimplies extension}
    Suppose that there exists some $p: V(G)\rightarrow \HH$ such that the geodesics $\gamma_{g_1\cdot p(v_1), g_2\cdot p(v_2)}$ and $\gamma_{g_3\cdot p(v_3), g_4\cdot p(v_4)}$ are crossing. Then $(G',\psi')$ is $\Gamma$-isostatic.
\end{lemma}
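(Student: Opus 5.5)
We place the new vertex $v$ at the crossing point and reduce to the isostaticity of $(G,\psi)$, following the template of \Cref{lem:1ext} and the loop-2-extension lemma. Since the regular, $\Gamma$-isostatic realisations of $(G,\psi)$ form a dense open set, and the crossing condition (transverse intersection at interior points, avoiding the labelled endpoints) is open in $p$ by \Cref{continuity-geodesics} and the continuity of $(P,Q)\mapsto\dot\gamma_{P,Q}(0)$, we may assume that $p$ itself is a $\Gamma$-isostatic realisation of $(G,\psi)$. Define $p'$ on $V(G')$ by $p'|_{V(G)}=p$ and $p'(v)=x$, where $x$ is the crossing point. Then $x$ lies on $\gamma_{g_1 p(v_1),g_2 p(v_2)}$, is distinct from $g_1p(v_1)$ and $g_2p(v_2)$, and similarly lies on the second geodesic.

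Next we apply \Cref{key_lemma_loop2ext} in the first form, with points $g_1p(v_1)$, $x$, $g_2p(v_2)$, after switching/translating by $g_1^{-1}$ so that the rows match those of $O$. The rows of $e_1,e_2$ in $O(G',\psi',p')$, together with a row for $e$, form a dependence in which all three rows have non-zero coefficients. Row operations therefore replace the $e_2$ row by (a non-zero multiple of) the row of $e$, plus entries only in the $v$-columns. The same applies to $e_3,e_4,f$. The resulting matrix has block form
\[
\begin{bmatrix} M & * \\ \mathbf{0} & O(G,\psi,p)\end{bmatrix}
\quad\text{up to the $v$-column contributions in the $e,f$ rows.}
\]
Concretely, it is cleaner to argue with flexes. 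Let $m$ be an infinitesimal motion of $(G',\psi',p')$. The rows $e_1,e_2$ force $\langle\dot\gamma_{x,g_1p(v_1)}(0),m(v)\rangle$ and $\langle\dot\gamma_{x,g_2p(v_2)}(0),m(v)\rangle$ to be determined by $m$ on $v_1,v_2$. These two directions are $\pm$ the same tangent vector $t_1$ of the first geodesic at $x$, and the dependence shows that $m|_{V(G)}$ satisfies the $e$ row exactly when the two constraints on $\langle t_1,m(v)\rangle$ are consistent. The same holds for $f$ with tangent $t_2$. Hence $m|_{V(G)}$ is an infinitesimal motion of $(G,\psi,p)$, so $m|_{V(G)}$ is trivial, and since $\Gamma$ is non-cyclic, $\mathcal T_\Gamma=0$ and $m|_{V(G)}=0$. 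Then $\langle t_1,m(v)\rangle=\langle t_2,m(v)\rangle=0$, and since $t_1,t_2$ are linearly independent (condition (ii) of crossing), $m(v)=0$. Thus $\rank O(G',\psi',p')=2|V(G')|$, and as $|E(G')|=|E(G)|+2=2|V(G')|$, $(G',\psi',p')$ is $\Gamma$-isostatic.

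The main obstacle is the bookkeeping in the dependence step. We must check, with signs and gains, that the $e_1,e_2$ constraints combine exactly into the $e$ row of $O(G,\psi,p)$, with coefficients that are non-zero because $x$ is an interior point distinct from both endpoints (condition (iii)). This needs the identity $D_z(g)\dot\gamma_{z,z'}(0)=\dot\gamma_{gz,gz'}(0)$ of \Cref{eq:isometries_metric_2} to transport vectors between $p(v_i)$ and $g_i p(v_i)$, together with the degenerate cases $v_1=v_2$ or $\{v_1,v_2\}\cap\{v_3,v_4\}\neq\emptyset$. These cases cause no extra difficulty, because the argument is applied separately to each pair and only the full flex equation is used.
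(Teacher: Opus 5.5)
Your proof is correct and is essentially the paper's. You place $v$ at the crossing point $x$, use the $\pm$ relations among the tangent vectors along each geodesic (exactly one sign negative in each triple) to recover the old rows of $e$ and $f$ from those of $e_1,e_2$ and $e_3,e_4$, and finish with the linear independence of the two tangent directions at $x$. The only difference is that you argue with infinitesimal motions and the infinitesimal rigidity of $(G,\psi,p)$, whereas the paper transfers an equilibrium stress of $(G',\psi',p')$ to one of $(G,\psi,p)$ and uses its independence; this is equivalent because $O(G',\psi',p')$ is square.
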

\begin{proof} 
We first observe that any small perturbation of $\{p(u):u\in V(G)\}$ preserves the property that the geodesics $\gamma_{g_1\cdot p(v_1), g_2 \cdot p(v_2)}$ and $\gamma_{g_3\cdot p(v_3), g_4\cdot p(v_4)}$ are crossing, so we may assume that $(G,\psi,p)$ is regular, and hence $\Gamma$-isostatic. 
Let $x$ be the intersection of the two geodesics. Define $p'\in\mathbb{H}^{V(G')}$ by letting $p'(u) = p(u)$ for $u\in V(G)$ and $p'(v) = x$. We have 
\begin{align*}
    \dot{\gamma}_{p'(v), g_2\cdot p(v_2)}(0) = \varepsilon_1 \dot{\gamma}_{p'(v), g_1\cdot p(v_1)}(0) &
    \qquad \dot{\gamma}_{p'(v), g_4\cdot p(v_4)}(0) = \varepsilon_4 \dot{\gamma}_{p'(v), g_3\cdot p(v_3)}(0)\\
    \dot{\gamma}_{p(v_1), g_1^{-1}\cdot p'(v)}(0) =\varepsilon_2 \dot{\gamma}_{p(v_1), g\cdot p(v_2)}(0) &\qquad\dot{\gamma}_{p(v_3), g_3^{-1}\cdot p'(v)}(0) = \varepsilon_5 \dot{\gamma}_{p(v_3), h\cdot p(v_4)}(0)\\
    \dot{\gamma}_{p'(v_2), g_2^{-1}\cdot p'(v)}(0) = \varepsilon_3  \dot{\gamma}_{p(v_2), g^{-1}\cdot p(v_1)}(0) & \qquad 
    \dot{\gamma}_{p(v_4), g_4^{-1}\cdot p'(v)}(0) = \varepsilon_6 \dot{\gamma}_{p(v_4), h^{-1}\cdot p(v_3)}(0)
\end{align*}
where $\varepsilon_{i} \in \{\pm1\},$ exactly one of $\varepsilon_1, \varepsilon_2, \varepsilon_3$ is $-1$ and exactly one of $\varepsilon_4, \varepsilon_5, \varepsilon_6$ is $-1$.
Choose an equilibrium stress $\omega'$ of $(G',\psi',p')$. By observing the columns of $(\omega')^T O(G',\psi',p')$ associated to $v$, we can see
\begin{align*}
    &\omega'(e_1) \dot{\gamma}_{p'(v),g_1\cdot p(v_1)}(0) + \omega'(e_2) \dot{\gamma}_{p'(v),g_2\cdot p(v_2)}(0) + \omega'(e_3) \dot{\gamma}_{p'(v),g_3\cdot p(v_3)}(0) + \omega'(e_3) \dot{\gamma}_{p'(v),g_4\cdot p(v_4)}(0)  \\
    = &\Big(\omega'(e) +\varepsilon_1  \omega'(e_2) \Big)\dot{\gamma}_{p'(v),g_1\cdot p(v_1)}(0) + \Big(\omega'(e_3) +\varepsilon_4  \omega'(e_4) \Big)\dot{\gamma}_{p'(v),g_3\cdot p(v_3)}(0)=\textbf{0}.
\end{align*}
Since $\gamma_{g_1\cdot p_{v_1}, g_2\cdot p_{v_2}}$ and $\gamma_{g_3\cdot p_{v_3}, g_4\cdot p_{v_4}}$ are crossing, $\omega'(e_1) = -\varepsilon_1 \omega'(e_2)$ and $\omega'(e_3) = -\varepsilon_4 \omega'(e_4)$.
From this, define the vector $\omega \in \mathbb{R}^{E(G)}$ by letting $\omega(e^{\star})=\omega'(e^{\star})$ for all $e^{\star}\in E(G)\setminus\{e,f\}$, $\omega(e)=\varepsilon_2\omega'(e_1)$ and $\omega(f)=\varepsilon_5\omega'(e_3)$. Since $\omega'$ is an equilibrium stress of $(G',\psi',p')$ and 
\begin{align*}
    &\omega(e)\dot\gamma_{p(v_1),g\cdot p(v_2)}(0)=\varepsilon_2^2\omega'(e_1)\dot\gamma_{p(v_1),g_1^{-1}\cdot p'(v)}(0)=\omega'(e_1)\dot\gamma_{p(v_1),g_1^{-1}\cdot p'(v)}(0)\\
    &\omega(e)\dot\gamma_{p(v_2),g^{-1} \cdot p(v_1)}(0)=-\varepsilon_1\varepsilon_2\varepsilon_3\omega'(e_2)\dot\gamma_{p(v_2),g_2^{-1}\cdot p'(v)}(0)=\omega'(e_2)\dot\gamma_{p(v_2),g_2^{-1}\cdot p'(v)}(0),
\end{align*}
it is easy to see that the balancing condition (that is, \Cref{bal. cond.}) is satisfied by $\omega$ on both $v_1$ and $v_2$.
A similar argument using the edge $f$ shows that the balancing condition is satisfied by $\omega$ on both $v_3$ and $v_4$ also. 
It follows that $\omega$ is an equilibrium stress of $(G,\psi,p)$. Since $(G,\psi,p)$ is $\Gamma$-isostatic,
$\omega = \mathbf{0}$, which implies that $\omega' = \mathbf{0}$. So $O(G,'\psi',p')$ has full-rank. Since $|E(G')| = 2|V(G')|$, this implies $(G',\psi',p')$ is $\Gamma$-isostatic.
\end{proof}

\begin{lemma}\label{lem:easy2-exts1}
    Suppose $v_1 \notin \{v_2, v_3, v_4\}$. Then $(G',\psi')$ is $\Gamma$-isostatic.
\end{lemma}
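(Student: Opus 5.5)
The approach is to reduce the statement to \Cref{lem:2-extcrossedgeimplies extension}: it suffices to exhibit a realisation $p$ of $(G,\psi)$ for which the point-labelled geodesics $\gamma_{g_1\cdot p(v_1), g_2\cdot p(v_2)}$ and $\gamma_{g_3\cdot p(v_3), g_4\cdot p(v_4)}$ are crossing. The hypothesis $v_1\notin\{v_2,v_3,v_4\}$ means that $p(v_1)$ can be moved freely while every other point in the configuration stays fixed. Thus the first geodesic can be manipulated independently of the second.

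First, I fix positions for all vertices other than $v_1$ so that $g_3\cdot p(v_3)\neq g_4\cdot p(v_4)$. If $v_3\neq v_4$ this is a dense open condition. If $v_3=v_4$, it is still achievable because $h=g_3^{-1}g_4\neq\text{id}$ (the removed edge $f$ is a loop with non-identity gain) and a non-trivial isometry has at most one fixed point in $\HH$. This fixes a genuine geodesic $\gamma_2:=\gamma_{g_3\cdot p(v_3), g_4\cdot p(v_4)}$ with two labelled points $A=g_3\cdot p(v_3)$ and $B=g_4\cdot p(v_4)$. I then choose an interior point $x$ of $\gamma_2$ lying strictly between $A$ and $B$, say $x=\gamma_2(T/2)$, where $\gamma_2(T)=B$. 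Set $Q:=g_2\cdot p(v_2)$; this point is already fixed, since $v_2\neq v_1$.

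Next, I would like the geodesic through $Q$ and $x$ to be transverse to $\gamma_2$. For this I need $Q\notin\gamma_2$. If $Q$ happens to lie on $\gamma_2$, I perturb $p(v_2)$ slightly. This is allowed: when $v_2\notin\{v_3,v_4\}$ the perturbation does not move $\gamma_2$. When $v_2\in\{v_3,v_4\}$, moving $p(v_2)$ moves $\gamma_2$ as well, so I instead argue that $Q=g_2\cdot p(v_2)$ lies on $\gamma_2$ only on a nowhere dense set of realisations, by checking a single witness. For example, if $v_2=v_3$, the point $g_2\cdot p(v_3)$ coincides with or is collinear with $g_3\cdot p(v_3)$ and $g_4\cdot p(v_4)$ only non-generically, by an argument like that of \Cref{claim: lem-1ext}. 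This degenerate bookkeeping is the step I expect to be the main (if mild) obstacle. Once $Q\notin\gamma_2$, I extend the geodesic $\gamma_{Q,x}$ beyond $x$ and take a point $y=\gamma_{Q,x}(s)$ with $s$ slightly larger than $d_\HH(Q,x)$. I then set $p(v_1):=g_1^{-1}\cdot y$, so that $g_1\cdot p(v_1)=y$. This does not disturb any other position, because $v_1$ is distinct from $v_2,v_3,v_4$.

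Finally, I verify the three crossing conditions. The geodesics $\gamma_{y,Q}$ and $\gamma_2$ meet at $x$. Since $Q\notin\gamma_2$, the two geodesics are distinct, so by \Cref{lem:linear_independence} their tangent vectors at $x$ are linearly independent. The point $x$ lies strictly between $y$ and $Q$ on the first geodesic and strictly between $A$ and $B$ on the second, so it avoids all four labelled points. Hence the geodesics are crossing, and \Cref{lem:2-extcrossedgeimplies extension} shows that $(G',\psi')$ is $\Gamma$-isostatic. The regularity of $p$ is not needed here, because that lemma already perturbs $p$ to a regular realisation while preserving the crossing.
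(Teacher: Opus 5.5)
Your proposal is correct and is essentially the paper's proof: reduce to \Cref{lem:2-extcrossedgeimplies extension} and, since $p(v_1)$ can be moved freely, place $g_1\cdot p(v_1)$ on the geodesic from $g_2\cdot p(v_2)$ through a point $x$ of $\gamma_{g_3\cdot p(v_3),g_4\cdot p(v_4)}$. Your version is slightly more careful than the paper's, which only asks that $g_2\cdot p(v_2),g_3\cdot p(v_3),g_4\cdot p(v_4)$ be distinct, whereas you correctly require $g_2\cdot p(v_2)$ to lie off that geodesic. In the degenerate cases $v_2\in\{v_3,v_4\}$ this works because new parallel edges into a common vertex have distinct gains (condition (ii) of \Cref{def:GG}); the paper invokes this fact, and you should state it explicitly rather than only gesturing at \Cref{claim: lem-1ext}.
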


\begin{proof}
By \Cref{lem:2-extcrossedgeimplies extension}, it suffices to show that there is some $p:V(G)\rightarrow\mathbb{H}$ such that $\gamma_{g_1\cdot p(v_1), g_2\cdot p(v_2)}$ and $\gamma_{g_3\cdot p(v_3) , g_4\cdot p(v_4)}$ are crossing. If, for some distinct $2\leq i,j\leq4$, we have $v_i=v_j$, then $g_i\neq g_j$ by the definition of $\Gamma$-gain graph. So choose $p$ such that $g_2\cdot p(v_2),g_3\cdot p(v_3),g_4\cdot p(v_4)$ are distinct and $g_1\cdot p(v_1)$ lies on $\gamma_{g_2\cdot p(v_2), x}$ for some $x$ on $\gamma_{g_3\cdot p(v_3), g_4\cdot p(v_4)}$. This is our desired $p.$
\end{proof}

By \Cref{lem:easy2-exts1}, we may assume $v_1=v_i$ for some $2\leq i\leq4$. Therefore, without loss of generality, we need only consider the following cases: (a) $v_1=v_2,v_3=v_4$ and $v_2\neq v_3$; (b) $v_1 = v_3,v_2 = v_4$ and $v_2\neq v_3$; and, (c) $v_1=v_2=v_3=v_4$. We will consider each case separately.

\subsubsection{Case (a). \texorpdfstring{$v_1=v_2,v_3=v_4$}{v1=v2,v3=v4} and \texorpdfstring{$v_2\neq v_3$}{v2,v3 distinct}}

In this section and the next, we will use boundary points to determine whether geodesics intersect.

\begin{lemma}\label{lem: cross-boundary-cross-plane0}
    Let $\Gamma$ be a Fuchsian group, and let $g,h \in \Gamma$ satisfy $g,h \neq \textup{id}$. Suppose that there exist $z_\alpha, z_\beta \in \RR \,\cup\, \{\infty\}$ such that $z_{\alpha}\neq g\cdot z_{\alpha}$, $z_{\beta}\neq  h\cdot z_{\beta}$ and such that the geodesics $(z_{\alpha}, g\cdot z_{\alpha})$ and $(z_{\beta}, h\cdot z_{\beta})$ intersect transversally. Then, there exists a non-empty open set of points $(\tau_{1}, \tau_{2})\in \mathbb{H}^{2}$ such that $\tau_1\neq\tau_2$ and such that the geodesics $\gamma_{\tau_{1}, h\cdot\tau_{1}}$ and $\gamma_{\tau_{2}, k\cdot \tau_{2}}$ are crossing. 
\end{lemma}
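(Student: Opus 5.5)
Note first that the statement has a typo: the conclusion should read $\gamma_{\tau_1, g\cdot\tau_1}$ and $\gamma_{\tau_2, h\cdot\tau_2}$, matching the hypotheses. The plan is a continuity argument that pushes the boundary configuration into the interior. Using \Cref{lem:linear_independence}, geodesic crossing in $\HH$ can be read off from the ideal endpoints via \Cref{intersecting geodesics}. I will take points $\tau_1\to z_\alpha$ and $\tau_2\to z_\beta$ from inside $\HH$, and show that for $\tau_i$ close enough to the boundary the point-labelled geodesics cross.

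\textbf{Step 1: endpoint convergence.} Extend the map $\chi$ of \Cref{continuity-geodesics} to pairs $(\tau, g\cdot\tau)$ as $\tau$ approaches a boundary point $z$ with $g\cdot z\neq z$. I claim that the ordered ideal endpoints of $\gamma_{\tau,g\cdot\tau}$ converge to $(z, g\cdot z)$, suitably oriented. This follows from the explicit formulas for $C$ and $R$ in the proof of \Cref{continuity-geodesics}. As $P\to x\in\RR$ and $Q\to y\in\RR$ with $x\ne y$, we get $C\to (x+y)/2$ and $R\to |x-y|/2$. The cases involving $\infty$ are handled by conjugating with an element of $\PSL(2,\RR)$. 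Here we use that $g$ extends continuously to the closed disc, so $g\cdot\tau\to g\cdot z$, exactly as observed in the proof of \Cref{lem: bad-geodesics}.

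Since $(z_\alpha,g\cdot z_\alpha)$ and $(z_\beta,h\cdot z_\beta)$ intersect transversally, their four endpoints are distinct and interlaced as in \Cref{intersecting geodesics}. Interlacing is an open condition, so for $\tau_1,\tau_2$ in small Euclidean neighbourhoods (intersected with $\HH$) of $z_\alpha,z_\beta$ the geodesics still intersect transversally, at a point $x$ that lies near the limiting intersection point $x_0\in\HH$.

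\textbf{Step 2: the crossing conditions.} Condition (ii) holds because the intersection is transversal. Condition (i) needs the intersection point to be an actual point on both parametrised curves, which is automatic since the curves are defined on all of $\RR$. The real issue is (iii): $x\notin\{\tau_1,g\cdot\tau_1,\tau_2,h\cdot\tau_2\}$. As $\tau_1\to z_\alpha$, both $\tau_1$ and $g\cdot\tau_1$ tend to boundary points, so they leave every compact set of $\HH$. The same holds for $\tau_2$ and $h\cdot\tau_2$. Meanwhile $x$ stays near the interior point $x_0$, so for neighbourhoods small enough the four labelled points are far from $x$. Finally $\tau_1\neq\tau_2$ is ensured by shrinking the neighbourhoods so they are disjoint; if $z_\alpha=z_\beta$, the points $\tau_1,\tau_2$ are simply chosen distinct within an open set. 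The set of such $(\tau_1,\tau_2)$ is open in $\HH^2$ because every condition involved is open, and it is non-empty.

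\textbf{Main obstacle.} The delicate step is Step 1 at $\infty$, and also in degenerate subcases where $z_\alpha=z_\beta$ or where a limiting geodesic is vertical. My approach is to first conjugate by an isometry $k\in\PSL(2,\RR)$ that moves all four endpoints into $\RR$. This is harmless because isometries preserve geodesics and crossing, and they act on $\Gamma$ by conjugation, replacing $g,h$ with $kgk^{-1},khk^{-1}$. After the conjugation, only the finite-endpoint formulas are needed.
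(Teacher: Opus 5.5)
Your proof is correct. It rests on the same two ingredients as the paper's proof (continuity of the endpoint map $\chi$ and openness of transversal intersection), but it obtains non-emptiness differently.

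The paper defines $f(\tau_1,\tau_2)=(\chi(\tau_1,g\cdot\tau_1),\chi(\tau_2,h\cdot\tau_2))$ and asserts that $f$ is a continuous surjection onto the set $Z$ of pairs of the form $((z,g\cdot z),(w,h\cdot w))$. It then pulls back $Z\cap O$ and finally picks a point with $\{\tau_1,g\cdot\tau_1\}\cap\{\tau_2,h\cdot\tau_2\}=\emptyset$. Read literally, $f$ does not land in $Z$. For $g\cdot z=2z$ and $\tau=1+\mathfrak{i}$, the geodesic through $\tau$ and $g\cdot\tau$ is the semicircle centred at $3$ of radius $\sqrt5$, and its endpoints $3\pm\sqrt5$ are not related by $g$. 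What is actually needed is that $f^{-1}(O)$ is non-empty, with $f$ viewed as a map into $\textup{Geod}(\HH)^2$. Your Step 1 supplies exactly this: you let $\tau_1\to z_\alpha$ and $\tau_2\to z_\beta$, use that $\chi(\tau,g\cdot\tau)\to(z,g\cdot z)$, and first conjugate so that all four endpoints are finite.

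Your Step 2 also treats condition (iii) of crossing more carefully than the paper does. Disjointness of the labelled points does not by itself prevent the intersection point from being, say, $\tau_1$; this happens if $\tau_1$ lies on $\gamma_{\tau_2,h\cdot\tau_2}$. Your observation that the labelled points escape to the boundary while the intersection point stays near $x_0\in\HH$ rules this out. It also gives an explicit open product of neighbourhoods.

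The paper's pullback phrasing is shorter, but your local limit argument is what makes it go through. You are also right that the conclusion should read $\gamma_{\tau_1,g\cdot\tau_1}$ and $\gamma_{\tau_2,h\cdot\tau_2}$.
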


\begin{proof}
Fix $O \subset \textup{Geod}(\mathbb{H})^2$ to be the set of all pairs of geodesics that intersect transversally.
By \Cref{intersecting geodesics},
$O$ is a non-empty open set.
Given the set
\begin{equation*}
    Z = \left\{\big((z_\alpha,g\cdot z_\alpha),(z_\beta, h\cdot z_\beta)\big) \in  \textup{Geod}(\mathbb{H})^2 : z_\alpha \neq g\cdot z_\alpha , ~ z_\beta \neq h\cdot z_\beta \right\},
\end{equation*}
we have that $Z \,\cap\, O$ is a non-empty open subset of $Z$.
Given $\chi$ is the continuous and surjective map defined in \Cref{continuity-geodesics},
we now define the continuous and surjective map
\begin{equation*}
    f: \left\{ (\tau_1,\tau_2) \in \HH^2 : \tau_1, g\cdot \tau_1, \tau_2 , h \cdot\tau_2 \text{ pairwise distinct} \right\} \longrightarrow Z, ~
    (\tau_1,\tau_2) \longmapsto \big(\chi(\tau_1, g\cdot\tau_1), \chi( \tau_2, h\cdot \tau_2)\big).
\end{equation*}
The set $f^{-1}(Z \,\cap\, O)$ is a non-empty open subset of $\HH^2$;
moreover, if $(\tau_1,\tau_2) \in f^{-1}(Z \,\cap\, O)$ then $\gamma_{\tau_1,g\cdot\tau_1}$ and $\gamma_{\tau_2,h\cdot\tau_2}$ intersect transversally.
We now choose $(\tau_1,\tau_2) \in f^{-1}(Z \,\cap\, O)$ such that $\{\tau_1 , g\cdot \tau_1\} \,\cap\, \{\tau_2 , h\cdot \tau_2\}=\emptyset$ to obtain our crossing pair.
\end{proof}

We now split this case into two subcases. 

\begin{lemma}\label{lem:repeat1}
    Let $\Gamma$ be a Fuchsian group and $g,h \in \Gamma$ be such that $h, g,h g^{-1} \neq\textup{id}$.
    Then there exist points $z, w \in \RR\,\cup\, \{\infty\}$ such that the geodesics $(z,g\cdot z)$ and $(w,h\cdot w)$ intersect transversally.
\end{lemma}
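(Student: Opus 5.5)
We will work entirely on the boundary circle $\RR\,\cup\,\{\infty\}$ and use \Cref{intersecting geodesics}. The geodesics $(z,g\cdot z)$ and $(w,h\cdot w)$ intersect transversally exactly when the pairs $\{z,g\cdot z\}$ and $\{w,h\cdot w\}$ are distinct and interleave on the circle, meaning each geodesic has one endpoint in each component of the circle minus the other. The strategy is therefore to fix a convenient boundary point $z$ with $g\cdot z\neq z$, and then produce $w$ such that exactly one of $w,h\cdot w$ lies in each open arc cut out by $z$ and $g\cdot z$.

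Since $g\neq\text{id}$, it fixes at most two boundary points, so we can pick $z$ with $z\neq g\cdot z$. Let $I$ and $J$ be the two open arcs of the circle with endpoints $z$ and $g\cdot z$. We then look at the continuous map $w\mapsto h\cdot w$ on the circle, which is a homeomorphism that preserves orientation. It suffices to find $w\in I$ with $h\cdot w\in J$, or $w\in J$ with $h\cdot w\in I$. In either case the four points are automatically distinct, and the interleaving condition of \Cref{intersecting geodesics} holds.

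To find such a $w$, we argue by contradiction. Suppose instead that $h\cdot I\subseteq \overline{I}$ and $h\cdot J\subseteq\overline{J}$. Since $h$ is a homeomorphism of the circle, $h$ must then map each closed arc onto itself, so $h$ fixes the common endpoints $z$ and $g\cdot z$ (or swaps them, which orientation-preservation rules out unless the arcs are exchanged, and that contradicts the assumption). We have freedom in choosing $z$: the set of $z$ with $z\neq g\cdot z$ is open and infinite. We therefore choose $z$ so that $z$ is not a fixed point of $h$, which is possible since $h\neq\text{id}$ fixes at most two boundary points. This choice immediately contradicts the conclusion that $h$ fixes $z$.

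The main subtlety is making this endpoint argument airtight. Preserving $\overline I$ forces $h$ to fix the endpoint set $\{z,g\cdot z\}$ setwise. If $h$ swaps the endpoints, then $h$ maps $\overline I$ to itself while reversing its ends, which is incompatible with preserving orientation on the circle. So $h$ fixes both endpoints, and in particular fixes $z$, giving the contradiction. Failing that, we could choose $z$ so that neither $z$ nor $g\cdot z$ is fixed by $h$ or $h^{2}$, avoiding finitely many points. The hypothesis $hg^{-1}\neq\text{id}$ is presumably needed because the paper's later use demands $h\neq g$. In our construction it only serves to guarantee that the geodesics are distinct: if $\{w,h\cdot w\}=\{z,g\cdot z\}$ could occur, we perturb $w$ slightly within the open set of valid choices. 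The interleaving condition is open, so such a perturbation is possible.
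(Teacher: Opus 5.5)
Your argument is correct, but it takes a genuinely different route from the paper's.

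The paper's proof is local. It picks $z$ avoiding the fixed points of $g$ and $h$, checks which of the two arcs cut out by $z$ and $g\cdot z$ contains $h\cdot z$, and takes $w=z\pm\varepsilon$ on the appropriate side. Then $h\cdot w$ stays near $h\cdot z$ in the opposite arc. This only fails when $h\cdot z=g\cdot z$. That would have to hold for all but finitely many $z$, forcing $h=g$, and this is exactly where the paper uses $hg^{-1}\neq\mathrm{id}$.

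Your proof is global. If no point of one open arc is sent into the other, then $h(\overline I)\subseteq\overline I$ and $h(\overline J)\subseteq\overline J$. Surjectivity of $h$ together with closedness upgrades these to equalities; you assert this step, and it is worth writing out in one line. Then $h$ preserves $\overline I\cap\overline J=\{z,g\cdot z\}$, and orientation-preservation forces $h$ to fix $z$.

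The payoff of your route is that it never uses $hg^{-1}\neq\mathrm{id}$. Your closing remark says the hypothesis only secures distinctness of the geodesics, but it plays no role at all. Interleaving already makes the four endpoints distinct, so no perturbation is needed. Consequently, the same argument with $h=g$ proves \Cref{lem:repeat2} directly, and the elliptic/parabolic/hyperbolic case analysis there becomes unnecessary. The paper's route, in exchange, gives an explicit $w$ near $z$.

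Drop the fallback about avoiding points fixed by $h^2$. It is never needed, and it breaks down when $h$ has order two, since then $h^2=\mathrm{id}$ fixes every point.
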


\begin{proof}
    Choose any $z \in \RR\,\cup\,\{\infty\}$ that is not a fixed point of $g$ or $h$.
    First, we suppose $g\cdot z \neq h\cdot z$.
    We may suppose $z < g\cdot z$, as an almost identical method will apply when $g\cdot z<z$.
    If $h\cdot z <z$ or $h\cdot z>g\cdot z$, then there exists suitably small $\varepsilon >0$ so that
    \begin{equation*}
        h\cdot(z+\varepsilon) < z < z+\varepsilon < g\cdot z \qquad \text{ or } \qquad z < z+\varepsilon < g\cdot z <h\cdot(z+\varepsilon)
    \end{equation*}
    respectively.
    Then, setting $w = z + \varepsilon$, the result holds by \Cref{intersecting geodesics}.
    If $z<h\cdot z<g\cdot z$, then there exists suitably small $\varepsilon >0$ so that
    \begin{equation*}
        z - \varepsilon < z < h\cdot(z-\varepsilon) < g\cdot z.
    \end{equation*}
    Then, setting $w = z + \varepsilon$, the result holds by \Cref{intersecting geodesics}.
    
    Suppose for contradiction that neither of the above options hold for any $x$ that is not a fixed point of $g$ or $h$.
    Since $z$ is not a fixed point of $h$, we have $h\cdot x=g\cdot x$ for all but finitely many points in $\RR \,\cup\, \{\infty\}$.
    By continuity, $h\cdot x=g\cdot x$ for all $x \in \RR \,\cup\,\{\infty\}$.
    As an isometry is uniquely determined by its effect on the boundary of $\HH$, we thus have $g=h$.
    However, this contradicts that $hg^{-1} \neq \textup{id}$.
\end{proof}

\begin{lemma}\label{lem:repeat2}
    Let $\Gamma$ be a Fuchsian group and $g \in \Gamma$ be such that $g \neq\text{id}$.
    Then there exist $z, w \in \RR\,\cup\, \{\infty\}$ such that the geodesics $(z,g\cdot z)$ and $(w,g\cdot w)$ intersect transversally.
\end{lemma}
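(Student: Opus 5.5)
The plan is to work on the boundary $\RR\cup\{\infty\}$ and exploit the dynamics of $g$ there, since \Cref{intersecting geodesics} reduces transverse intersection to interlacing of ideal endpoints. The obstruction to treating this as the case $h=g$ of \Cref{lem:repeat1} is that we now need two geodesics of the form $(z,g\cdot z)$, $(w,g\cdot w)$ with the same group element. The idea is to choose $w$ close to $z$ but on the correct side, so that the four endpoints $z,w,g\cdot z,g\cdot w$ interlace.

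First, fix $z\in\RR\cup\{\infty\}$ that is not a fixed point of $\tilde g$; this is possible since a non-identity isometry fixes at most two boundary points. After conjugating (which preserves transverse intersection and the form of the geodesics), we may assume $z$, $g\cdot z$ and the fixed points of $g$ all lie in $\RR$, and that $z<g\cdot z$ (the other ordering is symmetric). The extension $\tilde g$ restricted to $\RR\cup\{\infty\}$ is an orientation-preserving homeomorphism of the circle. Hence for $w$ in a small interval around $z$, the point $g\cdot w$ lies in a small interval around $g\cdot z$, and $w\mapsto g\cdot w$ is locally increasing there.

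The key step is to pick $w=z+\varepsilon$ with $\varepsilon>0$ small. Then
\begin{equation*}
    z< z+\varepsilon < g\cdot z < g\cdot(z+\varepsilon),
\end{equation*}
where the last inequality uses that $\tilde g$ is increasing near $z$ and that $z+\varepsilon$ has not wrapped past $\infty$. Setting $x_1=z$, $y_1=g\cdot z$, $x_2=w$, $y_2=g\cdot w$, we obtain $x_1<x_2<y_1<y_2$. By \Cref{intersecting geodesics}, the geodesics $(z,g\cdot z)$ and $(w,g\cdot w)$ intersect transversally, provided they are distinct, which holds since $w\neq z$ and $w\neq g\cdot z$ for $\varepsilon$ small.

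The main obstacle is justifying monotonicity of $\tilde g$ on the real line near $z$, since the Möbius map $x\mapsto (ax+b)/(cx+d)$ has a pole at $-d/c$. I would handle this by choosing $z$ to avoid $-d/c$ and its image, as well as the fixed points. This is allowed because it only excludes finitely many points. On an interval avoiding the pole, the derivative is $1/(cx+d)^2>0$, so $\tilde g$ is strictly increasing there. With this choice, and $\varepsilon$ small enough that $[z,z+\varepsilon]$ avoids the pole, the displayed chain of inequalities holds. Alternatively, one can avoid the conjugation step and work directly in the real coordinate with this derivative computation.
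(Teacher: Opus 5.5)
Your proof is correct, but it does not follow the paper's route. The paper conjugates $g$ into one of the three normal forms: a rotation (with a separate subcase when $\tan\theta=\infty$), the translation $x\mapsto x+t$, or the dilation $x\mapsto\lambda^2x$. In each case it writes down explicit endpoints, e.g.\ $z=1$ and $1<w<\lambda^2$ in the hyperbolic case, and checks the interlacing by hand.

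You instead give a single uniform perturbation argument. Writing $g$ with entries $a,b,c,d$, you take $z\in\RR$ with $z\neq g\cdot z$ and $cz+d\neq 0$. You then use that $x\mapsto(ax+b)/(cx+d)$ has derivative $1/(cx+d)^2>0$ near $z$, and set $w=z+\varepsilon$. This gives $z<w<g\cdot z<g\cdot w$, which is literally the first configuration in \Cref{intersecting geodesics}.

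You identify the real issue correctly. Because $g\cdot w$ tends to the endpoint $g\cdot z$ of the first geodesic, the bare continuity used in \Cref{lem:repeat1} is not enough. One needs $\tilde g$ to preserve the cyclic order of the boundary, and the positive derivative supplies exactly this.

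Your version avoids the elliptic/parabolic/hyperbolic trichotomy. It also shows that every $w\neq z$ sufficiently close to $z$, on either side, works. The paper's case analysis gains only explicit witnesses.

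Two cosmetic points. First, the conjugation step is unnecessary: choosing $z\in\RR$ outside the fixed points of $\tilde g$ and different from $g^{-1}\cdot\infty$ (which is $-d/c$ when $c\neq 0$) already makes $z$ and $g\cdot z$ finite. Second, ``$-d/c$ and its image'' should just read ``$-d/c$'', since its image is $\infty$.
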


\begin{proof}
    We first note that for any $k\in \PSL(2,\RR)$ and any $z, w\in\RR\,\cup\,\{\infty\}$ with $z\neq g\cdot z$ and $w \neq g\cdot w$, the geodesics $(z,g\cdot z)$ and $(w,g\cdot w)$ intersect transversally if and only if $(z,(kgk^{-1})\cdot z)$ and $(w,(kgk^{-1})\cdot w)$ intersect transversally. 
    Hence by conjugating $g$, we may assume without loss of generality that one of the following holds:
    \begin{itemize}
        \item[(i)] $g$ is elliptic and $g = \begin{bmatrix}
            \cos(\theta) & -\sin(\theta)\\
            \sin(\theta) & \cos(\theta)\\
        \end{bmatrix}$ for some $\theta \in (0, 2\pi)$. 

        First, suppose $\tan (\theta) \neq \infty$.
        If $\tan(\theta) = 0$, then (since $\PSL(2,\RR)$ is a quotient) $g = \textup{id}$, which contradicts our original assumption.
        In this case, pick $z = 0$ and $w = x$ for some small $x \in \mathbb{R}$. Then, since $x \approx 0$, we have 
    \begin{equation*}
        g\cdot z = -\frac{\sin(\theta)}{\cos(\theta)} = -\tan(\theta) , \qquad
        g\cdot w = - \tan(\theta) \left( \frac{ 1 - x\tan(\theta)^{-1} }{1+ x\tan(\theta) } \right) \approx -\tan(\theta).
    \end{equation*}  
    We observe here that for small $x$,
    we have
    \begin{equation*}
        f(x) := \left( \frac{ 1 - x\tan(\theta)^{-1} }{1+ x\tan(\theta) } \right) 
        \begin{cases}
            <1 &\text{if } \tan (\theta) >0 \text{ and } x > 0, \text{ or } \tan (\theta) <0 \text{ and } x < 0  \\
            > 1 &\text{if } \tan (\theta) >0 \text{ and } x < 0, \text{ or } \tan (\theta) <0 \text{ and } x > 0.
        \end{cases}
    \end{equation*}  
    If $\tan(\theta) >0$, then choose $x>0$ with small magnitude so that $g\cdot z < g\cdot w < z <  w$. If, on the other hand, $\tan(\theta) < 0$, then choose $x<0$ small in magnitude so that $g\cdot w < g\cdot z < w < z$. 
    In both cases, $(z,g\cdot z)$ and $(w, g\cdot w)$ intersect transversally by \Cref{intersecting geodesics}.

    Now suppose $\tan(\theta)=\infty$.
    Then $g : z \mapsto -\frac{1}{z}$ for all $z \in \HH\, \cup \,\RR \,\cup\, \{\infty\}$. Pick $z=1$ and $w=2$.
    Then $g\cdot z < g\cdot w < z < w$, and so $(z,g\cdot z)$ and $(w, g\cdot w)$ intersect transversally by \Cref{intersecting geodesics}.
    
    \item[(ii)] $g$ is parabolic and $g=
            \begin{bmatrix}
                1 & t \\
                 0 & 1
            \end{bmatrix}$
            for some $t>0$.

            In this case, pick $z = 0$ and $w = -\frac{t}{2}$. Then $g\cdot z = t$ and $g\cdot w = \frac{t}{2}.$ Therefore, $(z,g\cdot z)$ and $(w, g\cdot w)$ intersect transversally by \Cref{intersecting geodesics}.
        \item[(iii)] $g$ is hyperbolic and $g=
            \begin{bmatrix}
                \lambda & 0 \\
                 0 & \frac{1}{\lambda}
            \end{bmatrix}$ for some $|\lambda | >1$.

            In this case, pick $z = 1$ and $1 < w < \lambda^2$. Then $1 < w < g\cdot z = \lambda^2 < g\cdot w$. Therefore, $(z,g\cdot z)$ and $(w, g\cdot w)$ intersect transversally \Cref{intersecting geodesics}.
    \end{itemize}  
    In each case, we showed that there is some choice of $z,w$ such that $(z,w)$ and $(g\cdot z, g^{-1}\cdot w)$ intersect transversally. 
\end{proof}

\begin{lemma}\label{lem:easy2-exts2}
    Suppose $v_1 = v_2$ and $v_3 = v_4$, but $v_1 \neq v_3$. 
    Then $(G',\psi')$ is $\Gamma$-isostatic.
\end{lemma}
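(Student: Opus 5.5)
By \Cref{lem:2-extcrossedgeimplies extension}, it suffices to find a realisation $p$ for which the point-labelled geodesics $\gamma_{g_1\cdot p(v_1), g_2\cdot p(v_1)}$ and $\gamma_{g_3\cdot p(v_3), g_4\cdot p(v_3)}$ are crossing. First we normalise the gains. By switching at $v_1$ with gain $g_1^{-1}$, we replace $p(v_1)$ by $\tau_1 := g_1\cdot p(v_1)$. The first geodesic then becomes $\gamma_{\tau_1,(g_1 g_2 g_1^{-1})\cdot\ldots}$. It is cleaner to write it directly: $g_2\cdot p(v_1) = (g_2g_1^{-1})\cdot \tau_1$. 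So the first geodesic is $\gamma_{\tau_1, k_1\cdot\tau_1}$ with $k_1 := g_2g_1^{-1}$. In the same way, setting $\tau_2 := g_3\cdot p(v_3)$, the second geodesic is $\gamma_{\tau_2, k_2\cdot \tau_2}$ with $k_2:=g_4g_3^{-1}$.

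Since $e$ and $f$ are loops in $G$, their gains $g=g_1^{-1}g_2$ and $h=g_3^{-1}g_4$ are non-identity. Hence $k_1=g_1 g g_1^{-1}\neq \mathrm{id}$ and $k_2 = g_3 h g_3^{-1}\neq\mathrm{id}$. Because $v_1\neq v_3$, the positions $p(v_1)$ and $p(v_3)$ can be chosen independently. The map $p\mapsto(\tau_1,\tau_2)$ is therefore a surjective open map onto $\HH^2$, so every non-empty open set of pairs $(\tau_1,\tau_2)$ is realised by some $p$.

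By \Cref{lem: cross-boundary-cross-plane0}, applied with the elements $k_1,k_2$, it remains to produce boundary points $z,w\in\RR\cup\{\infty\}$, not fixed by $k_1$ and $k_2$ respectively, such that $(z,k_1\cdot z)$ and $(w,k_2\cdot w)$ intersect transversally. That lemma then gives a non-empty open set of pairs $(\tau_1,\tau_2)$ with $\tau_1\neq\tau_2$ whose geodesics are crossing. We split into two cases.
\begin{itemize}
\item If $k_1\neq k_2$, then $k_2k_1^{-1}\neq\mathrm{id}$, and \Cref{lem:repeat1} with $(g,h)=(k_1,k_2)$ supplies $z,w$.
\item If $k_1=k_2$, then \Cref{lem:repeat2} supplies $z,w$.
\end{itemize}
In both cases we may perturb $z,w$ to avoid the finitely many boundary fixed points, since transverse intersection is an open condition by \Cref{intersecting geodesics}.

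The main point needing care is the bookkeeping in the first step. The crossing condition in \Cref{lem:2-extcrossedgeimplies extension} is stated for the specific points $g_1\cdot p(v_1),g_2\cdot p(v_1)$, so we must check that they really coincide with $\tau_1$ and $k_1\cdot\tau_1$. We must also check that condition (iii) of crossing, namely that the intersection point avoids the four defining points, holds on an open set. The latter is guaranteed by the final choice made in \Cref{lem: cross-boundary-cross-plane0}.
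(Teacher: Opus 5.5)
Your proposal is correct and follows essentially the same route as the paper. The paper normalises $g_1=g_3=\mathrm{id}$ by switching via \Cref{lem:treegain}, which is exactly your change of variables $\tau_1=g_1\cdot p(v_1)$, $\tau_2=g_3\cdot p(v_3)$ with $k_1=g_2g_1^{-1}$, $k_2=g_4g_3^{-1}$. It then combines \Cref{lem:repeat1,lem:repeat2}, \Cref{lem: cross-boundary-cross-plane0} and \Cref{lem:2-extcrossedgeimplies extension} just as you do, and your explicit case split $k_1=k_2$ versus $k_1\neq k_2$ spells out a detail the paper leaves implicit.
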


\begin{proof}
    By \Cref{lem:treegain}, we may assume that $g_1=g_3=\text{id}$ and, by definition of gain graph, $g_2,g_4\neq\text{id}$. Then, by \Cref{lem:repeat1,lem:repeat2}, there are some distinct $z,w\in\RR\,\cup\,\{\infty\}$ such that the geodesics $(z,g_2\cdot z)$ and $(w,g_4\cdot w)$ intersect transversally. By \Cref{lem: cross-boundary-cross-plane0}, there is a choice of $p(v_1)=p(v_2)\neq p(v_3)=p(v_4)$ such that $\gamma_{p(v_1), g_2\cdot p(v_1)} = \gamma_{p(v_1), g_2\cdot p(v_2)}$ and $\gamma_{p(v_3),g_4\cdot p(v_3)}=\gamma_{p(v_3),g_4\cdot p(v_4)}$ are crossing. The result then follows from \Cref{lem:2-extcrossedgeimplies extension}.
\end{proof}

\subsubsection{Case (b). \texorpdfstring{$v_1=v_3,v_2=v_4$}{v1=v3,v2=v4} and \texorpdfstring{$v_2\neq v_3$}{v2,v3 distinct}}

We first require the following analogue to \Cref{lem: cross-boundary-cross-plane0}.
We omit its proof as it is almost identical.

\begin{lemma}\label{lem: cross-boundary-cross-plane}
    Let $\Gamma$ be a Fuchsian group, and let $g,h \in \Gamma$. Suppose there exist $z_\alpha, z_\beta \in \RR \,\cup\, \{\infty\}$ such that $z_{\alpha}\neq z_{\beta},$ $g\cdot z_{\alpha}\neq  h\cdot z_{\beta}$ and such that the geodesics $\gamma_1 = (z_{\alpha}, z_{\beta})$ and $\gamma_{2} = (g\cdot z_{\alpha}, h\cdot z_{\beta})$ intersect transversally. Then, there exists a non-empty open set of points $(\tau_{1}, \tau_{2})\in \mathbb{H}^{2}$ such that $\tau_1\neq\tau_2$ and such that the geodesics $\gamma_{\tau_{1}, \tau_{2}}$ and $\gamma_{g\cdot \tau_{1}, h\cdot \tau_{2}}$ are crossing. 
\end{lemma}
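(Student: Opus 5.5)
We follow the proof of \Cref{lem: cross-boundary-cross-plane0}, replacing the pair of geodesics $(\tau_1, g\cdot\tau_1), (\tau_2, h\cdot\tau_2)$ by the pair $(\tau_1,\tau_2), (g\cdot\tau_1, h\cdot\tau_2)$.

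\textbf{Step 1: an open set of transverse pairs.} Let $O\subset\textup{Geod}(\HH)^2$ be the set of pairs of geodesics that intersect transversally. By \Cref{intersecting geodesics}, transversality is given by strict inequalities among the four ideal endpoints, so $O$ is open. Set
\begin{equation*}
Z=\{((a,b),(g\cdot a,h\cdot b)) : a\neq b,\ g\cdot a\neq h\cdot b\},
\end{equation*}
where $a,b$ range over the boundary $\RR\cup\{\infty\}$. The hypothesis says exactly that $Z\cap O$ is a non-empty, relatively open subset of $Z$.

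\textbf{Step 2: pulling back to $\HH^2$.} Define
\begin{equation*}
F(\tau_1,\tau_2)=\big(\chi(\tau_1,\tau_2),\ \chi(g\cdot\tau_1,h\cdot\tau_2)\big)
\end{equation*}
on the open set of pairs $(\tau_1,\tau_2)\in\HH^2$ with $\tau_1\neq\tau_2$ and $g\cdot\tau_1\neq h\cdot\tau_2$. By \Cref{continuity-geodesics} this map is continuous. Its image need not lie in $Z$: the endpoints of $\gamma_{g\cdot\tau_1,h\cdot\tau_2}$ are not in general $g$ and $h$ applied to the endpoints of $\gamma_{\tau_1,\tau_2}$. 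So we cannot argue via surjectivity onto $Z$, and instead use a limiting argument.

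Take $\tau_1\to z_\alpha$ and $\tau_2\to z_\beta$ from inside $\HH$, for instance along geodesic rays. The isometries $g,h$ extend continuously to the closed disc, so $g\cdot\tau_1\to g\cdot z_\alpha$ and $h\cdot\tau_2\to h\cdot z_\beta$. The chord through two points near two distinct boundary points has endpoints close to those boundary points; this is the same boundary observation used in the proof of \Cref{lem: bad-geodesics}, and follows from the explicit formula for $C$ and $R$ in \Cref{continuity-geodesics}. Hence, for $\tau_1,\tau_2$ close enough to $z_\alpha,z_\beta$, the pair $F(\tau_1,\tau_2)$ is close to
\begin{equation*}
\big((z_\alpha,z_\beta),(g\cdot z_\alpha,h\cdot z_\beta)\big)\in O,
\end{equation*}
and therefore lies in $O$, since $O$ is open. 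Since $F$ is continuous and $O$ is open, $F^{-1}(O)$ is a non-empty open subset of $\HH^2$.

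\textbf{Step 3: from transverse to crossing.} Finally shrink to pairs with $\{\tau_1,\tau_2\}\cap\{g\cdot\tau_1,h\cdot\tau_2\}=\emptyset$ and with the intersection point different from all four points. Moving $\tau_1$ slightly along its own geodesic keeps the pair transverse and avoids these finitely many closed coincidence conditions. The resulting set is still non-empty and open, and on it $\gamma_{\tau_1,\tau_2}$ and $\gamma_{g\cdot\tau_1,h\cdot\tau_2}$ are crossing.

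\textbf{Main obstacle.} The difficulty is the boundary-limit continuity in Step 2, namely that the endpoints of the chord through two interior points converge to the two boundary points they approach. The explicit formula from \Cref{continuity-geodesics} handles this directly, with a separate check when one of the limit points is $\infty$, using the chart $x\mapsto 1/x$.
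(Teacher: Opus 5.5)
Your argument is correct, but it takes a different route from the paper. The paper omits the proof as ``almost identical'' to that of \Cref{lem: cross-boundary-cross-plane0}, which pulls back $Z\cap O$ along a map claimed to be a continuous surjection onto $Z$. You rightly point out that the analogous map $F(\tau_1,\tau_2)=(\chi(\tau_1,\tau_2),\chi(g\cdot\tau_1,h\cdot\tau_2))$ does not take values in $Z$ unless $g=h$, so that template does not transfer.

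The same defect is already present in \Cref{lem: cross-boundary-cross-plane0}. For $\tau$ on the axis of a hyperbolic $g$, the geodesic $\chi(\tau,g\cdot\tau)$ is the axis, and its endpoints are fixed by $g$, so it is not of the form $(z,g\cdot z)$. Your repair is the right one: pull back the open set $O$ itself, and get non-emptiness from the boundary behaviour of chord endpoints. That behaviour follows from the limits of the formulas for $C$ and $R$; alternatively, first conjugate so that none of the four ideal points is $\infty$. This costs one elementary limit computation, but unlike the surjection shortcut it actually proves the statement.

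The one soft spot is Step 3. ``Moving $\tau_1$ slightly along its own geodesic'' does not by itself escape the coincidence loci. For example, if $g$ is an elliptic of order two fixing $h\cdot\tau_2$, then $\tau_1$, $h\cdot\tau_2$ and $g\cdot\tau_1$ are collinear for every $\tau_1$. The two geodesics then meet at $\tau_1$ however $\tau_1$ is moved with $\tau_2$ fixed.

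No perturbation is needed, though, because Step 2 already gives condition (iii):
\begin{itemize}
\item Transversality of the limit geodesics forces $z_\alpha$, $z_\beta$, $g\cdot z_\alpha$, $h\cdot z_\beta$ to be pairwise distinct, and the limit geodesics meet at an interior point $x_0\in\HH$.
\item As $(\tau_1,\tau_2)\to(z_\alpha,z_\beta)$, the intersection point of $\gamma_{\tau_1,\tau_2}$ and $\gamma_{g\cdot\tau_1,h\cdot\tau_2}$ tends to $x_0$.
\item Meanwhile $\tau_1,\tau_2,g\cdot\tau_1,h\cdot\tau_2$ tend to the boundary.
\end{itemize}
Hence every pair in a small neighbourhood of $(z_\alpha,z_\beta)$, intersected with $\HH^2$, already satisfies (i)--(iii) of the crossing definition. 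That set is the required non-empty open set.
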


We now split this case into two separate subcases. 

\begin{lemma}\label{lem:nongeneric2-extcrossedgeimplies extension1}
    Let $\Gamma$ be a Fuchsian group and $g,h \in \Gamma$ be such that $h, g,hg \neq\text{id}$.
    Then there exist points $z, w \in \RR\,\cup\, \{\infty\}$ such that the geodesics $(z,w)$ and $(g\cdot z,h\cdot w)$ intersect transversally.
\end{lemma}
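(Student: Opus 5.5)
We want boundary points $z,w\in\RR\cup\{\infty\}$ such that the geodesics $(z,w)$ and $(g\cdot z,h\cdot w)$ intersect transversally. By \Cref{intersecting geodesics}, this holds exactly when the four points are distinct and one pair separates the other on the circle $\RR\cup\{\infty\}$. The plan mirrors \Cref{lem:repeat1}: fix $z$ in general position, then search over $w$ and look at the relative cyclic order of $z$, $g\cdot z$, $w$, $h\cdot w$.

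First, choose $z$ that is not a fixed point of $g$, $h$, $hg$ or $g^{-1}h$ (there are finitely many such fixed points, since all of these elements are non-identity or else handled separately). Then $z\neq g\cdot z$. Now let $w$ vary. Consider the continuous map $w\mapsto h\cdot w$ on the circle, and compare it with the fixed chord $\{z,g\cdot z\}$. This chord splits the circle into two open arcs $I$ and $J$. Transversality of $(z,w)$ and $(g\cdot z,h\cdot w)$ requires the chord $\{z,w\}$ to separate $g\cdot z$ from $h\cdot w$, with all four points distinct.

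The key step is to take $w=g\cdot z+\varepsilon$ and $w=g\cdot z-\varepsilon$ for small $\varepsilon$. These lie on opposite sides of $g\cdot z$, so the chord $\{z,w\}$ nearly coincides with $\{z,g\cdot z\}$. In each case, $g\cdot z$ sits in one specific small region relative to $\{z,w\}$: just inside one arc. We need $h\cdot w\approx h g\cdot z$ to lie in the other arc determined by $\{z,w\}$.

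If $hg\cdot z\notin\{z,g\cdot z\}$, then $hg\cdot z$ lies strictly in $I$ or in $J$. For small $\varepsilon$ it is separated from $g\cdot z$ by the chord $\{z,w\}$ for exactly one of the two sign choices of $\varepsilon$. This is because moving $w$ across $g\cdot z$ switches which side $g\cdot z$ lies on, while $h\cdot w$ stays close to $hg\cdot z$, away from the chord endpoints. Distinctness of the four points also holds for small $\varepsilon$. The transversality then follows from \Cref{intersecting geodesics}.

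The main obstacle is the degenerate situation where $hg\cdot z\in\{z,g\cdot z\}$ for every admissible $z$. Since the excluded set is finite, this means $hg\cdot x=x$ or $hg\cdot x=g\cdot x$ for all but finitely many boundary points $x$. By continuity one of these identities then holds on the whole boundary, since each identity holds on a closed set and their union is cofinite. The first gives $hg=\mathrm{id}$, contradicting the hypothesis. The second gives $h=\mathrm{id}$, since an isometry is determined by its boundary action. Either way we reach a contradiction, exactly as at the end of the proof of \Cref{lem:repeat1}.

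A secondary care point is points at $\infty$. These can be avoided by conjugating by an element of $\PSL(2,\RR)$, which preserves transverse intersection. Alternatively, one can argue directly with cyclic order on the circle, so the $\pm\varepsilon$ argument applies verbatim.
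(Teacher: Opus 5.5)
Your argument is correct and is essentially the paper's proof. Both move $w$ to $g\cdot z$ and then perturb it slightly towards the side of the chord $\{z,g\cdot z\}$ containing $(hg)\cdot z$, so that $h\cdot w\approx(hg)\cdot z$ is separated from $g\cdot z$, and both exclude the case $(hg)\cdot z\in\{z,g\cdot z\}$ via $hg\neq\mathrm{id}$ and $h\neq\mathrm{id}$. Your direct choice $w=g\cdot z\pm\varepsilon$ is a little cleaner than the paper's detour through a path $w(t)$.

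Two small tidy-ups. First, drop the exclusion of fixed points of $g^{-1}h$: it is never used, and it is impossible when $h=g$. Second, the continuity step is not justified as stated, because two closed sets with cofinite union need not include one equal to the whole circle. Instead, choose $z$ off the at most six boundary fixed points of the non-identity maps $g$, $hg$ and $g^{-1}hg$. This makes $(hg)\cdot z\notin\{z,g\cdot z\}$ automatic, so the degenerate case never arises.
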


\begin{proof}
    Since $h,g,hg\neq\text{id}$, we may choose $z,w \in  \RR$ so that the elements in $\{z,w,g\cdot z,h\cdot w\}$ are pairwise distinct and $g\cdot z,h\cdot w,(hg)\cdot z \neq \infty$.
    Assume that $(z,w),(g\cdot z,h\cdot w)$ do not intersect transversally. Choose a continuous injective path $w:[0,1] \rightarrow \RR\,\cup\, \{\infty\}$ where $w(0) = w$, $w(1) = g\cdot z$ and $w(t) \neq z$ for all $0\leq t\leq 1$, and
    suppose that for all $0\leq t\leq 1$ the geodesics $(z, w(t)),(g\cdot z,h\cdot w(t))$ do not intersect transversally. If $( hg)\cdot z \neq z$ and $( hg)\cdot z\neq g\cdot z$ then, setting $w^* := w(1) = g\cdot z$, one of the following inequalities holds:
    \begin{align*}
        h\cdot w^* < g\cdot z = w^* < z, \qquad h\cdot w^* < z < g\cdot z = w^*, \qquad z < h\cdot w^* < w^* = g\cdot z, \\ 
        w^* = g\cdot z < h\cdot w^* < z, \qquad w^* = g\cdot z < z < h\cdot w^*, \qquad z < g\cdot z=w^* < h\cdot w^*.
    \end{align*}
    It follows that we can choose some $w^{**} \in \RR$ close to $w^*$ so that one of the following holds:
    \begin{align*}
        h\cdot w^{**} < g\cdot z < w^{**} < z, \qquad h\cdot w^{**} < z < g\cdot z < w^{**}, \qquad z < h\cdot w^{**} < w^{**} < g\cdot z, \\ 
        w^{**} < g\cdot z < h\cdot w^{**} < z, \qquad w^{**} < g\cdot z < z < h\cdot w^{**}, \qquad z < g\cdot z < w^{**} < h\cdot w^{**}.
    \end{align*}
    Then, by \Cref{intersecting geodesics}, $(z,w^{**})$ and $(g\cdot z,h\cdot w^{**})$ intersect transversally. So assume $(hg)\cdot z = z$ or $(hg)\cdot z = g\cdot z$ for all suitable choices of $z,w$.
    Since both equalities can be seen to be solution sets to rational polynomials, either $(hg)\cdot z = z$ for all $z,w \in \RR\,\cup\, \{\infty\}$, or $(hg)\cdot z = g\cdot z$ for all $z,w \in \RR\,\cup\, \{\infty\}$.
    Since $k\cdot z = z$ for all $z \in \RR\,\cup\, \{\infty\}$ if and only if $k = \text{id}$,
    either $hg = \text{id}$ or $hg = g$, a contradiction. This proves the result.
    \end{proof}

\begin{lemma}\label{lem:nongeneric2-extcrossedgeimplies extension2}
    Let $\Gamma$ be a Fuchsian group and $g,h \in \Gamma$ be such that $h = g^{-1}$ and $h,g \neq\text{id}$.
    Then there exist points $z, w \in \RR\,\cup\, \{\infty\}$ such that the geodesics $(z,w)$ and $(g\cdot z,h\cdot w)$ intersect transversally.
\end{lemma}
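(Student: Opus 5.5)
The plan is to reduce to a normal form by conjugation and then exhibit explicit boundary points. Throughout I would use the criterion for transverse intersection via interleaving of ideal points (\Cref{intersecting geodesics}), read cyclically on $\RR\cup\{\infty\}$. I expect the argument to close only when $g$ is hyperbolic; the obstruction in the other cases is explained below.

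\textbf{Step 1 (conjugation).} As in the proof of \Cref{lem:repeat2}, for $k\in\PSL(2,\RR)$ the map $\tilde k$ is an orientation-preserving homeomorphism of the circle $\RR\cup\{\infty\}$. It therefore preserves the cyclic order of quadruples. So $(z,w)$ and $(g\cdot z,g^{-1}\cdot w)$ interleave if and only if $(k\cdot z,k\cdot w)$ and $((kgk^{-1})\cdot (k\cdot z),(kg^{-1}k^{-1})\cdot(k\cdot w))$ interleave. Hence I may replace $g$ by any conjugate and assume it is in one of the three normal forms listed in \Cref{sec:hypgeom}.

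\textbf{Step 2 (hyperbolic case).} Take $g=\mathrm{diag}(\lambda,1/\lambda)$ with $|\lambda|>1$, so that $g\cdot x=\lambda^2x$ and $g^{-1}\cdot x=x/\lambda^2$. Choose $z=1$ and $w=-1$. Then $g\cdot z=\lambda^2$ and $g^{-1}\cdot w=-1/\lambda^2$. The four points are pairwise distinct and ordered as
\begin{equation*}
    -1<-\tfrac{1}{\lambda^2}<1<\lambda^2,
\end{equation*}
so the pairs $\{1,-1\}$ and $\{\lambda^2,-1/\lambda^2\}$ alternate. By \Cref{intersecting geodesics} the two geodesics intersect transversally. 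Finally I would perturb $z,w$ slightly; by openness of the interleaving condition, the same conclusion holds on an open set of choices. This is what \Cref{lem: cross-boundary-cross-plane} needs downstream.

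\textbf{Step 3 (elliptic and parabolic cases: the main obstacle).} For these cases I would first try the same style of explicit choice, but I expect it to fail. For a parabolic $g\colon x\mapsto x+t$ with $t>0$ and $z<w$, we have $z+t\in(z,w)$ if and only if $w-z>t$, and $w-t\in(z,w)$ if and only if $w-z>t$. So $g\cdot z$ and $g^{-1}\cdot w$ always lie on the same side of $(z,w)$, and the geodesics never cross. The analogous computation for a rotation by $\theta$ behaves the same way: $z$ is pushed one way around the circle by $\theta$ and $w$ the other way, so again both images land on the same side. A check with $\theta=\pi$ confirms this.

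So I would not attempt these cases. Instead I would note that the lemma is only invoked for surface groups. By \Cref{lem:noparabolic}, such groups contain no parabolic or elliptic elements, so every non-identity $g$ is hyperbolic and Step 2 suffices. I would state the lemma under that hypothesis, namely $g$ hyperbolic, which is automatic when $\Gamma$ is a surface group. This is the main point I would flag against the statement as written.
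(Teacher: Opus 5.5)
Your analysis is correct, and it is more accurate than the paper's own proof. The paper uses the same skeleton as you: it conjugates $g$ to an elliptic, parabolic or hyperbolic normal form and exhibits explicit boundary points. In each of the three cases, however, its points give nested rather than interleaved endpoints:
\begin{itemize}
\item for $g\colon x\mapsto x+t$ it takes $z=0$, $w=t/2$, giving $-t/2<0<t/2<t$;
\item for $g\colon x\mapsto\lambda^2x$ it takes $z=1$, $w=|\lambda|$, giving $|\lambda|^{-1}<1<|\lambda|<\lambda^2$;
\item in the elliptic case, its own ordering $g\cdot z<w<z<g^{-1}\cdot w$ puts both endpoints of $(z,w)$ strictly between those of the other geodesic.
\end{itemize}
By \Cref{intersecting geodesics}, none of these pairs of geodesics meet. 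Your hyperbolic choice $z=1$, $w=-1$, which puts $z$ and $w$ on opposite sides of the axis of $g$, is what actually works. You are also right that no choice works in the elliptic and parabolic cases, so the lemma as stated is false for Fuchsian groups containing such elements. Your Step 1 states the conjugation reduction correctly, with $k$ applied to $z$ and $w$; the paper's version omits this.

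Your parabolic computation assumes $z<w$. You should also cover $z>w$ (where $z+t$ and $w-t$ both leave $[w,z]$), $\infty\in\{z,w\}$ (a shared ideal endpoint) and $w-z=t$ (the same geodesic); all of these are trivial.

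Your elliptic step is only a sketch. A uniform rigorous reason is as follows. Suppose $\sigma$ is an orientation-reversing involution of $\RR\cup\{\infty\}$ with $\sigma(z)=w$ and $\sigma g\sigma=g^{-1}$. Then $\sigma$ maps each of the two open arcs between $z$ and $w$ to itself, and $\sigma(g\cdot z)=g^{-1}\cdot w$. Hence $g\cdot z$ and $g^{-1}\cdot w$ lie in the same closed arc, and the endpoints cannot interleave. Such a $\sigma$ exists in each of the following cases:
\begin{itemize}
\item $g$ parabolic and $z\neq w$ finite: take $x\mapsto z+w-x$;
\item $g$ elliptic: take the reflection of the boundary circle of the disc model in the diameter bisecting $z$ and $w$;
\item $g\colon x\mapsto\lambda^2x$ with $zw>0$: take $x\mapsto zw/x$.
\end{itemize}
When $zw<0$ one checks directly that the endpoints interleave. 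So the good pairs for hyperbolic $g$ are exactly those separated by its fixed points $0$ and $\infty$.

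Restricting to hyperbolic $g$ suffices for the main theorem by \Cref{lem:noparabolic}. However, \Cref{lem:nongeneric2-extcrossedgeimplies extension3} invokes this lemma under the section's standing assumption that $\Gamma$ is merely a non-cyclic Fuchsian group, so it needs the same extra hypothesis.
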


\begin{proof}
    We first note that for any $k\in \PSL(2,\RR)$ and any $z\neq w\in\RR\,\cup\,\{\infty\}$ with $g\cdot z\neq h\cdot w$, the geodesics $(z,w)$ and $(g\cdot z,h\cdot w)$ intersect transversally if and only if $(z,w)$ and $((kgk^{-1})\cdot z,(khk^{-1})\cdot w)$ intersect transversally. Since $h=g^{-1}$, this happens if and only if $khk^{-1} = (kgk^{-1})^{-1}$.
    Hence by conjugating $g$, we may assume without loss of generality that one of the following holds:
    \begin{itemize}
        \item[(i)] $g$ is elliptic and $g = \begin{bmatrix}
            \cos(\theta) & -\sin(\theta)\\
            \sin(\theta) & \cos(\theta)\\
        \end{bmatrix}$ for some $\theta \in (0, 2\pi)$. 
        
        In this case, pick $z = 0$ and $w = x$ for some small $x \in \mathbb{R}$. Then, since $x \approx 0$, we have 
    \begin{equation*}
        g\cdot z = -\frac{\sin(\theta)}{\cos(\theta)} = -\tan(\theta) , \qquad
        g^{-1}\cdot w = \tan(\theta) \left( \frac{\sin(\theta) \cos(\theta) + \cos^2(\theta) x}{\sin(\theta) \cos(\theta) - \sin^2(\theta) x} \right) \approx \tan(\theta).
    \end{equation*}   
    If $\tan(\theta) >0$, then choose $x<0$ with small magnitude so that $g\cdot z < w < z < g^{-1} \cdot w$. If, on the other hand, $\tan(\theta) < 0$, then choose $x>0$ small in magnitude so that $g^{-1}\cdot w < z < w < g \cdot z.$ 
    In both cases, $(z,w)$ and $(g\cdot z, g^{-1}\cdot w)$ intersect transversally by \Cref{intersecting geodesics}.
    
    \item[(ii)] $g$ is parabolic and $g=
            \begin{bmatrix}
                1 & t \\
                 0 & 1
            \end{bmatrix}$
            for some $t>0$.

            In this case, pick $z = 0$ and $w = \frac{t}{2}$. Then $g\cdot z = t$ and $g^{-1}\cdot w = -\frac{t}{2}.$ Therefore, $(z,w)$ and $(g\cdot z,g^{-1}\cdot w)$ intersect transversally by \Cref{intersecting geodesics}.
        \item[(iii)] $g$ is hyperbolic and $g=
            \begin{bmatrix}
                \lambda & 0 \\
                 0 & \frac{1}{\lambda}
            \end{bmatrix}$ for some $|\lambda | >1$.

            In this case, pick $z = 1$ and $w = |\lambda|$. Then $g\cdot z = \lambda^2 > 1$ and $g^{-1} w = |\lambda|^{-1} < 1$. Therefore, $(z,w)$ and $(g\cdot z, g^{-1}\cdot w)$ intersect transversally by \Cref{intersecting geodesics}.
    \end{itemize}  
    In each case, we showed that there is some choice of $z,w$ such that $(z,w)$ and $(g\cdot z, g^{-1}\cdot w)$ intersect transversally.
\end{proof}

\begin{lemma}\label{lem:nongeneric2-extcrossedgeimplies extension3}
    Suppose $v_1 = v_3,v_2 = v_4$ and $v_2\neq v_3$.
    Then $(G',\psi')$ is $\Gamma$-isostatic.
\end{lemma}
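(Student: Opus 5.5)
The plan mirrors the proof of \Cref{lem:easy2-exts2}. By \Cref{lem:2-extcrossedgeimplies extension}, it suffices to exhibit a realisation $p$ of $(G,\psi)$ for which the point-labelled geodesics $\gamma_{g_1\cdot p(v_1), g_2\cdot p(v_2)}$ and $\gamma_{g_3\cdot p(v_3), g_4\cdot p(v_4)}$ are crossing. Since $v_1=v_3$ and $v_2=v_4$ are distinct, we first switch at $v_1$ and $v_2$ using \Cref{lem:treegain}. The aim is to normalise $g_1=\text{id}$ and $g_2=\text{id}$. Switching at the new vertex $v$ is also allowed, and the extension is only determined up to such switchings. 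With this normalisation the four target points become $p(v_1)$, $p(v_2)$, $g_3\cdot p(v_1)$ and $g_4\cdot p(v_2)$. Writing $a:=g_3$ and $b:=g_4$, we therefore need $\tau_1,\tau_2\in\HH$ such that $\gamma_{\tau_1,\tau_2}$ and $\gamma_{a\cdot\tau_1, b\cdot\tau_2}$ are crossing. Once this holds, we set $p(v_1)=\tau_1$ and $p(v_2)=\tau_2$ and extend $p$ arbitrarily to the other vertices.

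To produce such $\tau_1,\tau_2$, we use \Cref{lem: cross-boundary-cross-plane}. This reduces the problem to finding boundary points $z\neq w$ with $a\cdot z\neq b\cdot w$ such that the geodesics $(z,w)$ and $(a\cdot z, b\cdot w)$ intersect transversally. That lemma gives a non-empty open set of pairs $(\tau_1,\tau_2)$ with crossing geodesics, and we pick $p(v_1)=\tau_1$, $p(v_2)=\tau_2$ from this set. The boundary statement then splits according to the pair $(a,b)$. If $a,b\neq\text{id}$ and $ba\neq\text{id}$, we apply \Cref{lem:nongeneric2-extcrossedgeimplies extension1}. If $a,b\neq\text{id}$ and $b=a^{-1}$, we apply \Cref{lem:nongeneric2-extcrossedgeimplies extension2}.

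The remaining work is to check that the gain graph axioms and the sparsity condition in \Cref{def:2ext} force us into one of these two cases. The edges $e_1,e_3$ both join $v$ to $v_1$, so \Cref{def:GG}(ii) gives $g_1\neq g_3$, hence $a\neq\text{id}$. Similarly, $e_2,e_4$ both join $v$ to $v_2$, so $g_2\neq g_4$ and $b\neq\text{id}$.

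The main obstacle is the normalisation itself. After fixing $g_1=\text{id}$, the gains $g_2$ and the gain of $e$ are linked through $g=g_1^{-1}g_2$, so $g_2$ cannot be freely set to $\text{id}$ by a vertex switch alone. The original edges $e$ and $f$ (of gains $g$ and $h$) are parallel between $v_1$ and $v_2$, which gives the constraint $g\neq h$. First I would switch at $v$ by $g_1$, making $g_1=\text{id}$. Switching at $v_2$ then changes $g_2$, $g_4$, $g$ and $h$ simultaneously, and we choose it so that $g_2=\text{id}$. In this normal form $g=\text{id}$, and the condition $g\neq h$ becomes $h\neq\text{id}$. I expect this condition, together with $a,b\neq\text{id}$, to match the hypotheses of the two lemmas above. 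I would verify carefully that $h=a^{-1}b$ being nontrivial is all that is needed, and that the case split on $ba$ is exhaustive. If these identifications fail, the fallback is to redo the explicit elliptic, parabolic and hyperbolic normal-form computation as in \Cref{lem:nongeneric2-extcrossedgeimplies extension2}.
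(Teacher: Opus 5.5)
Your proposal is correct and is essentially the paper's own argument. You normalise $g_1=g_2=\text{id}$ by switching (at $v$ and then at $v_2$, or equivalently via \Cref{lem:treegain} on a spanning tree containing $e_1,e_2$), get $g_3,g_4\neq\text{id}$ from the parallel pairs $e_1,e_3$ and $e_2,e_4$, take transversal boundary geodesics $(z,w)$ and $(g_3\cdot z,g_4\cdot w)$ from \Cref{lem:nongeneric2-extcrossedgeimplies extension1} or \Cref{lem:nongeneric2-extcrossedgeimplies extension2}, and conclude with \Cref{lem: cross-boundary-cross-plane} and \Cref{lem:2-extcrossedgeimplies extension}. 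The normalisation you worry about goes through exactly as you describe, and the hedging at the end is unnecessary. The split according to whether $g_4g_3=\text{id}$ is trivially exhaustive, and the constraint $h\neq\text{id}$ (from $e$ and $f$ being parallel) plays no role.
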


\begin{proof}
By \Cref{lem:treegain} assume that $g_1=g_2=\text{id}$ and, by the definition of gain graph, $g_3,g_4\neq\text{id}$. Then, by \Cref{lem:nongeneric2-extcrossedgeimplies extension1,lem:nongeneric2-extcrossedgeimplies extension2}, there are distinct $z,w\in\RR\,\cup\,\{\infty\}$ such that the geodesics $(z,w)$ and $(g_3\cdot z,g_4\cdot w)$ intersect transversally. By \Cref{lem: cross-boundary-cross-plane}, there is a choice of $p(v_1)=p(v_3)\neq p(v_2)=p(v_4)$ such that $\gamma_{p(v_1),p(v_2)}$ and $\gamma_{g_3\cdot p(v_1),g_4\cdot p(v_2)}=\gamma_{g_3\cdot p(v_3),g_4p\cdot (v_4)}$ are crossing. The result then follows from \Cref{lem:2-extcrossedgeimplies extension}.
\end{proof}

\subsubsection{Case (c). \texorpdfstring{$v_1=v_2=v_3=v_4$}{v1=v2=v3=v4}}\label{subsub:the hardest lemma on earth}

In this section we prove the following technical result, which applies to the case when $\Gamma$ is a surface group (see \Cref{lem:noparabolic}).

\begin{lemma}\label{lem:2ext2loopsat1vertex}
    Suppose $v_1 = v_2=v_3 = v_4$, and $\Gamma$ has no parabolic elements and no elements of order $2$. 
    Then $(G',\psi')$ is $\Gamma$-isostatic.
\end{lemma}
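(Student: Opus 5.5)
The plan is to reduce to \Cref{lem:2-extcrossedgeimplies extension}, as in cases (a) and (b). Write $u:=v_1=v_2=v_3=v_4$. The removed edges $e,f$ are then loops at $u$ with gains $g=g_1^{-1}g_2$ and $h=g_3^{-1}g_4$. Switching at $v$ (\Cref{lem:treegain}) lets us assume $g_1=\text{id}$, so $g_2=g$ and $g_4=g_3h$.

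Since $(G',\psi')$ is a gain graph and $G[\{u\}]$ is $(2,3,1,0)$-sparse by \Cref{def:2ext}, we have the following facts. The elements $g,h,g_3,g_3h$ are non-identity. The gains $\text{id},g,g_3,g_3h$ are pairwise distinct. Finally, $\langle g,h\rangle$ is not cyclic, since $u$ with the two loops $e,f$ forms a cyclic-free, $(2,0)$-tight subgraph.

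By \Cref{lem:2-extcrossedgeimplies extension}, it suffices to find a single point $z\in\HH$ such that the point-labelled geodesics $\gamma_{z,g\cdot z}$ and $\gamma_{g_3\cdot z,\,g_3h\cdot z}=g_3\cdot\gamma_{z,h\cdot z}$ are crossing. Here $p(u)=z$ and the rest of $p$ is arbitrary.

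The first step is a boundary version of \Cref{lem: cross-boundary-cross-plane0}, now with a single point. Suppose there is $x\in\RR\cup\{\infty\}$, fixed by none of $g,h,g_3,g_3h$, such that the boundary geodesics $(x,g\cdot x)$ and $(g_3\cdot x,g_3h\cdot x)$ intersect transversally. Then for $z\in\HH$ close to $x$, continuity of $\chi$ (\Cref{continuity-geodesics}) gives $\chi(z,g\cdot z)\to(x,g\cdot x)$ and $\chi(g_3\cdot z,g_3h\cdot z)\to(g_3\cdot x,g_3h\cdot x)$. By \Cref{intersecting geodesics}, the two geodesics through $z$ then intersect transversally. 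Since the four labelling points are distinct and approach distinct boundary points, the intersection is not at a labelling point, so the geodesics are crossing. The problem is thus reduced to the interlacing condition of \Cref{intersecting geodesics} on the four boundary points $x,\ g\cdot x,\ g_3\cdot x,\ g_3h\cdot x$.

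The second step, which I expect to be the main obstacle, is producing such an $x$. Suppose for contradiction that no $x$ gives interlacing. Then the cyclic order of the four points, which are Möbius images of a single variable $x$, is never interlaced. I would exploit the hypotheses as follows. Since $\Gamma$ has no parabolic elements and no elements of order $2$, every non-identity element is hyperbolic, or elliptic of order $\ne 2$.

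For $g$ hyperbolic, take $x$ close to its repelling fixed point $r_g$. Then $(x,g\cdot x)$ is a geodesic with one end near $r_g$ and the other near the attracting point $a_g$. For $x$ on either side of $r_g$, these geodesics sweep out both sides of the axis. Meanwhile $(g_3\cdot x,g_3h\cdot x)$ converges to the fixed geodesic $(g_3\cdot r_g,\ g_3h\cdot r_g)$. Crossing then fails only if $g_3h\cdot r_g$ and $g_3\cdot r_g$ lie in the configuration excluded for both sides of $r_g$. That forces $\{g_3\cdot r_g, g_3h\cdot r_g\}\cap\{r_g,a_g\}\neq\emptyset$, and repeating the argument with $x$ near $a_g$ gives further coincidences of this kind. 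These coincidences say that some of $g_3, g_3h, h$ share a fixed point with $g$. In a Fuchsian group without parabolics, sharing one fixed point forces sharing both (\Cref{lem:cyclic-iff-fix} together with discreteness), so $\langle g,h\rangle$ would be cyclic, a contradiction.

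For elliptic elements of order $\ne 2$, I would argue instead with an explicit conjugated normal form, as in \Cref{lem:repeat2}. If this case analysis becomes unwieldy, my fallback is the Lorentzian model alluded to in the Remark after \Cref{lem:near}. Identify $\mathfrak{sl}(2,\RR)$ with Minkowski $3$-space via the Killing form, so that geodesics correspond to unit spacelike vectors. Two geodesics cross exactly when a certain polynomial in the boundary parameter $x$ is negative. Non-interlacing for all $x$ would make this polynomial sign-definite, and I would show such a polynomial identity forces $g$ and $h$ to commute, contradicting non-cyclicity via \Cref{lem:cyclic-iff-fix}.
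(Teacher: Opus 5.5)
Your reduction to \Cref{lem:2-extcrossedgeimplies extension} is legitimate, since its proof goes through when $e$ and $f$ are loops, and Step~1 is fine. Step~2, however, carries all the content and it fails.

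\textbf{The geometric claim in Step~2 is false.} The map $g$ is continuous on $\RR\cup\{\infty\}$ and fixes $r_g$. So for $x$ near $r_g$ the point $g\cdot x$ is also near $r_g$, and $(x,g\cdot x)$ is a short geodesic with \emph{both} ends near $r_g$. For $g\colon x\mapsto\lambda^2x$ these are the small semicircles $(x,\lambda^2x)$ near $0$. Consequently, in the generic situation $g_3\cdot r_g\neq r_g\neq g_3h\cdot r_g$, the pairs $\{x,g\cdot x\}$ and $\{g_3\cdot x,g_3h\cdot x\}$ are \emph{never} interlaced for $x$ near $r_g$ or $a_g$. Your inference therefore runs backwards.

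\textbf{Where interlacing can actually change.} The interlacing status is locally constant in $x$. It can only change when a point of one pair collides with a point of the other pair, i.e.\ at fixed points of $g_3$, $g_3h$, $g^{-1}g_3$ and $g^{-1}g_3h$. Collisions inside a pair (fixed points of $g$ or of $h$) never create interlacing. A boundary argument would have to be organised around these four elements.

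\textbf{The endgame is also wrong.} ``$g_3$ or $g_3h$ shares a fixed point with $g$'' does not make $\langle g,h\rangle$ cyclic. Moreover, there are configurations in which each of these collisions is accompanied by a second, simultaneous collision, for instance $g_3$ commuting with $g^{-1}g_3h$ while $g_3h$ commutes with $g^{-1}g_3$. Non-cyclicity of $\langle g,h\rangle$ alone does not exclude these. Ruling them out needs extra group-theoretic input, such as freeness of two-generator subgroups of a surface group.

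\textbf{The remaining cases are not argued.} The hypotheses allow elliptic elements of order at least three. If the relevant cross-differences are elliptic, they have no fixed points on $\RR\cup\{\infty\}$, so nothing forces the boundary interlacing pattern to change, and your reduction may yield nothing. The proposed Lorentzian fallback has no mechanism: a polynomial in $x$ of constant sign is not a polynomial identity, and it does not force $g$ and $h$ to commute.

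\textbf{How the paper proceeds.} The paper does not search for crossings in case~(c).
It notes that $G'$ contains the two-vertex subgraph on $\{v,v_1\}$ with four parallel edges, and proves that subgraph $\Gamma$-isostatic (\Cref{lem: case_x}).
After switching, one edge has gain $\textup{id}$, and its infinitesimal motions are exactly the $\eta(X,p)$ with $X\in\mathfrak{sl}(2,\RR)$.
By \Cref{inf_motions_edge}, each remaining edge restricts $X$ to a plane $\mathcal{T}_{\langle g_i\rangle}+(\text{a line})$.
The triple intersection of these planes is shown to be trivial using the Killing form and the Lorentzian cross product. This is exactly where ``no parabolics'' ($B(H_g,H_g)\neq 0$) and ``no order two'' (distinct eigenvalues of $\textup{Ad}$) are used.
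Since that subgraph has no non-zero infinitesimal motion, every motion of $G'$ vanishes at $v$ and $v_1$. It therefore restricts to a motion of $(G,\psi)$, because the loops $e,f$ constrain only the velocity at $v_1$. That motion is zero by isostaticity of $(G,\psi)$.
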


First, we describe the general strategy behind the proof. The lemma will follow if we show that the gain graph spanned by the vertex set $\{v,v_1\}$ is $\Gamma$-isostatic. 
To do this, we use the fact that the infinitesimal motions of a joint-configuration $(G,\psi,p)$ where $G$ has exactly one edge with trivial gain are given by $\eta(X, p)$ for $X\in \mathfrak{sl}(2,\RR)$. Assuming that $\psi'(e_4)=\text{id}$, we then use \Cref{inf_motions_edge} to describe, for each $i\in\{1,2,3\}$, the space $S_i\subseteq \mathfrak{sl}(2,\RR)$ such that $\eta(X, p)$ defines infinitesimal motions of $(G[e_i],\psi'|_{\{e_i\}},p|_{\{v,v_1\}})$. 
We then use a Lorentzian geometry argument to show that $\bigcap_{i=1}^{3} S_{i} =\{0\}$.

The proof of \Cref{lem:2ext2loopsat1vertex} requires some preliminary notions and results.
First, for any $g\in \text{PSL}(2,\RR)$, we define the \textit{adjoint representation of $\mathfrak{sl}(2,\RR)$ at $g$} to be $$\textup{Ad}(g) : \mathfrak{sl}(2, \mathbb{R}) \rightarrow \mathfrak{sl}(2, \mathbb{R}),\qquad X\mapsto gXg^{-1}$$
The adjoint representation induces a group action of $\text{PSL}(2,\RR)$ on $\mathfrak{sl}(2,\RR)$ given by the map $g\mapsto \text{Ad}(g)(X)$ for $g\in \text{PSL}(2,\RR)$ and $X\in\mathfrak{sl}(2,\RR)$. By noting that the space $\mathcal{T}_{\langle g \rangle}$ -- the tangent space of $C_{\PSL(2,\RR)}(\langle g \rangle)$ -- is the eigenspace of $\textup{Ad}(g)$ associated to the eigenvalue 1, \Cref{l: reformulation} follows directly from \Cref{lem:centraliser_dim} and its proof (see \Cref{eq:tgamma,eq:hg is a}).

\begin{lemma}
    \label{l: reformulation}
    For any non-identity $g\in \PSL(2,\RR)$, the linear map $\textup{Ad}(g)$ has exactly one eigenvector $H_g$ (up to scalar multiplication) associated with the eigenvalue 1. Moreover, we can choose $H_g$ such that \begin{align*}
    g =\begin{bmatrix}
         g_{11} & g_{12} \\
         g_{21} & g_{22}
    \end{bmatrix}\qquad  \implies \qquad 
    H_{g}&= 
        \begin{bmatrix}
         g_{11}-g_{22} & 2g_{12} \\
         2g_{21} & g_{22}-g_{11}
    \end{bmatrix} .
\end{align*}
\end{lemma}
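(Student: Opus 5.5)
The plan is to read off the statement directly from the computation in the proof of \Cref{lem:centraliser_dim}, once eigenvectors of $\textup{Ad}(g)$ for eigenvalue $1$ are identified with $\mathcal{T}_{\langle g\rangle}$.

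First, observe that $X\in\mathfrak{sl}(2,\RR)$ satisfies $\textup{Ad}(g)(X)=X$ if and only if $gXg^{-1}=X$, i.e.\ $Xg=gX$. By \Cref{eq:tgamma} with $\Gamma=\langle g\rangle$ (commuting with $g$ implies commuting with all of $\langle g\rangle$), the eigenspace of $\textup{Ad}(g)$ for eigenvalue $1$ is exactly $\mathcal{T}_{\langle g\rangle}$. One should note that $\textup{Ad}(g)$ is well defined on $\PSL(2,\RR)$ since $(-g)X(-g)^{-1}=gXg^{-1}$, and the matrix $H_g$ changes only by sign under $g\mapsto -g$. So the formula is unambiguous up to scalar.

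Second, invoke the proof of \Cref{lem:centraliser_dim}. There, for any non-identity $g$, $\mathcal{T}_{\langle g\rangle}$ is shown to be the null space of the $3\times 3$ matrix $M(g)$. That matrix has rank exactly $2$, so $\dim\mathcal{T}_{\langle g\rangle}=1$, which gives uniqueness of the eigenvector up to scaling. It remains to check that the matrix in \Cref{eq:hg is a}, which is exactly the displayed $H_g$, is a non-zero solution of $Ag=gA$.

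The verification is the only computation, and it is routine. Write $g$ with $\det g=1$, so that $g^{-1}=\begin{bmatrix} g_{22} & -g_{12}\\ -g_{21} & g_{11}\end{bmatrix}$. Then $H_g = 2g-\mathrm{tr}(g)I$, because $2g-(g_{11}+g_{22})I$ has diagonal entries $g_{11}-g_{22}$ and $g_{22}-g_{11}$ and off-diagonal entries $2g_{12}$ and $2g_{21}$. Being a linear combination of $g$ and $I$, it commutes with $g$, and it is traceless. It is non-zero unless $g=\pm I$, i.e.\ unless $g=\textup{id}$ in $\PSL(2,\RR)$. This shortcut avoids checking the three scalar equations by hand.

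I expect no real obstacle. The only subtle point is making sure the rank-$2$ claim for $M(g)$ from \Cref{lem:centraliser_dim} is available for all non-identity $g$, and not merely for elements of a Fuchsian group. That proof treats an arbitrary $g\in\PSL(2,\RR)\setminus\{\textup{id}\}$, so it applies here.
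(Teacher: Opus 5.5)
Your proposal is correct and is essentially the paper's argument. The paper likewise identifies the $1$-eigenspace of $\textup{Ad}(g)$ with $\mathcal{T}_{\langle g\rangle}$ via \Cref{eq:tgamma}, then reads off one-dimensionality and the explicit generator from the proof of \Cref{lem:centraliser_dim} and \Cref{eq:hg is a}; your identity $H_g = 2g-\mathrm{tr}(g)I$ is a cleaner way to see that this matrix commutes with $g$ and vanishes only when $g=\textup{id}$.
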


Throughout this section, we use $H_g$ to denote the non-zero eigenvector of $g\in\text{PSL}(2,\RR)$ described in \Cref{l: reformulation}. Note that $\text{Ad}(g)(H_g)=H_g$ for all non-identity $g\in\text{PSL}(2,\RR)$. We define the \textit{Killing form} of $\mathfrak{sl}(2,\RR)$ to be the symmetric bilinear form 
\begin{equation*}
    B: (\mathfrak{sl}(2, \mathbb{R}))^2 \longrightarrow  \mathbb{R}, ~ (X,Y) \longmapsto 4 \textrm{Tr}(XY).
\end{equation*}
 We now revise some properties of the Killing form $B$ (see \cite[section 14.2]{MR1153249} for more details).
Since the trace of a matrix is invariant under conjugation, it is easy to see that the Killing form of $\mathfrak{sl}(2,\RR)$ is \textit{$\text{Ad}$-invariant}, i.e.~for all $g \in \PSL(2,\RR)$ and $X,Y \in \mathfrak{sl}(2,\RR)$, $$B \left(\text{Ad}(g)(X),\text{Ad}(g)(Y)\right) = B(X,Y).$$
Moreover, $B$ is \textit{non-degenerate}, i.e.~for all $X\in\mathfrak{sl}(2,\RR)$ there exists some $Y\in\mathfrak{sl}(2,\RR)$ such that $B(X,Y)\neq0$. For all $X\in\mathfrak{sl}(2,\RR)$ define $$X^\perp := \{ Y : B(X,Y) = 0\}.$$ As a consequence of the non-degeneracy of $B$, for any $X,Y\in\mathfrak{sl}(2,\RR)$, $X^\perp=Y^\perp$ if and only if $X$ and $Y$ are scalar multiples of each other. 
Any $2$-dimensional subspace $\Pi$ of $\mathfrak{sl}(2,\RR)$ can be written as $\{X:B(N,X)=0\}$, for some $N\in\mathfrak{sl}(2,\RR)$, which we call the \textit{normal vector} of $\Pi$. Then clearly $\Pi$ can be represented by its normal vector $N$ as $\Pi = N^\perp$. 
Moreover, since $B$ is $\text{Ad}$-invariant, for all $g\in \PSL(2,\RR)$, $\text{Ad}(g)(\Pi)$ may be represented by $\text{Ad}(g)(N)$.

To prove \Cref{lem:2ext2loopsat1vertex}, we require the following additional property of the Killing form of $\mathfrak{sl}(2,\RR)$.

\begin{lemma}
    \label{l: hg not orth to hg}
    Let $\Gamma$ be a Fuchsian group and $g\in\Gamma$ be non-parabolic. Then, $B(H_g,H_g) \neq 0$.
\end{lemma}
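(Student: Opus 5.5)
The plan is a direct computation: evaluate $B(H_g,H_g)$ from the explicit form of $H_g$ in \Cref{l: reformulation} and relate it to the trace of $g$. Write $g=\begin{bmatrix} g_{11} & g_{12}\\ g_{21} & g_{22}\end{bmatrix}$ with $g_{11}g_{22}-g_{12}g_{21}=1$. Then
\begin{equation*}
H_g^2=\begin{bmatrix} (g_{11}-g_{22})^2+4g_{12}g_{21} & 0\\ 0 & (g_{11}-g_{22})^2+4g_{12}g_{21}\end{bmatrix},
\end{equation*}
which is a scalar matrix because $H_g$ is traceless. Using the determinant condition $g_{12}g_{21}=g_{11}g_{22}-1$, we get
\begin{equation*}
B(H_g,H_g)=4\,\textrm{Tr}(H_g^2)=8\big((g_{11}-g_{22})^2+4g_{11}g_{22}-4\big)=8\big((g_{11}+g_{22})^2-4\big)=8\big(\textrm{Tr}(g)^2-4\big).
\end{equation*}
Taking $-g$ in place of $g$ changes $H_g$ by a sign, so $B(H_g,H_g)$ is well defined on $\PSL(2,\RR)$.

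It then remains to show that $\textrm{Tr}(g)^2\neq 4$ for every non-identity non-parabolic $g$. The statement implicitly assumes $g\neq\textup{id}$, since otherwise $H_g$ is not defined. Because the trace is invariant under conjugation, we use the classification of non-identity isometries recalled in \Cref{sec:hypgeom}:
\begin{itemize}
\item If $g$ is \emph{elliptic}, it is conjugate to a rotation matrix with angle $\theta$, so $\textrm{Tr}(g)^2=4\cos^2\theta$. This equals $4$ only when $\theta\in\{0,\pi\}$, which corresponds to the identity in $\PSL(2,\RR)$; hence $\textrm{Tr}(g)^2<4$.
\item If $g$ is \emph{hyperbolic}, it is conjugate to $\operatorname{diag}(\lambda,1/\lambda)$ with $|\lambda|\neq 1$, so $\textrm{Tr}(g)^2=(\lambda+1/\lambda)^2>4$.
\end{itemize}
The only remaining non-identity case is parabolic, where $\textrm{Tr}(g)^2=4$, and this is excluded by hypothesis. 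Hence $B(H_g,H_g)\neq 0$.

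Note that the Fuchsian hypothesis is not actually needed for this argument. The only step requiring care is the algebra identifying $\textrm{Tr}(H_g^2)$ with $2(\textrm{Tr}(g)^2-4)$, and this is a short check.
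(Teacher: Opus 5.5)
Your argument is correct, but it is organised differently from the paper's. The paper never computes $B(H_g,H_g)$ for a general matrix. It evaluates $H_g$ only on the normal forms: a rotation gives $H_g=\begin{bmatrix}0&-x\\x&0\end{bmatrix}$ and $B(H_g,H_g)=-8x^2<0$, and a diagonal matrix gives $H_g=\begin{bmatrix}x&0\\0&-x\end{bmatrix}$ and $B(H_g,H_g)=8x^2>0$. It then transfers this to an arbitrary elliptic or hyperbolic element, using that $H_{hgh^{-1}}$ is a non-zero multiple of $\textup{Ad}(h)(H_g)$ together with the $\textup{Ad}$-invariance of $B$.

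You instead derive the closed form $B(H_g,H_g)=8(\textrm{Tr}(g)^2-4)$, which is automatically conjugation invariant. So you do not need the equivariance of $g\mapsto H_g$, which the paper only asserts as a computation; the classification is used only to compare $\textrm{Tr}(g)^2$ with $4$.

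Your route is more self-contained and gives a sharper statement: for $g\neq\textup{id}$, $B(H_g,H_g)=0$ exactly when $g$ is parabolic. The paper's route stays within the $\textup{Ad}$/Killing-form language used in the surrounding lemmas.

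You are also right on two side points. The lemma tacitly requires $g\neq\textup{id}$, since for $g=\textup{id}$ the formula for $H_g$ gives $\mathbf{0}$. The Fuchsian hypothesis plays no role. The paper's proof likewise only treats the elliptic and hyperbolic cases.
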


\begin{proof}

By \Cref{l: reformulation},
for $\lambda >1$ and $\theta \in (0,2\pi)$, there exists $x\neq 0$ such that
\begin{align*}
    H_{g}&= \begin{bmatrix}
         0 & -x \\
         x & 0
    \end{bmatrix} \quad \textup{ if } \quad g =\begin{bmatrix}
         \cos(\theta) & -\sin(\theta) \\
         \sin(\theta) & \cos(\theta)
    \end{bmatrix} \quad \implies \quad B(H_{g}, H_{g})= -8x^2 <0 \\
    H_{g}&= \begin{bmatrix}
         x & 0 \\
        0 & -x
    \end{bmatrix} \quad \textup{ if } \quad g =\begin{bmatrix}
         \lambda & 0 \\
          0 & \lambda^{-1} 
    \end{bmatrix} \quad \implies \quad B(H_{g}, H_{g})= 8x^2 >0.
\end{align*}
It can additionally be computed from \Cref{l: reformulation} that $H_{hgh^{-1}}$ is a non-zero scalar multiple of $\text{Ad}(h)(H_g)$.
The result now follows from conjugating the elliptic and hyperbolic elements into the above forms and the fact that $B$ is Ad-invariant.
\end{proof}

By identifying $\mathfrak{sl}(2, \mathbb{R})$ with $\mathbb{R}^{3}$ via the map 
\begin{equation}
        \label{eq: identify}
        \Phi:\mathfrak{sl}(2,\RR)\longrightarrow\RR^3, \,\hspace{2em} \begin{bmatrix}
    a & b \\ c & -a
\end{bmatrix}\longmapsto \begin{bmatrix}
    a \\ \frac{b+c}{2} \\ \frac{b-c}{2}
\end{bmatrix},
\end{equation}
we can identify $\PP(\mathfrak{sl}(2, \mathbb{R}))$ with $\mathbb{RP}^2$.  With this identification, the Killing form of $\mathfrak{sl}(2,\RR)$ is represented by the diagonal matrix $\text{diag}(1,1,-1)$, i.e.~$B:(\RR^3)^2\rightarrow\RR$ is defined by $$B((x_1,y_1,z_1),(x_2,y_2,z_2))=x_1x_2+y_1y_2-z_1z_2$$
for all $(x_1,y_1,z_1),(x_2,y_2,z_2)\in\RR^3$.

We define the \textit{Lorentzian cross product} for $\mathbb{R}^3$ to be the bilinear map $\wedge: \mathbb{R}^{3}\times \mathbb{R}^{3} \rightarrow \mathbb{R}^{3}$ such that for all $(x_1,y_1,z_1),(x_2,y_2,z_2)\in\mathbb{R}^3$,
    \begin{equation*}
        (x_1,y_1,z_1)\wedge(x_2,y_2,z_2)=(y_1z_2-y_2z_1,z_1x_2-x_1z_2,y_1x_2-y_2x_1).
    \end{equation*}
See \cite[Chapter 12.3]{baragar2001survey} for a reference on the Lorentzian cross product.
Using $\Phi$ allows us to define the Lorentzian cross product for $\mathfrak{sl}(2,\RR)$, which (by abuse of notation) we also denote by $\wedge$.
Computations show that the Lorentzian cross product is \textit{$\text{Ad}$-equivariant}, i.e.~$$\text{Ad}(g)(X\wedge Y) = \text{Ad}(g)(X)\wedge \text{Ad}(g)(Y)$$ for any $g \in \PSL(2,\RR)$ and any $X,Y \in \mathfrak{sl}(2,\RR)^3$. Moreover, for all $X,Y,Z \in \mathfrak{sl}(2,\RR)^3$ and $N_1,N_2 \in \mathfrak{sl}(2,\RR)$ the following hold:
    \begin{itemize}
        \item[(P1)] $X \wedge (Y \wedge Z) = B(X,Y) Z - B(X,Z)Y$;
        \item[(P2)] $X \wedge Y = \mathbf{0}$ if and only if $X,Y$ are linearly dependent;
        \item[(P3)] $B(X \wedge Y, X) = B(X \wedge Y,Y) = 0$;
        \item[(P4)] for subspaces $\Pi_1 = N_1^\perp$ and $\Pi_2= N_2^\perp$ of $\mathfrak{sl}(2,\RR)$, $N_1 \wedge N_2 \in \Pi_1 \,\cap\, \Pi_2$.
    \end{itemize}
Each of (P1), (P2) and (P3) can be verified through direct computations, and (P4) follows from (P3).

To prove \Cref{lem:2ext2loopsat1vertex}, we require the following technical lemmas regarding the adjoint representation, the Killing form, and the Lorentzian cross product.

    \begin{lemma}\label{claim:eigen of ad}
        If $g \in \PSL(2,\RR)$ is non-parabolic with eigenvalues\footnote{Technically, the element $g$ is two different matrices; a matrix with eigenvalues $\lambda_1,\lambda_2$, and a matrix with $-\lambda_1,-\lambda_2$. It is, however, clear from the result that the eigenvalues of the operator $\textup{Ad}(g)$ are independent of this choice.} $\lambda_1,\lambda_2$, then $\textup{Ad}(g)$ has eigenvalues $1, \lambda_1\lambda_2^{-1}, \lambda_2\lambda_1^{-1}$.
    \end{lemma}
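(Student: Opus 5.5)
We prove the statement by diagonalising $g$, computing the eigenvalues of $\textup{Ad}(g)$ for the diagonal representative, and transferring the result back by conjugation invariance. Since $g$ is non-parabolic and lies in $\SL(2,\RR)$ up to sign, it is either the identity, elliptic or hyperbolic. In each case $g$ is diagonalisable over $\CC$ with eigenvalues $\lambda_1,\lambda_2$ satisfying $\lambda_1\lambda_2=1$. For the identity this is trivial, and all eigenvalues of $\textup{Ad}(g)$ equal $1$, which matches the claim. For hyperbolic $g$ the eigenvalues $\lambda,\lambda^{-1}$ are real and distinct. For elliptic $g$ they are $e^{\pm \mathfrak{i}\theta}$, distinct unless $g=\pm\textup{id}$. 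So write $g = S D S^{-1}$ with $D=\textup{diag}(\lambda_1,\lambda_2)$ and $S\in \mathrm{GL}(2,\CC)$.

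First I would extend $\textup{Ad}$ to the complexification $\mathfrak{sl}(2,\CC)$, which has the same characteristic polynomial as $\textup{Ad}(g)$ on $\mathfrak{sl}(2,\RR)$. Conjugation gives $\textup{Ad}(g)=\textup{Ad}(S)\circ\textup{Ad}(D)\circ\textup{Ad}(S)^{-1}$ on $\mathfrak{sl}(2,\CC)$, so $\textup{Ad}(g)$ and $\textup{Ad}(D)$ share eigenvalues. Next I would compute $\textup{Ad}(D)$ directly on the basis
\begin{equation*}
E_{11}-E_{22}, \qquad E_{12}, \qquad E_{21}.
\end{equation*}
A one-line computation shows $D(E_{11}-E_{22})D^{-1}=E_{11}-E_{22}$, $DE_{12}D^{-1}=\lambda_1\lambda_2^{-1}E_{12}$ and $DE_{21}D^{-1}=\lambda_2\lambda_1^{-1}E_{21}$. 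Thus the eigenvalues are $1,\lambda_1\lambda_2^{-1},\lambda_2\lambda_1^{-1}$. Replacing $(\lambda_1,\lambda_2)$ by $(-\lambda_1,-\lambda_2)$ leaves these ratios unchanged, which disposes of the footnote's sign ambiguity.

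The only step needing care is the complexification in the elliptic case, because there the eigenvalues are non-real and one cannot diagonalise over $\RR$. The easiest way to handle it is to argue with characteristic polynomials: the characteristic polynomial of a real linear map equals that of its complex extension. Alternatively, one can use the explicit rotation form from Section~3.3 and compute the $3\times3$ matrix of $\textup{Ad}(g)$ in the basis $\Phi$ of \Cref{eq: identify}. This gives the block $1\oplus R_{2\theta}$, whose eigenvalues are $1,e^{\pm 2\mathfrak{i}\theta}$, and these agree with $\lambda_1\lambda_2^{-1}$ and $\lambda_2\lambda_1^{-1}$.
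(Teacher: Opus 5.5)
Your proposal is correct and takes essentially the same approach as the paper: diagonalise $g$ over $\CC$ and compute $\textup{Ad}$ of the diagonal form on the basis $E_{11}-E_{22},\ E_{12},\ E_{21}$. Your extra care is worthwhile. The paper asserts that the two eigenvalues are distinct for every non-parabolic $g$, which fails for $g=\textup{id}$ (harmlessly, as you note), and it leaves implicit the complexification and conjugation-invariance step that you spell out.
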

    \begin{proof}
Suppose $g$ has eigenvalues $\lambda_1,\lambda_2$. Since $g$ is not parabolic, these two eigenvalues are distinct, as can be easily verified.  Taking a basis of $\mathbb{C}^2$ such that $g= \begin{bmatrix}
    \lambda_1 & 0\\
    0 & \lambda_2
\end{bmatrix}$, the following hold:
\begin{align*}
\text{Ad}(g)\left(\begin{bmatrix}
        1 & 0 \\
        0 & -1
    \end{bmatrix}\right) &= 
\begin{bmatrix}
    \lambda_1 & 0\\
    0 & \lambda_2
\end{bmatrix}
    \begin{bmatrix}
        1 & 0 \\
        0 & -1
    \end{bmatrix}
    \begin{bmatrix}
    \lambda_1^{-1} & 0\\
    0 & \lambda_2^{-1}
\end{bmatrix}  = \begin{bmatrix}
        1 & 0 \\
        0 & - 1
    \end{bmatrix},\\
\text{Ad}(g)\left(\begin{bmatrix}
        0 & 1 \\
        0 & 0
    \end{bmatrix}\right) &=\begin{bmatrix}
\lambda_1 & 0\\
0 & \lambda_2
\end{bmatrix}
\begin{bmatrix}
    0 & 1 \\
    0 & 0
\end{bmatrix}
\begin{bmatrix}
\lambda_1^{-1} & 0\\
0 & \lambda_2^{-1}
\end{bmatrix} = \begin{bmatrix}
    0 & \lambda_1\lambda_2^{-1} \\
    0 & 0
\end{bmatrix},\\
\text{Ad}(g)\left(\begin{bmatrix}
        0 & 0 \\
        1 & 0
    \end{bmatrix}\right) &=
\begin{bmatrix}
\lambda_1 & 0\\
0 & \lambda_2
\end{bmatrix}
\begin{bmatrix}
    0 & 0 \\
    1 & 0
\end{bmatrix}
\begin{bmatrix}
\lambda_1^{-1} & 0\\
0 & \lambda_2^{-1}
\end{bmatrix}  = \begin{bmatrix}
    0 & 0 \\
    \lambda_2\lambda_1^{-1} & 0
\end{bmatrix}.\\
\end{align*}
So $\begin{bmatrix}
    1 & 0 \\
    0 & -1
\end{bmatrix},
\begin{bmatrix}
    0 & 1 \\
    0 & 0
\end{bmatrix}, \begin{bmatrix}
    0 & 0 \\
    1 & 0
\end{bmatrix}$
are eigenvectors of $\textup{Ad}(g)$ (over $\mathbb{C}$) with eigenvalues $1,\lambda_1\lambda_2^{-1},\lambda_2\lambda_1^{-1}$, respectively. 
\end{proof}

    \begin{lemma}
    \label{claim:hgdg}
        For $g \in \PSL(2,\RR),A_1,A_2 \in \mathfrak{sl}(2,\RR)^3$, the vector $H_g \wedge (A_1\wedge \text{Ad}(g)(A_2))\wedge(\text{Ad}(g^{-1})(A_1)\wedge A_2)$ equals
         \begin{equation*}
             \lambda \Big( \big( \text{Ad}(g^{-1}) (A_1) \wedge A_2 \big) - \big(A_1 \wedge \text{Ad}(g)( A_2) \big) \Big) = \lambda \Big( \big( \text{Ad}(g^{-1}) (A_1) \wedge A_2 \big) -  \text{Ad}(g) \big(  \text{Ad}(g)^{-1}( A_1) \wedge A_2 \big) \Big)  ,
         \end{equation*}
         where $\lambda = B(H_g, A_1 \wedge  \text{Ad}(g)(A_2))$.
    \end{lemma}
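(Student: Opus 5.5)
Set $U := A_1\wedge \text{Ad}(g)(A_2)$ and $V := \text{Ad}(g^{-1})(A_1)\wedge A_2$; the expression is read as $H_g\wedge(U\wedge V)$. The plan is to apply (P1) directly, which gives
\begin{equation*}
H_g\wedge(U\wedge V) = B(H_g,U)\,V - B(H_g,V)\,U.
\end{equation*}
With $\lambda := B(H_g,U)$, the first claimed form follows once we show $B(H_g,V) = B(H_g,U)$.

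To prove this, use Ad-equivariance of $\wedge$:
\begin{equation*}
\text{Ad}(g)(V) = \text{Ad}(g)\text{Ad}(g^{-1})(A_1)\wedge \text{Ad}(g)(A_2) = A_1\wedge\text{Ad}(g)(A_2) = U.
\end{equation*}
Then Ad-invariance of $B$ together with $\text{Ad}(g)(H_g)=H_g$ gives
\begin{equation*}
B(H_g,V) = B(\text{Ad}(g)(H_g),\text{Ad}(g)(V)) = B(H_g,U) = \lambda.
\end{equation*}
Hence $H_g\wedge(U\wedge V) = \lambda(V-U)$. The second displayed form is the identity $U=\text{Ad}(g)(V)$ just established, after noting $\text{Ad}(g)^{-1}=\text{Ad}(g^{-1})$.

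If $g=\text{id}$, $H_g$ is not defined by \Cref{l: reformulation}; in that case the statement should be read with any fixed $H_g$, and then $U=V$ and both sides vanish. The only point needing care is the step $\text{Ad}(g)(V)=U$, and it follows directly from Ad-equivariance. I expect no real obstacle beyond confirming that the bracketing conventions for $\wedge$ match (P1).
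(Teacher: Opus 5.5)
Your proposal is correct and is essentially the paper's argument: expand $H_g\wedge(U\wedge V)$ by (P1), then use Ad-equivariance of $\wedge$, Ad-invariance of $B$ and $\text{Ad}(g)(H_g)=H_g$ to obtain $U=\text{Ad}(g)(V)$ and $B(H_g,V)=B(H_g,U)=\lambda$. Your write-up is cleaner than the printed proof, whose intermediate lines contain typos (e.g.\ $\text{Ad}(g)(A_1)\wedge A_2$ where $\text{Ad}(g^{-1})(A_1)\wedge A_2$ is meant).
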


     \begin{proof}
     By (P1), along with the fact that $\wedge$ is $\text{Ad}$-equivariant, $B$ is $\text{Ad}$-invariant, and $\text{Ad}(g)(H_g) = H_g$, the vector $H_g \wedge (A_1\wedge \text{Ad}(g)(A_2))\wedge(\text{Ad}(g^{-1})(A_1)\wedge A_2)$ equals
         \begin{align*}
             &\Big(B(H_g, A_1 \wedge \text{Ad}(g)(A_2)) \Big)( \text{Ad}(g)(A_1) \wedge A_2) -\Big( B(H_g,  \text{Ad}(g^{-1})(A_1) \wedge A_2) \Big) (A_1 \wedge \text{Ad}(g)(A_2)) \\
             =& \Big(B(H_g, A_1 \wedge \text{Ad}(g)(A_2)) \Big)( \text{Ad}(g^{-1})(A_1 \wedge A_2)) -\Big( B(\text{Ad}(g)(H_g),  A_1 \wedge \text{Ad}(g)(A_2)) \Big) (A_1 \wedge \text{Ad}(g)(A_2)) \\
             =& B(H_g, A_1 \wedge \text{Ad}(g)(A_2)) \Big( ( \text{Ad}(g^{-1})(A_1 \wedge A_2)) - (A_1 \wedge \text{Ad}(g)(A_2)) \Big) \\
             =& B(H_g, A_1 \wedge \text{Ad}(g)(A_2)) \Big( ( \text{Ad}(g^{-1})(A_1) \wedge A_2) - \text{Ad}(g)( \text{Ad}(g)(A_1) \wedge A_2) \Big). \qedhere
         \end{align*}
     \end{proof}

\begin{lemma}\label{lem:hard lemma}
    Let $\Gamma=\langle g_1,g_2,g_3\rangle$ be a non-cyclic Fuchsian group with no parabolic elements and no elements of order two. Suppose further that $g_i\neq g_j$ for all $1\leq i\neq j\leq 3$.
    Then, there exists a dense open subset $U \subset \mathfrak{sl}(2,\RR)^2$ such that for all $(A_1, A_2) \in U$ the vectors
    \begin{equation}\label{eq:linwedgevecs2}
        H_{g_1} \wedge \Big(\text{Ad}(g_1^{-1})(A_1)  \wedge A_2 \Big), \qquad H_{g_2} \wedge  \Big(\text{Ad}(g_2^{-1})(A_1)   \wedge A_2 \Big), \qquad H_{g_3} \wedge  \Big(\text{Ad}(g_3^{-1})(A_1)   \wedge A_2 \Big)
    \end{equation}    
    are linearly independent.
\end{lemma}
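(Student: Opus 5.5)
The determinant of the three vectors in \eqref{eq:linwedgevecs2} is a polynomial in the entries of $(A_1,A_2)\in\mathfrak{sl}(2,\RR)^2\cong\RR^6$. Its non-vanishing locus is therefore either empty or Zariski open, hence dense and open. So it suffices to exhibit a single pair $(A_1,A_2)$ at which the three vectors are linearly independent. The plan is to choose $A_1,A_2$ in a structured way, adapted to the geometry of the $H_{g_i}$ under the Killing form $B$, so that each vector becomes computable via (P1)--(P4).

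First I would simplify each vector using (P1): writing $C_i=\mathrm{Ad}(g_i^{-1})(A_1)$, we get
\[
H_{g_i}\wedge(C_i\wedge A_2)=B(H_{g_i},C_i)A_2-B(H_{g_i},A_2)C_i .
\]
Since $\mathrm{Ad}(g_i)H_{g_i}=H_{g_i}$ and $B$ is Ad-invariant, $B(H_{g_i},C_i)=B(H_{g_i},A_1)$. Hence the $i$-th vector is
\[
B(H_{g_i},A_1)A_2-B(H_{g_i},A_2)\,\mathrm{Ad}(g_i^{-1})(A_1).
\]

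Next I would choose $A_2$ in the common $B$-orthogonal complement of two of the $H_{g_i}$, or more flexibly take $A_1$ with $B(H_{g_i},A_1)$ small. The cleanest attempt is the specialization $A_2=A_1=A$. The vectors then become $B(H_{g_i},A)\bigl(A-\mathrm{Ad}(g_i^{-1})A\bigr)$, up to correction terms, so independence reduces to independence of the three vectors $(\mathrm{id}-\mathrm{Ad}(g_i^{-1}))A$. This specialization is degenerate, however, since $C_i\wedge A$ must not vanish. I would therefore instead perturb, taking $A_2=A+\epsilon Y$ and expanding to first order in $\epsilon$. The determinant then factors as $\epsilon^3$ times a polynomial in $(A,Y)$ that involves the operators $\mathrm{id}-\mathrm{Ad}(g_i^{-1})$.

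If the expansion proves messy, the fallback is a direct approach. By \Cref{claim:eigen of ad}, $\mathrm{Ad}(g_i^{-1})$ fixes $H_{g_i}$ and acts on the plane $H_{g_i}^\perp$ with eigenvalues $\mu_i^{\pm1}$. Here $B(H_{g_i},H_{g_i})\neq0$ by \Cref{l: hg not orth to hg}, so $H_{g_i}^\perp$ is indeed complementary to $H_{g_i}$. Moreover $\mu_i\neq1$ because $g_i\neq\mathrm{id}$, and $\mu_i\neq-1$ because $g_i$ has no order-two elements. Consequently $\mathrm{id}-\mathrm{Ad}(g_i^{-1})$ is invertible on $H_{g_i}^\perp$, so its image is exactly $H_{g_i}^\perp$. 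Choose $A_1$ with $B(H_{g_i},A_1)=0$ for $i=1,2$, and then take the determinant as a function of $A_2$. The first two vectors simplify to $-B(H_{g_i},A_2)\,\mathrm{Ad}(g_i^{-1})A_1$, while the third is $B(H_{g_3},A_1)A_2-B(H_{g_3},A_2)\,\mathrm{Ad}(g_3^{-1})A_1$. Generically in $A_2$ this spans $\RR^3$ provided $\mathrm{Ad}(g_1^{-1})A_1$ and $\mathrm{Ad}(g_2^{-1})A_1$ are independent. That condition fails only if $\mathrm{Ad}(g_2g_1^{-1})$ fixes the line through $\mathrm{Ad}(g_1^{-1})A_1$, and this is where $g_1\neq g_2$ and the absence of order-two elements enter: the line would have to be the axis $H_{g_2g_1^{-1}}$, or an eigenline with eigenvalue $-1$.

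The main obstacle is this degenerate configuration. The pair $H_{g_1},H_{g_2}$ pins $A_1$ to the line $H_{g_1}\wedge H_{g_2}$ (by (P4)), and that line may coincide with a special line; the worst case is $H_{g_1}\parallel H_{g_2}$, where the $g_i$ commute. Non-cyclicity of $\Gamma=\langle g_1,g_2,g_3\rangle$ together with \Cref{lem:cyclic-iff-fix} guarantees that not all the $H_{g_i}$ are parallel, so after relabelling I can pick the pair with non-parallel axes. The remaining genuine difficulty is checking that $H_{g_1}\wedge H_{g_2}$ is not an eigenline of $\mathrm{Ad}(g_2g_1^{-1})$. I would attempt this with \Cref{claim:hgdg} and Ad-equivariance, and if it fails in a residual case, handle that case by a separate explicit choice of $A_1$.
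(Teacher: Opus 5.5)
Your reduction to a single witness and the identity $H_{g_i}\wedge(\text{Ad}(g_i^{-1})(A_1)\wedge A_2)=B(H_{g_i},A_1)A_2-B(H_{g_i},A_2)\,\text{Ad}(g_i^{-1})(A_1)$ are fine. The fallback argument, however, has a genuine gap. Imposing $B(H_{g_1},A_1)=B(H_{g_2},A_1)=0$ pins $A_1$ to the single line $H_{g_1}^\perp\cap H_{g_2}^\perp$. Every remaining non-degeneracy must then hold for that one $A_1$, with only $A_2$ left free.

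The step you flag yourself, that this line is not an eigenline of $\text{Ad}(g_2g_1^{-1})$, is left unproved, and your case list is incomplete. By \Cref{claim:eigen of ad}, if $g_2g_1^{-1}$ is hyperbolic then $\text{Ad}(g_2g_1^{-1})$ has two further real eigenlines besides the axis, with eigenvalues $\lambda^{\pm2}$. These are not of the form ``eigenvalue $-1$''.

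More seriously, your claim that independence of $\text{Ad}(g_1^{-1})(A_1)$ and $\text{Ad}(g_2^{-1})(A_1)$ suffices is false. By (P3) the third vector always lies in $H_{g_3}^\perp$. Suppose $c:=B(H_{g_3},A_1)\neq0$. Since $B(H_{g_3},\text{Ad}(g_3^{-1})(A_1))=c$, the map $A_2\mapsto cA_2-B(H_{g_3},A_2)\,\text{Ad}(g_3^{-1})(A_1)$ has kernel spanned by $\text{Ad}(g_3^{-1})(A_1)$, so its image is exactly $H_{g_3}^\perp$. Hence the determinant vanishes for every $A_2$ as soon as $P:=\langle \text{Ad}(g_1^{-1})(A_1),\text{Ad}(g_2^{-1})(A_1)\rangle$ equals $H_{g_3}^\perp$. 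When $c=0$ you instead need $\text{Ad}(g_3^{-1})(A_1)\notin P$, which you also do not address.

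Geometrically, if $g_1,g_2$ are hyperbolic with axes crossing at $z$, then $A_1$ spans $\mathfrak{h}_z$. The condition $P=H_{g_3}^\perp$ then says exactly that the axis of $g_3$ passes through $g_1^{-1}\cdot z$ and $g_2^{-1}\cdot z$. Nothing in your argument excludes this, and with $A_1$ fixed you have no freedom left to avoid it. The $A_2=A_1+\epsilon Y$ expansion is never carried out, so it does not rescue the argument.

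The paper specialises the other way round, which keeps enough freedom. It takes $A_2=Z_3\in H_{g_3}^\perp$, so the third vector becomes $B(H_{g_3},A_1)Z_3$. It then chooses $A_1\in\langle\text{Ad}(g_1)Z_3,\text{Ad}(g_1)Z_1\rangle\cap\langle\text{Ad}(g_2)Z_3,\text{Ad}(g_2)Z_2\rangle$ with $Z_i\in H_{g_i}^\perp$, so that by (P1) the first two vectors become $-\mu_iB(H_{g_i},Z_3)Z_i$. The three vectors are then multiples of $Z_1,Z_2,Z_3$, and the $Z_i$ range freely over the planes $H_{g_i}^\perp$. The remaining non-vanishing conditions are arranged by choosing the $Z_i$. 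The no-order-two hypothesis is used to show that $\text{Ad}(g_1^{-1}g_2)$ cannot have a two-dimensional real eigenspace.
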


\begin{proof}
 By \Cref{lem:cyclic-iff-fix}, we may assume that $\langle g_i,g_3\rangle$ is not cyclic for $i=1,2$.
    As each map $(A_1,A_2) \mapsto H_{g_i} \wedge ( \text{Ad}(g_i^{-1})(A_1)  \wedge A_2)$ is a polynomial map, it suffices to find any $A_1,A_2 \in \mathfrak{sl}(2,\RR)$ for which the vectors in \Cref{eq:linwedgevecs2} are linearly independent. 

    For each $i \in \{1,2,3\}$,
    choose some arbitrary non-zero vector $Z_i \in H_{g_i}^\perp$.
    Fix $A_2 = Z_3$ and choose
    \begin{equation*}
        A_1 \in L(Z_1,Z_2,Z_3) := \Big\langle \text{Ad}(g_1) (Z_3) \, , \, \text{Ad}(g_1) (Z_1) \Big\rangle \,\cap\, \Big\langle \text{Ad}(g_2) (Z_3) \, , \, \text{Ad}(g_2) (Z_2) \Big\rangle.
    \end{equation*}
    Let $\lambda_1,\mu_1,\lambda_2,\mu_2 \in \RR$ be scalars such that 
    \begin{equation*}
        A_1 = \lambda_1 \text{Ad}(g_1) (Z_3) + \mu_1 \text{Ad}(g_1) (Z_1) = \lambda_2 \text{Ad}(g_2) (Z_3) + \mu_2 \text{Ad}(g_2) (Z_2).
    \end{equation*}
    By (P1), (P2) and the fact that $Z_i\in H_{g_i}^\perp$, for $i \in \{1,2\}$ we have 
    \begin{align*}
        H_{g_i} \wedge \Big(\text{Ad}(g_i^{-1})(A_1)  \wedge A_2 \Big) &= H_{g_i} \wedge \Big( \big(\lambda_i Z_3 + \mu_i Z_i \big) \wedge Z_3 \Big)  \\
        &=  H_{g_i} \wedge \Big( \lambda_i \big(Z_3 \wedge Z_3  \big)+ \mu_i  \big(Z_i \wedge Z_3 \big)  \Big) \\
        &= \mu_i H_{g_i} \wedge \Big( Z_i  \wedge Z_3  \Big) \\
        &= \mu_i \Big( B( H_{g_i} \, , \, Z_i) Z_3 - B(H_{g_i} \, , \, Z_3) Z_i \Big) \\
        &= - \mu_i B(H_{g_i} \, , \, Z_3) Z_i.
    \end{align*}
    Moreover, by (P1) and the fact that $Z_3\in H_{g_3}^\perp$, we have
    \begin{align*}
        H_{g_3} \wedge \Big(\text{Ad}(g_3^{-1})(A_1)  \wedge A_2 \Big) &= H_{g_3} \wedge \Big(\text{Ad}(g_3^{-1})(A_1)  \wedge Z_3\Big) \\
        &= B \big(H_{g_3} \, , \, \text{Ad}(g_3^{-1})(A_1)\big) Z_3 - B \big(H_{g_3} \, , \, Z_3 \big) \text{Ad}(g_3^{-1})(A_1) \\
        &= B \big(H_{g_3} \, , \, A_1\big) Z_3.
    \end{align*}
    Therefore, there exists a pair $A_1,A_2$ for which the vectors in \Cref{eq:linwedgevecs2} are linearly independent if the following properties hold:
    \begin{enumerate}
        \item $H_{g_i}^\perp \,\cap\, H_{g_3}^\perp$ is 1-dimensional for $i=1,2$: if this is the case, then we can choose $Z_1,Z_2,Z_3$ to be linearly independent. In particular, since $Z_i\in H_{g_i}^\perp$ for $i\in\{1,2,3\}$, this ensures that $Z_3\not\in H_{g_1}^\perp,H_{g_2}^\perp$.

        \item There exists a choice of $Z_1,Z_2,Z_3$ such that $L(Z_1,Z_2,Z_3)$ contains neither $\text{Ad}(g_1) (Z_3)$ nor $\text{Ad}(g_2)(Z_3)$ (and hence is 1-dimensional). This is so we can choose $A_1$ so that $\mu_1,\mu_2\neq0$.
        \item There exists a choice of $Z_1,Z_2,Z_3$ such that $B(H_{g_3},A_1) \neq 0$ for some non-zero $A_1 \in L(Z_1,Z_2,Z_3)$.
    \end{enumerate}
    We now show each property holds individually.

    \medskip\noindent
    \textbf{\textit{Property (i):}}
    Fix some $i\in\{1,2\}$ and recall that $\langle g_i,g_3\rangle$ is non-cyclic. Therefore, $H_{g_i}$ is not a scalar multiple of $H_{g_3}$ by \Cref{lem:centraliser_dim}, and so $H_{g_i}^{\perp}\neq H_{g_3}^{\perp}$. This implies that $\dim(H_{g_i}^\perp\,\cap\, H_{g_3}^\perp)=1$, proving (i). 

    \medskip\noindent
    \textbf{\textit{Property (ii):}} 
    Suppose for contradiction that Property (ii) does not hold.
    For each $i \in \{1,2\}$, the inclusion of $\text{Ad}(g_i) (Z_3)$ in $L(Z_1,Z_2,Z_3)$ is determined by a polynomial equation.
    By applying the same reasoning up until now for $Z_3$ to both $Z_1$ and $Z_2$, we may suppose that $\langle \text{Ad}(g_1) (Z_3) , \text{Ad}(g_1) (Z_1) \rangle$ contains $\text{Ad}(g_2) (Z_3)$ for all choices of $Z_1,Z_3$. 
    This in turn implies that the vectors $Z_1, Z_3, \text{Ad}(g_1^{-1}g_2) (Z_3)$ are linearly dependent for all choices of $Z_1,Z_3$.
    If we choose $Z_3 \notin H_{g_1}^\perp$ (possible by Property (i)) then we can choose $Z_1$ to not lie in $\langle Z_3, \text{Ad}(g_1^{-1}g_2)(Z_3) \rangle$; this is as the latter space is not equal to the 2-dimensional space $H_{g_1}^\perp$.
    Thus, the vectors $Z_3$, $\text{Ad}(g_1^{-1}g_2)(Z_3)$ are linearly dependent. Equivalently, $Z_3$ is an eigenvector of $\text{Ad}(g_1^{-1}g_2)$ for all $Z_3 \in H_{g_3}^\perp\setminus H_{g_1}^\perp$.
    Having a 2-dimensional space of eigenvectors implies that $\text{Ad}(g_1^{-1}g_2)$ has repeated eigenvalues.

    Recall that, by assumption, $g_1^{-1} g_2 \neq \id$. Since conjugation does not effect eigenvalues, either $g_1^{-1}g_2$ is elliptic with eigenvalues $e^{\mathfrak{i} \theta},e^{-\mathfrak{i} \theta}$ for some $\theta \in (0,2\pi)$,
    or $g_1^{-1}g_2$ is hyperbolic with eigenvalues $\lambda, \lambda^{-1}$ for some $\lambda >1$.
    When $g_1^{-1}g_2$ is hyperbolic, \Cref{claim:eigen of ad} gives the eigenvalues of $\textup{Ad}(g_1^{-1}g_2)$ to be $1, \lambda^2, \lambda^{-2}$ which are distinct,
    contradicting that it has repeated eigenvalues.
    When $g_1^{-1}g_2$ is elliptic,
    \Cref{claim:eigen of ad} gives the eigenvalues of $\textup{Ad}(g_1^{-1}g_2)$ to be $1, e^{2\mathfrak{i} \theta}, e^{-2\mathfrak{i} \theta}$.
    As it has repeated eigenvalues, $\theta \in \{\pi/2,\pi\}$.
    However, this implies that $(g_1^{-1} g_2)^4 = \id$, contradicting that $\Gamma$ contains no elements of order 2.
    (The $\theta=\pi$ case corresponds to $g_1^{-1}g_2$ being congruent to the matrix $-\id$, which is modded out in $\PSL(2,\RR)$.) So (ii) holds.

    \medskip\noindent
    \textbf{\textit{Property (iii):}} 
    In the proof of (i), we saw that $H_{g_i}^\perp\neq H_{g_3}^\perp$ for $i\in\{1,2\}$. Applying the adjoint map, we see that for $i\in\{1,2\}$, $\text{Ad}(g_i)(H_{g_i}^\perp)\neq \text{Ad}(g_i)(H_{g_3}^\perp)$, and so $\dim(\text{Ad}(g_i)(H_{g_i}^\perp)\,\cap\, \text{Ad}(g_i)(H_{g_3}^\perp))=1$. Hence, we can choose $Z_3$ such that $\text{Ad}(g_i) (Z_3) \notin \text{Ad}(g_i)(H_{g_i}^\perp)$ for $i\in\{1,2\}$.
    Pick $Y\in \mathfrak{sl}(2,\mathbb{R})$ such that 
    \begin{align*}
        Y\notin \Big\langle \text{Ad}(g_1)(Z_3) ~,~ \text{Ad}(g_2)(Z_3)\Big\rangle \qquad \mbox{ and } \qquad 
        Y\notin \bigcup_{i=1}^{3} H_{g_i}^{\perp}.
    \end{align*}
     The conditions say $Y$ does not belong to a finite union of $2$-dimensional subspaces, so such a $Y$ exists. We will show we can choose $Z_1, Z_2$ such that $Y\in L(Z_1,Z_2,Z_3)$. We have
    \begin{equation*}
        \dim\Big(\Big\langle Y\, , \, \text{Ad}(g_1)(Z_3) \Big\rangle\Big)=\dim\Big(\Big\langle Y\, , \, \text{Ad}(g_2)(Z_3) \Big\rangle\Big)=2.
    \end{equation*}
    Hence, as $\text{Ad}(g_i)(Z_3)\notin \text{Ad}(g_i)(H_{g_i}^{\perp})=H_{g_i}^\perp$ for $i=1,2$, we have
    \begin{align*}
        \dim\left(\Big\langle Y\, , \, \text{Ad}(g_i)(Z_3) \Big\rangle\,\cap\, H_{g_i}^{\perp}\right) &=1,
    \end{align*} 
    and thus we may pick some non-zero     \begin{align*}
        \tilde{Z}_i &\in \left(\Big\langle Y\, , \, \text{Ad}(g_i)(Z_3) \Big\rangle\,\cap\, H_{g_i}^{\perp}\right).
    \end{align*}
    Since $H_{g_i}$ is invariant under conjugation and the Killing form in $\text{Ad}$-invariant, $$B(\text{Ad}(g_i)(X), H_{g_i}) = B(X, H_{g_i}).$$ It follows that the map $\text{Ad}(g_i)$ is a bijection when restricted to $H_{g_i}^{\perp}$. Thus we may pick $Z_1, Z_2$ such that $\textup{Ad}(g_i)(Z_i) =\tilde{Z}_{i}$ for $i=1,2$. With this choice of $Z_1, Z_2$ we have 
    \begin{align*}
        \langle \textup{Ad}(g_i)(Z_i),\textup{Ad}(g_i)(Z_3)\rangle = \langle Y ,\textup{Ad}(g_i)(Z_3)\rangle 
    \end{align*}
    
    Thus, using the definition of $L(Z_1, Z_2, Z_3),$ it follows that $\langle Y \rangle \subseteq L(Z_1, Z_2, Z_3)$
    and we have $Y\notin H_{g_3}^{\perp},$ i.e.~$B(Y, H_{g_3})\neq 0$. 
\end{proof}

     We now have all the background material required to prove \Cref{lem: case_x}, which is the key technical result from which \Cref{lem:2ext2loopsat1vertex} follows.

\begin{lemma}\label{lem: case_x}
    Let $\Gamma=\langle g_1,g_2,g_3\rangle$ be a non-cyclic Fuchsian group with no parabolic elements and no elements of order two.
    Then, a $\Gamma$-gain graph $(G, \psi)$ with exactly two vertices $u,v$ and four edges $u \xrightarrow{\textup{id}} v$, $u \xrightarrow{g_1} v$, $u \xrightarrow{g_2} v$, and $u\xrightarrow{g_3} v$ is $\Gamma$-isostatic. 
\end{lemma}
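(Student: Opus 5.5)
We plan to reduce the isostaticity of $(G,\psi)$ to a statement about three planes in $\mathfrak{sl}(2,\RR)$ meeting only in $\{\mathbf{0}\}$, and then to feed \Cref{lem:hard lemma} into that statement.

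First, the count is right: $|E(G)|=4=2|V(G)|$. So it suffices to find $p$ with $\ker O(G,\psi,p)=\{\mathbf{0}\}$. The single edge $u\xrightarrow{\textup{id}}v$ with $p(u)\neq p(v)$ is $\{\textup{id}\}$-isostatic, by \Cref{thm:Hlaman}. Hence its infinitesimal motions are exactly $\big(\eta(X,p(u)),\eta(X,p(v))\big)$ for $X\in\mathfrak{sl}(2,\RR)$. This parametrisation is injective, because \Cref{lem:centraliser_triv} gives $\mathfrak{h}_{p(u)}\cap\mathfrak{h}_{p(v)}=\{\mathbf{0}\}$ when $p(u)\neq p(v)$. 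So the kernel of $O(G,\psi,p)$ corresponds to $\bigcap_{i=1}^3\ker R^{(i)}_{p(u),p(v)}$, where $R^{(i)}$ is the map of \Cref{inf_motions_edge} for the gain $g_i$.

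By \Cref{inf_motions_edge}, for generic $p$ (so that $p(u),p(v),g_i\cdot p(u),g_i^{-1}\cdot p(v)$ are not on one geodesic, which is an open condition), each kernel is the plane
\[
S_i=\mathcal{T}_{\langle g_i\rangle}+\Big((\mathfrak{h}_{p(u)}+\mathfrak{h}_{g_i\cdot p(v)})\cap(\mathfrak{h}_{p(v)}+\mathfrak{h}_{g_i^{-1}\cdot p(u)})\Big).
\]
We now translate this into Killing-form language. Write $\mathfrak{h}_{p(u)}=\langle A_1\rangle$ and $\mathfrak{h}_{p(v)}=\langle A_2\rangle$. By \Cref{lem: Lie -algebras-stabilizers1}, $\mathfrak{h}_{g\cdot z}=\langle\textup{Ad}(g)(\mathfrak{h}_z)\rangle$. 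The plane $\langle A_1,\textup{Ad}(g_i)(A_2)\rangle$ has normal $A_1\wedge\textup{Ad}(g_i)(A_2)$, and the plane $\langle A_2,\textup{Ad}(g_i^{-1})(A_1)\rangle$ has normal $\textup{Ad}(g_i^{-1})(A_1)\wedge A_2$. By (P4), their intersection line is spanned by
\[
D_i=\big(A_1\wedge\textup{Ad}(g_i)(A_2)\big)\wedge\big(\textup{Ad}(g_i^{-1})(A_1)\wedge A_2\big).
\]
Since $\mathcal{T}_{\langle g_i\rangle}=\langle H_{g_i}\rangle$, we get $S_i=\langle H_{g_i},D_i\rangle$, whose normal is $N_i=H_{g_i}\wedge D_i$ by (P3). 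By \Cref{claim:hgdg}, $N_i=\lambda_i\big(C_i-\textup{Ad}(g_i)(C_i)\big)$ with $C_i=\textup{Ad}(g_i^{-1})(A_1)\wedge A_2$. Because $\textup{Ad}(g_i)-\textup{id}$ kills $H_{g_i}$, this should be identifiable with a multiple of $H_{g_i}\wedge C_i$. The verification I would do is as follows. On $H_{g_i}^\perp$, the operator $\textup{Ad}(g_i)$ acts as a rotation or boost, and $C-\textup{Ad}(g_i)C$ projects onto $H_{g_i}^\perp$. Also $H_{g_i}\wedge\cdot$ maps $H_{g_i}^\perp$ to itself invertibly; this uses $B(H_{g_i},H_{g_i})\neq0$ from \Cref{l: hg not orth to hg}, which needs the absence of parabolics. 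Both operators commute with $\textup{Ad}(g_i)$ on this 2-dimensional space, and the needed identity is $N_i\in\textup{span}$ of a fixed linear image of $H_{g_i}\wedge C_i$. I would expect the precise relation $C-\textup{Ad}(g)C=\alpha H_g\wedge C+\beta H_g\wedge(H_g\wedge C)$, and I would check this by computing in the diagonal and rotation normal forms.

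The planes satisfy $\bigcap S_i=\{\mathbf{0}\}$ if and only if $N_1,N_2,N_3$ are linearly independent. Independence also requires $\lambda_i\neq0$, which is a polynomial non-vanishing condition. Linear independence of the three vectors $H_{g_i}\wedge C_i$ is exactly \Cref{lem:hard lemma}. That lemma applies because the $g_i$ are distinct, since $(G,\psi)$ is a gain graph, and $\Gamma$ satisfies its hypotheses.

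The main obstacle is passing from $H_{g_i}\wedge C_i$ to $N_i$, since the linear map relating them depends on $i$. I would first try to show that $N_i$ is a nonzero multiple of $H_{g_i}\wedge C_i$ plus a multiple of $H_{g_i}\wedge C_i$'s image under another fixed map. If that fails, I would redo the explicit construction from the proof of \Cref{lem:hard lemma} directly for the $N_i$, using the same choices $A_2=Z_3$ and $A_1\in L(Z_1,Z_2,Z_3)$. Finally, all the conditions involved are polynomial, and the set of pairs $(\mathfrak{h}_{p(u)},\mathfrak{h}_{p(v)})$ is open by \Cref{lem: Lie -algebras-stabilizers0}. So a dense open set of valid $(A_1,A_2)$ meets realisable pairs, which gives the desired $p$.
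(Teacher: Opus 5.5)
\textbf{Verdict: there is a gap at the key step.} Your reduction follows the paper's proof. The kernel of $O(G,\psi,p)$ is identified with $\bigcap_i\ker R^{(i)}$, and each kernel is the plane $S_i=\langle H_{g_i},D_i\rangle$ with normal $N_i=H_{g_i}\wedge D_i$. \Cref{claim:hgdg} gives $N_i=\lambda_i\bigl(C_i-\textup{Ad}(g_i)(C_i)\bigr)$ with $\lambda_i=B(H_{g_i},C_i)$, and the plan is to finish with \Cref{lem:hard lemma}. The gap is the step you flag and then leave open: nothing in the proposal derives independence of $N_1,N_2,N_3$ from independence of the vectors $H_{g_i}\wedge C_i$.

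Your expected identity is correct. On $H_{g_i}^\perp$, $\textup{Ad}(g_i)$ commutes with $J_i=H_{g_i}\wedge(\cdot)$ and $J_i^2=B(H_{g_i},H_{g_i})\,\textup{id}$, so $C_i-\textup{Ad}(g_i)(C_i)=\alpha_iJ_iC_i+\beta_iJ_i^2C_i$. But because $\alpha_i,\beta_i$ depend on $i$, this transfers nothing. Equivalently, $N_i=\lambda_iB(H_{g_i},H_{g_i})^{-1}\Psi_i(H_{g_i}\wedge C_i)$ with $\Psi_i(Z)=H_{g_i}\wedge\bigl(Z-\textup{Ad}(g_i)(Z)\bigr)$ invertible on $H_{g_i}^\perp$. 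Applying three different invertible maps to three vectors lying in three different planes does not preserve linear independence.

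This is also where the paper's proof is too quick. It observes that $(Z_i)_i\mapsto(\Psi_i(Z_i))_i$ is a linear bijection of $\prod_iH_{g_i}^\perp$ and concludes that the two triples are independent simultaneously. That does not follow, and is false pointwise in general. So your suspicion is well founded, but your proposal, like the paper's argument, does not yet cross this step.

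\textbf{How to close it.} Your fallback works. Parametrise the construction in the proof of \Cref{lem:hard lemma} polynomially by $Z=(Z_1,Z_2,Z_3)\in\prod_iH_{g_i}^\perp$. Take $A_2=Z_3$ and $A_1=\bigl(\textup{Ad}(g_1)(Z_3)\wedge\textup{Ad}(g_1)(Z_1)\bigr)\wedge\bigl(\textup{Ad}(g_2)(Z_3)\wedge\textup{Ad}(g_2)(Z_2)\bigr)$, which spans $L(Z_1,Z_2,Z_3)$. The computation there gives $H_{g_i}\wedge C_i\in\langle Z_i\rangle$ for each $i$. So on this family $N_i$ is a scalar multiple of $\Psi_i(Z_i)$, and the scalar is non-zero as soon as $\lambda_i\neq0$ and $H_{g_i}\wedge C_i\neq\mathbf 0$.

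Each of the following is a non-empty Zariski-open condition on the irreducible space $\prod_iH_{g_i}^\perp$:
\begin{itemize}
\item $H_{g_i}\wedge C_i\neq\mathbf 0$ for all $i$; a witness is exhibited in \Cref{lem:hard lemma}.
\item $\lambda_i\neq0$ for all $i$. Up to a constant, $\lambda_i=\det\bigl(H_{g_i},\textup{Ad}(g_i^{-1})(A_1),Z_3\bigr)$. For generic $Z_3$ this only requires the vector $Y$ in the proof of Property (iii) to avoid three further planes $\langle H_{g_i},\textup{Ad}(g_i)(Z_3)\rangle$.
\item $\Psi_1(Z_1),\Psi_2(Z_2),\Psi_3(Z_3)$ are linearly independent; choose independent $W_i\in H_{g_i}^\perp$ and put $Z_i=\Psi_i^{-1}(W_i)$.
\end{itemize}
Their intersection is therefore non-empty, which yields one pair $(A_1,A_2)$ with $\det[N_1,N_2,N_3]\neq0$. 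This determinant is polynomial in $(A_1,A_2)$, and its non-vanishing forces $D_i\neq\mathbf 0$ and $\lambda_i\neq0$. Hence it is non-zero on a dense open set, and your closing density argument then finishes the proof.
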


\begin{proof}
For $g\in\{\text{id},g_1,g_2,g_3\}$, let $e_g$ denote the edge $u\xrightarrow{g}v$ and $G[e_g]$ to be the subgraph of $G$ with the sole edge $e_g$.
If, for some distinct pair $i,j \in \{1,2,3\}$,
the subgroup $\langle g_i, g_j \rangle$ is cyclic,
then $G$ is near-cyclic and the result holds by \Cref{lem:near}.
So assume that each subgroup $\langle g_i, g_j \rangle$, $i \neq j$, is non-cyclic.

By \Cref{inf_motions_edge}, for any $i\in\{1,2,3\}$ and any realisation $p:\{u,v\}\rightarrow\HH$ of $(G,\psi)$ such that $p(u),p(v),g_i\cdot p(v)$ and $g_i^{-1}\cdot p(u)$  do not lie on the same geodesic, the space of infinitesimal motions of $(G[e_{g_i}],\psi|_{\{e_{g_i}\}},p)$ is
\begin{equation*}
    S_i(p) := \mathcal{T}_{\langle g_i\rangle}  + \left(\left( \mathfrak{h}_{p(u)} +\mathfrak{h}_{g_i\cdot p(v)} \right) \,\cap\, \left( \mathfrak{h}_{p(v)} +\mathfrak{h}_{g_i^{-1}\cdot p(u)} \right)\right).
\end{equation*}
Since each $g_i \neq \text{id}$, the set of realisations $p$ where $p(u),p(v),g_i\cdot p(v)$ and $g_i^{-1}\cdot p(u)$ do not lie on a shared geodesic forms a dense open subset of $\HH^{V(G)}$. (This follows from the locus of a real analytic function with connected domain being a closed nowhere dense subset; see, for example, \cite{Mityagin2020}.) So, by \Cref{inf_motions_edge}, for all $1 \leq i\leq 3$, there is a dense open subset $U_i$ of $\HH^2$ such that $S_i(p)$ is a (well-defined) two-dimensional space for all $p\in U_i$. Similar to the proof of \Cref{base_case_lemma}, it suffices to show that there exists a choice of $p$ such that $S_1(p)\,\cap\, S_2(p)\,\cap\, S_3(p) = \{\textbf{0}_{2\times 2}\}$.

Recall from \Cref{lem: Lie -algebras-stabilizers1} that, for $p\in \HH^{\{u,v\}} \cong \HH^2$, we have $\mathfrak{h}_{g_i\cdot p(v)} = g_i \mathfrak{h}_{ p(v)} g_i^{-1} = \textup{Ad}(g_i)(\mathfrak{h}_{p(v)})$. For $A_1,A_2\in\mathfrak{sl}(2,\RR)$ and $1\leq i\leq 3$, define
\begin{equation*}
    S_i'(A_1,A_2) = \mathcal{T}_{\langle g_i\rangle}  + \Big(\big\langle A_1,\text{Ad}(g_i)(A_2)\big\rangle
    \,\cap\, \big\langle \text{Ad}(g_i^{-1})(A_1),A_2 \big\rangle \Big).
\end{equation*}
For any $A_1 \in \mathfrak{h}_{p(u)}$, $A_2 \in \mathfrak{h}_{p(v)}$,
we have
\begin{equation*}
    \big\langle A_1,\text{Ad}(g_i)(A_2)\big\rangle = \mathfrak{h}_{p(u)} +\mathfrak{h}_{g_i\cdot p(v)}, \qquad \big\langle A_1,\text{Ad}(g_i^{-1})(A_2)\big\rangle = \mathfrak{h}_{p(u)} +\mathfrak{h}_{g_i^{-1}\cdot p(v)}.
\end{equation*}
As the set $\{\mathfrak{h}_z \in \PP(\mathfrak{sl}(2, \mathbb{R})) ~\vert ~ z\in \mathbb{H}\}$ is an open subset of $\PP(\mathfrak{sl}(2, \mathbb{R}))$ (\Cref{lem: Lie -algebras-stabilizers0}),
it thus suffices to show that 
$$O:=\Big\{(A_1,A_2)\in\mathfrak{sl}(2,\RR):S_1'(A_1,A_2)\,\cap\,S_2'(A_1,A_2)\,\cap\,S_3'(A_1,A_2)=\{\textbf{0}_{2\times 2}\}\Big\}$$
is a dense open subset of $(\mathfrak{sl}(2,\RR))^2$.
By (P3) and (P4), there is a dense open subset of choices of $A_1,A_2$ such that
   $\langle A_1 , \text{Ad}(g)(A_2)\rangle \,\cap\, \langle \text{Ad}(g^{-1})(A_1), A_2\rangle$ 
   is the exactly the space consisting of scalar multiples of $D_g(A_1,A_2) :=N_1(A_1,A_2)\wedge N_2(A_1,A_2)$, where
    \begin{equation*}
      N_1(A_1,A_2) = A_1 \wedge \text{Ad}(g)(A_2)  \qquad N_2(A_1,A_2)=  \text{Ad}(g^{-1})(A_1) \wedge A_2  .
    \end{equation*}
    As proved in \Cref{inf_motions_edge}, there is a dense open subset of pairs $(A_1,A_2)$ where $D_g(A_1,A_2) \neq \mathbf{0}$. 
    Since $\mathcal{T}_{\langle g_i\rangle} = \langle H_{g_i} \rangle $ and since each map $(A_1,A_2) \mapsto D_{g_i}(A_1,A_2)$ is a polynomial map, 
    it now suffices to show that there exists a dense open subset of pairs $A_1,A_2 \in \mathfrak{sl}(2, \mathbb{R})$ such that
    \begin{equation*}
        \bigcap_{i=1}^3\Big\langle H_{g_i}, D_{g_i}(A_1,A_2) \Big\rangle  = \{\textbf{0}\}.
    \end{equation*}
    Using the Lorentzian cross product,
    this is equivalent to finding a dense open subset of pairs $A_1,A_2 \in \mathfrak{sl}(2, \mathbb{R})$ such that the vectors
    \begin{equation}\label{eq:detm replacement}
        H_{g_1} \wedge D_{g_1}(A_1,A_2), \qquad 
            H_{g_2} \wedge D_{g_2}(A_1,A_2), \qquad H_{g_3} \wedge D_{g_3}(A_1,A_2)
    \end{equation}
    are linearly independent.

    It is easy to check that $B(H_g, A_1 \wedge  \text{Ad}(g)(A_2)) \neq 0$ for a dense open subset of pairs $(A_1,A_2)$. 
    Thus, by \Cref{claim:hgdg} there exists a dense open set of pairs $(A_1,A_2)$ such that the vectors in \Cref{eq:detm replacement} are linearly independent if and only if there exists a dense open set of pairs $(A_1,A_2)$ such that the vectors
    \begin{equation}\label{eq:linwedgevecs}
        \Big(\text{Ad}(g_i^{-1})(A_1) \wedge A_2 \Big) - \text{Ad}(g_i)\Big( \text{Ad}(g_i^{-1})( A_1) \wedge A_2 \Big), \qquad 1\leq i\leq 3
    \end{equation}
    are linearly independent. 
    Moreover, by \Cref{claim:hgdg} and (P3), for each $1\leq i\leq 3$ and almost all choices of $A_1,A_2$, there is a scalar $\lambda\neq0$ such that
    \begin{align*}
        B \Big(H_{g_i} \, , \, \big( \text{Ad}(g_i^{-1})(A_1) \wedge A_2 \big) - \text{Ad}(g_i) \big( \text{Ad}(g_i^{-1})(A_1) \wedge A_2 \big) \Big)
        &=B \Big(H_{g_i} \, , \, \lambda^{-1}\big(H_{g_i}\wedge D_g(A_1,A_2) \big) \Big)\\
        &=\lambda^{-1}B \big(H_{g_i} \, , \, H_{g_i}\wedge D_g(A_1,A_2) \big)\\
        &=0,
    \end{align*}
    and so each of the vectors in \Cref{eq:linwedgevecs} lies in its corresponding plane $H_{g_i}^\perp$.

    One can easily check that each of the two maps $Z\mapsto Z - \text{Ad}(g_i)(Z)$ and $Z \mapsto H_{g_i} \wedge Z$ is a linear bijection on $H_{g_i}^{\perp}$: the first map as $H_{g_i}$ is the only eigenvector with eigenvalue $1$ for $g_i$, and the second map as $H_{g_i}$ is not contained in $H_{g_i}^{\perp}$ (\Cref{l: hg not orth to hg}).
    It follows that the map
    \begin{equation*}
        \prod_{i=1}^3 H_{g_i}^{\perp} \longrightarrow \prod_{i=1}^3 H_{g_i}^{\perp},
        ~
        (Z_i)_{i \in \{1,2,3\}} \longmapsto \Big( H_{g_i} \wedge \big(Z_i - \text{Ad}(g_i)(Z_i) \big) \Big)_{i \in \{1,2,3\}}
    \end{equation*}
    is a well-defined linear bijection.
    Now, for each $i \in \{1,2,3\}$,
    consider the map
    \begin{equation*}
        \widetilde{\varphi}_i: \mathfrak{sl}(2, \mathbb{R}) \times \mathfrak{sl}(2, \mathbb{R}) \longrightarrow H_{g_i}^{\perp}, ~ (A_1, A_2)\longmapsto H_{g_i} \wedge \big(\text{Ad}(g_i^{-1})(A_1)\wedge  A_2 \big).
    \end{equation*} 
    Using that $\wedge$ is Ad-equivariant, \Cref{claim:hgdg}, (P1), and (P3), we have for all $A_1,A_2$ that
    \begin{align*}
        & H_{g_i} \wedge \Big(\widetilde{\varphi}_i(A_1,A_2) - \text{Ad}(g_i)( \widetilde{\varphi}_i(A_1,A_2)) \Big)\\
        = ~& H_{g_i} \wedge \Big( H_{g_i} \wedge \big(\text{Ad}(g_i^{-1})(A_1)\wedge  A_2 \big) - \text{Ad}(g_i)(H_{g_i}) \wedge \text{Ad}(g_i)\big(\text{Ad}(g_i^{-1})(A_1)\wedge  A_2 \big) \Big)\\
        = ~& H_{g_i} \wedge \Big(H_{g_i} \wedge \Big( \big(\text{Ad}(g_i^{-1})(A_1)\wedge  A_2 \big) - \big(A_1\wedge  \text{Ad}(g_i)(A_2) \big) \Big) \Big) \\
        = ~& B(H_{g_i},H_{g_i}) \Big( \big(\text{Ad}(g_i^{-1})(A_1)\wedge  A_2 \big) - \big(A_1\wedge \text{Ad}(g_i)(A_2) \big) \Big).
    \end{align*}
    For $1\leq i\leq 3$, since $g_i$ is not parabolic, $B(H_{g_i},H_{g_i})\neq0$ by \Cref{l: hg not orth to hg}.
    So the vectors in \Cref{eq:linwedgevecs} are linearly independent if and only the vectors
    \begin{equation*}
        H_{g_1} \wedge \Big(\text{Ad}(g_1^{-1})(A_1)  \wedge A_2 \Big), \qquad H_{g_2} \wedge  \Big(\text{Ad}(g_2^{-1})(A_1)   \wedge A_2 \Big), \qquad H_{g_3} \wedge  \Big(\text{Ad}(g_3^{-1})(A_1)   \wedge A_2 \Big)
    \end{equation*}
    are linearly independent. 
    So it suffices to show that there exists a dense open set of pairs $(A_1,A_2)$ in $\mathfrak{sl}(2,\RR)^2$ such that the vectors in \Cref{eq:linwedgevecs2} are linearly independent. 
    As this is guaranteed by \Cref{lem:hard lemma}, we achieve our desired result.
\end{proof}

We can now prove \Cref{lem:2ext2loopsat1vertex}.

\begin{proof}[Proof of \Cref{lem:2ext2loopsat1vertex}]
    If $v_1 = v_2 =v_3 =v_4$, then the two rows in the rigidity matrix coming from the loops at $v_1$ are independent. Hence, it suffices to show that the four rows resulting from the new edges $v \xrightarrow{g_i} v_1$ are also linearly independent. This follows from \Cref{lem: case_x}, \Cref{prop:balgain} and \Cref{lem: case_x}.
\end{proof}

With this, we are now ready to prove \Cref{2-ext}.
\begin{proof}[Proof of \Cref{2-ext}]
    Since $\Gamma$ is a surface group, it contains no parabolic elements and no element of order 2 (\Cref{lem:noparabolic}).
    Depending on the type of 2-extension (see \Cref{fig:2ext}), the $\Gamma$-gain graph $(G',\psi')$ is $\Gamma$-isostatic by either \Cref{lem:easy2-exts1}, \Cref{lem:easy2-exts2}, \Cref{lem:nongeneric2-extcrossedgeimplies extension3}, or \Cref{lem:2ext2loopsat1vertex}.
\end{proof}

\section{Rigidity on compact Riemann surfaces}\label{sec:riemann}

In this section, we reinterpret our main results in the setting of compact Riemann surfaces, showing that they yield a combinatorial characterisation of generic rigidity for graphs on surfaces of genus $g \geq 2$.

Let $S$ be a Riemann surface.
Unlike with the hyperbolic plane $\HH$, there can be more than one geodesic $\gamma$ in $S$ with start $\gamma(0)=P$ and end $\gamma(1)=Q$ for distinct points $P,Q \in S$.
However, we can define a \emph{framework in $S$} as a triple $(G,z,p)$,
where $G=(V,E)$ is a (multi)graph, $p:V \rightarrow S$, and $z=(z(e))_{e \in E}$ is an assignment of geodesics to each edge $e$ connecting vertices $u,v$ such that (given a fixed arbitrary orientation of $e$) $z(e)(0)=p(u)$ and $z(e)(1) = p(v)$.
A \emph{continuous flex} of $(G,z,p)$ now consists of a continuous family $((G,p_t,z_t))_{t \in [0,1]}$ of frameworks in $S$ such that $(G,z_0,p_0)=(G,z,p)$ and for each $e \in E$ the map $t \mapsto z_t(e)$ is a homotopy where the length of $z_t(e)$ between $z_t(e)(0)$ and $z_t(e)(1)$ remains constant.
We then say that $(G,z,p)$ is \emph{continuously rigid} if every continuous flex is \emph{trivial}, in that the map $t \mapsto (G,z_t,p_t)$ corresponds to a continuous family of isometries of $S$.
If $S$ has finitely many isometries, then a continuous flex is trivial if and only if $(G,z_t,p_t)=(G,z,p)$ for all $t \in [0,1]$.

Now let $S_g$ be a compact Riemann surface of genus $g \geq 2$ with constant curvature.
In this specific case we can remove the need to keep track of geodesics by utilising the Uniformisation Theorem \cite{Unif2,Unif1}.
Let $\Gamma \leq \PSL(2,\RR)$ be the surface group for which $S_g = \HH/\Gamma$ and let $\varphi_\Gamma : \HH \rightarrow S_g$ be the quotient map.
For the framework $(G,z,p)$,
we construct the $\Gamma$-joint-configuration $(G,\psi,\tilde{p})$ by choosing, for each $v \in V(G)$, a point $\tilde{p}(v) \in \varphi_\Gamma^{-1}(p(v))$, and setting an edge $e$ from $u$ to $v$ to have label $h \in \Gamma$ if the image of the geodesic $\gamma_{\tilde{p}(u), h \cdot \tilde{p}(v)}$ is $z_e$.
(Since $\Gamma$ acts freely and properly discontinuously on $\HH$,
the choice of group element $\gamma$ is unique.)
We now say that $(G,\psi,\tilde{p})$ is a \emph{corresponding $\Gamma$-joint-configuration} for $(G,z,p)$.

\begin{proposition}\label{prop:surfacerigidity}
    Let $(G,z,p)$ be a framework in $S_g = \HH/\Gamma$ and let $(G,\psi,\tilde{p})$ be a corresponding $\Gamma$-joint-configuration. 
    Then $(G,z,p)$ is continuously rigid if and only if $(G,\psi,\tilde{p})$ is locally $\Gamma$-symmetrically rigid. 
\end{proposition}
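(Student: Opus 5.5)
The plan is to build an explicit dictionary between continuous flexes of $(G,z,p)$ in $S_g$ and continuous paths of realisations in $f_{G,\psi}^{-1}(f_{G,\psi}(\tilde p))$. We then check that trivial motions correspond on both sides. Two facts drive this. First, $\varphi_\Gamma:\HH\to S_g$ is a covering map and a local isometry. Second, geodesics in $S_g$ with prescribed endpoints correspond, via lifting, to geodesics $\gamma_{\tilde p(u),h\cdot\tilde p(v)}$ in $\HH$, and homotopy classes rel endpoints are indexed by $h\in\Gamma$.

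\emph{Step 1 (flexes lift).} Let $((G,z_t,p_t))_{t\in[0,1]}$ be a continuous flex. For each $v$, use the path lifting property of the covering $\varphi_\Gamma$ to lift $t\mapsto p_t(v)$ to a continuous path $t\mapsto\tilde p_t(v)$ with $\tilde p_0=\tilde p$. For each edge $e=u\xrightarrow{h}v$, the family $z_t(e)$ is a homotopy of geodesics. Lift it, via the homotopy lifting property, starting from $\gamma_{\tilde p(u),h\cdot\tilde p(v)}$. The lifted curves are geodesics in $\HH$ starting at $\tilde p_t(u)$, and their endpoints are continuous lifts of $p_t(v)$. Since the fibre $\varphi_\Gamma^{-1}(p_t(v))$ is discrete (proper discontinuity), each endpoint stays equal to $h\cdot\tilde p_t(v)$ with the same $h$. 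The length of $z_t(e)$ equals the hyperbolic length of its lift. That lift is the unique geodesic segment, so its length is $d_\HH(\tilde p_t(u),h\cdot\tilde p_t(v))=f_e(\tilde p_t)$. Hence $\tilde p_t\in f_{G,\psi}^{-1}(f_{G,\psi}(\tilde p))$ for all $t$.

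\emph{Step 2 (paths descend).} Conversely, let $t\mapsto q_t$ be a continuous path in the fibre of $f_{G,\psi}$ with $q_0=\tilde p$. Set $p_t=\varphi_\Gamma\circ q_t$ and $z_t(e)=\varphi_\Gamma\circ\gamma_{q_t(u),h\cdot q_t(v)}$, reparametrised to $[0,1]$. Continuity of $(P,Q)\mapsto\gamma_{P,Q}$ (cf.\ \Cref{continuity-geodesics} and the formula for $\dot\gamma_{P,Q}(0)$) gives a homotopy of constant length. So this is a continuous flex of $(G,z,p)$.

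\emph{Step 3 (trivial motions match).} Since $g\ge2$, the isometry group of $S_g$ is finite. Hence a flex is trivial if and only if it is constant. On the other side, \Cref{lem:centraliser_dim} gives $\dim C_{\PSL(2,\RR)}(\Gamma)=0$, since a surface group is not cyclic. So the centraliser is discrete, and trivial paths in $\HH^{V(G)}$ are also constant. (I would check that $C_{\PSL(2,\RR)}(\Gamma)$ is in fact trivial, or at least that its orbit of $\tilde p$ is discrete, which suffices.) Consequently, local $\Gamma$-symmetric rigidity of $\tilde p$ says that $\tilde p$ is isolated in its fibre, up to the discrete centraliser orbit. I would then use the standard curve-selection argument: the fibre $f_{G,\psi}^{-1}(f_{G,\psi}(\tilde p))$ is a real-analytic set, and by the curve selection lemma (as in \cite{Asimow1978}) $\tilde p$ is isolated in it if and only if no non-constant continuous path in it starts at $\tilde p$. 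Combining this with Steps 1 and 2 gives both directions. A non-constant flex lifts to a non-constant path, because $\varphi_\Gamma$ is a local homeomorphism. A non-constant path descends to a non-constant flex, for the same reason.

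The main obstacle is making Step 3 rigorous. This means justifying the curve-selection equivalence between ``isolated in the fibre'' and ``no non-trivial continuous path''. It also requires the analyticity of $f_{G,\psi}$ near $\tilde p$, which needs care when $\tilde p(u)=h\cdot\tilde p(v)$ for some edge, since $d_\HH$ is not smooth there; in that case I would pass to $\cosh d_\HH$, which is real-analytic everywhere. A secondary care point is the bookkeeping in Step 1 showing that the gain $h$ is preserved along the homotopy lift.
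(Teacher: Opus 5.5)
Your proposal is correct and follows essentially the same route as the paper's sketch proof. Both arguments rest on the same three ingredients: a lifting/descending correspondence between continuous flexes of $(G,z,p)$ and continuous paths in $f_{G,\psi}^{-1}(f_{G,\psi}(\tilde p))$ starting at $\tilde p$, the matching of trivial flexes with constant paths, and an Asimow--Roth style curve-selection argument on the real-analytic fibre. You also supply details the sketch leaves implicit: homotopy lifting preserving the gain $h$, finiteness of the isometry group of $S_g$, discreteness (indeed triviality) of $C_{\PSL(2,\RR)}(\Gamma)$, and passing to $\cosh d_{\HH}$ to handle non-smoothness at coincident points.
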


\begin{proof}[Sketch proof]
    There is a one-to-one correspondence between the continuous flexes of $(G,z,p)$ and the continuous paths $(\tilde{p}_t)_{t\in [0,1]}$ in $\HH^{V(G)}$ with $\tilde{p}_0=\tilde{p}$ for which $f_{G,\psi}(\tilde{p}_t)=f_{G,\psi}(\tilde{p})$.
    Moreover, a continuous flex of $(G,z,p)$ is trivial if and only if the corresponding continuous path $(\tilde{p}_t)_{t\in [0,1]}$ is constant.
    Using that $f_{G,\psi}$ is a real analytic function,
    the proof of \cite[Proposition 1]{Asimow1978} can be adapted to show that $(G,\psi,\tilde{p})$ is locally $\Gamma$-symmetrically rigid if and only if any continuous path $(\tilde{p}_t)_{t\in [0,1]}$ with $\tilde{p}_0=\tilde{p}$ and $f_{G,\psi}(\tilde{p}_t)=f_{G,\psi}(\tilde{p})$ is the constant map.
\end{proof}

To discuss `generic' rigidity for $S_g$, we define the following concept. We define a framework $(G,z,p)$ in $S_g=\HH/\Gamma$ to be \emph{regular} if there exists $(G,\psi,\tilde{p})$ be a corresponding $\Gamma$-joint-configuration that is regular.
As the set of regular realisations of a $\Gamma$-gain graph is a dense open set in $\HH^{V(G)}$, 
it can be shown that there exists a comeagre\footnote{A subset of a measurable space is \emph{comeagre} if it contains the countable intersection of open dense sets. By the Baire Category Theorem, every comeagre subset of $S_g^{V(G)}$ is dense.} subset $U \subseteq S_g^{V(G)}$ for which $(G,z,p)$ is regular for any choice of geodesics $z$.

The following rigidity characterisation now follows immediately from \Cref{main_theorem}, \Cref{prop:asimowroth} and \Cref{prop:surfacerigidity}.

\begin{corollary}\label{cor:surfaces}
    Let $\Gamma \leq \PSL(2,\mathbb{R})$ be a surface group of genus $g \geq 2$, and let $(G,z,p)$ be a regular framework in $S_g=\HH/\Gamma$ with corresponding $\Gamma$-joint-configuration $(G,\psi,\tilde{p})$. 
    Then $(G,z,p)$ is continuously rigid if and only if $(G,\psi)$ contains a spanning $(2,3,1,0)$-gain tight $\Gamma$-gain graph.
\end{corollary}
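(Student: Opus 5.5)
The plan is to combine \Cref{prop:surfacerigidity}, which turns continuous rigidity of $(G,z,p)$ into local $\Gamma$-symmetric rigidity of $(G,\psi,\tilde p)$, with \Cref{prop:asimowroth}, which turns local into infinitesimal rigidity at a regular configuration, and then with \Cref{main_theorem}, which characterises $\Gamma$-isostatic gain graphs. Since $(G,z,p)$ is regular, we may fix the corresponding $\tilde p$ to be a regular realisation of $(G,\psi)$. Changing the lift $\tilde p(v)$ within $\varphi_\Gamma^{-1}(p(v))$ is a switching, and switching preserves the rank of the orbit rigidity matrix. Likewise, the gain tightness conditions depend only on balanced and cyclic subgraphs, so they are switching invariant by \Cref{prop:<H>_u=<H>_v}. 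Hence the choice of lift is harmless.

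First, by \Cref{prop:surfacerigidity} and both parts of \Cref{prop:asimowroth}, $(G,z,p)$ is continuously rigid if and only if $(G,\psi,\tilde p)$ is infinitesimally $\Gamma$-symmetrically rigid. Here we use regularity for the forward direction of part (ii). Because $\Gamma$ is a surface group, it is non-cyclic, so this means $\rank O(G,\psi,\tilde p)=2|V(G)|$. One small point must be checked: regularity compares the rank only with other realisations of the same gain graph, but infinitesimal rigidity at one realisation implies it at every regular one, so nothing is lost.

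For the forward direction, suppose $\rank O(G,\psi,\tilde p)=2|V(G)|$. Choose a maximal linearly independent set of rows; these are the edges of a spanning subgraph $H$ with $|E(H)|=2|V(G)|$. Then $(H,\psi|_{E(H)},\tilde p)$ is $\Gamma$-isostatic, and \Cref{thm:necessary}(iii) shows that $H$ is $(2,3,1,0)$-gain tight. The edge-restricted gain map is still a valid gain graph, since conditions (i)–(iii) of \Cref{def:GG} pass to subgraphs.

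For the converse, let $H$ be a spanning $(2,3,1,0)$-gain tight subgraph. By \Cref{main_theorem}, $(H,\psi|_{E(H)})$ is $\Gamma$-isostatic, so some realisation, and hence every regular realisation of $H$, has rank $2|V(G)|$. The main subtlety is that $\tilde p$ is regular for $G$ but perhaps not for $H$. This is resolved because the rows of $O(H,\psi|_{E(H)},q)$ are rows of $O(G,\psi,q)$. We therefore have $2|V(G)|=\max_q\rank O(H,\psi|_{E(H)},q)\le\max_q\rank O(G,\psi,q)=\rank O(G,\psi,\tilde p)\le 2|V(G)|$. So $(G,\psi,\tilde p)$ is infinitesimally rigid, and therefore $(G,z,p)$ is continuously rigid.
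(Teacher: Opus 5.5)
Your proposal is correct and follows the paper's route exactly. The paper obtains the corollary directly from \Cref{main_theorem}, \Cref{prop:asimowroth} and \Cref{prop:surfacerigidity}, and you make explicit the details it leaves implicit: the switching-invariance of the choice of lift, the extraction of a spanning $\Gamma$-isostatic subgraph from a maximal independent set of rows (with \Cref{thm:necessary} serving as the necessity half of \Cref{main_theorem}), and the rank-monotonicity argument showing that it does not matter that $\tilde p$ is regular only for $G$ and not for $H$.
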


In particular, this provides, to our knowledge, the first combinatorial characterisation of rigidity for generic frameworks on compact Riemann surfaces of genus 
$g\geq 2$. This extends existing intrinsic rigidity theories for surfaces with continuous isometry groups, such as the sphere and the torus, and complements extrinsic rigidity results for surfaces \cite{NOP1,NOP2}.

\begin{remark}\label{rem:noparabolic} 
A natural generalisation to \Cref{cor:surfaces} would consider quotients of the form $\HH/\Gamma$ where $\Gamma$ is a co-compact Fuchsian group that is not a surface group.
Such quotient spaces are examples of \emph{orbifolds}; roughly speaking, a generalisation of a smooth manifold that allows for singularities.
Extending \Cref{cor:surfaces} to such structures would require a generalisation of \Cref{main_theorem} to arbitrary co-compact Fuchsian groups.
Co-compact Fuchsian groups contain no parabolic elements \cite[Corollary 4.2.7]{katok1992fuchsian},
and so the main obstacles to this extension are:
(i) some of our technical lemmas require we have no finite-order group elements;
and (ii) \Cref{l:one non-cyclic} only applies to two types of co-compact Fuchsian groups, namely free groups and surface groups.
\end{remark} 

We close with a related conjecture. Alternatively one may define framework rigidity for $S_g$ using only shortest-length geodesics between points as follows.
Let $G$ be a simple graph and $p:V(G) \rightarrow S_g$ be a \emph{well-positioned realisation}; i.e.~a map where there exists a single shortest-length geodesic between any two vertices $p(u),p(v)$ if $uv$ is an edge of $G$.
Fix $z$ to be the family of shortest-length geodesics corresponding to each edge of $G$.
Define $(G,p)$ to be \emph{continuously rigid in $S_g$} if and only if $(G,z,p)$ is continuously rigid in $S_g$,
and we define $G$ to be \emph{rigid in $S_g$} if there exists a well-positioned realisation $p:V(G) \rightarrow S_g$ where $(G,z,p)$ is both regular and continuously rigid.
It is easy to check that a graph being rigid in this way guarantees the existence of a non-empty open set of well-positioned realisations that are continuously rigid.
The disadvantage of this concept of rigidity is that it is no longer a generic property, since different realisations can produce different gain maps. Hence we would arrive at a different family of graphs if the definition of rigid in $S_g$ required all well-positioned regular realisations to be rigid.
Nevertheless we conjecture the following combinatorial characterisation.

\begin{conjecture}
    For $g \geq 2$, a simple graph is rigid in $S_g$ if and only if it contains a $(2,0)$-tight spanning subgraph.
\end{conjecture}

\subsection*{Acknowledgements}
This project emerged from a Heilbronn Institute for Mathematical Research Focused Research Grant `Structural Stability: Combinatorics, Geometry and Topology'. 
S.\,D.\ and A.\,L.\,P.\ were supported by the KU Leuven grant iBOF/23/064 and the FWO grants G0F5921N (Odysseus) and G023721N.
S.\,D.\ was also supported by the Heilbronn Institute for Mathematical Research.
A.\,L.\,P.\ and A.\,N.\ were supported by UK Research and Innovation (grant number UKRI1112), under the EPSRC Mathematical Sciences Small Grant scheme.
A.\,N.\ was also supported by EPSRC grant EP/X036723/1.
J.\,V.\ was supported by the Wallenberg AI, autonomous systems, and software program (WASP) funded by the Knut and Alice Wallenberg foundation. 
For the purpose of open access, the authors have applied a Creative Commons Attribution (CC-BY) licence to any Author Accepted Manuscript version arising. 
\bibliographystyle{plainurl}
\bibliography{ref}

\appendix

\section{Recursive construction of tight graphs}
\label{appA}
In this paper, we employed two results from \cite{jzt2016}, or rather their proofs. Namely, we claimed that the proof of the following results, originally stated in terms of \emph{finite} groups, rather than \emph{infinite} groups, also hold in our setting. 
Note that these results are stated in terms of all groups in \cite{jzt2016} and there is no \textit{explicit} assumption that the group $\Gamma$ is finite. However, it is reasonable to assume that there is an \textit{implicit} assumption on the order of the group, since the paper is only concerned with finite symmetric graphs (and therefore finite groups). Hence, for completeness, we consider the infinite case in this appendix. Since the proof turns out to be essentially identical to that given in \cite{jzt2016} we provide only a brief summary.

\begin{theorem}[{\cite[Theorem 4.4]{jzt2016}}]
\label{app: recur. con. 1}
    Let $\Gamma$ be a group. A $\Gamma$-gain graph is $(2,3,1)$-gain tight if and only if it can be built up from a single vertex with an unbalanced loop by applying a series of 0-extensions, 1-extensions and loop-1-extensions.
\end{theorem}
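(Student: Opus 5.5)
Both directions will follow the argument of \cite[Theorem 4.4]{jzt2016}, checking that finiteness of $\Gamma$ is never used.

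\emph{Sufficiency.} A single vertex with one unbalanced loop $l$ has $|E|=1=2\cdot 1-1$. It is $(2,1)$-sparse. Its only subgraph with non-empty edge set is itself, and this is unbalanced because $\psi(l)\neq\text{id}$. So it is $(2,3,1)$-gain tight. Next I show that each extension preserves $(2,3,1)$-gain tightness.
\begin{itemize}
\item Each operation adds one vertex and a net of two edges, so the equality $|E|=2|V|-1$ is preserved.
\item For sparsity, take a subgraph $H'$ of the new graph that contains the new vertex $v$. Delete $v$, and in the $1$-extension case add back the subdivided edge $e$ if both $e_1,e_2\in H'$. This produces a subgraph $H$ of the old graph with the same deficiency, and the old sparsity bounds then give those for $H'$.
\item The only delicate point is balancedness. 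If $H'$ is balanced, then $H$ is balanced, because $\psi'(e_1)^{-1}\psi'(e_2)=\psi(e)$ means closed walks through $e$ correspond to closed walks through $e_1e_2$ with the same gain. A loop at $v$ is unbalanced, so a balanced $H'$ cannot contain it.
\item Two-cycles $e_ie_j^{-1}$ at $v$ are unbalanced by definition. Hence if $v$ has degree at least $2$ in a balanced $H'$, its two neighbours in $H'$ are distinct, and $H'-v$ (plus $e$ where appropriate) is a balanced subgraph of the old graph.
\end{itemize}
None of this uses that $\Gamma$ is finite.

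\emph{Necessity.} I argue by induction on $|V(G)|$. In a $(2,3,1)$-gain tight graph the degree sum is $4|V|-2$, so there is a vertex of degree at most $3$, where loops count $2$. Degree $\le 1$ contradicts sparsity unless $G$ is a single vertex with a loop, which is the base case. A degree-$2$ vertex can be removed by a $0$-reduction, since removing it keeps all counts. A degree-$3$ vertex carrying a loop can be removed by a loop-$1$-reduction. If no such vertex exists, every minimum-degree vertex $v$ has degree exactly $3$ and no loop. At such a $v$ we must find an admissible $1$-reduction: a pair of edges $e_i,e_j$ at $v$ such that deleting $v$ and adding the edge with gain $\psi(e_i)^{-1}\psi(e_j)$ keeps the graph $(2,3,1)$-gain sparse. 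A new loop here must have non-identity gain, which holds because $e_ie_j^{-1}$ is unbalanced.

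The main obstacle is showing that some pair yields an admissible $1$-reduction. I will follow JKT's counting argument. Suppose every pair fails. Then each pair has a blocking subgraph: either a $(2,1)$-tight subgraph containing both endpoints, or a balanced $(2,3)$-tight subgraph $H$ in which the new edge would keep $H$ balanced. I then combine two blockers with the submodularity identities for the union and intersection. For balanced blockers I additionally need that the union of two balanced subgraphs whose intersection is connected is balanced, and that after switching (\Cref{lem:treegain}) the gains inside a balanced subgraph can be taken to be $\text{id}$. With these, the union of the blockers together with $v$ violates the $(2,1)$ count or the balanced $(2,3)$ count.

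The group-theoretic facts used are \Cref{prop:<H>_u=<H>_v}, \Cref{lem:treegain}, \Cref{prop:balgain}, and the standard lemma that balanced subgraphs meeting in a connected graph have a balanced union. All of these hold for an arbitrary group, so the argument goes through when $\Gamma$ is infinite. The only care needed is to check that JKT never invoke finiteness, for instance by counting group elements to guarantee an admissible gain label. Here the labels are forced by $\psi(e_i)^{-1}\psi(e_j)$, so no such counting occurs.
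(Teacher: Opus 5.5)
Your argument is correct in outline, and the sufficiency half is what is needed. For necessity you take a somewhat different route from the one the paper relies on.

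\textbf{Comparison of routes.} The paper, following \cite{jzt2016}, proceeds in three steps:
\begin{enumerate}
\item it shows that two tight subgraphs sharing an edge have a tight union (the analogue of \Cref{lem: un. of tight is tight});
\item it deduces that sparse edge sets form a matroid;
\item it uses the matroid to produce a reduction at a degree-$3$ vertex.
\end{enumerate}
You bypass the matroid and count blocking subgraphs directly. For the $(2,3,1)$ count both routes need only \Cref{unions lemma}(i). So your claim that nothing group-specific enters is right, and it is slightly sharper than the paper's summary: the Fuchsian input the paper supplies for \Cref{unions lemma}(iii) matters only for \Cref{app: recur. con. 2}.

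What the matroid buys is a uniform argument. Suppose $e_{12},e_{13},e_{23}$ all lie in $\mathrm{cl}(E(G-v))$. The set $\{e_1,e_2,e_3,e_{12},e_{13},e_{23}\}$ is a balanced $K_4$, or a degenerate version on fewer vertices when neighbours coincide. In it, $e_1$ is spanned by the other five edges, so $E(G)$ would be dependent. Your route is more elementary, but it needs a case analysis that your sketch understates.

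\textbf{What the sketch understates.} Combining two blockers does not always produce a violation. Three configurations show this:
\begin{enumerate}
\item If $H_{12},H_{13}$ are balanced blockers meeting only in $v_1$, then $H_{12}\cup H_{13}$ together with $v,e_1,e_2,e_3$ is balanced with exactly $2|V|-3$ edges.
\item If $H_{12}$ is $(2,1)$-tight, $H_{13}$ is balanced, and they meet only in $v_1$, the corresponding union has exactly $2|V|-1$ edges.
\item If both blockers are balanced and meet in two isolated vertices, the union again has exactly $2|V|-1$ edges.
\end{enumerate}

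In these configurations you must bring in $H_{23}$ and count $H_{12}\cup H_{13}\cup H_{23}+v$ by inclusion--exclusion. When all pairwise intersections are single vertices, this union has $2|V|-2$ edges, so you also need it to be balanced. That follows by applying \Cref{unions lemma}(i) successively to the graphs $H_{ij}+v+\{e_i,e_j\}$, whose intersections are then connected through $v$ (for instance the path $v_2vv_3$). This works even though the $H_{ij}$ themselves need not meet in connected sets. In the remaining configurations, the count alone exceeds $2|V|-1$.

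Two further points are needed:
\begin{itemize}
\item A $(2,1)$-tight blocker for $\{v_i,v_j\}$ cannot contain $v_k$, since otherwise restoring $v$ breaks the $(2,1)$ count. This constrains the blockers when neighbours coincide.
\item Disconnected intersections of balanced blockers must be handled by counting alone, because \Cref{unions lemma}(i) does not apply there.
\end{itemize}
With these additions your direct argument closes in every case.
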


\begin{theorem}[{\cite[Theorem 7.9]{jzt2016}}]
\label{app: recur. con. 2}
    Let $\Gamma$ be a group. A $\Gamma$-gain graph is  $(2,3,1,0)$-gain tight if and only if it can be built up from a disjoint union of base graphs by applying a series of $0$-extensions, $1$-extensions, loop-$1$-extensions, $2$-extensions, and loop-$2$-extensions. 
\end{theorem}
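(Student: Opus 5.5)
The proof follows \cite[Section 7]{jzt2016} step by step. The work is to check that no step uses finiteness of $\Gamma$. The only group-theoretic facts that argument relies on are the following, and all hold for arbitrary groups:
\begin{itemize}
\item the behaviour of gains under switching;
\item \Cref{prop:<H>_u=<H>_v}, \Cref{lem:treegain} and \Cref{prop:balgain};
\item the fact that the gain group of a connected subgraph is generated by the gains of non-tree edges once a spanning tree has identity gains.
\end{itemize}
So I would run the same induction on $|V(G)|$ and audit each lemma.

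\emph{Sufficiency.} Each base graph is $(2,3,1,0)$-gain tight; this is a direct count, already noted in \Cref{subsec:basegraphs}. Next I would show that each extension preserves tightness. Take a subgraph $H'$ of the extended graph containing the new vertex $v$, and compare it with the subgraph $H$ of the old graph obtained by deleting $v$ and restoring the removed edge(s) when their endpoints lie in $H$. The constraints on the new gains give the needed implications: $\psi'(e_1)^{-1}\psi'(e_2)=\psi(e)$, unbalanced two-cycles, non-identity loops, and sparsity of $G[\{v_1,\dots,v_4\}]$ in the 2-extension. Namely, if $H'$ is balanced (respectively cyclic), then so is $H$, since closed walks through $v$ have the same gains as the walks through the restored edges. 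The count $2|V|-\ell$ then transfers. All of this is switching bookkeeping and never uses $|\Gamma|<\infty$.

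\emph{Necessity.} Let $(G,\psi)$ be $(2,3,1,0)$-gain tight. If $G$ is disconnected, treat each component; tightness forces each component to be tight. Since $|E|=2|V|$, the minimum degree is at most $4$, and it is at least $2$ by sparsity.
\begin{itemize}
\item A vertex of degree $2$ gives a $0$-reduction, which is automatically admissible.
\item At a vertex of degree $3$, the counting arguments of \Cref{app: recur. con. 1} (Theorem 4.4 of \cite{jzt2016}) produce an admissible $1$-reduction or loop-$1$-reduction.
\end{itemize}
The degree-$3$ step uses that obstacles to a reduction are tight ``blocking'' subgraphs: balanced tight, cyclic tight, or tight. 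It then combines two blockers via the submodular inequality for $f(H)=2|V(H)|-k(H)$, where $k(H)\in\{3,1,0\}$ according to whether $H$ is balanced, cyclic or neither.

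The critical group-theoretic inputs are two. First, the union of two balanced subgraphs with connected intersection is balanced. Second, if two cyclic subgraphs intersect in a connected graph that is unbalanced, the gain group of the union is controlled well enough for the count to work. In \cite{jzt2016} both are proved via \Cref{prop:balgain} by choosing a spanning tree of the intersection and extending it. I expect to check that the second statement is used only in forms like ``$\langle a,b\rangle$ cyclic and $\langle a,c\rangle$ cyclic with $a\neq\mathrm{id}$''. In a Fuchsian group this again gives cyclicity via \Cref{lem:cyclic-iff-fix}, since all three elements share fixed points. If an abstract group is needed instead, I would fall back on the exact counting alternative used in \cite{jzt2016}.

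This leaves the case where $G$ is $4$-regular (loops counted twice). Here I would follow \cite[Section 7]{jzt2016}. Either some vertex admits an admissible $2$-reduction or loop-$2$-reduction, or every possible reduction is blocked by tight subgraphs. In the latter case the blocking structures combine, again by submodularity, to force $G$ to be one of the following:
\begin{itemize}
\item a trivial graph;
\item a fancy triangle;
\item a fancy hat;
\item a double cycle;
\item a near-cyclic graph.
\end{itemize}

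I expect this $4$-regular case analysis to be the main obstacle. It is long and case-based, and it is where one must verify that no argument counts group elements or cosets (for instance, choosing a gain ``different from all existing ones'' in a finite list). My first step would be to list every place in \cite[Section 7]{jzt2016} where a specific gain is chosen or a subgroup is compared. I expect that such choices are always made among finitely many edges of $G$, never among elements of $\Gamma$, so infinitude of $\Gamma$ is harmless.
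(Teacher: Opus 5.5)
Your route is the paper's. You rerun \cite{jzt2016}, observe that the only group-dependent inputs are the union lemmas (\Cref{unions lemma}, \Cref{lem: un. of bal.}) that feed \Cref{lem: un. of tight is tight} and the sparsity matroid, and then repair part (iii) of \Cref{unions lemma}. Your fixed-point repair via \Cref{lem:cyclic-iff-fix} is equivalent to the paper's own repair. The paper notes that the non-tree gains of $H_1$ and $H_2$ commute with a non-identity gain $g$ lying in $H_1\cap H_2$, and that $C_\Gamma(g)$ is cyclic in a Fuchsian group. You did not list \Cref{lem: un. of bal.}, but its spanning-tree proof works for every group, so nothing is lost there.

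The gap is your opening claim that every group-theoretic input holds for arbitrary groups, together with the fallback to an ``exact counting alternative'' for abstract $\Gamma$. Part (iii) is false in general, and no counting argument replaces it. Take $\Gamma=\mathbb{Z}\times\mathbb{Z}/2\mathbb{Z}$, written additively, with $a=(1,0)$, $b=(1,1)$ and $g=(2,0)=2a=2b$. Put three loops $\ell_g,\ell_a,\ell_b$ with these gains on a single vertex. Then $\{\ell_g,\ell_a\}$ and $\{\ell_g,\ell_b\}$ are cyclic, and their intersection $\{\ell_g\}$ is connected and unbalanced. Yet the union has gain group $\langle a,b\rangle=\Gamma$, which is not cyclic.

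Worse, the $(2,3,1,0)$-gain sparse edge sets here are not the independent sets of a matroid. The set $\{\ell_g\}$ is maximal sparse, since adding $\ell_a$ or $\ell_b$ gives two loops on one vertex with cyclic gain group, exceeding $2\cdot 1-1$. Meanwhile $\{\ell_a,\ell_b\}$ is sparse of size two. So submodularity of $2|V(H)|-k(H)$ fails, and with it \Cref{lem: un. of tight is tight} and the matroid step that the paper calls the crucial ingredient for finding admissible reductions. In \cite{jzt2016}, part (iii) rests on the uniqueness of the maximal cyclic subgroup containing a non-identity gain. That is a property of the groups considered there, not a counting fact.

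What your argument (and the paper's) actually proves is the statement for groups in which every non-identity element has cyclic centraliser. Examples are Fuchsian groups, or dihedral groups of order $2n$ with $n$ odd. This is exactly the setting in which the paper uses the result (\Cref{thm: recur. con.}). You should drop the fallback and state that hypothesis explicitly.
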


For the remainder of the appendix, we call any $(2,3,1)$-gain sparse (respectively, tight) graph and any $(2,3,1,0)$-gain sparse (respectively, tight) graph simply \textit{sparse} (respectively, \textit{tight}).
It is relatively straightforward to show that extension operations applied to tight graphs yield tight graphs. However the converse is highly non-trivial. For a standard induction argument to work one needs to guarantee the existence of a vertex that admits a reduction. 
The authors of \cite{jzt2016} start by showing the following statement. 

\begin{lemma}[{\cite[Lemma 2.4]{jzt2016}}]
\label{unions lemma}
    Let $\Gamma$ be a (finite) group, $(G,\psi)$ be a $\Gamma$-gain graph, and $H_1,H_2$ be connected subgraphs of $G$ such that $H_1\,\cap\, H_2$ is connected. Then, the following hold:
    \begin{itemize}
        \item[(i)] If $H_1,H_2$ are balanced, then so is $H_1\,\cup\, H_2$; and
        \item[(ii)] If $H_1$ is cyclic and $H_2$ is balanced, then $H_1\,\cup\, H_2$ is cyclic.
        \item[(iii)] If $H_1,H_2$ are cyclic and $H_1\,\cap\,H_2$ is unbalanced, then $H_1\,\cup\, H_2$ is cyclic.
    \end{itemize}
\end{lemma}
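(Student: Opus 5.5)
The plan is to reduce all three parts to a statement about subgroups of $\Gamma$, using a good choice of gains, and then to check the required group-theoretic facts.

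\emph{Step 1: normalise the gains.} Since $H_1\cap H_2$ is connected, choose a spanning tree $T_0$ of $H_1\cap H_2$. Extend $T_0$ to a spanning tree $T_1$ of $H_1$ and to a spanning tree $T_2$ of $H_2$.

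\begin{itemize}
\item The edge sets of $T_1$ and $T_2$ meet exactly in $T_0$, and they share exactly the vertices of $H_1\cap H_2$. Indeed, any edge of $T_1\cap T_2$ lies in $H_1\cap H_2$, and adding such an edge to $T_0$ would create a cycle in $T_1$ unless it already belongs to $T_0$.
\item Hence $T_1\cup T_2$ is connected, and $|E(T_1\cup T_2)| = |V(H_1\cup H_2)|-1$, so it is a spanning tree of $H_1\cup H_2$.
\end{itemize}

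By \Cref{lem:treegain}, after switching we may assume every edge of $T_1\cup T_2$ has gain $\text{id}$. Switching changes each of $\langle H_1\rangle$, $\langle H_2\rangle$, $\langle H_1\cap H_2\rangle$ and $\langle H_1\cup H_2\rangle$ only by conjugation (\Cref{prop:<H>_u=<H>_v}). So being balanced or cyclic is unaffected.

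Now apply \Cref{prop:balgain} to $H_1$, $H_2$, $H_1\cap H_2$ and $H_1\cup H_2$, using the trees $T_1$, $T_2$, $T_0$ and $T_1\cup T_2$ respectively. Set $A_i=\langle H_i\rangle$ and $C=\langle H_1\cap H_2\rangle$. This gives
\begin{equation*}
\langle H_1\cup H_2\rangle=\langle A_1, A_2\rangle, \qquad C\le A_1\cap A_2 .
\end{equation*}

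\emph{Step 2: parts (i) and (ii).} If $A_1=A_2=\{\text{id}\}$, then $\langle H_1\cup H_2\rangle$ is trivial, which gives (i). If $A_2=\{\text{id}\}$ and $A_1$ is cyclic, then $\langle A_1,A_2\rangle=A_1$ is cyclic, which gives (ii). Neither part uses any property of $\Gamma$.

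\emph{Step 3: part (iii).} Here $A_1,A_2$ are cyclic and contain a common non-identity element $c\in C$. The claim reduces to the following group property:
\[
\text{(}\star\text{)}\qquad \text{two cyclic subgroups of } \Gamma \text{ with nontrivial intersection generate a cyclic subgroup.}
\]
This is the main obstacle. Property ($\star$) is not a formal consequence of finiteness: in the quaternion group, $\langle i\rangle$ and $\langle j\rangle$ meet in $\langle -1\rangle$ but generate a non-cyclic group. So the proof must use the groups actually treated in \cite{jzt2016}, namely the dihedral groups $\mathcal{D}_{2n}$ with $n$ odd, and I would verify ($\star$) there directly.

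\begin{itemize}
\item A cyclic subgroup of $\mathcal{D}_{2n}$ is either contained in the rotation subgroup $\mathbb{Z}_n$, or has the form $\{\text{id},s\}$ for a reflection $s$. This is because a product of a reflection with a nontrivial rotation is again a reflection, so no cyclic subgroup mixes the two types.
\item If $A_1$ and $A_2$ both lie in $\mathbb{Z}_n$, they generate a subgroup of the cyclic group $\mathbb{Z}_n$, which is cyclic.
\item If one of them, say $A_1$, is $\{\text{id},s\}$, then $c=s$ is a reflection. Since $n$ is odd, $\mathbb{Z}_n$ contains no reflection, so $A_2$ is not contained in $\mathbb{Z}_n$ and must also be of reflection type. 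Thus $A_2=\{\text{id},s\}=A_1$.
\end{itemize}

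In each case $\langle A_1,A_2\rangle$ is cyclic, so $H_1\cup H_2$ is cyclic. For the groups used elsewhere in the paper, ($\star$) would instead follow from \Cref{lem:cyclic-iff-fix}, since cyclic subgroups sharing a nontrivial element have the same fixed points. The finite-group form of the statement, however, rests on the dihedral verification above.
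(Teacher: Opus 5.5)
Your proof is correct, and Steps 1 and 2 coincide with the paper's argument. Both take a spanning tree of $H_1\cup H_2$ that restricts to spanning trees of $H_1$, $H_2$ and $H_1\cap H_2$, switch so that it carries identity gains, and then apply \Cref{prop:balgain}. Your check that $T_1\cup T_2$ really is a spanning tree fills in a detail the paper only asserts.

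The routes differ in part (iii). The paper argues directly for Fuchsian groups. Let $g\neq\text{id}$ be the gain of a non-tree edge of $H_1\cap H_2$. Every gain in $H_1$ or $H_2$ lies in a cyclic group containing $g$, so it commutes with $g$. Hence $\langle H_1\cup H_2\rangle\le C_\Gamma(g)$, which is cyclic in a Fuchsian group.

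You instead isolate the exact group property $(\star)$ being used, and you correctly observe that finiteness is not the operative hypothesis. Your $Q_8$ example genuinely refutes (iii) as literally stated: take one vertex with loops of gains $-1,i,j$, let $H_1$ consist of the loops $-1,i$, and let $H_2$ consist of the loops $-1,j$. This also explains why the paper's summary of the original proof speaks of \emph{the} maximal cyclic subgroup containing $\psi(e)$; that uniqueness is essentially $(\star)$.

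What your version buys is this diagnosis. What the paper's buys is a complete argument in the setting the paper actually needs, where you give only a one-line remark. That remark needs one more step. Knowing that the non-identity elements of $A_1\cup A_2$ share the fixed points of $c$ does not by itself give the same for every product in $\langle A_1,A_2\rangle$. The cleanest fix is the paper's: every element of $A_1\cup A_2$ commutes with $c$, so $\langle A_1,A_2\rangle\le C_\Gamma(c)$, which is cyclic for a Fuchsian group.

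A minor point in your dihedral check: the parity of $n$ plays no role. The rotation subgroup never contains a reflection, so $(\star)$ in fact holds in every dihedral group.
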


The general proof strategy for \Cref{unions lemma} adopted by the authors of \cite{jzt2016} is to take a spanning tree $T$ of $H_1\,\cup\, H_2$ such that $T\,\cap\,E(H_i)$ is a spanning tree of $H_i$ (for $i=1,2$) and such that $T\,\cap\,E(H_1\,\cap\,H_2)$ is a spanning tree of $H_1\,\cap\,H_2$. The existence of $T$ is guaranteed by the fact that $H_1,H_2,H_1\,\cap\,H_2$ are connected. 

By \Cref{lem:treegain}, all edges in $T$ can be assigned identity gain. For (i), then it suffices to note that, by \Cref{prop:balgain}, all edges in $E(H_i-T)$ also have identity gain, and so all edges in $E(H_1\,\cup\,H_2)$ have identity gain. Similarly, for (ii), the group $\langle H_1\,\cup\,H_2\rangle_{v,\psi}$ (for some $v\in V(H_1,\cup\,H_2)$) is
\begin{equation*}
    \langle\psi(e):e\in E(H_1\,\cup\,H_2-T)\rangle=\langle\psi(e):e\in E(H_2-T)\rangle,
\end{equation*}
which is cyclic. For (iii), the assumption that $H_1\,\cap\,H_2$ is unbalanced is crucial: given $e\in E(H_1\,\cap\,H_2-T)$ with unbalanced gain, the authors of \cite{jzt2016} consider the maximal cyclic group $\overline{\mathcal{C}}$ containing $\langle\psi(e)\rangle$ and note that $\langle H_1\,\cup\,H_2\rangle_{v,\psi}$ (for some $v\in V(H_1\,\cup\,H_2)$) is contained in $\overline{\mathcal{C}}$, proving the result. In our case, it suffices to note that $C_{\Gamma}(g)$ contains all gains of the elements in $E(H_1-T)$ and $E(H_2-T)$. Since $g\neq\text{id}$ and $\Gamma$ is a Fuchsian group (see Theorems 2.3.3 and 2.3.5 in \cite{katok1992fuchsian}), $C_{\Gamma}(g)$ is cyclic, proving case (iii).

\begin{lemma}[{\cite[Lemma 2.5]{jzt2016}}]
\label{lem: un. of bal.}
    Let $\Gamma$ be a (finite) group, $(G,\psi)$ be a $\Gamma$-gain graph, and $H_1,H_2$ be connected balanced subgraphs of $G$ such that $H_1\,\cap\, H_2$ has two connected components. Then, $H_1\,\cup\, H_2$ is cyclic.
\end{lemma}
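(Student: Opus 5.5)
We need to show that if $H_1,H_2$ are connected and balanced and $H_1\cap H_2$ has exactly two components, then $H_1\cup H_2$ is cyclic. The approach is to normalise gains along a well-chosen spanning tree and then read off a single generator of the gain group.

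Let $C_1,C_2$ be the two components of $H_1\cap H_2$. First, choose a spanning forest $F$ of $H_1\cap H_2$ consisting of a spanning tree $F_j$ of each $C_j$. Next, extend $F$ to a spanning tree $T_1$ of $H_1$; this is possible since $H_1$ is connected. Then $T_1$ contains a path $P$ in $H_1$ joining $C_1$ to $C_2$, and $F\cup P$ is a tree.

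Now extend $F$ to a spanning forest of $H_2$ by adding edges of $H_2$ that do not join $C_1$ to $C_2$ through $H_2$. Concretely, take a maximal forest $F_2'\supseteq F$ in $H_2$ whose union with $P$ is still acyclic. Then $T:=T_1\cup F_2'$ is a spanning tree of $H_1\cup H_2$, and exactly one edge $e^\ast\in E(H_2)$, or more precisely one fundamental cycle, is needed to reconnect $H_2$. Equivalently, $T\cap E(H_2)$ is a spanning forest of $H_2$ with two components, one containing $C_1$ and one containing $C_2$. Fix any edge $e^\ast$ of $H_2$ joining these two components; such an edge exists because $H_2$ is connected. Then $T_2:=(T\cap E(H_2))+e^\ast$ is a spanning tree of $H_2$.

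By \Cref{lem:treegain}, switch so that every edge of $T$ has gain $\text{id}$. Since $H_1$ is balanced and $T\cap E(H_1)=T_1$ is a spanning tree of $H_1$, \Cref{prop:balgain} gives $\psi(e)=\text{id}$ for every $e\in E(H_1)$. For $H_2$, the only edge of $T_2$ whose gain may be nontrivial is $e^\ast$. Switch on the whole vertex set of the component of $T\cap E(H_2)$ not containing $C_1$ is not allowed, because it would disturb $H_1$; instead, compute directly. Every other edge $f\in E(H_2)\setminus T_2$ lies in a fundamental cycle of $T_2$ in $H_2$, and balancedness of $H_2$ forces its gain to equal $\text{id}$ if the cycle avoids $e^\ast$, or $\psi(e^\ast)^{\pm1}$ if it uses $e^\ast$. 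So every edge of $H_2$ has gain in $\{\text{id},\psi(e^\ast)^{\pm1}\}$.

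Hence all gains in $H_1\cup H_2$ lie in $\langle\psi(e^\ast)\rangle$. Since every edge of the spanning tree $T$ has gain $\text{id}$, \Cref{prop:balgain} gives $\langle H_1\cup H_2\rangle_{\psi,v}\le\langle \psi(e^\ast)\rangle$, which is cyclic. This holds for an arbitrary group, with no finiteness needed, so the argument from \cite{jzt2016} carries over verbatim.

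The main obstacle is the bookkeeping in the previous step. I must ensure that the fundamental cycle of each $f$ in $H_2$ passes through $e^\ast$ at most once, and that the partial product along the tree portion is trivial. The first holds because $T_2$ is a tree, so each fundamental cycle traverses $e^\ast$ at most once. The second holds because the tree edges of $T_2$ other than $e^\ast$ have identity gain. Balancedness then says the product of gains around the cycle, namely $\psi(f)^{\pm1}\psi(e^\ast)^{\epsilon}$ with $\epsilon\in\{0,\pm1\}$, equals $\text{id}$.
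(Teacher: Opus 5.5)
Your proof is correct and follows essentially the same route as the paper's sketch: pick a spanning tree $T$ of $H_1\cup H_2$ with $T\cap E(H_1)$ a spanning tree of $H_1$ and $T\cap E(H_2)$ a two-component spanning forest of $H_2$, switch so that $T$ has identity gains, and observe that the only possibly non-trivial gains are on edges of $H_2$ joining the two components, all of which equal $\psi(e^\ast)^{\pm1}$. Your fundamental-cycle computation via $T_2=(T\cap E(H_2))+e^\ast$ is a cleaner phrasing of the paper's $V_1,V_2,V_3$ partition. The facts you assert without proof also hold: $T_1\cup F_2'$ is a spanning tree of $H_1\cup H_2$, and a maximal such $F_2'$ has exactly two components, because $T_1$ meets each component of $F_2'$ in a single subtree $F_j$ and $H_2$ is connected.
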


The proof for \Cref{lem: un. of bal.} given in \cite{jzt2016} is similar. Taking a spanning tree $T$ of $H_1\,\cup\,H_2$ such that $E(H_1)\,\cap \,T$ is a spanning tree of $H_1$, we have that $E(H_2)\,\cap\,T$ has two connected components $T_1$ and $T_2$. This partitions $V(H_2)$ into $V_1:=V(H_2)\,\cap\,V(G[T_1]),V_2:=V(H_2)\,\cap\,V(G[T_2])$ and $V_3=V(H_2-V_1-V_2)$. Assuming that all edges in $T$ have identity gain, the balanced condition on $H_1,H_2$ implies that all edges of $G[V_1],G[V_2]$ and $H_1$ have identity gain and that all edges of $G[V_3]$ have the same gain (up to group inverse).

\begin{lemma}[{\cite[Lemma 7.2]{jzt2016}}]
\label{lem: un. of tight is tight}
    Let $\Gamma$ be a (finite dihedral) group and $(G,\psi)$ be a sparse $\Gamma$-gain graph. Let $H_1,H_2$ be tight subgraphs of $(G,\psi)$ such that $E(H_1\,\cap\, H_2)\neq\emptyset$. Then, $H_1\,\cup\, H_2$ is tight.
\end{lemma}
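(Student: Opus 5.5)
We prove \Cref{lem: un. of tight is tight} by the standard counting argument, adapted to the three-level $(2,3,1,0)$ count. Throughout, $H_1,H_2$ are tight subgraphs with $E(H_1\cap H_2)\neq\emptyset$, and $(G,\psi)$ is sparse.

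\textbf{Step 1: the rank function.} For a subgraph $H$ with nonempty edge set, let $k(H)=3$ if $H$ is balanced, $k(H)=1$ if $H$ is cyclic but unbalanced, and $k(H)=0$ otherwise. For disconnected $H$, define $k(H)$ as the sum over its components. Then $H$ is tight exactly when $|E(H)|=2|V(H)|-k(H)$, and sparsity of $G$ gives $|E(H)|\le 2|V(H)|-k(H)$ for every $H$.

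\textbf{Step 2: the count.} We have
\[
|E(H_1\cup H_2)| = 2|V(H_1\cup H_2)| - k(H_1)-k(H_2) + \bigl(2|V(H_1\cap H_2)| - |E(H_1\cap H_2)|\bigr).
\]
Applying sparsity to $H_1\cap H_2$ gives
\[
|E(H_1\cup H_2)| \ge 2|V(H_1\cup H_2)| - k(H_1)-k(H_2)+k(H_1\cap H_2).
\]
So it suffices to prove the submodular-type inequality
\[
k(H_1\cup H_2)\le k(H_1)+k(H_2)-k(H_1\cap H_2).
\]
Combined with sparsity of $H_1\cup H_2$, this forces equality, so $H_1\cup H_2$ is tight.

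\textbf{Step 3: reduce to the connected case.} Tight subgraphs are connected. When $k(H_1\cap H_2)=0$ the inequality is immediate, since $k(H_1\cup H_2)\le\min(k(H_1),k(H_2))$ by monotonicity: a subgraph of a balanced graph is balanced, and a subgraph of a cyclic graph is cyclic.

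\textbf{Step 4: case analysis when $H_1\cap H_2$ is connected.}
\begin{itemize}
\item If both $H_i$ are balanced, \Cref{unions lemma}(i) shows $H_1\cup H_2$ is balanced, giving $3\le 3+3-3$.
\item If one $H_i$ is cyclic and the other balanced, \Cref{unions lemma}(ii) shows the union is cyclic, so $k(H_1\cup H_2)\le1\le k(H_1)+k(H_2)-3$ with the intersection balanced.
\item If both are cyclic: when the intersection is unbalanced, \Cref{unions lemma}(iii) makes the union cyclic, giving $1\le 1+1-1$. When the intersection is balanced, we need $k(H_1\cup H_2)\le 1+1-3<0$, which is impossible. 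So this configuration must be excluded using tightness, which we check in Step 6.
\item If one $H_i$ is neither cyclic nor balanced, the inequality follows from monotonicity.
\end{itemize}

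\textbf{Step 5: disconnected intersection.} When $H_1\cap H_2$ has several components, $k(H_1\cap H_2)$ grows with the number of components. \Cref{lem: un. of bal.} gives the key case: two balanced graphs meeting in two components have a cyclic union, so $1\le 3+3-6$. With more components, or with cyclic $H_i$, the right-hand side becomes too negative. In those cases we instead derive a contradiction: the count in Step 2 would give $|E(H_1\cup H_2)|>2|V(H_1\cup H_2)|$, violating $(2,0)$-sparsity.

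\textbf{Step 6: the main obstacle.} The hard case is two tight cyclic unbalanced graphs whose intersection is balanced, or has balanced components. Here the union can have gain group generated by two non-commuting elements. The count then reads
\[
|E(H_1\cup H_2)| = 2|V(H_1\cup H_2)| - 2 + \bigl(2|V(H_1\cap H_2)| - |E(H_1\cap H_2)|\bigr) \ge 2|V(H_1\cup H_2)|+1,
\]
which contradicts $(2,0)$-sparsity. So this case cannot occur, and no group theory is needed for it.

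The only genuine input about $\Gamma$ is \Cref{unions lemma}(iii). In an infinite Fuchsian group, the appendix argument replaces "maximal cyclic subgroup" by the centraliser $C_\Gamma(g)$, which is cyclic by \Cref{lem:cyclic-iff-fix}. This is where finiteness of the dihedral group was used in \cite{jzt2016}. Verifying that every component configuration in Step 5 is either covered by \Cref{lem: un. of bal.} or ruled out by the $(2,0)$ count is the tedious part of the proof.
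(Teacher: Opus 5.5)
\textbf{The reduction in Step~2 points the wrong way, so the argument does not prove tightness.} Your identity in Step~2 is correct. Sparsity supplies the upper bound $|E(H_1\cup H_2)|\le 2|V(H_1\cup H_2)|-k(H_1\cup H_2)$, so what you must prove is the matching lower bound. Starting from $|E(H_1\cup H_2)|\ge 2|V(H_1\cup H_2)|-k(H_1)-k(H_2)+k(H_1\cap H_2)$, that requires
\[
k(H_1)+k(H_2)\le k(H_1\cup H_2)+k(H_1\cap H_2),
\]
i.e.\ submodularity of $2|V|-k$. The inequality you set out to prove, $k(H_1\cup H_2)\le k(H_1)+k(H_2)-k(H_1\cap H_2)$, already follows from that same display together with sparsity of $H_1\cup H_2$. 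It therefore carries no information, and ``combined with sparsity this forces equality'' is a non sequitur.

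The inversion runs through your whole case analysis:
\begin{itemize}
\item You use parts (i)--(iii) of Lemma~\ref{unions lemma} to get \emph{upper} bounds on $k(H_1\cup H_2)$. These are trivial, since $H_i\subseteq H_1\cup H_2$.
\item The both-cyclic, balanced-intersection case looks ``impossible'' to you only because of the wrong direction.
\item Step~5 asserts $1\le 3+3-6$, which is false.
\end{itemize}

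\textbf{How to repair it.} With the correct direction the argument is short. The real content of (i)--(iii) is a \emph{lower} bound on $k(H_1\cup H_2)$ when $H_1\cap H_2$ is connected: the union stays balanced, respectively unbalanced cyclic. Every other case is trivial. Each component $C$ of $H_1\cap H_2$ lies in both $H_i$, so $k(C)\ge\max\{k(H_1),k(H_2)\}$. Hence the required inequality holds outright in each of these cases:
\begin{itemize}
\item $H_1\cap H_2$ has at least two edge-carrying components;
\item $H_1\cap H_2$ is balanced while both $H_i$ are unbalanced cyclic;
\item some $H_i$ is non-cyclic.
\end{itemize}
In particular, Lemma~\ref{lem: un. of bal.} is not needed for this statement.

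\textbf{Other errors to fix.} Drop the claim in Step~5 that cyclic $H_i$ with disconnected intersection contradict $(2,0)$-sparsity. Two unbalanced intersection components only give $|E(H_1\cup H_2)|\ge 2|V(H_1\cup H_2)|$. For a concrete instance, in a dihedral group $\langle r,s\rangle$ take loops of gain $r$ at $x$ and at $y$:
\begin{itemize}
\item let $H_1$ add the edge $x\xrightarrow{\mathrm{id}}y$;
\item let $H_2$ add the edge $x\xrightarrow{s}y$ (switching at $y$ by $s$ turns the loop gain into $r^{-1}$).
\end{itemize}
Both are tight and cyclic, and their union is non-cyclic with exactly $2|V|$ edges. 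So the configuration occurs, and the union is tight rather than contradictory. Finally, under your Step~1 convention a disjoint union of tight graphs is itself tight, so ``tight subgraphs are connected'' is false. You should take connectivity of $H_1,H_2$ as a hypothesis.
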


The authors of \cite{jzt2016} used \Cref{unions lemma,lem: un. of bal.} to show \Cref{lem: un. of tight is tight}. In turn, \Cref{lem: un. of tight is tight} was used to show that sparsity induces a matroid\footnote{They state the sparsity conditions in terms of \emph{submodular functions} which often induce matroids, for example via Edmonds theorem. Since we do not work with submodular functions in this article we chose not to introduce this viewpoint here.} whose independent sets are given by the edge sets of sparse graphs. This matroid structure was used to prove that tight graphs admit reductions. Aside from  \Cref{unions lemma,lem: un. of bal.}, the arguments given in \cite{jzt2016} use combinatorial and matroidal arguments that translate directly to our setting.

As mentioned, the technical difficulty in the proofs of \Cref{app: recur. con. 1,app: recur. con. 2} is to show a reduction is always possible. The fact that sparsity induces a matroid is the crucial component that allows the authors of \cite{jzt2016} to prove this. 
In brief, it is easy to see that any tight graph either has a vertex of degree two or three, or that it is 4-regular (in which case is it $(2,3,1,0)$-gain tight). The authors of \cite{jzt2016} showed that a graph with a degree three vertex always admits a reduction, proving the result for the case where the graph is not 4-regular.
The 4-regular case was more involved, as not every vertex admits a 2-reduction or a loop-2-reduction. However, each case can be considered separately and, if a given vertex does not admit a reduction, it was shown that there is always a different vertex that admits a reduction (unless the graph is a base graph).
\end{document}